\documentclass{amsart}
\usepackage[utf8]{inputenc}

\usepackage{geometry}
\geometry{top=1.0in, bottom=1.0in, left=1.5in, right=1.5in}


\usepackage{extarrows}
\usepackage{amsmath}
\usepackage{amsthm}
\usepackage{mathrsfs}
\usepackage{amsfonts}
\usepackage{ amssymb }
\usepackage{amsmath,amscd}
\usepackage{array}
\usepackage{amssymb}
\usepackage[all, cmtip]{xy}
\usepackage{tikz}
\usepackage{enumitem}
\usepackage{rotating}
\usepackage{makecell}
\usepackage{pdflscape}
\usepackage{tensor}
\usepackage{longtable}
\usetikzlibrary{arrows}
\usetikzlibrary{cd}
\usepackage{hyperref}
\usepackage[backend=biber,
style=alphabetic,
isbn=false,
url=false,
sorting=nyt]{biblatex}
\usepackage{bm}
\addbibresource{references.bib}

\newtheorem{theorem}{Theorem}[subsection]
\newtheorem{lemma}[theorem]{Lemma}

\newtheorem{proposition}[theorem]{Proposition}
\newtheorem{definition}[theorem]{Definition}
\newtheorem{corollary}[theorem]{Corollary}
\newtheorem{situation}[theorem]{Situation}
\theoremstyle{plain}


\theoremstyle{definition}
\newtheorem{example}[theorem]{Example}
\newtheorem{claim}[theorem]{Claim}
\newtheorem{remark}[theorem]{Remark}
\newtheorem{construction}[theorem]{Construction}

\newcommand{\EE}{\mathbb{E}}

\newcommand{\LL}{\mathbb{L}}

\newcommand{\ZZ}{\mathbb{Z}}

\newcommand{\OO}{\mathscr{O}}

\renewcommand{\L}{\mathsf{L}}
\newcommand{\R}{\mathsf{R}}

\newcommand{\omegabul}{\omega^\bullet}

\newcommand{\lotimes}{\overset{\L}{\otimes}}

\newcommand{\cX}{\mathcal{X}}
\newcommand{\cV}{\mathcal{V}}
\newcommand{\cU}{\mathcal{U}}
\newcommand{\cY}{\mathcal{Y}}
\newcommand{\cP}{\mathcal{P}}
\newcommand{\cQ}{\mathcal{Q}}
\newcommand{\cK}{\mathcal{K}}
\newcommand{\cF}{\mathcal{F}}
\newcommand{\cS}{\mathscr{S}}
\newcommand{\cG}{\mathcal{G}}
\newcommand{\fa}{\mathfrak{a}}

\newcommand{\tr}{tr}
\newcommand{\gray}[1]{{\color{gray}#1}}
\newcommand{\Mod}[1]{\mathrm{Mod}(#1)}

\newcommand{\Sec}{{\mathrm{Sec}}}

\newcommand{\liset}{\operatorname{lis-et}}
\newcommand{\et}{\mathrm{et}}
\newcommand{\zar}{\mathrm{zar}}

\newcommand{\Rhom}{\R\Shom}
\newcommand{\Ghom}{\R\mathrm{Hom}}
\newcommand{\SZ}{\mathfrak{S}(\cZ)}
\newcommand{\SW}{\mathfrak{S}(\cW)}

\newcommand{\cW}{\mathcal{W}}
\newcommand{\cZ}{\mathcal{Z}}
\newcommand{\cC}{\mathcal{C}}

\newcommand{\cD}{\mathcal{D}}

\newcommand{\sC}{\mathscr{C}}
\newcommand{\sD}{\mathscr{D}}
\newcommand{\sS}{\mathscr{S}}
\newcommand{\sT}{\mathscr{T}}
\newcommand{\sO}{\mathscr{O}}

\newcommand{\cM}{\mathcal{M}}
\newcommand{\cN}{\mathcal{N}}

\newcommand{\uR}{{\underline{R}}}

\newcommand{\Def}{\underline{\mathrm{Def}}}
\newcommand{\Defset}{\mathrm{Def}}
\newcommand{\Exal}{\underline{\mathrm{Exal}}}
\newcommand{\Extcat}{\underline{\mathrm{Ext}}}
\newcommand{\Exalset}{\mathrm{Exal}}

\newcommand{\Estack}[2]{\mathrm{Ext}^{0/-1}(#1, #2)}
\newcommand{\Estackbase}[3]{\mathrm{Ext}^{0/-1}_{#3}(#1, #2)}

\newcommand{\piso}{\varphi}
\newcommand{\qc}{\mathrm{qc}}
\newcommand{\Shom}{\mathcal{H}om}

\DeclareMathOperator{\Hom}{Hom}

\DeclareMathOperator{\Spec}{Spec}
\DeclareMathOperator{\Dqc}{D_{qc}}

\DeclareMathOperator{\D}{D}
\DeclareMathOperator{\Qcoh}{QCoh}

\DeclareMathOperator{\Ext}{Ext}

\DeclareMathOperator{\cosk}{cosk}

\DeclareMathOperator{\colim}{colim}

\DeclareMathOperator{\Et}{Et}
\DeclareMathOperator{\Le}{Le}
\DeclareMathOperator{\Es}{Es}

\usepackage{xcolor}

\newcommand{\NOTEoff}{\newcommand{\Commentn}[1]{}}
\newcommand{\note}[1]{\Commentn{#1}}

\newcommand\TODOoff{\newcommand{\Comment}[1]{}}
\newcommand{\todo}[1]{\Comment{#1}}

\begin{document}
\TODOoff
\NOTEoff

\title{The moduli of sections has a canonical obstruction theory}
\author{Rachel Webb}

\address[R. Webb]{Department of Mathematics\\
University of California, Berkeley\\
Berkeley, CA 94720-3840\\
U.S.A.}
\email{rwebb@berkeley.edu}


 \begin{abstract}
We give a detailed proof that locally Noetherian moduli stacks of sections carry canonical obstruction theories. As part of the argument we construct a dualizing sheaf and trace map, in the lisse-\'etale topology, for families of tame twisted curves, when the base stack is locally Noetherian.
\end{abstract}

\keywords{obstruction theory, Hom-stack, moduli of sections, duality for twisted curves}

 \subjclass[2020]{Primary 14D23, 14D15; Secondary 14F06, 18D99}

\maketitle

\setcounter{tocdepth}{1}
\tableofcontents

\section{Introduction}\label{sec:intro}

\subsection{Overview}
Let $\cM$ be a locally Noetherian algebraic stack and let $\cC \rightarrow \cM$ be a family of twisted curves as in  \cite[Def~2.1]{AOV11}. Let $\cZ \rightarrow \cC$ be a morphism of algebraic stacks such that $\cZ \to \cM$ is locally of finite presentation, quasi-separated, and has affine stabilizers. By \cite[Thm~1.3]{HR19} there is an algebraic stack $\Sec_{\cM}({\cZ}/{\cC})$ over $\cM$ whose fiber over a scheme $T \rightarrow \cM$ is
\[
\Sec_{\cM}({\cZ}/{\cC})(T) := \Hom_{\cC}(\cC\times_{\cM} T, \cZ)
\]
where the right hand side is the groupoid of morphisms of stacks over $\cC$. Recall that an obstruction theory for $\Sec_{\cM}({\cZ}/{\cC})$ is a morphism of complexes $\phi: E \rightarrow \LL_{\Sec/\cM}$ in the derived category of $\Sec := \Sec_{\cM}({\cZ}/{\cC})$ whose mapping cone has vanishing cohomology sheaves in degrees $[-1, \infty)$ (see Section \ref{sec:def-ot}). An implication of our main theorem is the following.
\begin{theorem}\label{thm:main-intro}
The stack $\Sec_{\cM}({\cZ}/{\cC})$ carries a canonical obstruction theory.
\end{theorem}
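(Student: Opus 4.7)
The plan is to produce the canonical obstruction theory as the derived pushforward, along the projection from the universal section diagram, of the target's cotangent complex twisted by the relative dualizing sheaf. Write $\pi : \Sec\times_\cM \cC \to \Sec$ for the projection, which is itself a family of tame twisted curves, and $ev : \Sec\times_\cM \cC \to \cZ$ for the universal evaluation morphism over $\cC$. The candidate complex is
\[
E \;:=\; R\pi_*\bigl(ev^*\LL_{\cZ/\cC}\lotimes \omega_\pi\bigr)[1],
\]
where $\omega_\pi$ is the relative dualizing sheaf of $\pi$. This shape is forced by relative Grothendieck duality: one computes $E^\vee \simeq R\pi_* ev^* \mathbb{T}_{\cZ/\cC}$, which is the complex that must compute tangent and obstruction spaces of sections, since infinitesimal deformations and obstructions of a section $s : \cC_T \to \cZ$ are classically $H^0$ and $H^1$ of $s^*\mathbb{T}_{\cZ/\cC}$.

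The morphism $\phi : E \to \LL_{\Sec/\cM}$ assembles from three natural operations. First, since $ev$ is a morphism over $\cC$, the universal property of the cotangent complex gives $ev^*\LL_{\cZ/\cC} \to \LL_{(\Sec\times_\cM\cC)/\cC}$, and flatness of $\cC \to \cM$ identifies the target with $\pi^*\LL_{\Sec/\cM}$. Second, tensoring by $\omega_\pi$, applying $R\pi_*$, and invoking the projection formula yields
\[
R\pi_*\bigl(ev^*\LL_{\cZ/\cC}\lotimes \omega_\pi\bigr) \;\longrightarrow\; \LL_{\Sec/\cM}\lotimes R\pi_*\omega_\pi.
\]
Third, composing with the trace $R\pi_*\omega_\pi \to \OO_\Sec[-1]$ and shifting by $1$ produces $\phi$. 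The obstruction-theoretic property then reduces to a Behrend--Fantechi-style criterion: for every square-zero extension $T \hookrightarrow T'$ of affine test schemes, one checks that $\phi$ mediates correctly between the deformation theory of the section $\cC_T \to \cZ$ and that of $\Sec$ over $\cM$. Under the duality identification $E^\vee \simeq R\pi_* ev^*\mathbb{T}_{\cZ/\cC}$, this is precisely the classical statement that deformations live in $H^0$ and obstructions in $H^1$ of the pulled-back tangent complex, which forces vanishing of the mapping cone of $\phi$ in degrees $\geq -1$.

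The principal obstacle, and the technical bulk of the paper as announced in the abstract, is to execute each of these steps rigorously in the lisse-\'etale topology when $\Sec$ and $\cM$ are arbitrary locally Noetherian stacks and $\cC$ is a tame twisted curve. One must construct $\omega_\pi$ as a coherent sheaf on the lisse-\'etale site, prove that it commutes with arbitrary base change, and produce a trace map $R\pi_*\omega_\pi \to \OO_\Sec[-1]$ compatible with a $(R\pi_*,\pi^!)$ adjunction; simultaneously one must verify the projection formula and the flat base change identification $\LL_{(\Sec\times_\cM\cC)/\cC}\simeq \pi^*\LL_{\Sec/\cM}$ in this stacky setting. The nonrepresentability of $\cZ \to \cC$ (only affine stabilizers are assumed) and of $\cC \to \cM$ (twisted curves have stacky points) rules out direct appeal to classical Grothendieck duality for schemes or Deligne--Mumford stacks, so this duality framework has to be developed essentially from scratch before the obstruction-theoretic argument above can be assembled.
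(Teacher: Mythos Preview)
Your construction of $E$ and $\phi$ matches the paper's exactly (see \eqref{eq:build1} and \eqref{eq:generalpot}), and you correctly identify the duality infrastructure (dualizing sheaf, trace, projection formula, base change) as a main technical prerequisite. However, there is a genuine gap in your final step, and it is precisely the gap the paper is written to close.

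You write that the obstruction-theoretic property ``reduces to a Behrend--Fantechi-style criterion'' and that, via the identification $E^\vee \simeq R\pi_* ev^*\mathbb{T}_{\cZ/\cC}$, ``this is precisely the classical statement that deformations live in $H^0$ and obstructions in $H^1$.'' The problem is that this argument only shows the groups $\Ext^i(\L f^*\LL_{\Sec/\cM}, I)$ and $\Ext^i(\L f^*E, I)$ are \emph{abstractly} isomorphic (both compute the same deformation-theoretic data). It does not show that the \emph{specific map} $\Phi_i$ induced by your $\phi$ is that isomorphism. The paper's introduction (Section~\ref{sec:discussion}) singles this out explicitly: the criterion in Lemma~\ref{lem:BF} requires that $\Phi_0$ and $\Phi_{-1}$ themselves be isomorphisms, and the usual argument ``falls just shy'' of this. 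Closing the gap requires tracking the functoriality of the Illusie--Olsson equivalence $\Exal_{\cY}(\cX,I)\simeq \Estack{\LL_{\cX/\cY}}{I[1]}$ at the level of Picard categories (Lemmas~\ref{lem:ft-functoriality1} and~\ref{lem:ft-functoriality2}) and proving a formal compatibility (Lemma~\ref{lem:strange}) that identifies the map induced by $\phi$---built from the trace via the adjunction-like construction---with the pullback map on $\Exal$ categories. These are assembled into the commuting cube~\eqref{eq:D}, which shows that $\Phi$ itself, not merely its source and target, is an isomorphism.

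A secondary issue: your identification $E^\vee \simeq R\pi_* ev^*\mathbb{T}_{\cZ/\cC}$ presupposes that $\LL_{\cZ/\cC}$ is perfect, which is not among the hypotheses. The paper never passes through $E^\vee$; it works directly with the maps $\Phi_i:\Ext^i(\L f^*\LL_{\Sec/\cM},I)\to\Ext^i(\L f^*E,I)$.
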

We define the canonical obstruction theory in Section \ref{sec:moduli-of-sections}. Theorem \ref{thm:main-intro} is generalized and stated more precisely as Theorem \ref{thm:main} below. An important feature of the canonical obstruction theory is its functoriality, as explained in \cite[Appendix~A]{CJW}.

When the obstruction theory in Theorem \ref{thm:main-intro} is perfect and $\Sec:= \Sec_{\cM}(\cZ/\cC)$ is Deligne-Mumford, quasi-separated, and locally finite type over a field, the machinery in \cite{BF97} and \cite[Sec~5.2]{Kre99} defines a virtual fundamental class on $\Sec$. This is a key part of the construction of Gromov-Witten theory and related enumerative theories: see for example \cite{Be97, AGV, CCK15, CL12}. On the other hand, Theorem \ref{thm:main-intro} is used with a non-Deligne-Mumford instance of $\Sec$ to functorially compare different obstruction theories on quasimap moduli spaces in \cite[Lem~A.2.5]{CJW}. This comparison is crucial for the application of \cite{CJW} to quasimap theory and also for the computations of quasimap $I$-functions in \cite{nonab1, nonab2}.

\subsection{Discussion of Theorem \ref{thm:main-intro}}
\label{sec:discussion}

 The usual argument supporting  Theorem \ref{thm:main-intro} when $\Sec$ is Deligne-Mumford is as follows (this is used, for example, in \cite[Prop~6.2]{BF97}). 
 First reduce to showing that for each affine $f:T \rightarrow \Sec$ and square-zero quasi-coherent ideal sheaf $I$ on $T$, the induced map
\begin{equation}\label{eq:ext}
\Ext^i(\L f^*\LL_{\Sec/\cM}, I) \rightarrow \Ext^i(\L f^*E, I)
\end{equation}
has the following properties (see \cite[Thm~4.5]{BF97}):
\begin{align}
\bullet &\text{ When $i=1$, \eqref{eq:ext} is injective on obstructions.}\label{eq:req}\\
\bullet &\text{ When $i=0$ and there exists a deformation of $f$ by $I$, \eqref{eq:ext} is an isomorphism.}\notag
\end{align} Second, use standard deformation theory to relate the groups $\Ext^i(\L f^*\LL_{\Sec/\cM}, I)$ (resp. $\Ext^i(\L f^*E, I)$) to deformations of the morphism $f:T \rightarrow \Sec$ (resp. $\cC \times_{\cM} T \rightarrow \cZ$). Since morphisms $T \rightarrow \Sec$ are equivalent to morphisms $\cC \times_{\cM} T \rightarrow \cZ$ by definition of $\Sec$, one concludes that \eqref{eq:ext} is an injection (on obstructions) when $i=1$ and that \textit{the groups in \eqref{eq:ext} are isomorphic} (if the obstruction vanishes) when $i=0$. We note that this falls just shy of the second requirement in \eqref{eq:req}, since it is not clear that the morphism in \eqref{eq:ext} is itself an isomorphism.

In this paper, we copy the first step above in Lemma \ref{lem:BF}. However, in the second step, we analyze the functoriality of the isomorphism of Picard categories
\begin{equation}\label{eq:ft-intro}
\Exal_{\cY}(\cX, I) \simeq \Estack{\LL_{\cX/\cY}}{I[1])}
\end{equation}
due to Illusie and Olsson (\cite{illusie, olsson-deformation}), for $\cX \rightarrow \cY$ a representable morphism of algebraic stacks. (See Section \ref{sec:picard-stacks} for the notation and Theorem \ref{thm:ft} for the precise statement). Our proof shows that when $i=0$, not only are the groups in \eqref{eq:ext} isomorphic (in the case of vanishing obstruction), but in fact \textit{the morphism in \eqref{eq:ext} is an isomorphism}, completing the proof of the second requirement in \eqref{eq:req}. Our proof also covers the case when $\Sec\rightarrow \cM$ is not representable or even relatively Deligne-Mumford. 


The correct approach to Theorem \ref{thm:main-intro} is likely through derived algebraic geometry, as in \cite[Sec~2.2]{STV}. The functoriality properties of the obstruction theory proved in \cite[Appendix~A]{CJW} would be natural consequences of such a construction. Unfortunately this author is not equipped to produce the argument. Though the statement of Theorem \ref{thm:main-intro} is certainly familiar, we note that there does not seem to be a reference in the literature for the generality in which we have stated it here.

\subsection{Duality for twisted curves}
A key ingredient for the construction of the obstruction theory in Theorem \ref{thm:main-intro} is the following (stated more precisely as Proposition \ref{prop:duality} below). 
\begin{theorem}\label{thm:duality-intro}
For every family $p: \cC \rightarrow \cM$ of tame twisted curves on a locally Noetherian algebraic stack $\cM$, there is a functorial pair $(\omega_{\cM}, \tr_{\cM})$ with $\omega_{\cM}$ a quasi-coherent sheaf on $\cC$ and $\tr_{\cM}: \R\pi_*\omega_{\cM} \rightarrow \OO_{\cM}[-1]$. 
When $\cM$ is a quasi-separated Noetherian algebraic space, the pair $(\omega_\cM,\tr_\cM)$ agrees with the right adjoint to $\R p_*$.
\end{theorem}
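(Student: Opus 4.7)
The plan is to construct the pair by smooth descent, reducing to the case where the base $\cM$ is a quasi-separated Noetherian algebraic space. In that case I would take $(\omega_\cM, \tr_\cM)$ to be (a shift of) the output of the right adjoint $p^!$ from Grothendieck duality: the tameness of $\cC \to \cM$ together with the relative Gorenstein condition for twisted curves of relative dimension one ensures that $p^!\OO_\cM$ is concentrated in a single cohomological degree and represented there by an $\cM$-flat quasi-coherent sheaf. Thus $\omega_\cM := H^{-1}(p^!\OO_\cM)$ is legitimately a sheaf and $\tr_\cM$ is the shift of the counit $\R p_* p^!\OO_\cM \to \OO_\cM$. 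Duality for tame DM stacks over an algebraic space base can be imported directly, or else reduced to duality for schemes by descending through the coarse moduli map, which is exact on quasi-coherent sheaves by tameness.

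For a general locally Noetherian algebraic stack $\cM$, I would choose a smooth presentation $u: U \to \cM$ by a scheme (reducing to the Noetherian case by further Zariski-localizing on $U$). Apply the construction above to the family $\cC_U := \cC \times_\cM U \to U$ to obtain $(\omega_U, \tr_U)$. The crucial compatibility to verify is that for a flat morphism $f: V \to U$ between quasi-separated Noetherian algebraic spaces, pullback of the pair agrees canonically with the pair of the base-changed family; this is the standard flat base change for relative Grothendieck duality, which holds because $\omega_U$ is $U$-flat. Applied to the two projections $U \times_\cM U \rightrightarrows U$ (both smooth, hence flat), this yields descent data for $\omega_U$ in the lisse-\'etale topology, giving a quasi-coherent sheaf $\omega_\cM$ on $\cC$. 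Independence of the presentation would then be checked by comparing two covers through their fiber product in the usual way.

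The main obstacle is descending the trace, which a priori lives in the derived category rather than in an abelian category. Here I would exploit that tameness plus relative dimension one forces $\R p_* \omega_U$ to be concentrated in degrees $[0,1]$, so that a morphism $\tr_U: \R p_*\omega_U \to \OO_U[-1]$ in $\Dqc(U)$ is determined by the single morphism of quasi-coherent sheaves $R^1 p_*\omega_U \to \OO_U$ induced on $H^1$. This linearizes the descent problem and reduces it to ordinary sheaf descent. The remaining technical point is ensuring that the glued $\tr_\cM$ represents a genuine morphism in $\Dqc(\cM)$ despite the well-known awkwardness of the lisse-\'etale topos on a stack; following Olsson's treatment, one would circumvent this by carrying out all derived constructions inside the quasi-coherent derived category from the outset.
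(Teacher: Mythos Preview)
Your strategy matches the paper's almost exactly: define $(\omega,\tr)$ via $p^!$ when the base is a quasi-separated Noetherian algebraic space, and then descend. The paper, however, identifies and resolves two technical points that your sketch glosses over.

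First, the \v{C}ech nerve does not stay in the right category. You assert flat base change ``for a flat morphism $f:V\to U$ between quasi-separated Noetherian algebraic spaces'' and then apply it with $V=U\times_{\cM}U$. But for a merely locally Noetherian stack $\cM$ (no quasi-compactness or quasi-separatedness assumed), the fibre product $U\times_{\cM}U$ need not be quasi-compact or quasi-separated, so it is not covered by the base-change statement you invoke. The paper's fix is to replace the $0$-coskeleton by a \emph{very smooth hypercover} $M_\bullet\to\cM$ in which every $M_i$ is a disjoint union of Noetherian affine schemes and all face and degeneracy maps are smooth; the base-change comparison is then only ever applied between such objects. Making this work required the paper to set up a lisse-\'etale descent theorem for very smooth hypercovers (Appendix~A), which is where the real content beyond your outline lies.

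Second, ``functorial'' in the statement means compatibility with \emph{arbitrary} base change $\cN\to\cM$, not just flat base change (this is what the application to obstruction theories needs). Your proposal only discusses flat base change. The paper proves the required non-flat base-change isomorphism for $p^!$ along morphisms of quasi-separated Noetherian algebraic spaces separately (Lemma~\ref{lem:f!-basechange}), using compact generation of $\Dqc$ and the fact that $\R p_*$ preserves perfect complexes; this is then fed into the hypercover descent to yield the cocycle condition and the functoriality isomorphisms $m'^*\omega_{\cM}\xrightarrow{\sim}\omega_{\cN}$. Your reduction of the trace to the sheaf map $R^1p_*\omega\to\OO$ is correct in spirit and is essentially what the paper does (citing \cite[Tag~0DL9]{tag}), but again it has to be carried out on the hypercover rather than on a single \v{C}ech overlap.
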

 
We restate the last sentence of the theorem more precisely: if $p: \cC \to \cM$ is a family of twisted curves, a right adjoint $p^!$ to $\R p_*$ exists by \cite[Thm~4.14(1)]{HR17} (see also Lemma \ref{lem:f!-exists} below). The last sentence of Theorem \ref{thm:duality-intro} says that if $\cM$ is a quasi-separated Noetherian algebraic space, we have that $\omega_{\cM}[1] = p^!\OO_{\cM}$ and $tr_\cM$ is the counit of the $(\R p_*, p^!)$ adjunction.  

The reason we do not have this agreement for arbitrary locally Noetherian $\cM$ is that it seems difficult to show that $p^!$ is compatible with arbitrary base change. Following the exposition of \cite{lipman} for schemes, we prove base change for the right adjoint to pushforward for certain morphisms of algebraic stacks in Lemma \ref{lem:f!-basechange} below (see also \cite{Nee17} for a complementary result). However, in applications, one would like to have base change for $p^!$ for families of curves over arbitrary morphisms of algebraic stacks.

The base change problem arises for non-twisted prestable curves as well, and \cite[Tag~0E5W]{tag} addresses the issue by ``gluing'' the pairs $(\omega_{\cM}, \tr_{\cM})$ to get a functorial construction of a dualizing complex and trace map. We adopt the same strategy to prove Theorem \ref{thm:duality-intro}. Again, while the statement of Theorem \ref{thm:duality-intro} is well-known, we do not know a reference for twisted curves, even over the complex numbers.






\subsection{Contents of the paper}
The main goal of Section \ref{sec:formal} is to derive a certain commuting diagram (Lemma \ref{lem:strange}) which will be used in our proof of Theorem \ref{thm:main-intro}. Because the notation is simpler and because we can reuse various parts of the argument in other parts of the paper, we work in the setting of abstract closed symmetric monoidal categories. 

In Section \ref{sec:duality} we prove Theorem \ref{thm:duality-intro}. The proof requires us to construct a special kind of hypercover of an algebraic stack and an associated lisse-\'etale topos. This is an application of the general results proved in \cite{tag} and we explain the details in Appendix \ref{app:descent}.

We explain \eqref{eq:ft-intro} and prove Theorem \ref{thm:main-intro} in Section \ref{sec:main-proof}. The proof itself is fairly short, granting the existence of the dualizing complex and the functoriality of \eqref{eq:ft-intro}. We reserve our proof of the functoriality of \eqref{eq:ft-intro} for Appendix \ref{sec:ft-functoriality}. Since the functoriality is critical to our argument we include the details, but said details are unsurprising.



\subsection{The locally Noetherian hypothesis}
We expect that the locally Noetherian assumption on $\cM$ can be relaxed. It is used only in the proof of Lemma \ref{lem:pushing-ps-per}, to show that pushing forward to the coarse moduli space of a twisted curve preserves pseudo-coherent objects. See Remark \ref{rmk:lazy-me}.

\subsection{Conventions and notation}\label{sec:notation}
We collect some conventions and recurring notation. Our list of notation here is not exhaustive.\\

\noindent
\textit{Algebraic stacks.}
We follow the conventions in \cite[Tag~0260]{tag}; in particular, an algebraic stack need not be quasicompact or quasi-separated.\\

\noindent
\textit{Twisted curves.}
A morphism $p: \cC \rightarrow \cM$ of algebraic stacks is a \textit{family of twisted curves} if smooth-locally on $\cM$ it is a twisted curve in the sense of \cite[Def~2.1]{AOV11}. \\


\renewcommand{\arraystretch}{2}
\setlength{\tabcolsep}{9pt}
\begin{center}
\begin{longtable}{ c|m{7.2cm}|c } 
\multicolumn{3}{c}{\textit{Notation for closed categories and internal hom}} \\ \hline
 Notation & \makecell{Category} & Internal hom \\ \hline \hline 
 $\Mod{A}$ & category of $A$-modules for a sheaf of rings $A$ on a site $\sS$& $\Shom_A$\\ 
 $\D(A)$ & unbounded derived category of $\Mod{A}$ & $\Rhom_A$ \\ [-.2cm]
 & \textit{derived global hom functor (valued in the derived category of $\Gamma(\sS, A)$-modules)}. &\makecell{\textit{notated}\\ $\Ghom_{A}$ }\\
 \makecell{$\cX_{\liset}$\\ (resp. $\cX_{\et}$)}& category of sheaves on the lisse-\'etale (rep. \'etale) site of an algebraic (resp. Deligne-Mumford) stack $\cX$&not needed\\
  \makecell{$\Qcoh(\cX_{\liset})$\\ (resp. $\Qcoh(\cX_{\et})$)}& category of quasi-coherent sheaves on the lisse-\'etale (rep. \'etale) site of an algebraic (resp. Deligne-Mumford) stack $\cX$&not needed\\
 \makecell{$\D(\cX_{\liset})$ \\(resp. $\D(\cX_{\et})$)} &unbounded derived category of $\OO_{\cX}$-modules in $\cX_{\liset}$ (resp. $\cX_{\et}$) & $\Rhom_{\OO_{\cX}}$\\[-.4cm]
 & \textit{derived global hom functor (valued in the derived category of $\Gamma(\cX, \OO_{\cX})$-modules)}. &\makecell{\textit{notated}\\ $\Ghom_{\sO_{\cX}}$ }\\
 \makecell{$\Dqc(\cX_{\liset})$\\ (resp. $\Dqc(\cX_{\et})$)} & full subcategory of $\D(\cX_{\liset})$ (resp. $\D(\cX_{\et})$) on objects with quasi-coherent cohomology sheaves&$\Rhom^{\qc}_{\OO_{\cX}}$\\
\end{longtable}
\end{center}

\begin{center}
\begin{longtable}{c|m{10cm}}
\multicolumn{2}{c}{\textit{Operations on sheaves on topoi and algebraic stacks}} \\ \hline
Notation & \makecell{Meaning} \\\hline\hline
$H^i(\cF)$ & The $i^{th}$ cohomology sheaf of a complex $\cF$\\
$\R \Gamma(\cF)$& The derived global sections functor applied to a complex $\cF$, commonly notated $\R\Gamma(X, \cF)$ where $X$ is a topos \\
$(f^{-1}, f_*)$ & Adjoint functors defined by a morphism of topoi $f: \sC \rightarrow \sD$\\
$f^* $ & For $f:(\sC, \OO_{\sC}) \rightarrow (\sD, \OO_{\sD})$ a morphism of ringed topoi set $f^*(-):=f^{-1}(-) \otimes_{f^{-1}\OO_{\sD}}\OO_{\sC}$\\
$\L f^*$ & For $f: \cX \rightarrow \cY$ a morphism of algebraic stacks, we denote by $\L f^*: \Dqc(\cY_{\liset}) \rightarrow \Dqc(\cX_{\liset})$ the functor $\L f^*_{\qc}$ in \cite[Sec~1.3]{HR17}.\\
$\R f_*$ & For $f:(\sC, \OO_{\sC}) \rightarrow (\sD, \OO_{\sD})$ a morphism of ringed topoi, this is the usual direct image functor $\D(\OO_{\sC}) \to \D(\OO_{\sD})$.\\
$\R f_*$ & For $f: \cX \rightarrow \cY$ a \textit{concentrated} morphism of algebraic stacks, this is the functor $\R (f_{\qc})_*: \Dqc(\cX_{\liset}) \rightarrow \Dqc(\cY_{\liset})$ of \cite[Sec~1.3]{HR17} that is right adjoint to $\L f^*$. By \cite[Thm~2.6(2)]{HR17} it agrees with the restriction of the usual direct image functor $\R (f_{\liset})_*: \D(\cX_{\liset}) \rightarrow \D(\cY_{\liset})$.
\end{longtable}
\end{center}

\subsection{Acknowledgements}
I am grateful to Bhargav Bhatt for explaining the characterization of an obstruction theory in Lemma \ref{lem:BF}. I am also thankful for many helpful conversations with Martin Olsson. This project was partially supported by an NSF Postdoctoral Research Fellowship, award number 200213.
\section{A formal framework for dualizing objects and trace maps}
\label{sec:formal}
\subsection{Closed symmetric monoidal categories}\label{sec:csmcats}
Because notation is simpler in an abstract setting, we work for a moment with closed symmetric monoidal categories. If $\sC$ is such a category, we will write $\OO_{\sC}$ for the unit, $\otimes$ for the product, and $\Hom$ for internal hom, using $\sC(X, Y)$ to denote the set of morphisms between two objects $X, Y \in \sC$ and $1_X$ to denote the identity morphism on $X$. We will suppress mention of the associativity, commutativity, and identity isomorphisms that are part of the definition of $\sC$.\note{instead remembering that if the diagrams (C1)-(C4) of Kelly-MacLane, 'coherence in closed categories" commute, then any diagram of assoc, commutativity, and unit  commutes by Kelly, "on maclane's conditions for coherence"} 
If $\sC$ and $\sD$ are any two categories and $R: \sC \rightarrow \sD$ is a functor with a left adjoint $L$, then for $X \in \sD$ and $Y \in \sC$ we will denote the unit and counit of the adjunction by
\[
\eta^L_X: X \rightarrow RL(X) \quad \quad \quad \epsilon^L_Y: LR(Y) \rightarrow Y
\]
omitting the decorations on $\eta$ and $\epsilon$ when there is no risk of confusion.

We will use many specific instances of the following abstract situation.
\begin{situation}\label{basic-situation}
We are given $\sC$, $\sD$ be closed symmetric monoidal categories with $f^*: \sD \rightarrow \sC$ strong monoidal and $f_*$ a right adjoint.\footnote{\label{ft:lipman}
Moreover, $f_*$ is lax monoidal by \cite[(3.2)]{FHM} and so our setup is consistent with that used in \cite[Sec~3.5]{lipman}.} This means we have natural isomorphisms 
\begin{gather}\label{eq:strong-monoidal}
f^*(X) \otimes f^*(Y) \rightarrow f^*(X\otimes Y)\\
\OO_{\sC} \rightarrow f^*\OO_{\sD}. \label{eq:strong-monoidal-unit}
\end{gather}
\end{situation}
When we are in Situation \ref{basic-situation} we have the following three morphisms at our disposal. The first we recall from \cite[(3.4)]{FHM}: given $Y\in \sC$ and $X \in \sD$ there is a functorial isomorphism
\begin{equation}\label{eq:simplicial1}
\Hom(X, f_*Y) \xrightarrow{\sim} f_*(\Hom(f^*(X), Y)).
\end{equation}
The second is the composition
\begin{equation}
\label{eq:sheafy-push}
f_*\Hom(X, Y) \rightarrow f_*\Hom(f^*f_*X, Y) \xleftarrow[\sim]{\text{\eqref{eq:simplicial1}}} \Hom(f_*X, f_*Y)
\end{equation}
where the first morphism is induced by the counit of the adjunction. The third is 
\begin{equation}\label{eq:sheafy-pullback}
\Hom(X, Y) \rightarrow \Hom(X, f_*f^*Y) \xrightarrow[\sim]{\text{\eqref{eq:simplicial1}}}f_*\Hom(f^*X, f^*Y)
\end{equation}
where the first morphism is induced by the unit of the adjunction; it is an isomorphism if $f^*$ is fully faithful. One can check that \eqref{eq:sheafy-pullback} is functorial in $X$, $Y$, and in the adjoint pair $(f^*, f_*)$ (see \cite[Exercise~3.7.1.1]{lipman}).

We present Example \ref{ex:modules} as the first instance of Situation \ref{basic-situation}; more instances can be found in Examples \ref{ex:general}, \ref{ex:change-rings}, \ref{ex:change-topoi}, and \ref{ex:change-simplicial}.
\begin{example}\label{ex:modules}
The following is an example of Situation \ref{basic-situation}. Let $B' \rightarrow B$ be a homomorphism of rings, and set $\sC = \Mod{B'}$ and $\sD = \Mod{B}$. Define $f^*$ to be the extension of scalars functor $-\otimes_{B'} B$ and define $f_*$ to be restriction of scalars $(-)_{B'}$. The functor $-\otimes_{B'} B$ is strong symmetric monoidal. One can check from the definition in \cite[(3.4)]{FHM} that \eqref{eq:simplicial1} sends $X \rightarrow (Y)_{B'}$ to its adjoint arrow $X\otimes_{B'} B \rightarrow Y$, that \eqref{eq:sheafy-push} sends a $B$-module homomorphism $g: X \rightarrow Y$ to $(g)_{B'}: (X)_{B'} \rightarrow (Y)_{B'}$, and that \eqref{eq:sheafy-pullback} sends a $B'$-module homomorphism $h: X \rightarrow Y$ to $h\otimes_{B'} B: X\otimes_{B'} B \rightarrow Y\otimes_{B'} B.$
\end{example}

We recall a formal framework for basechange. We do not need monoidal structures here.
\begin{situation}\label{basic-situation-basechange}
We have a diagram of categories and functors
\begin{equation}\label{eq:basechange1}
\begin{tikzcd}
\sS  \arrow[d, "g_*"] \arrow[r, "m_*'"]& \sC \arrow[d, "f_*"]\\
\sT  \arrow[r, "m_*"]& \sD
\end{tikzcd}
\end{equation}
where the functors $f_*, g_*, m_*, m'_*$ have left adjoints $f^*, g^*, m^*, m'^*$, and we are given a natural transformation $m'^*f^* \simeq g^*m^*$.
\end{situation}
In this situation we get get a unique natural transformation $m_*g_*\simeq f_*m'_*$ such that the adjunctions for $(m'^*f^*, f_*m'_*)$ and $(m_*g_*, g^*m^*)$ are compatible (see \cite[Sec~3.6]{lipman}). We define the basechange map 
\begin{equation}\label{eq:basechange}
m^*f_*X \rightarrow g_*m'^*X \quad \quad \quad \text{for}\; X \in \sS
\end{equation}
as in \cite[Prop~3.7.2(i)]{lipman}.\note{this basechange map can be defined can be defined without $m_*$---but you still need to know $m_*$ exists to know that all my diagrams commute, because my refernces all assume the existence of $m_*$.} It may not be an isomorphism in general.

\begin{lemma}\label{lem:bc-unit}
For $Y \in \sD$ and $X \in \sC$ there are commuting diagrams 
\begin{equation}\label{eq:bc-unit1}
\begin{tikzcd}
m'^*f^*f_*X \arrow[d, equal] \arrow[r, "m'^*\epsilon"] & m'^*X\\
g^*m^*f_*X \arrow[r, "\text{\eqref{eq:basechange}}"] & g^*g_*m'^*X \arrow[u, "\epsilon"]
\end{tikzcd} \quad \quad \quad \quad
\begin{tikzcd}
m^*Y \arrow[r, "m^*\eta"] \arrow[d, "\eta"] & m^*f_*f^*Y \arrow[d, "\text{\eqref{eq:basechange}}"]\\
g_*g^*m^*Y \arrow[r, equal] & g_*m'^*f^*Y
\end{tikzcd}
\end{equation}
\end{lemma}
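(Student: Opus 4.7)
The plan is to verify both diagrams by passing to $(g^*, g_*)$-adjoints. First I would recall the defining property of the basechange morphism $\beta := \eqref{eq:basechange}$ from \cite[Prop~3.7.2(i)]{lipman}: the $(g^*, g_*)$-adjoint of $\beta$ is
\[
g^* m^* f_* X = m'^* f^* f_* X \xrightarrow{m'^* \epsilon^{f^*}_X} m'^* X,
\]
where the equality uses the given natural isomorphism $m'^* f^* \simeq g^* m^*$. Both diagrams in the statement should then follow from a bookkeeping exercise with triangle identities.

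For the first diagram, both composites are morphisms $m'^* f^* f_* X \to m'^* X$, equivalently $g^* m^* f_* X \to m'^* X$ after the identification. The top arrow is by construction the $(g^*, g_*)$-adjoint of $\beta$. The bottom path, read as $g^* m^* f_* X \xrightarrow{g^* \beta} g^* g_* m'^* X \xrightarrow{\epsilon^{g^*}} m'^* X$, is the standard formula for recovering a morphism from its $(g^*, g_*)$-adjoint via the triangle identity $\epsilon^{g^*} g^* \circ g^* \eta^{g^*} = \mathrm{id}_{g^*}$. Hence both composites equal the adjoint of $\beta$ and so agree.

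For the second diagram, I would again apply the $(g^*, g_*)$-adjoint to the two composites $m^* Y \to g_* m'^* f^* Y$. The adjoint of the clockwise path $\eta^{g^*}_{m^* Y}$ is the identity on $g^* m^* Y = m'^* f^* Y$ by the triangle identity for $(g^*, g_*)$. The adjoint of the counterclockwise path $\beta \circ m^* \eta^{f^*}$ unfolds as
\[
g^* m^* Y \xrightarrow{g^* m^* \eta^{f^*}} g^* m^* f_* f^* Y \xrightarrow{g^* \beta} g^* g_* m'^* f^* Y \xrightarrow{\epsilon^{g^*}} m'^* f^* Y,
\]
and the defining property of $\beta$ rewrites the last two arrows as $m'^* \epsilon^{f^*}_{f^* Y}$. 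After invoking $g^* m^* = m'^* f^*$ once more, the composite becomes $m'^* f^* Y \xrightarrow{m'^* f^* \eta^{f^*}} m'^* f^* f_* f^* Y \xrightarrow{m'^* \epsilon^{f^*}} m'^* f^* Y$, which is the identity by the triangle identity for $(f^*, f_*)$ applied inside $m'^*$. Since the adjoints agree, the two composites agree.

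The whole argument is formal calculus of adjunctions, so I do not expect a genuine obstacle; the only thing to watch carefully is which $\eta$ and $\epsilon$ belong to which of the four adjoint pairs at each step. The substantive input, used in both diagrams, is the characterization of $\beta$ as the unique morphism with the prescribed $(g^*, g_*)$-adjoint.
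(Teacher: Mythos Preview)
Your proof is correct and, at the level of ideas, matches the paper's. Both arguments reduce the diagrams to adjunction calculus and the characterization of the basechange map $\beta$ via its $(g^*,g_*)$-adjoint. For the second diagram your argument is essentially identical to the paper's (the paper writes ``similar'' and its hidden sketch computes exactly the adjoint you compute, ending with the triangle identity $\epsilon^{f^*}_{f^*Y}\circ f^*\eta^{f^*}_Y = 1$).

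The only difference is in the first diagram. You invoke directly the characterization from \cite[Prop~3.7.2]{lipman} that the $(g^*,g_*)$-adjoint of $\beta$ is $m'^*\epsilon^{f^*}$, which makes the bottom path $\epsilon^{g^*}\circ g^*\beta$ equal to the top path $m'^*\epsilon^{f^*}$ tautologically. The paper instead unfolds the definition of $\beta$ as the composite $m^*f_* \xrightarrow{m^*f_*\eta^{m'^*}} m^*f_*m'_*m'^* = m^*m_*g_*m'^* \xrightarrow{\epsilon^{m^*}} g_*m'^*$ and assembles a larger diagram \eqref{eq:bc-counit2}, citing the compatibility of the composite adjunctions $(m'^*f^*,f_*m'_*)$ and $(g^*m^*,m_*g_*)$ from \cite[(3.6.2)]{lipman} for the key cell. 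Your route is shorter because it packages that compatibility into the single citation of the adjoint description of $\beta$; the paper's route is more self-contained since it exhibits the verification explicitly. Both are sound.
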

\begin{proof}
We show that the first diagram commutes; the second one may be checked similarly. Commutativity of the first follows from the following commuting diagram.
\begin{equation}\label{eq:bc-counit2}
\begin{tikzcd}
g^*m^*f_*X \arrow[r, "g^*m^*f_*\eta"] \arrow[d, equal] &[1.5em] g^*m^*f_*m'_*m'^*X \arrow[d, equal] \arrow[r, equal] &[-1.5em] g^*m^*m_*g_*m'^*X \arrow[r, "g^*\epsilon^{m^*}"]& g^*g_*m'^*X \arrow[dd, "\epsilon^{g^*}"] \\
m'^*f^*f_*X \arrow[r, "m'^*f^*f_*\eta"] \arrow[d, "m'^*\epsilon"] & m'^*f^*f_*m'_*m'^*X \arrow[d, "m'^*\epsilon"] & &&\\
 m'^*X \arrow[r, "m'^*\eta"]&m'^*m'_*m'^*X \arrow[rr, "\epsilon^{m'^*}"] && m'^*X
\end{tikzcd}
\end{equation}
The perimeter of the diagram from $m'^*f^*f_*X$ to $m'^*X$ along the bottom is equal to $m'^*\epsilon$ using a triangle identity, while the composition along the top is equal to the composition of the other three arrows in the desired square. The commutativity of the big cell in \eqref{eq:bc-counit2} is compatibility of the $(m'^*f^*, f_*m'_*)$ and $(m_*g_*, g^*m^*)$ adjunctions---see \cite[(3.6.2)]{lipman}.\note{
a proof of the other diagram:
We apply the $(g^*, g_*)$ adjunction to the two arrows $m^*Y \to g_*g^*m^*Y$. The adjoint of the left vertical arrow in \eqref{eq:bc-unit1} is the identity. The adjoint of the composition of the remaining arrows in \eqref{eq:bc-unit1} is the composition
\[
g^*m^*Y \xrightarrow{g^*m^*\eta} g^*m^*f_*f^*Y \xrightarrow{=} m'^*f^*f_*f^*Y \xrightarrow{m'^*\epsilon_{f^*Y}} m'^*f^*Y \xrightarrow{=} g^*m^*Y
\]
using the second description of \eqref{eq:basechange} in \cite[Lem~A.7(2)]{hall-GAGA}. One may use a triangle identity to show that this composition is the identity.}
\end{proof}

\subsection{An ideal setup}
We recall the formal framework of \cite[Rmk~5.10]{FHM}.

\begin{situation}\label{situation} We are given closed symmetric monoidal categories $\sC, \sD$ and functors $f_*, f_!: \sC \rightarrow \sD$ and $f^*, f^!: \sD \rightarrow \sC$ such that $(f^*, f_*)$ and $(f_!, f^!)$ are adjoint pairs. Moreover, these functors satisfy
\begin{itemize}
\item $f^*$ is strong symmetric monoidal
\item $f_* = f_!$
\item The canonical projection formula morphism 
\begin{equation}\label{eq:projection}
\pi: Y \otimes f_*(X) \rightarrow f_*(f^*Y\otimes X)
\end{equation}
defined in \cite[Appendix~A]{hall-GAGA} is an isomorphism
\item The object $C := f^! \sO_{\sD}$ is invertible, and
\item The canonical morphism 
\begin{equation}\label{eq:piso}
\piso: f^*Y \otimes f^!\sO_\sD \rightarrow f^!Y
\end{equation}
defined in \cite[(5.5)]{FHM} is an isomorphism.
\end{itemize}
\end{situation}

\begin{example}

Let $p:\cC \rightarrow T$ be a family of prestable curves on a quasi-separated Noetherian scheme $T$, in the sense of \cite[Tag~0E6T]{tag}. Let $\sC = \Dqc(\cC_{\et})$, $\sD = \Dqc(T_{\et})$, $f_* = \R p_*$, and $f^*=\L p^*$. Then $f_*$ has a right adjoint $f^!$ and $f^! \OO_T$ is equal to the relative dualizing sheaf $\omega_{\cC/T}[1]$. These data are an example of Situation \ref{situation}.
We will extend this example to families $\cC \to T$ of twisted curves in Example \ref{ex:situation}.

\end{example}

\begin{lemma}\label{lem:coev}
For every $X \in \sC$, the $(\otimes, \Hom)$-unit
\begin{equation}\label{eq:coev}
\eta_X: X \rightarrow \Hom(C, X\otimes C)
\end{equation} is an isomorphism.
\end{lemma}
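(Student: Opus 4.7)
The plan is to exploit the assumption that $C := f^!\OO_\sD$ is invertible. Recall that invertibility means there exists an object $C^{-1}$ together with an isomorphism $C \otimes C^{-1} \cong \OO_\sC$. I would first observe that the morphism $\eta_X$ displayed in \eqref{eq:coev} is, by its description as the $(\otimes, \Hom)$-unit, exactly the unit of the adjunction $(-\otimes C,\, \Hom(C,-))$. So it suffices to show that this adjunction is an adjoint equivalence.

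Next, I would verify that tensoring with an invertible object is an auto-equivalence of $\sC$. The associator and unitor furnish natural isomorphisms
\[
(X \otimes C) \otimes C^{-1} \cong X \otimes (C \otimes C^{-1}) \cong X \otimes \OO_\sC \cong X,
\]
and symmetrically for the composition in the other order, so $-\otimes C$ is an equivalence with quasi-inverse $-\otimes C^{-1}$. Because a right adjoint to an equivalence is again a quasi-inverse, and because the unit and counit of an adjoint equivalence are automatically natural isomorphisms, it follows that $\Hom(C, -) \simeq -\otimes C^{-1}$ canonically and that the unit $\eta_X$ is an isomorphism for every $X$.

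I do not expect a serious obstacle. Granting invertibility of $C$, the statement is a formal consequence of the calculus of adjoints together with the coherence theorem for closed symmetric monoidal categories invoked at the start of Section~\ref{sec:csmcats}, which lets us manipulate the associator and unitor without tracking them explicitly. The only point requiring mild care is the identification of the morphism in \eqref{eq:coev} with the abstract adjunction unit, but this is built into the definition of internal hom.
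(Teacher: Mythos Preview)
Your argument is correct and is, if anything, more streamlined than the paper's. You use the purely categorical fact that if a left adjoint is an equivalence then the adjunction is an adjoint equivalence, hence its unit is invertible; since $-\otimes C$ is an equivalence when $C$ is invertible, the unit $\eta_X$ is an isomorphism. The paper instead proceeds in two stages: first it invokes that an invertible object is dualizable with invertible coevaluation, which gives the case $X=\OO_\sC$; then for general $X$ it writes down a commuting square relating $\eta_X$ to $\eta_{\OO_\sC}\otimes 1_X$ via the comparison map $\nu:\Hom(C,C)\otimes X \to \Hom(C,C\otimes X)$, which is an isomorphism because $C$ is dualizable. Your approach is shorter and avoids citing the specific duality machinery from \cite{may} and \cite{LMS}; the paper's approach has the minor advantage of producing an explicit commuting square and tying the statement to standard references on dualizability, but nothing later in the paper seems to rely on that square.
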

\begin{proof}
Since $C$ is invertible, it follows from \cite[Lem~2.9]{may} that $C$ is dualizable and that the coevaluation map defined there is an isomorphism. It follows from the definition of the coevaluation map that the unit \eqref{eq:coev} is an isomorphism when $X=\OO_{\sC}$. For general $X$, there is a commuting square
\[
\begin{tikzcd}
\OO_{\sC} \otimes X \arrow[r, "\eta_{\OO_{\sC}}\otimes 1_X"] \arrow[d, equal]&[3em] \Hom(C, \sO_{\sC}\otimes C) \otimes X\arrow[r, equal] &\Hom(C, C) \otimes X \arrow[d, "\nu", "\sim"'] \\
X \arrow[r, "\eta_X"] &\Hom(C, X \otimes C) \arrow[r, equal] & \Hom(C, C\otimes X)
\end{tikzcd}
\]
where the map labeled $\nu$ (defined in \cite[120]{LMS}) is an isomorphism since $C$ is dualizable (see \cite[Prop~III.1.3(ii)]{LMS}). This implies that $\eta_X$ is an isomorphism. The commutativity
\note{$H = \Hom$ uncomment commutative diagram below
composition of right vertical arrows is definitily $\nu$, including the messy middle arrow; square with equalities commutes by coherence ( or check it); for the composition of the bottom arrows, pull the commutativity of $X$ and $C$ out and then use a triangle/zig-zag identity .}
of the square follows immediately from the definition of $\nu$ and the functoriality of $\eta$.\note{$\nu$ here is defined to be the composition $\Hom(C, C) \otimes X \rightarrow \Hom(C, \Hom(C, C)\otimes X \otimes C) \rightarrow \Hom(C, C\otimes X)$ where the first arrow is $\eta$ and the second is evaluation (counit). This is tricky because $\eta_\OO$ really maps to $\Hom(C, C\otimes \OO)$.}
\end{proof}

Following \cite[Def~5.6]{FHM}, we define twisted functors $f^!_C(X) := \Hom(C, f^!(X))$ and $f_!^C(X):=f_!(X\otimes C)$.\note{a priori, in the desired application, $\cC$ and $\cD$ are $\Dqc$ categories, so $\Hom$ is $\Hom_{qc}$ which might not agree with $\Hom$ in general---but it does if $C$ is perfect. so i can compute the twisted functor using usual hom.} We have an isomorphism $f^*Y \rightarrow f^!_C(Y)$ for $Y \in \sC$ equal to the composition
\begin{equation}\label{eq:sheaves1}
f^*(Y) \xrightarrow{\text{\eqref{eq:coev}}} \Hom(C,  f^*Y\otimes C) \xrightarrow{\piso} \Hom(C, f^!Y)
\end{equation}
where \eqref{eq:coev} is an isomorphism by Lemma \ref{lem:coev} and $\piso$ is an isomorphism by assumption.
\note{maybe $\eta$ is an isomorphism in some generality for formal reasons, see 0E47(3)---something to check here though. NO I think that if $\eta$ is an isomorphism for ALL $X$, then $C$ is invertible! Because by the commuting square in Lemma 2.1.2 the map $\nu$ is an isomorphism for ALL $X$, which I think (because of \cite[Example~5.5]{FHM}) implies that $C$ is dualizable. Also if $\eta$ is an isomorphism then $coev$ is an isomorphism, and since we already know $C$ is dualizable this implies $C$ is invertible. I think the best I could get is that $\eta_X$ is an isomorphism for dualizable $X$. }
 Since $(f^C_!, f^!_C)$ is an adjoint pair, we've realized $f^C_!$ as a left adjoint to pullback. Moreover, there is a projection isomorphism
\[
\pi_C: Y \otimes f_!^C(X) \xrightarrow{\sim} f_!^C(f^*(Y) \otimes X)
\]
 defined by replacing $X$ with $X \otimes C$ in $\pi$.
 
 In this setting, we prove commutativity of some diagrams which will be useful to us.
\begin{lemma}\label{lem:projection-counit}
There is a commuting diagram
\begin{equation}\label{eq:situation1}
\begin{tikzcd}
X \otimes f^C_!f^*(Y)  \arrow[r, "\text{\eqref{eq:sheaves1}}", "\sim"'] \arrow[d, "\pi_C", "\sim"']& X \otimes f_!^C f^!_C(Y) \arrow[r, "\epsilon"] & X \otimes Y\\
f_!^C(f^*(X)\otimes f^*(Y))& \arrow[l, "\sim"]f^C_!f^*(X\otimes Y) \arrow[r, "\text{\eqref{eq:sheaves1}}", "\sim"'] & f_!^C f^!_C(X\otimes Y) \arrow[u, "\epsilon"]
\end{tikzcd}
\end{equation}
where the arrows labeled $\epsilon$ are counits for the $(f^C_!, f_C^!)$ adjunction. 
\end{lemma}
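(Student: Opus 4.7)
The plan is to reduce both routes around \eqref{eq:situation1} to expressions involving only the untwisted counit $\epsilon^!: f_! f^! \to 1_\sD$ and the projection isomorphism $\pi$, after which commutativity becomes a standard associativity compatibility of $\pi$ with the monoidal structure. The key input is the definition of $\piso$ from \cite[(5.5)]{FHM}: by construction $\piso$ is the adjoint under $(f_!, f^!)$ of $f_!(f^*Y \otimes C) \xrightarrow{\pi^{-1}} Y \otimes f_! C \xrightarrow{1 \otimes \epsilon^!} Y$, which gives the identity
\begin{equation*}
\epsilon^! \circ f_!(\piso) \;=\; (1_Y \otimes \epsilon^!) \circ \pi^{-1} \quad \text{on } f_!(f^*Y \otimes C).
\end{equation*}
Separately, since $C$ is invertible, the twisted counit $\epsilon: f^C_! f^!_C Y \to Y$ unwinds to $f_!(\Hom(C, f^! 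Y) \otimes C) \xrightarrow{f_!(\mathrm{ev})} f_! f^! Y \xrightarrow{\epsilon^!} Y$ with $\mathrm{ev}$ the evaluation isomorphism.

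First I would simplify the top row. Applying $f^C_!$ to \eqref{eq:sheaves1} and composing with $\epsilon$, the triangle identity $\mathrm{ev} \circ (\eta \otimes 1_C) = 1$ together with naturality of $\mathrm{ev}$ in its first variable with respect to $\piso$ collapses the top to $X \otimes f_!(f^*Y \otimes C) \xrightarrow{1 \otimes f_!(\piso)} X \otimes f_! f^! Y \xrightarrow{1 \otimes \epsilon^!} X \otimes Y$; applying the displayed identity this becomes
\begin{equation*}
X \otimes f_!(f^*Y \otimes C) \xrightarrow{1_X \otimes \pi^{-1}} X \otimes Y \otimes f_! C \xrightarrow{1_X \otimes 1_Y \otimes \epsilon^!} X \otimes Y.
\end{equation*}
The same unwinding applied to the bottom row, plus one more application of the displayed identity, yields
\begin{equation*}
X \otimes f_!(f^*Y \otimes C) \xrightarrow{\pi_C} f_!(f^*X \otimes f^*Y \otimes C) \xrightarrow{f_!(m \otimes 1_C)} f_!(f^*(X \otimes Y) \otimes C) \xrightarrow{\pi^{-1}} (X \otimes Y) \otimes f_! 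C \xrightarrow{1 \otimes \epsilon^!} X \otimes Y,
\end{equation*}
where $m: f^*X \otimes f^*Y \xrightarrow{\sim} f^*(X \otimes Y)$ is the strong-monoidal isomorphism of \eqref{eq:strong-monoidal}.

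After cancelling the common final arrow $1 \otimes \epsilon^!$, the commutativity of \eqref{eq:situation1} reduces to the identity
\begin{equation*}
\pi_{X \otimes Y,\, C} \;=\; f_!(m \otimes 1_C) \circ \pi_{X,\, f^*Y \otimes C} \circ (1_X \otimes \pi_{Y,\, C})
\end{equation*}
between maps $X \otimes Y \otimes f_! C \to f_!(f^*(X \otimes Y) \otimes C)$. This is the standard coherence between the projection formula and the monoidal associator and follows directly from the construction of $\pi$ in \cite[Appendix~A]{hall-GAGA}, using strong monoidality of $f^*$ and the triangle identities for $(f^*, f_*)$. I expect the main obstacle to be bookkeeping in the first two paragraphs --- carefully propagating $\piso$ past the evaluation and coevaluation of the $(- \otimes C, \Hom(C, -))$ adjunction --- while the genuine content of the lemma is that $\piso$ is built from $\pi$ and therefore inherits its associativity.
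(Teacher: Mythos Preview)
Your proposal is correct and follows essentially the same approach as the paper: both unwind $\epsilon \circ f^C_!(\text{\eqref{eq:sheaves1}})$ via the triangle identity and the definition of $\piso$ to reduce the diagram to a pure statement about the projection isomorphism $\pi$. The only difference is that for the final coherence identity the paper cites \cite[Lem~3.4.7(iv)]{lipman} rather than deriving it from \cite[Appendix~A]{hall-GAGA}.
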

\begin{proof}
 For any $Z \in \sD$, the composition
\[
f^C_!f^*(Z) \xrightarrow{\text{\eqref{eq:sheaves1}}} f^C_!f^!_C(Z) \xrightarrow{\epsilon^{f^C_!}} Z
\]
is equal to
\[
f^C_!f^*(Z)=f_*(f^*(Z) \otimes C) \xleftarrow[\sim]{\pi} Z \otimes f_*(C) \xrightarrow{\epsilon^{f_!}_{\OO_{\sD}}} Z
\]
To see this, expand the $(f_!^C, f^C_!)$-counit in terms of the $(\otimes, \Hom)$-counit and the $(f_!, f^!)$-counit; commute the morphism $\piso$ in the definition of \eqref{eq:sheaves1} with the $(\otimes, \Hom)$-counit; and finally use the triangle identity $1_{f^*Z\otimes C} = \epsilon^{\otimes}_{f^*Z\otimes C}\circ (\eta^{\otimes}_{f^*Z}\otimes 1_C)$, where $\eta$ and $\epsilon$ here denote the unit and counit of the $(\otimes, \Hom)$ adjunction. Now \eqref{eq:situation1} is equivalent to the diagram
\[
\begin{tikzcd}
X \otimes f_*(f^*(Y) \otimes C) \arrow[d, "\pi"] & \arrow[l, "\pi"] X \otimes Y \otimes f_*(C) \arrow[d, "\pi"] \arrow[r, "\epsilon"] & X\otimes Y\\
f_*(f^*(X) \otimes f^*(Y) \otimes C) \arrow[r, equals] & f_*(f^*(X\otimes Y) \otimes C)
\end{tikzcd}
\]
whose commutativity is proved in \cite[Lem~3.4.7(iv)]{lipman}.
\end{proof}

\begin{lemma}\label{lem:triangle}
Suppose we are in Situation \ref{situation}. Then there is an isomorphism $\gamma: f_*\Hom(f^*X, f^!_C(Y)) \rightarrow \Hom(f^C_!(f^*X), Y)$ making the following diagram commute.
\begin{equation}\label{eq:triangle1}
\begin{tikzcd}
\Hom(f^C_!f^!_C(X), Y) & \arrow[l, "\Hom{(\epsilon, 1_Y)}"'] \Hom(X, Y) \arrow[d, "\text{\eqref{eq:sheafy-pullback}}"] \\
& \arrow[ul, "\gamma"] f_*\Hom(f^*X, f^*Y)
\end{tikzcd}
\end{equation}
Here, $\epsilon$ is the counit for the $(f_!^C, f^!_C)$-adjunction. 
\end{lemma}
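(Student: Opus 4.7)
The plan is to define $\gamma$ by an explicit chain of canonical isomorphisms and then verify the triangle by a Yoneda-style comparison, leaning on Lemma \ref{lem:projection-counit} to identify the counit $\epsilon^{f^C_!}$.

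First, I would construct $\gamma$ as the composition
\begin{align*}
f_*\Hom(f^*X, f^!_C Y) &= f_*\Hom(f^*X, \Hom(C, f^!Y)) \\
&\xrightarrow{\sim} f_*\Hom(f^*X \otimes C, f^!Y) \\
&\xrightarrow{\sim} \Hom(f_!(f^*X \otimes C), Y) = \Hom(f^C_!(f^*X), Y),
\end{align*}
where the first isomorphism is tensor-hom adjunction in $\sC$ and the second is the internal form of the $(f_!, f^!)$-adjunction. The latter I would build by Yoneda: for any $A \in \sC$ and $B, Z \in \sD$ one has a natural chain
\begin{align*}
\sD(Z, f_*\Hom(A, f^!B)) &\cong \sC(f^*Z \otimes A, f^!B) \\
&\cong \sD(f_!(f^*Z \otimes A), B) \\
&\overset{\pi}{\cong} \sD(Z \otimes f_!A, B) \\
&\cong \sD(Z, \Hom(f_!A, B))
\end{align*}
combining the $(f^*, f_*)$- and $(f_!, f^!)$-adjunctions, tensor-hom, and the projection formula \eqref{eq:projection}, yielding a natural isomorphism $f_*\Hom(A, f^!B) \xrightarrow{\sim} \Hom(f_!A, B)$ by Yoneda. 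Since each constituent is an isomorphism, $\gamma$ is too.

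To check the triangle, I would apply $\sD(Z, -)$ for arbitrary $Z \in \sD$ and follow a test element $g : Z \otimes X \to Y$ of $\sD(Z, \Hom(X, Y))$ along both routes. The top arrow $\Hom(\epsilon^{f^C_!}, 1_Y)$ simply precomposes $g$ with $1_Z \otimes \epsilon^{f^C_!}$. The other route---\eqref{eq:sheafy-pullback} (which is induced by $\eta^{f^*}$), the identification $f^*Y \cong f^!_C Y$ of \eqref{eq:sheaves1} (which combines $\eta^\otimes$ of Lemma \ref{lem:coev} with $\piso$), and $\gamma$ as constructed above---unwinds into a composite involving $\eta^{f^*}$, $\piso$, the $(\otimes, \Hom)$-counit, the projection formula $\pi$, and the $(f_!, f^!)$-counit $\epsilon^{f_!}_{\OO_{\sD}}$. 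The comparison thus reduces to showing that, after identifying $f^!_C X \cong f^*X$ via \eqref{eq:sheaves1}, the counit $\epsilon^{f^C_!}: f^C_! f^*X \to X$ is given by
\begin{equation*}
f^C_! f^*X = f_*(f^*X \otimes C) \xleftarrow[\sim]{\pi} X \otimes f_*C \xrightarrow{1_X \otimes \epsilon^{f_!}_{\OO_{\sD}}} X,
\end{equation*}
which is exactly Lemma \ref{lem:projection-counit} applied with $Y = \OO_{\sD}$ (together with a triangle identity collapsing the $(\otimes, \Hom)$-unit on $X$). Substituting this factorization into the long composite on the bottom route, the $\piso$ and $\pi$ factors cancel against the definition of $\gamma$, leaving precisely precomposition with $1_Z \otimes \epsilon^{f^C_!}$.

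The main obstacle is not conceptual---$\gamma$ is forced by the adjunctions---but rather the careful bookkeeping of three interacting adjunctions (tensor-hom, $(f^*, f_*)$, and $(f_!, f^!)$) and their units and counits, together with the coherence morphisms $\pi$ and $\piso$. Lemma \ref{lem:projection-counit} absorbs most of this work by expressing $\epsilon^{f^C_!}$ in terms of more primitive data, after which the triangle in \eqref{eq:triangle1} reduces to a routine sequence of triangle identities and the functoriality of the units and counits involved.
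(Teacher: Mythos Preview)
Your proposal is correct and follows essentially the same route as the paper: a Yoneda argument reducing the triangle to a comparison of morphisms in $\sD$, with $\gamma$ built from the chain of adjunctions $(f^*,f_*)$, $(\otimes,\Hom)$, $(f_!,f^!)$ together with the projection formula, and with the key identification of $\epsilon^{f^C_!}$ in terms of $\pi$ and $\epsilon^{f_!}_{\OO_{\sD}}$ coming from (the proof of) Lemma~\ref{lem:projection-counit}. The only organizational difference is that you define $\gamma$ explicitly at the outset and then verify the triangle, whereas the paper lets $\gamma$ emerge from the commutativity requirement and records the explicit description at the end; the underlying computation is the same.
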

\begin{proof}
The definition of $\gamma$ will come out in the course of the proof: it will be ``conjugate'' to $\pi$ via various adjoints (see also \cite[(4.1)]{FHM} and \cite[Tag~0A9Q]{tag}). For future reference, we summarize it in the final paragraph of the proof. To simplify notation, when there is no risk of confusion, if $F$ is a functor between categories and $\alpha$ is a morphism of the source category, we will notate $F(\alpha)$ by $\alpha$. For example, we may use $\epsilon$ as the label for the horizontal arrow in \eqref{eq:triangle1}.

To show the commutativity of \eqref{eq:triangle1} we use the Yoneda embedding: for an arbitrary $T \in \sD$, it suffices to show the commutativity of 
\[
\begin{tikzcd}
{}^\gray{3}\sD(T, \Hom(f_!^Cf^!_C(X), Y) & \arrow[l, "\epsilon^{f^C_!}"']  {}^\gray{1}\sD(T, \Hom(X, Y)) \arrow[d, "\text{\eqref{eq:sheafy-pullback}}"] \\
&{}^\gray{2}\sD(T, f_*\Hom(f^*X, f^*Y)) \arrow[ul, "\gamma"].
\end{tikzcd}
\]
We do this by demonstrating that it is equivalent to the commutativity of \[
\begin{tikzcd}
{}^\gray{6}\sD(T\otimes f^C_!f^!_C(X), Y) & \arrow[l, "\epsilon^{f^C_!}"']  {}^\gray{4}\sD(T\otimes X, Y) \arrow[d, "\epsilon^{f^C_!}"] \\
&{}^\gray{5}\sD(f^C_!f^!_C(T\otimes X),Y) \arrow[ul, "\tilde\gamma"].
\end{tikzcd}
\]
where $\tilde\gamma$ is defined to equal the isomorphisms in \eqref{eq:situation1}. This second diagram commutes by Lemma \ref{lem:projection-counit}.

There are isomorphisms $\gray{1}=\gray{4}$ and $\gray{3}=\gray{6}$ given by $(\otimes, \Hom)$ adjunction, and an isomorphism
$\gray{2}=\gray{5}$ given by
\begin{equation}
\begin{aligned}\label{eq:long1}
{}^\gray{2}\sD(T, &f_*\Hom(f^*X, f^*Y)) = \sC(f^*T, \Hom(f^*X, f^*Y)) = \sC(f^*T \otimes f^*X, f^*Y)\\
&= {}^{\gray{7}}\sC(f^*(T\otimes X), f^*Y) = \sC(f^!_C(T\otimes X), f^!_CY)= {}^\gray{5}\sD(f^C_!(f^!_C(T\otimes X)), Y)
\end{aligned}
\end{equation}
where the equalities are $(f^*, f_*)$-adjunction, $(\otimes, \Hom)$-adjunction, the isomorphism \eqref{eq:strong-monoidal}, the isomorphism \eqref{eq:sheaves1}, and $(f^C_!, f^!_C)$-adjunction.
The square with corners $\gray{1}, \gray{3}, \gray{4},$ and $\gray{6}$ commutes by functoriality of the $(\otimes, \Hom)$ adjunction. We take the commutativity of the square with corners $\gray{2}, \gray{3}, \gray{5},$ and $\gray{6}$ as the definition of $\gamma$. The final square commutes as follows. By the definition of the vertical map in \eqref{eq:triangle1} and adjunction, the composition $\gray{1} \rightarrow \gray{2} \rightarrow \gray{7}$ is equal to 
\begin{align*}
\sD(T, \Hom(X,Y)) &\xrightarrow{f^*}\sC(f^*T, f^*\Hom(X,Y)) \xrightarrow{\otimes f^*X} \sC(f^*T\otimes f^*X, f^*\Hom(X,Y)\otimes f^*X)\\
&\xlongequal{\text{\eqref{eq:strong-monoidal}}} \sC(f^*(T\otimes X), f^*(\Hom(X,Y)\otimes X)) \xrightarrow{f^*\epsilon^{\otimes}} \sC(f^*(T\otimes X), f^*Y)
\end{align*}
By functoriality of \eqref{eq:strong-monoidal}, this composition is equivalent to
\[
{}^\gray{1}\sD(T, \Hom(X,Y)) \xrightarrow{\otimes X}\sD(T\otimes X, \Hom(X,Y)\otimes X) \xrightarrow{\epsilon^\otimes} {}^\gray{4}\sD(T\otimes X, Y) \xrightarrow{f^*}{}^\gray{7}\sC(f^*(T\otimes X), f^*Y).
\]
The first two arrows are precisely the $(\otimes, \Hom)$ adjunction $\gray{1} = \gray{4}.$ Finally, the composition
\[
{}^\gray{4}\sD(T\otimes X, Y) \xrightarrow{f^*}{}^{\gray{7}}\sC(f^*(T\otimes X), f^*Y) \xlongequal{\text{\eqref{eq:sheaves1}}} \sC(f^!_C(T\otimes X), f^!_CY)= {}^\gray{5}\sD(f_!(f^!_C(T\otimes X)), Y),
\]
where the arrow comes from the previous formula and the two equalities come from \eqref{eq:long1}, is induced by the counit $\epsilon^{f^C_!}$ as desired.

To conclude, we summarize the definition of $\gamma$ as promised. After cancelling assorted isomorphisms with their inverses, we see that it is given under the Yoneda embedding by
\begin{align*}
\sD(T, &f_*\Hom(f^*X, f^*Y)) = \sC(f^*T, \Hom(f^*X, f^*Y)) = \sC(f^*T \otimes f^*X, f^*Y) \\
&= \sC(f^*T \otimes f^*X, f^!_CY) = \sD(f_!^C(f^*T \otimes f^*X), Y) \xrightarrow{\sim} \sD(T \otimes f_!^Cf^*X, Y)\\
&= \sD(T ,\Hom( f_!^Cf^*X, Y)) = \sD(T ,\Hom( f_!^Cf^!_CX, Y)).
\end{align*}
Where the equalities are $(f^*, f_*)$ adjunction, $(\otimes, \Hom)$ adjunction, the isomorphism \eqref{eq:sheaves1}, $(f^C_!, f^!_C)$ adjunction, the projection formula, $(\otimes, \Hom)$ adjunction, and finally the isomorphism \eqref{eq:sheaves1} again. In particular, if we follow $\gamma$ with the inverse of the last equality, we have defined a natural isomorphism
\begin{equation}\label{eq:newgamma}
f_*\Hom(Z, f^*Y) \xrightarrow{\sim} \Hom( f_!^CZ, Y)
\end{equation}
that is functorial in both arguments. (The content of this statement is that to define \eqref{eq:newgamma} it is not necessary for $Z$ to be of the form $f^*X$.) 
\end{proof}

\subsection{A modification of the ideal situation}\label{sec:modification}
When $\cC \rightarrow \cX$ is a family of twisted curves on an algebraic stack $\cX$, we would like to apply Situation \ref{situation} by setting $\sD=\Dqc(\cX_{\liset})$ and $\sC = \Dqc(\cC_{\liset})$. Unfortunately we do not know a proof that the right adjoint $f^!$ in Situation \ref{situation} exists in the generality we would like (see Lemma \ref{lem:f!-exists} and the discussion following it). Instead, we work in the following weaker situation, replacing $f^!$ with a dualizing complex and trace map.

\begin{situation}\label{weaker-situation}
We are given closed symmetric monoidal categories $\sC$, $\sD$, a functor $f_*:\sC \rightarrow \sD$ with a right adjoint $f^*$, and an invertible object $C \in \sC$ with a trace map $tr: f_*C \rightarrow \OO_{\sD}$. These data satisfy
\begin{itemize}
\item $f^*$ is strong symmetric monoidal
\item The canonical morphism $\pi: Y\otimes f_*(X) \rightarrow f_*(f^*Y \otimes X)$ is an isomorphism
\end{itemize}
\end{situation}

In this situation we define an adjunction-like map $a: \sC(X, f^*Y) \rightarrow \sD(f_*(X\otimes C), Y)$ as follows (see also \cite[Sec~A.2.1]{CJW}).\footnote{Informally we think of $a$ as realizing $f^C_!=f_*(\cdot \otimes C)$ as a left adjoint to $f^*=f^!_C$.} Given $\phi' \in \sC(X, f^*Y)$, define $a(\phi')$ to be the composition
\begin{equation}\label{eq:adjunction-like}
f_*(X \otimes 1_C) \xrightarrow{f_*(\phi' \otimes C)} f_*(f^*Y \otimes C) \xleftarrow[\sim]{\pi} Y \otimes f_*C \xrightarrow{id \otimes tr_C} Y.
\end{equation}
Observe that $a$ is functorial in both arguments, by which we mean the following:
\begin{enumerate}
\item Given $X' \in \sC$ and $\psi \in \sC(X', X)$, we have $a(\phi' \circ \psi) = a(\phi') \circ f_*(\psi \otimes 1_C)$.
\item Given $Y' \in \sD$ and $\psi \in \sD(Y, Y')$, we have $a(f^*\psi \circ \phi') = \psi \circ a(\phi').$
\end{enumerate}
The next example explains why we call $a$ ``adjunction-like.''

\begin{example}\label{ex:ideal}
Suppose we are in Situation \ref{situation}, with an adjoint pair $(f^*, f_*)$ and object $C = f^!\sO_{\sD} \in \sC$.  Then we have the data of Situation \ref{weaker-situation}: we can define $tr_C: f_*C \rightarrow \sO_{\sD}$ to be the counit of the $(f_!, f^!)$ adjunction. Under the isomorphism \eqref{eq:sheaves1}, the adjunction-like map $a: \sC(X, f^*Y) \rightarrow \sD(f_*(X\otimes C), Y)$ is identified with the adjunction $\sC(X, f^!_CY) \simeq \sD(f^C_!X, Y)$.  To see this, let $\phi' \in \sC(X, f^*Y)$. The $(f^C_!, f^!_C)$-adjoint of $\text{\eqref{eq:sheaves1}}\circ\phi'$ is equal to the $(f_!, f^!)$-adjoint of
\[
X \otimes C \xrightarrow{\phi' \otimes 1_C} f^*Y \otimes C \xrightarrow{\piso} f^!Y.
\]
Since $f_*=f_!$, said adjoint is equal to the composition
\[
f_*(X \otimes C) \xrightarrow{f_*(\phi' \otimes 1_C)} f_*(f^*Y \otimes C) \xrightarrow{\hat\piso} Y.
\]
where $\hat \piso$ is the $(f_!, f^!)$-adjoint of $\piso$. By the definition of $\piso$ in \cite[(5.5)]{FHM}, this last composition is equal to $a(\phi')$.
\end{example}

\subsection{Basechange}
We introduce a setting where the adjunction-like morphism $a$ is compatible with pullback.

\begin{situation}\label{situation2}
We have a diagram of closed symmetric monoidal categories as in \eqref{eq:basechange1}
such that the left adjoints $f^*, g^*, m^*, m'^*$ are strong symmetric monoidal. We are given objects $S \in \sS, C \in \sC$ and morphisms $tr_S: g_*S \rightarrow \OO_{\sT}, tr_C: f_*C \rightarrow \OO_{\sD}$ such that the data for each column of \eqref{eq:basechange1} are in Situation \ref{weaker-situation}, and these data are compatible as follows.
\begin{itemize}
\item The basechange map \eqref{eq:basechange} is an isomorphism.
\item We are given an isomorphism $\alpha: m'^*C \rightarrow S$ making this diagram commute:
\begin{equation}\label{eq:tr-functoriality}
\begin{tikzcd}
m^*f_*C \arrow[d, "\text{\eqref{eq:basechange}}"] \arrow[r, "m^*tr_C"]\arrow[d] & \OO_{\sT}\\
g_*m'^*C \arrow[r, "g_*\alpha"] & g_*S \arrow[u, "tr_S"]
\end{tikzcd}
\end{equation}
\end{itemize}
\end{situation}

In this situation, Lemma \ref{lem:technical} explains a precise sense in which $m^*a(\phi') = a(m'^*\phi')$.

\begin{lemma}\label{lem:technical}
Suppose we are in Situation \ref{situation2}. Let $\phi': X\rightarrow f^*(Y)$ be an arrow in $\sC$. Then we have $m'^*\phi': m'^*X \rightarrow m'^*f^*Y = g^*m^*Y$, and the following diagram commutes:
\[
\begin{tikzcd}
m^*f_*(X\otimes C) \arrow[r, "m^*a(\phi')"] \arrow[d, "\fa_{m^*}"', "\sim"] & m^*Y \\
g_*(m'^*X\otimes S) \arrow[ur, "a(m'^*\phi')"']
\end{tikzcd}
\]
The isomorphism $\fa$ is equal to \eqref{eq:basechange} followed by \eqref{eq:strong-monoidal} and finally $\alpha$, and in particular it is functorial in $X$. Moreover, suppose we have a diagram 
\begin{equation}\label{eq:situation3}
\begin{tikzcd}
\sC_1  \arrow[d, "f_{1*}"] & \arrow[l, "m'^*_{12}"]\sC_2  \arrow[d, "f_{2*}"]  &\arrow[l, "m'^*_{23}"] \sC_3 \arrow[d, "f_{3*}"]\\
\sD_1 & \arrow[l, "m^*_{12}"] \sD_2  &\arrow[l, "m^*_{23}"] \sD_3
\end{tikzcd}
\end{equation}
together with distinguished objects $C_i \in \sC_i$ and trace maps $tr_i: f_{i*}C_i \rightarrow \sO_{\sD_i}$ for each $i$, and isomorphisms $\alpha_{ij}: m^{\prime *}_{ij}C_j \rightarrow C_i$ for $i<j$. Suppose that with these data both squares and the outer rectangle of \eqref{eq:situation3} are in Situation \ref{situation2}, and that $\alpha_{13} = \alpha_{12} \circ m'^*_{12}(\alpha_{23}).$ Then $\fa_{m^*_{12}\circ m^*_{23}} = \fa_{m^*_{12}} \circ m'^*_{12}(\fa_{m^*_{23}}).$
\end{lemma}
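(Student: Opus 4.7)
I would prove the first assertion by expanding both $m^*a(\phi')$ and $a(m'^*\phi')$ according to the defining formula \eqref{eq:adjunction-like} and verifying commutativity stage by stage. Writing $a(\phi') = (1_Y \otimes tr_C)\circ \pi^{-1}\circ f_*(\phi'\otimes 1_C)$, and similarly for $a(m'^*\phi')$, and writing $\fa_{m^*}$ as the composite of \eqref{eq:basechange}, \eqref{eq:strong-monoidal}, and $g_*\alpha$, the triangle in the statement decomposes into three squares---one for each of the three stages of $a$---which I would check in turn.

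The first stage, $f_*(\phi'\otimes 1_C)$, commutes with $\fa_{m^*}$ by naturality of the base change map \eqref{eq:basechange} in its argument, together with naturality of the strong-monoidality isomorphism \eqref{eq:strong-monoidal}. The third stage, $1\otimes tr_C$, commutes with $\fa_{m^*}$ by the hypothesis \eqref{eq:tr-functoriality}, once $m'^*C$ has been identified with $S$ via $\alpha$ and the tensor factor $Y$ has been pulled back using \eqref{eq:strong-monoidal}. The middle stage is the main technical step: one must verify that the projection formula is compatible with base change, i.e.\ that
\[
\begin{tikzcd}[column sep=large]
m^*(Y\otimes f_*X) \arrow[r, "m^*\pi"] \arrow[d] & m^*f_*(f^*Y \otimes X) \arrow[d] \\
g_*(g^*m^*Y \otimes m'^*X) \arrow[r, "\pi"'] & g_*(m'^*(f^*Y\otimes X))
\end{tikzcd}
\]
commutes, where the verticals are assembled from \eqref{eq:basechange} and \eqref{eq:strong-monoidal}. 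Since $\pi$ is constructed in \cite[Appendix~A]{hall-GAGA} as the composite of the $(\otimes,\Hom)$-adjunction unit and a map induced by the $(f^*,f_*)$-counit, I would verify this square by a diagram chase invoking both triangles of Lemma \ref{lem:bc-unit} to transport the unit and counit across base change, together with the analogous (and formal) compatibility for the $(\otimes,\Hom)$-adjunction.

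For the transitivity statement, I would split the decomposition $\fa = g_*\alpha\circ g_*(\text{\eqref{eq:strong-monoidal}})\circ \text{\eqref{eq:basechange}}$ into its three constituents and verify horizontal composition separately for each: the base change map satisfies the expected composition identity by standard adjoint manipulations as in \cite[Prop~3.7.2(ii)]{lipman}; the strong-monoidality isomorphisms compose correctly by the definition of a strong monoidal functor applied to a composite of strong monoidal functors; and the identity for $\alpha$ is exactly the assumption $\alpha_{13}=\alpha_{12}\circ m'^*_{12}(\alpha_{23})$. Pasting these three compatibilities yields the claimed equality. The main obstacle in the overall argument is the base change compatibility of the projection formula $\pi$; everything else is a routine chase in the formal framework of Situation \ref{situation2}.
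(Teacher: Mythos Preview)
Your plan is correct and is essentially the approach the paper relies on: the paper's own proof is simply the one-line citation ``The proof of \cite[Lem~A.2.1]{CJW} works in this more general situation,'' and what you have outlined is exactly the diagram chase that lemma carries out---decompose $a$ into its three constituent maps, verify base change compatibility for each (naturality for the first, the hypothesis \eqref{eq:tr-functoriality} for the third, and the projection/base change compatibility for the middle), and handle transitivity componentwise using the assumed cocycle condition on $\alpha$.

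One small refinement: the projection formula $\pi$ as defined in \cite[Appendix~A]{hall-GAGA} is built from the $(f^*,f_*)$ unit, the strong-monoidality isomorphism, and the $(f^*,f_*)$ counit (rather than the $(\otimes,\Hom)$ unit), so your middle-square verification will use both halves of Lemma~\ref{lem:bc-unit} together with the coherence of \eqref{eq:strong-monoidal} under composition of pullbacks. This is exactly the content of \cite[Lem~3.7.3]{lipman}, which you may cite directly in place of the chase.
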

\begin{proof}
The proof of \cite[Lem~A.2.1]{CJW} works in this more general situation.
\end{proof}

\begin{lemma}\label{lem:strange}
Suppose we are in Situation \ref{situation2}, but that the left column of \eqref{eq:basechange1} is actually in Situation \ref{situation}: this means we are given a right adjoint $g^!$ for $g_*=:g_!$ with $g^!\OO_\sT=S$ and $tr_S: g_*S \rightarrow \OO_{\sT}$ equal to the counit for the $(g_!, g^!)$ adjunction.
Let $\phi': X\rightarrow f^*(Y)$ be an arrow in $\sC$ and set $\phi:=a(\phi')$. Let $I$ be an object of $\sT$. Then the following diagram commutes.
\begin{equation}\label{eq:strange}
\begin{tikzcd}
\Hom(m^*f_*(X\otimes C), I) & \Hom(m^*Y, I) \arrow[l, "m^*\phi"] \arrow[d]\\
g_*\Hom(m'^*X, g^*I) \arrow[u, equal] &\arrow[l, "m'^*\phi'"] g_*\Hom(m'^*f^*Y, g^*I)
\end{tikzcd}
\end{equation}
In this diagram, the equality is comprised of \eqref{eq:basechange}, $\alpha$, and \eqref{eq:newgamma}. The right vertical arrow is \eqref{eq:sheafy-pullback} followed by \eqref{eq:basechange}.
\end{lemma}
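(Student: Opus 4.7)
The plan is to reduce the commutativity of \eqref{eq:strange} to two tools already at hand: Lemma \ref{lem:technical}, which controls the interaction of the adjunction-like map $a$ with base change along $(m^*, m'^*)$; and Lemma \ref{lem:triangle} applied in the left column --- specifically the natural isomorphism \eqref{eq:newgamma} --- which is available because the left column now sits in Situation \ref{situation}.

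First I would make the left-hand equality of \eqref{eq:strange} explicit: it is the composite
\[
\Hom(m^*f_*(X\otimes C), I) \xrightarrow{\sim} \Hom(g_*(m'^*X \otimes S), I) \xleftarrow{\sim} g_*\Hom(m'^*X, g^*I),
\]
where the first arrow is induced by the inverse of the isomorphism $\fa_{m^*}: m^*f_*(X\otimes C) \xrightarrow{\sim} g_*(m'^*X \otimes S)$ of Lemma \ref{lem:technical} (which absorbs \eqref{eq:basechange}, \eqref{eq:strong-monoidal}, and $\alpha$), and the second is \eqref{eq:newgamma} applied in the left column with $Z = m'^*X$ and target $I$. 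Lemma \ref{lem:technical} also supplies $m^*\phi = a(m'^*\phi') \circ \fa_{m^*}$, so after combining with this equality the top path of \eqref{eq:strange} simplifies to $\text{\eqref{eq:newgamma}}^{-1} \circ \Hom(a(m'^*\phi'), 1_I)$. The lemma therefore reduces to showing that applying \eqref{eq:newgamma} to the bottom path yields $\Hom(a(m'^*\phi'), 1_I)$.

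I would verify this by assembling two facts. By naturality of \eqref{eq:newgamma} in its first argument,
\[
\text{\eqref{eq:newgamma}} \circ g_*\Hom(m'^*\phi', g^*I) = \Hom(g^S_!(m'^*\phi'), 1_I) \circ \text{\eqref{eq:newgamma}},
\]
so it remains to evaluate $\text{\eqref{eq:newgamma}} \circ \text{\eqref{eq:sheafy-pullback}}$, modulo the identification $g^*m^*Y \simeq m'^*f^*Y$ of Situation \ref{basic-situation-basechange}. This is precisely the content of the triangle \eqref{eq:triangle1} of Lemma \ref{lem:triangle} applied with $(X, Y) \mapsto (m^*Y, I)$ in the left column: it gives $\text{\eqref{eq:newgamma}} \circ \text{\eqref{eq:sheafy-pullback}} = \Hom(\epsilon, 1_I)$, where $\epsilon: g^S_!(g^*m^*Y) \to m^*Y$ is the $(g^S_!, g^!_S)$-counit (modulo \eqref{eq:sheaves1}). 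Assembling these, the bottom path followed by \eqref{eq:newgamma} is precomposition by $\epsilon \circ g^S_!(m'^*\phi')$, which by Example \ref{ex:ideal} is exactly $a(m'^*\phi')$.

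The main obstacle will be bookkeeping: tracking the natural isomorphisms $m'^*f^* \simeq g^*m^*$, $g^* \simeq g^!_S$ via \eqref{eq:sheaves1}, and the $(g^S_!, g^!_S)$-adjunction, and verifying that they compose compatibly so that the abstract identifications really correspond to the concrete arrows in \eqref{eq:strange}. The conceptual content is straightforward once one notices that Example \ref{ex:ideal} identifies $a$ in the left column with the genuine $(g^S_!, g^!_S)$-adjunction, and that \eqref{eq:newgamma} is simply its internalization via $g_*$.
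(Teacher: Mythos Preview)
Your proposal is correct and follows essentially the same route as the paper's proof: both reduce to Lemma~\ref{lem:technical} for the base-change compatibility, naturality of \eqref{eq:newgamma}, and Lemma~\ref{lem:triangle} for the identification $\text{\eqref{eq:newgamma}}\circ\text{\eqref{eq:sheafy-pullback}}=\Hom(\epsilon,1_I)$. The only cosmetic difference is that where you invoke Example~\ref{ex:ideal} to identify $\epsilon\circ g^S_!(m'^*\phi')$ with $a(m'^*\phi')$, the paper instead cites Lemma~\ref{lem:projection-counit} directly (the same projection-formula computation underlying Example~\ref{ex:ideal}) and organizes the argument as the pasting of two explicit diagrams \eqref{eq:strange2} and \eqref{eq:strange3}.
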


\begin{proof}
The desired commuting diagram is derived from the composition of two. On the left, we have
\begin{equation}\label{eq:strange2}
\begin{tikzcd}
\Hom(g_!^S(m'^*X), I) & \arrow[l]\Hom(g_!^S(m'^*f^*Y), I) \arrow[r, equal, "\text{\eqref{eq:basechange}}"] & \Hom(g_!^Sg^*(m^*Y), I)\\
g_*\Hom(m'^*X, g^*I) \arrow[u, "\text{\eqref{eq:newgamma}}"] & g_*\Hom(m'^*f^*Y, g^*I) \arrow[l] \arrow[u, "\text{\eqref{eq:newgamma}}"] \arrow[r, equal, "\text{\eqref{eq:basechange}}"] & g_*\Hom(g^*m^*Y, g^*I) \arrow[u, "\text{\eqref{eq:newgamma}}"]
\end{tikzcd}
\end{equation}
Here, the arrows pointing left are induced by $m'^*\phi'$. On the right, we have
\begin{equation}\label{eq:strange3}
\begin{tikzcd}
\Hom(g_!^Sg^*(m^*Y), I) \arrow[r, "\pi_S","\sim"'] \arrow[d, equal, "\text{\eqref{eq:sheaves1}}"] & \Hom(m^*Y\otimes g_!^S(\OO_\sT), I) & \Hom(m^*Y, I) \arrow[l, "tr_S"']\arrow[dll, "\epsilon"] \arrow[ddll, "\text{\eqref{eq:sheafy-pullback}}"]\\
\Hom(g_!^Sg^!_S(m^*Y), I) \\
g_*\Hom(g^*m^*Y, g^*I) \arrow[u, "\gamma"]
\end{tikzcd}
\end{equation}
The commutativity of the top triangle is Lemma \ref{lem:projection-counit} with $X=m^*Y$ and $Y=\OO_{\sT}$, and the commutativity of the bottom triangle is Lemma \ref{lem:triangle}. We note that definition \eqref{eq:newgamma} is equal to $\gamma$ followed by the equality induced by \eqref{eq:sheaves1}.
Finally, by Lemma \ref{lem:technical}, the top row of \eqref{eq:strange2} followed by the top row of \eqref{eq:strange3} agrees with the top row of \eqref{eq:strange} (after inserting a copy of \eqref{eq:basechange}).
\end{proof}

\section{Duality for twisted curves}\label{sec:duality}

We explain how which the formal discussion of Section \ref{sec:formal} will generally be used in the remainder of this article. In this section and the remainder of the paper, we define pseudo-coherent and perfect objects of lisse-\'etale sites as in \cite[Tag~08FT]{tag} and \cite[Tag~08G5]{tag}, respectively. Note that if $X$ is an algebraic stack, pseudo-coherent and perfect objects of $\D({\cX_{\liset}})$ are always in $\Dqc(\cX_{\liset})$.\note{for perfect this is stated in HR17. for ps-coh, say $\cF$ is ps-coh. Then for each $m$, there is a smooth cover $U \to \cX$ such that $\cF|_U$ has a map to a perfect complex $E$ that is surjective in degree $m$ and an isomorphism in higher degrees. Since the restriction functor $|_U$ is exact, and perfect obejcts have qcoherent cohomology, we see that $H^i(\cF)$ is qcoherent.}

\begin{example}\label{ex:general}If $f: \cX \rightarrow \cY$ is a morphism of algebraic stacks, we have closed symmetric monoidal categories $\sC = \Dqc(\cX_{\liset})$ and $\sD = \Dqc(\cY_{\liset})$ and a strong monoidal functor $\L f^*: \sD \rightarrow \sC$. If $f$ is concentrated we also have $\R f_*: \sC \rightarrow \sD$ that is a right adjoint to $\L f^*$. In this context, the functor notated $\Hom$ in Section \ref{sec:formal} translates to internal hom for $\Dqc(\cX_{\liset})$. However, we note that by \cite[Lem~4.3(2)]{HR17}, for any algebraic stack $\cX$, we have equality \[\Rhom^{\qc}_{\sO_{\cX}}(\cP, \cF) \simeq \Rhom_{\sO_{\cX}}(\cP, \cF)\]
for any $\cF \in \Dqc(\cX_{\liset})$ and any perfect complex $\cP\in \Dqc(\cX_{\liset})$.
\end{example}
\subsection{Background on twisted curves}
Recall from Section \ref{sec:notation} that a morphism $p: \cC \rightarrow \cM$ of algebraic stacks is a \textit{family of twisted curves} if smooth-locally on $\cM$ it is a twisted curve in the sense of \cite[Def~2.1]{AOV11}. In particular, $p$ is flat\note{this should follow from the description in (iv) and (v). Let's write out the smooth case. Let $\bar p \to C$ be a geometric point. We will show that $\cC \to S$ is flat at $\bar p$. We have a commuting diagram
\[
\begin{tikzcd}[ampersand replacement=\&]
E \arrow[d, "sm"] \&\& E' \arrow[ll, "fl"]\arrow[d, "sm"]\\
\cC \arrow[d] \& {[D^{sh}/\mu_r]} \arrow[l, "fl"] \arrow[d] \& D^{sh} \arrow[l, "fl"] \arrow[ddl] \arrow[r, "fl","07QM"'] \& \Spec(\OO_{S, f(\bar p)}[z]) \arrow[ddll, "sm"]\\
C \arrow[d] \& \Spec(\OO_{C, \bar p}) \arrow[l, "fl"] \arrow[d] \\
S \& \arrow[l, "fl","07QM"'] \Spec(\OO_{S, f(\bar p)})
\end{tikzcd}
\]
where $fl$ means a map is flat and $sm$ means it is smooth. The top rectangle is fibered as is the square just underneath. we conclude from info in the diagram that $D^{sh} \to S$ is flat. So $E' \to S$ is flat. Since $E' \to E$ is flat, the proof of $036K$ shows that $E \to S$ is flat. This is the definition for $\cC \to S$ to be flat at a point mapping to $\bar p$.} and proper and the diagonal $\cC \to \cC \times_{\cM} \cC$ is quasi-finite.\note{use lemma \ref{lem:local-curves} and 06UF(4). also, this is implied by p.18 l.13. this line also says $p$ is flat.} 
If $r: \cC \to C$ is the coarse moduli map, and $\bar c \to C$ is a geometric point, the fiber product $\Spec(\OO_{C, \bar c}) \times_C \cC$ is moreover required to have a certain description (see the full definition in \cite[Def~2.1]{AOV11}). We recall some properties of families of twisted curves. 

Our first lemma ``spreads out'' the local quotient description of 
a twisted curve at a geometric point to an \'etale neighborhood of that point.
\begin{lemma}\label{lem:local-curves}
Let $\cC \to T$ be a family of twisted curves over an affine scheme $T$ and let $q: C \to T$ be the coarse moduli space. Let $\bar c \to C$ be a geometric point. Then there is an integer $n \geq 1$ and an affine scheme $V = \Spec(A)$ with an action of $\mu_n$ such that, if $R = A^{\mu_n}$ is the ring of invariants and $U := \Spec(R)$, there is a commuting diagram
\begin{equation}
\label{eq:local_duality_new}
	\begin{tikzcd}
	V \arrow[r, "\sigma"] & {[V/\mu_n]} \arrow[r, "\tau"] \arrow[d] & U \arrow[d] \\
	& \cC \arrow[r, "r"] & C 
 	\end{tikzcd}
	\end{equation}
where the square is fibered and the vertical maps are \'etale.\note{$\mu_n$ is finite and flat but NOT \'etale in general (it is \'etale if $n$ is relatively prime to the characteristic of a ground field). So $\sigma$ is finite and flat but NOT \'etale.} Moreover, one of the following  holds:
\begin{enumerate}
		\item $A = R[x]/(x^n-t)$ for some $t \in R$ and $\mu_n$ acts by $\zeta \cdot p(x) = p(\zeta x)$.
		\item $A = R[x,y]/(xy-t, x^r-u, y^r-v)$ for some $t, u, v \in R$ and $\mu_n$ acts by $\zeta \cdot p(x,y) = p(\zeta x, \zeta^{-1}y)$.
	\end{enumerate}
\end{lemma}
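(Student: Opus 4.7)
My plan is to unwind the definition of a twisted curve at the geometric point $\bar c$ and then spread out the resulting local description from the strict henselization to an étale neighborhood of $\bar c$ in $C$.

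First I would write $R_0 := \OO^{sh}_{C,\bar c}$. The definition \cite[Def~2.1]{AOV11} of a twisted curve provides an integer $n \geq 1$ and an isomorphism $\cC \times_C \Spec(R_0) \simeq [V_0/\mu_n]$, where $V_0 = \Spec(A_0)$ with $A_0$ of form (1) or (2) over $R_0$ and $\mu_n$ acting as stated. Next I would realize $R_0 = \colim_\lambda R_\lambda$ as a filtered colimit over the affine étale neighborhoods $U_\lambda = \Spec(R_\lambda) \to C$ of $\bar c$. Since $A_0$ is cut out by finitely many equations involving $t, u, v \in R_0$, these elements descend to some $R_\lambda$; this lets me define $V_\lambda := \Spec(A_\lambda)$ of the same form over $R_\lambda$, with its natural $\mu_n$-action, which is compatible with base change because the formulas are integral. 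A direct computation with the two explicit presentations shows $R_\lambda = A_\lambda^{\mu_n}$, so $U_\lambda = \Spec(A_\lambda^{\mu_n})$ matches the notation in the statement.

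The remaining step is to descend the given isomorphism $[V_0/\mu_n] \simeq \cC \times_C \Spec(R_0)$ to a finite stage. Both $[V_\lambda/\mu_n]$ and $\cC \times_C U_\lambda$ are algebraic stacks of finite presentation over $U_\lambda$, and their further base changes to $\Spec(R_0)$ are identified. A standard limit/approximation argument for morphisms of finitely presented algebraic stacks (the stackified analogue of \cite[Tag~01ZM]{tag}, applied to $\Hom$-stacks or directly to isomorphism functors) then yields, after possibly shrinking $U_\lambda$, an isomorphism $[V_\lambda/\mu_n] \xrightarrow{\sim} \cC \times_C U_\lambda$ of stacks over $U_\lambda$. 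This furnishes the Cartesian square of the statement; the left vertical map $[V_\lambda/\mu_n] \to \cC$ is étale because it is the base change of the étale map $U_\lambda \to C$.

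The main obstacle I anticipate is invoking the correct form of the limit/approximation statement for algebraic stacks to descend the isomorphism together with its $\mu_n$-equivariant structure onto a single étale neighborhood. Because the $\mu_n$-action on $V_\lambda$ is integrally defined and everything in sight is of finite presentation, this is essentially a bookkeeping matter once the appropriate reference is located, but it is the part of the argument that demands the most care.
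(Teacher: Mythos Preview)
Your proposal is correct and follows essentially the same approach as the paper: extract the local quotient description at $\bar c$, write $\OO_{C,\bar c}$ as a filtered colimit of affine étale neighborhoods, descend the finite presentation data, and then descend the isomorphism of stacks by a limit argument. The paper makes one step more explicit---the AOV11 definition gives the local model over $\OO_{T,q(\bar c)}$ rather than over $\OO_{C,\bar c}$, so one first computes the $\mu_n$-invariants to rewrite it in form (1) or (2)---and for the spreading-out of the stack isomorphism it cites \cite[Prop~4.18(i)]{LMB}, which is the reference you were looking for.
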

\begin{proof}
We prove the lemma when $\bar c$ maps to a node of $C$; the case when $\bar c$ maps to a smooth point is similar. By definition \cite[Def~2.1(v)]{AOV11}, there is a fiber square
\begin{equation}\label{eq:local-curves}
\begin{tikzcd}
{[(\Spec(\OO_{T, q(\bar c)}[x,y]/(xy-t))/\mu_n]} \arrow[d] \arrow[r] & \cC \arrow[d]\\
\Spec(\OO_{C, \bar c}) \arrow[r] & C
\end{tikzcd}
\end{equation}
for some $t \in \OO_{T, q(\bar c)}$, where $\zeta \in \mu_n$ acts by $x \mapsto \zeta \cdot x$ and $y \mapsto \zeta^{-1}\cdot y$.\note{The ring $\OO_{C, \bar c}$ is defined to be the limit over \'etale neighborhoods factoring the geometric point $\bar c \to C$. Probably this is the only local ring that makes sense for an algebraic space $C$. If $C$ happens to be a scheme, it is somethign extra to show that this is the (strict?) henselization of the usual local ring.} Since $\cC$ is tame, formation of the coarse space commutes with arbitrary base change \cite[Cor~3.3]{AOV08}, and we have\note{I'm thinking of using the formula for computing the coarse modulispace of a finite group action in the sponge article. that lemma doesn't need $n$ to be invertible on the base.}\note{Let $A = \OO_{T, q(\bar c)}[x,y]/(xy-t)$. To make sense of $A^{\mu_n}$, since $\mu_n$ is a group scheme, we should define an action of $\mu_n$ on all $\ZZ$-algebras $R$. This is given by compatible group actions (in the usual sense) $\mu_n(R)$ acting on $A\otimes_{\ZZ}R$. An element $v \in A$ is invariant if $v\otimes 1$ is invariant in each ring $A \otimes_{\ZZ} R$.}
\[\OO_{C, \bar c} \simeq (\OO_{T, q(\bar c)}[x,y]/(xy-t))^{\mu_n} \simeq \OO_{T, q(\bar c)}[x^n, y^n]/(x^ny^n-t^n).\]
If we set $u:= x^n$ and $v:= y^n$ in $\OO_{C, \bar c}$, then we may write the top left corner of \eqref{eq:local-curves} as the stack
\begin{equation}\label{eq:new-corner}
[(\Spec(\OO_{C, \bar c}[x,y]/(x^n-u, y^n-v, xy-t))/\mu_n].
\end{equation}
Now write $\OO_{C, \bar c}$ as the inverse limit of affine schemes $\Spec(R_i)$ with \'etale maps to $C$. Since $\Spec(\OO_{C, \bar c}[x,y]/(x^n-u, y^n-v, xy-t)) \to \Spec(\OO_{C, \bar c})$ is finitely presented, there is an index $i_0$ and elements $u, v, t \in R_{i_0}$ such that $\Spec(R_{i_0}[x,y]/(x^n-u, y^n-v, xy-t))$ pulls back to the affine scheme in \eqref{eq:new-corner}. Define $\mu_n$ to act on $\Spec(R_{i_0}[x,y]/(x^n-u, y^n-v, xy-t))$ by the same rule $x \mapsto \zeta \cdot x$ and $y \mapsto \zeta^{-1}\cdot y$.

Let $\cC_{R_i}$ denote the pullback of $\cC$ to $\Spec(R_i)$. Observe that for $i \geq i_0$, we have two stacks $[(\Spec(R_{i}[x,y]/(x^n-u, y^n-v, xy-t))/\mu_n]$ and $\cC_{R_i}$ defined over $\Spec(R_i)$ and an isomorphism between their pullbacks to $\Spec(\OO_{C, \bar c})$. By \cite[Prop~4.18(i)]{LMB} there is an index $ j\geq i_0$ and an isomorphism $[(\Spec(R_{j}[x,y]/(x^n-u, y^n-v, xy-t))/\mu_n] \simeq \cC_{R_j}$. We may set $R:= R_j$.
\end{proof}

We refer the reader to Section \ref{sec:notation} for definitions of the direct and inverse image functors in the next lemma.

\begin{lemma}\label{lem:properties-of-p}
Let $p: \cC \rightarrow \cM$ be a family of twisted curves on an algebraic stack $\cM$.
\begin{enumerate}
\item The morphism $p$ has cohomological dimension $\leq 1$ (in the sense of \cite[Def~2.1]{HR17}).
\item The morphism $p$ is concentrated (in the sense of \cite[Def~2.4]{HR17}).
\item For $\cF \in \Dqc(\cC_{\liset})$ and $\cG \in \Dqc(\cM_{\liset})$, the projection morphism $\cG \otimes \R p_*(\cF) \xrightarrow{\text{\eqref{eq:projection}}} \R p_*(p^*\cG\otimes\cF)$ is an isomorphism.
\item Given a fiber square of algebraic stacks
\[
\begin{tikzcd}
\cC' \arrow[r, "m'"] \arrow[d, "p'"] & \cC \arrow[d, "p"]\\
\cM' \arrow[r, "m"] &\cM
\end{tikzcd}
\]
and $\cF \in \Dqc(\cC_{\liset})$, the basechange map $\L m^*\R p_* \cF \xrightarrow{\text{\eqref{eq:basechange}}} \R p'_*\L{m'}^* \cF$ is an isomorphism.
\item If $\cM$ is locally Noetherian, then the functor $\R p_*$ sends perfect complexes to perfect complexes.
\end{enumerate}
\end{lemma}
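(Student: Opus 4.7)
The plan is to reduce each claim to the case where $\cM$ is affine via smooth descent, since each of (1)--(5) can be checked smooth-locally on the target. Under this reduction, $p$ becomes a tame twisted curve in the sense of \cite[Def~2.1]{AOV11} over an affine scheme $T$, and I would work throughout with the coarse moduli factorization $p: \cC \xrightarrow{r} C \xrightarrow{q} T$, where $q$ is a flat proper family of nodal curves in the classical sense.

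For (1)--(4), I would argue as follows. Tameness of $\cC$ implies via \cite[Cor~3.3]{AOV08} that $r_*$ is exact on quasi-coherent sheaves, so $r$ has cohomological dimension $0$. The map $q$ is a flat proper family of curves and thus has cohomological dimension $\leq 1$ by the classical theory, so the composition $p = q \circ r$ has cohomological dimension $\leq 1$, giving (1). Part (2) follows by combining (1) with the fact that $p$ is proper, hence quasi-compact and quasi-separated, and applying the definition of concentrated from \cite[Def~2.4]{HR17}. Part (3) is then the projection formula for concentrated morphisms established in \cite{HR17}. For (4), note that $p$ is flat, so any base change is tor-independent, and tor-independent base change for concentrated morphisms is likewise proved in \cite{HR17}.

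For (5), by (1) the functor $\R p_*$ has finite cohomological dimension, so it automatically preserves finite tor-amplitude and the remaining content is preservation of pseudo-coherence. I would split this across the factorization $p = q \circ r$. For $q$, it is a proper morphism of locally Noetherian algebraic spaces, so $\R q_*$ preserves pseudo-coherence by Grothendieck's classical coherence theorem for proper morphisms. For $r$, Lemma \ref{lem:local-curves} shows that \'etale-locally on $C$ the map $r$ is isomorphic to $[V/\mu_n] \to U = \Spec(R)$ with $V = \Spec(A)$, and in both cases of the lemma $A$ is a finite $R$-module: in case (1), $A$ is free of rank $n$, and in case (2), $A$ is generated over $R$ by $\{1, x, \ldots, x^{n-1}, y, \ldots, y^{n-1}\}$ after using $xy = t$. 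Hence $r$ is a finite morphism, and under the locally Noetherian hypothesis $r_* = \R r_*$ preserves coherence, hence pseudo-coherence.

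The main obstacle is part (5): preservation of pseudo-coherence by $r_*$ is exactly where the locally Noetherian hypothesis enters (matching the hint in Remark \ref{rmk:lazy-me}), and Lemma \ref{lem:local-curves} is indispensable in order to reduce to the finite quotient description. Parts (1)--(4) are essentially formal consequences of tameness, the coarse moduli factorization, and the existing theory of concentrated morphisms in \cite{HR17}.
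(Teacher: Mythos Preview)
Your approach is essentially the same as the paper's for parts (1)--(4): both reduce to an affine base and then invoke the machinery of \cite{HR17} for concentrated morphisms. For (1) you spell out the coarse-moduli argument that the paper simply cites as \cite[Prop~2.6]{AOV11}; for (2)--(4) the citations to \cite{HR17} coincide.

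For (5) your strategy again matches the paper's (factor through the coarse moduli space, use Noetherian coherence arguments via Lemma~\ref{lem:pushing-ps-per}), but there is one imprecision worth flagging. The sentence ``Hence $r$ is a finite morphism'' is not correct as stated: the coarse moduli map $r:\cC\to C$ is not representable (it has nontrivial isotropy at the stacky points), so it is not finite in the usual sense. What your computation with Lemma~\ref{lem:local-curves} actually shows is that, \'etale-locally, $r_*$ takes a coherent sheaf on $[V/\mu_n]$ (a finitely generated $A$-module with $\mu_n$-action) to its $\mu_n$-invariants, which is a submodule of a module finite over $R$ and hence coherent when $R$ is Noetherian. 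That is a valid argument; the paper reaches the same conclusion by citing \cite[Thm~4.16(x)]{alper13} instead of the explicit local model. Your finite-tor-amplitude step is also slightly compressed: it requires the projection formula and flatness of $p$, not merely finite cohomological dimension, as the paper makes explicit in Lemma~\ref{lem:pushing-ps-per}.
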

\begin{proof}
For part (1), by flat base change \cite[Lem~1.2(4)]{HR17} we may assume $\cM$ is an affine scheme,\note{this uses that $m^*$ is exact if $m: T \rightarrow \cM$ is smooth.} but this is \cite[Prop~2.6]{AOV11}. Now (1) implies part (2) by definition, part (3) by \cite[Cor~4.12]{HR17}, and part (4) by \cite[Cor~4.13]{HR17}. For part (5), we recall that perfection is a flat-local property of complexes in the sense of \cite[Lem~4.1]{HR17}, so we may use basechange \cite[Cor~4.13]{HR17} to reduce to the case when $\cM$ is a Noetherian affine scheme. Now the result follows from Lemma \ref{lem:pushing-ps-per} below.
\end{proof}

\note{there is basechange lemma 0fn1}

\begin{lemma}\label{lem:pushing-ps-per}
Let $p: \cC \to T$ be a family of twisted curves over a Noetherian affine scheme $T$ and let $r: \cC \to C$ be the map to the coarse moduli space.
\begin{enumerate}
\item The exact functor $r_*$ sends pseudo-coherent objects in $\Dqc(\cC_{\liset})$ to pseudo-coherent objects in $\Dqc(C_{\liset})$.
\item The functor $\R p_*$ sends perfect objects in $\Dqc(\cC_{\liset})$ to perfect objects in $\Dqc(T_{\liset})$.
\end{enumerate}
\end{lemma}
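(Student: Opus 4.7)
The plan is to treat (1) via the local quotient description from Lemma \ref{lem:local-curves}, and then to deduce (2) by factoring $p$ through the coarse moduli space $C$ of $\cC$.

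First I would record the relevant characterizations of pseudo-coherent and perfect objects in this Noetherian setting. Since $T$ is Noetherian and $p$ is of finite presentation, both $\cC$ and $C$ are Noetherian; in this setting an object of $\Dqc(\cX_{\liset})$ is pseudo-coherent iff its cohomology sheaves are coherent and it is locally bounded above, and perfect iff additionally it has finite Tor amplitude. These reductions turn the lemma into a statement about preservation of coherent cohomology (plus a Tor-amplitude bound for (2)).

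For (1), tameness of $\cC \to T$ gives that $r_*$ is exact on quasi-coherent sheaves (cf.\ \cite{AOV08}), so $H^i(r_*\cF)\simeq r_* H^i(\cF)$ and boundedness above is automatically preserved. It remains to show $r_*$ takes coherent sheaves to coherent sheaves, which is smooth-local on $C$. Using Lemma \ref{lem:local-curves} I can reduce to the model $[V/\mu_n]\to U$ with $V=\Spec(A)$ finite over $U=\Spec(R)$. In that model $r_*M$ of a coherent $\mu_n$-equivariant $A$-module $M$ is simply $M^{\mu_n}$ as an $R$-module; since $A$ is finite over the Noetherian ring $R$, $M$ is finitely generated over $R$, and hence so is the submodule $M^{\mu_n}$.

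For (2), factor $p$ as $\cC\xrightarrow{r}C\xrightarrow{q}T$. Exactness of $r_*$ gives $\R p_*\cF\simeq \R q_*(r_*\cF)$. Given $\cF$ perfect on $\cC$, part (1) shows $r_*\cF$ is pseudo-coherent on $C$. Since $q$ is a proper morphism of Noetherian algebraic spaces (the coarse space of a family of twisted curves is a family of prestable curves), the classical coherence theorem for proper morphisms of algebraic spaces implies $\R q_*$ preserves pseudo-coherence; hence $\R p_*\cF$ is pseudo-coherent on $T$. For Tor amplitude I would invoke Lemma \ref{lem:properties-of-p}(1): $p$ has cohomological dimension $\leq 1$, and perfection of $\cF$ gives bounded Tor amplitude, which together force bounded Tor amplitude of $\R p_*\cF$ on the Noetherian affine $T$. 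A pseudo-coherent complex with finite Tor amplitude is perfect, finishing the proof.

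The part I expect to be the main obstacle is the careful identification of pseudo-coherent and perfect objects on the \emph{lisse-\'etale} site (rather than on the \'etale or smooth-\'etale site), and the verification that $r_*$ on the lisse-\'etale site really does compute $\mu_n$-invariants in the local model from Lemma \ref{lem:local-curves}. Once these comparisons are in hand, the rest of the argument is short and essentially classical.
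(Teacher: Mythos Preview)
Your approach is correct and close to the paper's, with one genuine difference and one underspecified step.

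For part (1), the paper does not go through the local quotient description at all: after reducing to the statement that $r_*$ preserves coherent sheaves (exactly as you do), it simply cites \cite[Thm~4.16(x)]{alper13}, which says that pushforward along a good moduli space map preserves coherence in the Noetherian setting. Your route via Lemma~\ref{lem:local-curves} and the finiteness of $A$ over $R$ (which does hold in both cases listed there) is a perfectly valid alternative---more elementary and self-contained, at the cost of having to check the local computations. The paper's route is shorter but imports a general theorem.

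For part (2), your strategy is the same as the paper's: factor through $C$, use (1) to get pseudo-coherence of $r_*\cF$, then proper pushforward along $q$ (the paper cites \cite[Tag~0CTL]{tag}) for pseudo-coherence of $\R p_*\cF$. The one step you should make explicit is the Tor-amplitude bound: cohomological dimension $\le 1$ of $p$ and finite Tor amplitude of $\cF$ do not by themselves bound the Tor amplitude of $\R p_*\cF$; you need the projection formula $\R p_*\cF\lotimes \cG\simeq \R p_*(\cF\lotimes p^*\cG)$ (Lemma~\ref{lem:properties-of-p}(3), together with flatness of $p$ so that $p^*\cG$ is a sheaf). Then finite Tor amplitude of $\cF$ bounds the cohomology of $\cF\lotimes p^*\cG$, and bounded cohomological dimension of $p$ finishes it. The paper spells this out explicitly; in your sketch it is implicit but not stated.

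Your remark about the lisse-\'etale subtleties is apt: the paper handles these by repeatedly invoking the equivalences $\Dqc(X_{\liset})\simeq\Dqc(X_{\zar})$ for schemes and the compatibility of pseudo-coherence/perfection across these, together with \cite[Tag~08E8]{tag} for the Noetherian characterization.
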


\begin{remark}\label{rmk:lazy-me}
We expect that the locally Noetherian hypothesis can be removed using absolute Noetherian approximation for algebraic stacks as in \cite[Tag~0CN4]{tag} (see the proof of \cite[Tag~01AH]{tag}). We do not, however, know a reference that allows us to assume the approximating morphism has properties (1) and (2) of Lemma \ref{lem:pushing-ps-per}. Since we are not aware of an application of the non-Noetherian setting we omit this investigation.
\end{remark}

\note{We would like to remove the Noetherian hypothesis by copying the proof of 01AH. I can use 0CN4 to approximate the stack, but I am not sure about the analog of 09RF in the stacky situation. The referee says that (1) holds for any proper tame morphism (with a Noetherian base??) and (2) holds for morphisms of finite tor dimension. So I want to say that I can increase i such that the morphism is proper, tame, and finite tor-dimension. It seems nontrivial to show you can do this for families of stacks. }
\begin{proof}[Proof of Lemma \ref{lem:pushing-ps-per}]
We will repeatedly use the fact that if $X$ is a scheme, there are equivalences of categories $\Qcoh(X_{\liset}) \simeq \Qcoh(X_{\zar}) $ and $\Dqc(X_{\liset}) \simeq \Dqc(X_{\zar})$, where $X_{\zar}$ is the category of sheaves on the small Zariski site of $X$, and that these equivalences preserve coherence, pseudo-coherence, and perfection.\note{part of ps-coh is 08he; for the analog when going between lisse-etale and etale, you know pullback always preserves ps-coh. The converse in this case is "obvious" (follows straight from the definition) because covers are the same in the two sites.}\note{ps-coh always correspond, but maybe coherent sheaves only correspond when the scheme is locally Noetherian---you need to know that the pullback of a coherent sheaf is coherent, for example to say that pullack from $X_{zar}$ to $X_{\et}$ (or ($X_{\liset}$) preserves coherent objects}

To prove (1), let $\cF \in \Dqc(\cC_{\liset})$ be pseudo-coherent. Let $f: U \to \cC$ be a smooth cover by a scheme. Since $f$ defines a morphism of lisse-\'etale sites and $f^*$ is exact, $f^*\cF$ is pseudo-coherent by \cite[Tag~08H4]{tag}. By \cite[Tag~08E8]{tag}\note{uses Noetherian}, the sheaves $H^i(f^*\cF)$ are coherent and vanish for $i \gg 0$.\note{Let $\R \epsilon: \Dqc(U_{\liset} \to \Dqc(U_{zar})$. its inverse equivalence is $\epsilon^*$. see 071Q. a priori i know $H^i(\R \epsilon_* f^*\cF)$ has these properties. so $\epsilon^*$ applied to these sheaves has these properties, but compute (since $\epsilon^*$ is exact) $\epsilon^*H^i(\R \epsilon_* f^*\cF) = H^i(\epsilon^*\R \epsilon_* f^*\cF) = H^i(f^*\cF)$. }
It follows from \cite[Rmk~6.10, Prop~6.12]{olsson-sheaves} that the sheaves $H^i(\cF)$ are coherent and vanish for $i\gg 0$. By \cite[Thm~4.16(x)]{alper13}\note{uses Noetherian} the sheaves $r_*H^i(\cF)$ have these same properties, but since $r_*$ is exact we know $r_*H^i(\cF)=H^i(r_*\cF).$ Hence by \cite[Tag~08E8]{tag} again, the object $r_*\cF$ is pseudo-coherent.\note{we need to check that $\R \epsilon_* r_*\cF$ is ps-coh. To do this it is enough to show that $H^i(\R \epsilon_* r_*\cF)$ are coherent and vanish when $i$ is large.but these are coherent and vanish iff their pullbacks under $\epsilon$ do, and we already computed that the pullbacks under $\epsilon$ are $H^i(r_*\cF)$.}

To prove (2), let $\cF \in \Dqc( \cC_{\liset})$ be perfect---by \cite[Tag~08G8]{tag} this is equivalent to pseudo-coherent and locally of finite tor dimension. By part (1) of this lemma and \cite[Tag~0CTL]{tag}, the pushforward $\R p_*\cF$ is pseudo-coherent.\note{this equates lisse-\'etale and etale topologies and ps-coh objects in them.} To see that $\R p_*\cF$ locally has finite tor dimension, by \cite[Tag~08EA]{tag}\note{uses qsep} it suffices to show that for $\cG \in \Qcoh(T)$ the sheaves $H^i(\R p_*\cF \lotimes \cG)$ vanish for $i$ outside a finite range.\note{really I'm proving that $\R \epsilon_* \R p_* \cF$ is perfect.} By the projection formula \cite[4.12]{HR17} and flatness of $p$ these are equal to $H^i(\R p_*(\cF \lotimes p^*\cG))$. Since $\cF$ is a perfect complex on a quasi-compact space, it has finite tor amplitude,\note{$\cF$ perfect means that it has locally finite tor amplitude, which means that for every $U \to \cX$ smooth there is a cover $U_i \to U$ such that $\cF|_{U_i}$ has finite tor dimension. If $\cX$ is quasi-compact, then we can take $U$ to be a cover and take the union of all the intervals $[a_i, b_i]$ to get a smooth cover $V \to \cX$ such that $\cF|_V$ has tor amplitude $[a,b]$. We show that $\cF$ has the same tor amplitude. Let $\cG$ be an $\OO_{\cX}$-module. Question is, does $H^i(\cF \lotimes \cG)$ vanish. The restriction functor is exact, so the restriction of $H^i(\cF\lotimes \cG)$ equals $H^i(\cF|_V \lotimes \cG|_V)$ and this vanishes (for $i \not \in [a,b]$). So the restriction to $V_{\et}$ vanishes. So by prop:descent-olsson the sheaf $H^i(\cF\lotimes \cG)$ vanishes.} 
so the spectral sequence
\[
\R^mp_*H^n(\cF) \implies \R^{m+n}p_*\cF
\]
of \cite[Tag~015J]{tag} and the fact that $p$ is concentrated finish the proof.\note{what's really happening: I want to show $\R \epsilon_* \R p_*\cF$ is perfect and I already know it is pscoh, so suffice to show $H^i(\R \epsilon_* \R p_*\cF \lotimes \cG)$ vanishes for qcoh $\cG$ and $i$ outside a finite range. To check if this sheaf vanishes, apply $\epsilon^*$ and get $H^i(\R p_*\cF \lotimes \epsilon^*\cG)$. happily $\epsilon^*\cG$ is still qcoh, which is important for applying the projection formula. For the projection formula and spectral sequence we stay in the lisse-\'etale topology.}
\end{proof}

\subsection{Background on right adjoint to pushforward}\label{sec:right-adj}
We recall some statements about right adjoint to pushforward that hold for purely formal reasons.

\begin{lemma}\label{lem:f!-exists}
Let $f: \cX \rightarrow \cY$ be a concentrated morphism of algebraic stacks. Then a right adjoint $f^!$ to $\R f_*:\Dqc(\cX_{\liset}) \to \Dqc(\cY_{\liset})$ exists, and for dualizable $\cG \in \Dqc(\cY_{\liset})$ the canonical morphism $f^*\cG \otimes f^!\OO_{\cY} \to f^!\cG$ defined in \eqref{eq:piso} is an isomorphism.
 Moreover, for $\cF \in \Dqc(\cX_{\liset})$ and $\cG \in \Dqc(\cY_{\liset})$ there is a functorial isomorphism
\begin{equation}\label{eq:sheaf-adjii}
\R f_*\Rhom_{\sO_{\cX}}^{\qc}(\cF, f^!\cG) \rightarrow \Rhom_{\sO_{\cY}}^{\qc}(\R f_*\cF, \cG).
\end{equation}
\end{lemma}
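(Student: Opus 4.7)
The three assertions are essentially formal once one has the projection formula and an adjoint functor theorem for $\Dqc$; I would treat them in turn.

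For the existence of $f^!$, I would simply invoke \cite[Thm~4.14(1)]{HR17}: by definition of concentrated, $\R f_*$ commutes with small coproducts, and the target is a well-generated triangulated category, so Brown representability supplies the right adjoint.

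For the second assertion, I would prove $\L f^*\cG \otimes f^!\sO_{\cY} \simeq f^!\cG$ by Yoneda. Since $\L f^*$ is strong symmetric monoidal it preserves dualizable objects, so $\L f^*\cG$ has a dual, namely $\L f^*(\cG^\vee)$. For arbitrary $\cF \in \Dqc(\cX_{\liset})$, one produces the chain
\begin{align*}
\Hom(\cF, \L f^*\cG \otimes f^!\sO_{\cY})
&\simeq \Hom(\cF \otimes \L f^*(\cG^\vee), f^!\sO_{\cY}) \\
&\simeq \Hom(\R f_*(\cF \otimes \L f^*(\cG^\vee)), \sO_{\cY}) \\
&\simeq \Hom(\cG^\vee \otimes \R f_*\cF, \sO_{\cY}) \\
&\simeq \Hom(\R f_*\cF, \cG) \\
&\simeq \Hom(\cF, f^!\cG),
\end{align*}
using, in order, the $(\otimes, \Rhom)$ adjunction for the dualizable object $\L f^*\cG$, the $(\R f_*, f^!)$ adjunction, the projection formula (an isomorphism for concentrated $f$ by \cite[Cor~4.12]{HR17}), dualizability of $\cG$, and the $(\R f_*, f^!)$ adjunction once more. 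To identify the resulting natural isomorphism with the canonical map $\piso$ of \cite[(5.5)]{FHM}, I would unpack the construction of $\piso$ in terms of the unit $\eta^{f^*}$, the counit $\epsilon^{f_!}$, and the projection isomorphism $\pi$, and then compare the two against a generic test object via a unit/counit chase.

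The third assertion also follows by a Yoneda argument, at the sheaf level. Testing the left side of \eqref{eq:sheaf-adjii} against $\cH \in \Dqc(\cY_{\liset})$, one obtains
\begin{align*}
\Hom(\cH, \R f_* \Rhom_{\sO_{\cX}}^{\qc}(\cF, f^!\cG))
&\simeq \Hom(\L f^*\cH \otimes \cF, f^!\cG) \\
&\simeq \Hom(\R f_*(\L f^*\cH \otimes \cF), \cG) \\
&\simeq \Hom(\cH \otimes \R f_*\cF, \cG) \\
&\simeq \Hom(\cH, \Rhom_{\sO_{\cY}}^{\qc}(\R f_*\cF, \cG)),
\end{align*}
via the $(\L f^*, \R f_*)$ adjunction, the $\otimes$-$\Rhom^{\qc}$ adjunction in $\Dqc(\cX_{\liset})$, the $(\R f_*, f^!)$ adjunction, the projection formula, and the $\otimes$-$\Rhom^{\qc}$ adjunction in $\Dqc(\cY_{\liset})$. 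Yoneda concludes.

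I expect the only non-routine point to be the identification in the second part of the abstract Yoneda isomorphism with the specific canonical morphism $\piso$ of \cite[(5.5)]{FHM}; this is a standard but notationally heavy unit/counit chase. Everything else is a direct consequence of the existence of $f^!$ and the projection formula for concentrated morphisms.
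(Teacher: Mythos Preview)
Your proposal is correct and follows essentially the same approach as the paper. The paper's own proof is a one-line citation: existence of $f^!$ from \cite[Thm~4.14(1)]{HR17}, the isomorphism $\piso$ for dualizable $\cG$ from \cite[Prop~5.4]{FHM}, and the sheafified adjunction \eqref{eq:sheaf-adjii} from \cite[Prop~4.3]{FHM}. Your Yoneda arguments for parts two and three are exactly the arguments used in those cited FHM propositions (both rest on the projection formula, which holds for concentrated $f$ by \cite[Cor~4.12]{HR17}), so you have simply unpacked what the paper cites.
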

\begin{proof}
Existence of $f^!$ is \cite[Thm~4.14(1)]{HR17} and that \eqref{eq:piso} is an isomorphism follows from \cite[Prop~5.4]{FHM}. The isomorphism \eqref{eq:sheaf-adjii} is \cite[Prop~4.3]{FHM} (see also \cite[Tag~0A9Q]{tag}).
\end{proof}


We now explain what it means for $f^!$ to be compatible with basechange. While lemma \ref{lem:f!-exists} applies to arbitrary families of twisted curves $\cC \to \cM$, we will see that we need additional assumptions for the basechange property to hold.

Suppose we have a fiber square of algebraic stacks as below with $m$ and $f$ tor-indpendent (see \cite[Sec~4.5]{HR17}) and $f$ concentrated.
\begin{equation}\label{eq:duality10}
\begin{tikzcd}
\cX' \arrow[r, "m'"] \arrow[d, "g"]& \cX \arrow[d, "f"] \\
\cY' \arrow[r, "m"]& \cY
\end{tikzcd}
\end{equation}
Then $f^!$ and $g^!$ exist as recalled in Lemma \ref{lem:f!-exists}.
By \cite[Cor~4.13]{HR17}, the base change map \eqref{eq:basechange} is an isomorphism (we take the closed symmetric monoidal categories in \eqref{eq:basechange1} to be $\Dqc(\cX_{\liset})$, etc). This lets us define the functorial base change map $\L{m'}^*f^! \rightarrow g^!\L m^*$ to be the composition\note{this should agree with the basechange map in the stacks project. my idea for why: the maps $L{m'}^*f^! \rightarrow g^!Lm^*$ and $Rm'_*g^! \rightarrow f^!Rm_*$ are conjugate in the sense of Lipman lemma-definition 3.3.5. from here I hope you can find a commuting diagram.}
\begin{equation}\label{eq:basechangeii}
\L{m'}^*f^! \rightarrow g^!\R g_*\L{m'}^*f^! \xrightarrow{\text{\eqref{eq:basechange}}} g^!\L m^*\R f_*f^! \rightarrow g^!\L m^*.
\end{equation}
We are interested in when \eqref{eq:basechangeii} is an isomorphism.



\begin{lemma}\label{lem:f!-basechange}
Suppose we have a tor-independent fiber square \eqref{eq:duality10} of quasi-compact algebraic stacks with quasi-finite and separated diagonals,\note{quasi-finite implies quasi-compact (0G2M), so these stacks are in particular qcqs (04YW)} and suppose that $\cY'$ and $\cY$ are concentrated\note{if $\cY'$ and $\cY$ are qcqs algebraic spaces then they are concentrated} with quasi-affine diagonals.\note{I need BOTH diagonals q-affine becuase in the proof below, $\cY'$ plays the role of $\cY$ at one point.} If $f$ is concentrated and $\R f_*$ sends perfect complexes to perfect complexes, then \eqref{eq:basechangeii} is an isomorphism.\note{generalize to arbitrary characteristic: if the stacks are algebraic, need to add assumption that $\cY$ has quasi-affine diagonal. in fact i think I only need this lemma when the $\cX$'s are algebraic. I need a condition that guarantees that the derived category of $\cX$ has a single (or finite set) compact (? i have a hunch this is needed but I don't know why) perfect generator.}\todo{why is quasi-finite needed? why is tor-independent needed? in fact can I check where each hypothesis is used?}
\end{lemma}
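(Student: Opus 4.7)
The plan is to follow the scheme-theoretic argument of Lipman, adapted to algebraic stacks via Hall--Rydh's compact generation results. Write $\zeta$ for the basechange map \eqref{eq:basechangeii}. Under our hypotheses on $\cX, \cX', \cY, \cY'$, both $\Dqc(\cX_{\liset})$ and $\Dqc(\cX'_{\liset})$ are compactly generated by perfect complexes (the quasi-finite separated-diagonal condition and quasi-affine diagonal on the target are inherited by the base change, so the theorem applies on the primed side as well). Moreover, $\L{m'}^*$ preserves perfection and, because $\L{m'}^*$ of a set of compact generators still generates after base change, the family $\{\L{m'}^*\cP : \cP \in \Dqc(\cX_{\liset}) \text{ perfect}\}$ is a compact generating family of $\Dqc(\cX'_{\liset})$. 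Thus to prove $\zeta$ is an isomorphism it suffices to show that for each such $\cP$ and arbitrary $\cH \in \Dqc(\cY'_{\liset})$ the map $\Hom(\L{m'}^*\cP, \L{m'}^*f^!\cH) \to \Hom(\L{m'}^*\cP, g^!\L m^*\cH)$ induced by $\zeta$ is a bijection.

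Next, I compute both Hom groups using only the structures we have. For the right-hand side, apply $(\R g_*, g^!)$-adjunction (Lemma \ref{lem:f!-exists}) and then basechange for $\R f_*$, which is an isomorphism by tor-independence (\cite[Cor~4.13]{HR17}): this yields $\Hom(\L m^*\R f_*\cP, \L m^*\cH)$. Since $\R f_*\cP$ is perfect by hypothesis, $\L m^*$ commutes with $\Rhom(\R f_*\cP,-)$, giving $\R\Gamma_{\cY'}\bigl(\L m^*\Rhom(\R f_*\cP, \cH)\bigr)$. For the left-hand side, perfection of $\cP$ lets $\L{m'}^*$ commute with $\Rhom(\cP,-)$, yielding $\R\Gamma_{\cX'}\bigl(\L{m'}^*\Rhom(\cP, f^!\cH)\bigr)$. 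Applying $\R g_*$, then basechange for $\R f_*$, then \eqref{eq:sheaf-adjii} rewrites this as
\[
\R\Gamma_{\cY'}\bigl(\L m^*\R f_*\Rhom(\cP, f^!\cH)\bigr) \simeq \R\Gamma_{\cY'}\bigl(\L m^*\Rhom(\R f_*\cP, \cH)\bigr),
\]
matching the right-hand side.

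Finally, I check that the abstract isomorphism produced in the second step coincides with the one induced by $\zeta$. This is a diagram chase: unfold \eqref{eq:basechangeii} as the composition of the $(\R g_*, g^!)$-unit on $\L{m'}^*f^!$, the basechange \eqref{eq:basechange} for $\R f_*$, and the $(\R f_*, f^!)$-counit, and verify step by step that the chain of rewrites in the second step is natural and compatible with these units and counits. The key tools are the naturality of \eqref{eq:sheaf-adjii} (in both arguments and in the adjoint pair) and the two commuting squares of Lemma \ref{lem:bc-unit} applied to the $(\L m^*, \R m_*)$ and $(\R g_*, g^!)$ adjunctions.

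\textbf{Main obstacle.} The main obstacle I anticipate is the bookkeeping in the third step: the two Hom groups are visibly naturally isomorphic, but verifying that the natural isomorphism is exactly $\zeta$ (rather than off by an automorphism) involves careful tracking of units, counits, and basechange maps through several adjunctions. A secondary technical point is making precise the claim that $\{\L{m'}^*\cP\}$ generates $\Dqc(\cX'_{\liset})$ in the appropriate compact sense, which is where the quasi-finite separated-diagonal and quasi-affine-diagonal hypotheses, together with the Hall--Rydh machinery, are really used.
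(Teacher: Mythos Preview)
Your approach is essentially the paper's (both follow Lipman's scheme-theoretic argument, transplanted via Hall--Rydh), and your identification of the bookkeeping in step three as the main obstacle is accurate---the paper dispatches it by citing the diagrams on \cite[182]{lipman} and \cite[Lem~4.6.4]{lipman}. However, what you flag as a ``secondary technical point'' is in fact the real gap.

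The claim that $\{\L{m'}^*\cP : \cP \text{ perfect on } \cX\}$ generates $\Dqc(\cX'_{\liset})$ does \emph{not} follow from applying Hall--Rydh's Theorem~A to $\cX'$: that only tells you $\Dqc(\cX'_{\liset})$ has \emph{some} perfect compact generator, not that it can be taken of the form $\L{m'}^*\cP$. The relevant statement is \cite[Cor~2.8]{HR17}, which says pullback of a compact generator along a \emph{quasi-affine} morphism is again a compact generator; for arbitrary $m'$ this fails (equivalently, $\R m'_*$ need not be conservative). Since nothing in the hypotheses forces $m$ or $m'$ to be quasi-affine, your argument as written does not go through.

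The paper fixes this by inserting a preliminary reduction: using the horizontal cocycle condition for \eqref{eq:basechangeii} (\cite[Prop~4.6.8]{lipman}), one reduces to the two cases (i) $\cY'$ affine and $m$ smooth, or (ii) $\cY'$ and $\cY$ both affine. In either case the quasi-affine diagonal hypothesis on $\cY$ (and on $\cY'$, in the role it plays during the reduction) forces $m$ to be quasi-affine, at which point \cite[Cor~2.8]{HR17} applies and your computation proceeds. This reduction is exactly where the quasi-affine-diagonal hypothesis on \emph{both} $\cY$ and $\cY'$ is consumed---so your proposal is missing the step that actually uses those hypotheses. (Minor: your $\cH$ should live in $\Dqc(\cY_{\liset})$, not $\Dqc(\cY'_{\liset})$.)
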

\begin{remark}
The hypotheses of the lemma are satisfied if all the stacks in \eqref{eq:duality10}  are quasi-compact tame Deligne-Mumford with separated diagonals, with additional conditions on $f$ as above.\note{This is because the diagonal of $\cY$ is quasi-affine and you can do the cancellation theorem. The diagonal is quasi-affine because of 0418 (this lets you conclude that the diagonal is representable by schemes, not just alg spaces) and 02LR.}
\end{remark}
\begin{remark}
The preprint \cite{Nee17} proves that \eqref{eq:basechangeii} is an isomorphism under very general conditions. Compared with \cite{Nee17}, our Lemma \ref{lem:f!-basechange} imposes stricter conditions on the stacks $\cX$, $\cX'$, $\cY$, $\cY'$, and on the morphism $f$, but we allow $m$ to be arbitrary, whereas \cite{Nee17} requires $m$ to be flat.
\end{remark}

\begin{remark}\label{rmk:new-generators}
The proof of Lemma \ref{lem:f!-basechange} relies on our ability to find a compact generator for the algebraic stack $\cX$. By \cite[Thm~A]{HR17}, our assumptions that $\cX$ is quasi-compact with quasi-finite and separated diagonal imply that $\Dqc(\cX_{\liset})$ is compactly generated by a single perfect complex $\cP$. This means that for any $\cF \in \Dqc(\cX_{\liset})$, we have $\cF=0$ if and only if $\Hom_{\Dqc(\cX_{\liset})}(\cP[n], \cF)=0$ for every $n \in \ZZ$ (here, $\Hom_{\Dqc(\cX_{\liset})}$ denotes the hom-set in the (additive) category ${\Dqc(\cX_{\liset})}$). Since $\Dqc(\cX_{\liset})$ is a full subcategory of $\D(\cX_{\liset})$, we may compute the hom set in the larger category. But these hom sets are computed by the cohomology of the derived global hom functor. We conclude that for any morphism $f: \cF \rightarrow \cG$, we have that $f$ is an isomorphism if and only if $\R\mathrm{Hom}_{\sO_{\cX}}(\cP, f)$ is an isomorphism. 
\end{remark}

\begin{proof}[Proof of Lemma \ref{lem:f!-basechange}]
We explain why the proof of \cite[Cor~4.4.3]{lipman} also works in this setting. 

The first step is to reduce to the case where $m$ is quasi-affine. Indeed, by \cite[Prop~4.6.8]{lipman} the morphism \eqref{eq:basechangeii} satisfies a cocycle condition for squares stacked horizontally. This implies that it is enough to prove the Lemma when $\cY'$ is an affine scheme and $m$ is smooth, or when $\cY'$ and $\cY$ are both affine (see \cite[182-4]{lipman} for more details).
By assumption $\cY$ has quasi-affine diagonal,\note{tricky words! $\cY$ here might actually be $\cY'$.} so in either case the morphism $m$ is quasi-affine.

Now we assume $m$ is quasi-affine. Let $\cF \in \Dqc(\cY_{\liset})$ and let $\cP$ be a perfect, compact generator for $\Dqc(\cX_{\liset})$ (see Remark \ref{rmk:new-generators}). Since $\cY'$ and $\cY$ are concentrated by assumption, the morphisms $m$ and $m'$ are also concentrated \cite[Lem~2.5]{HR17} and we have functors $\R m_*$ and $\R m'_*$. To show that \eqref{eq:basechangeii} is an isomorphism, we claim that it suffices to show the induced map
\begin{equation}\label{eq:bc3}
\R f_*\R m'_* \Rhom_{\sO_{\cX'}}^{\qc}(\L m'^*\cP, \L m'^*f^! \cF) \rightarrow \R f_*\R m'_* \Rhom_{\sO_{\cX'}}^{\qc}(\L m'^*\cP, g^!\L m^* \cF)
\end{equation}
is an isomorphism. First, $\L {m'}^*\cP$ is perfect,\footnote{
To see this, note that the equivalence in Proposition \ref{prop:descent-olsson} sends perfect objects to perfect objects---one reason is because this is an equivalence of symmetric monoidal categories and the perfect objects are the dualizables \cite[Tag~0FPP]{tag}. Now apply \cite[Tag~08H6]{tag} to the morphism of strictly simplicial \'etale topoi.
}\note{Claim 1: if $E$ is perfect in $\cX_{\liset}$, then its restriction is perfect in $X^+_{\bullet, \et}$. Proof: the pullbac of perfect is perfect (for morphisms of topoi) so the pullbac of $E$ to $X^+_{\bullet, \liset}$ is perfect. Now the restriction (pushforward) to $X^+_{\bullet, \et}$ is perfect by the definition in 08G5: it is super important that covers in the lisse-etale site are the same as covers in the etale site. There are just more objects. 
}\note{pullback of perfect is perfect in lisse-etale topology? 
perfect is the same as being dualizable (0FPP), which is a category-theoretic notion---so dualizables in $\cX_{\liset}$ and $X^+_{\bullet, \et}$ coincide since the categories are equivalent. HELPFUL POInT: these categories are equivalent, and if I can show one of the equivalences is strong monoidal then so is the other, see \url{https://math.stackexchange.com/questions/183285/is-the-inverse-to-a-monoidal-equivalence-also-monoidal} or hall's paper p.20. SO the categories of qc sheaves on $\cX_{\liset}$ and $X^+_{\bullet, \et}$ are equivalent via strong symmetric monoidal functors.  but why is a dualizable object of $X^+_{\bullet, \et}$ represented by a perfect complex on $X^0$ (hence it is perfect on every level)? ah, because dualizable means locally dualizable (we have the dualizing data locally.) so it is dualizable on $X_0$ hence perfect. now usu. pullback of perfect is perfect, hence dualizable---it's all good.} so we may replace the functors $\Rhom_{\sO_{\cX'}}^{\qc}$ with
$\Rhom_{\sO_{\cX'}}$ (see Example \ref{ex:general}). Next, if \eqref{eq:bc3} is an isomorphism we get an isomorphism of global derived homs by applying the global sections
functor.\note{ideas are in Lipman 4.3.7, could add a reference to the stacks project definition of global derived hom}\note{-I'm just working WITHIN the lisse-etale topology, so this sentence is still just about derived categories of sites.} But $\L m'^*\cP$ is a perfect generator for
$\Dqc(\cX'_{\liset})$ by \cite[Cor~2.8]{HR17}---this is where we use that $m$ (hence $m'$) is quasi-affine. We conclude that \eqref{eq:basechangeii} is an isomorphism (see Remark \ref{rmk:new-generators}).

To show that \eqref{eq:bc3} is an isomorphism, we cite the bottom two cells of the commuting diagram on \cite[182]{lipman} to reduce to proving a certain morphism
\[
\R m_*\R g_*\Rhom_{\sO_{\cX'}}^{\qc}(\L m'^*\cP, \L m'^*f^!\cF) \xrightarrow{\R m_*(4.4.1)^*_{\mathrm{pc}}} \R m_*\Rhom_{\sO_{\cY'}}^{\qc}(\R g_*\L m'^*\cP, \L m^*\cF)
\]
is an isomorphism.\footnote{One may check that Lipman's discussion of the relevant commuting diagram here and in \eqref{eq:bc4} uses only formal properties of adjoint symmetric functors as discussed in \cite[Sec~3.5]{lipman}. The setup in \cite[Sec~3.5]{lipman} is compatible with our Situation \ref{basic-situation} by footnote \ref{ft:lipman}.} (In the cited diagram, the map notated $u^*\delta$ is, in our notation, equal to $\L m^*$ applied to the isomorphism \eqref{eq:sheaf-adjii}.) We will not bother to write the definition of $\R m_*(4.4.1)^*_{\mathrm{pc}}$ because \cite[Lem~4.6.4]{lipman} gives a commuting diagram
\begin{equation}\label{eq:bc4}
\begin{tikzcd}
\R f'_*\Rhom_{\sO_{\cX'}}^{\qc}(\L m'^*\cP, \L m'^*f^!cF) \arrow[r, "{(4.4.1)^*_{\mathrm{pc}}}"] & \Rhom_{\sO_{\cY'}}^{\qc}(\R f'_*\L m'^*\cP, \L m^*\cF)\\
\R f'_*\L m'^*\Rhom_{\sO_{\cX}}^{\qc}(\cP, f^!\cF) \arrow[u, "\rho"] & \Rhom_{\sO_{\cY'}}^{\qc}(\L m^*\R f_*\cP, \L m^*\cF) \arrow[u, "\text{\eqref{eq:basechange}}"]\\
\L m'^*\R f_*\Rhom_{\sO_{\cX}}^{\qc}(\cP, f^!\cF) \arrow[u, "\text{\eqref{eq:basechange}}"] \arrow[r, "\L m^*\text{\eqref{eq:sheaf-adjii}}"] & \L m^*\Rhom_{\sO_{\cY}}^{\qc}(\R f_*\cP, \cF) \arrow[u, "\rho"]
\end{tikzcd}
\end{equation}
The arrows labeled \eqref{eq:basechange} are isomorphisms by \cite[Cor~4.13]{HR17}. The arrows labeled $\rho$ are defined in \cite[(3.3)]{FHM}, and by \cite[Prop~3.2]{FHM} all instances in this diagram are isomorphisms since $\cP$ and $\R f_*\cP$ are perfect complexes by assumption. We note that our definition of $\rho$ agrees with the definition in \cite[(3.5.4.5)]{lipman} by \cite[Exercise~3.5.6(a)]{lipman}. Finally, in diagram \eqref{eq:bc4}, we know that \eqref{eq:sheaf-adjii} is an isomorphism; this concludes the proof.

\end{proof}

\subsection{Example of Situation \ref{situation}}
We realize Situation \ref{situation} as duality for families of twisted curves on Noetherian algebraic spaces. 

\begin{example}\label{ex:situation}
Let $p:\cC \rightarrow T$ be a family of twisted curves on a quasi-separated Noetherian algebraic space $T$.\note{why i need each property: i need qc (resp qs) so $\cC$ is qc (resp ABSOLUTE diagonal of $\cC$ is sep), so thm A applies. \textit{i need locally noetherian so that $\R p_*$ preserves perfect complexes.}} Let $\sC = \Dqc(\cC_{\et})$, $\sD = \Dqc(T_{\et})$, $f_* = \R p_*$, and $f^*=\L p^*$.\note{I have to use categories with qc cohomology because in the next paragraph I need to know that they are compactly generated.} We will write $p^*$ for $\L p^*$ since $p$ is flat (this is justified by \cite[(1.9)]{HR17}). The projection map \eqref{eq:projection} is an isomorphism by Lemma \ref{lem:properties-of-p}.

The right adjoint $p^!$ exists by Lemma \ref{lem:f!-exists}. Moreover, by \cite[Thm~A]{HR17}, the category $\Dqc(\cC_{\liset})$ is compactly detected by a single perfect complex. Since $\R p_*$ preserves perfect complexes (by Lemma \ref{lem:properties-of-p}) and perfect objects in $\Dqc(T_{\et})$ are also compact,\note{this is because an algebraic space (used to say affine scheme) is concentrated so perfect=compact, see \cite[4]{HR17}. an algebraic space has trivial stabilizer groups which are certainly affine, so an algebraic space is concentrated.} it follows from \cite[Thm~8.4]{FHM} and \cite[Lem~7.4]{FHM} that \eqref{eq:piso} is an isomorphism for all $Y$.

It remains to show that $p^!\OO_{T}$ is invertible. This follows from Lemma \ref{lem:f!-basechange} and Lemma \ref{lem:invertible} below.
\end{example}

\begin{lemma}\label{lem:invertible}
Let $p:\cC \rightarrow T$ be a family of twisted curves on a Noetherian affine scheme $T$. Then $p^!\OO_{T}$ is represented by a rank one locally free sheaf in degree -1.
\end{lemma}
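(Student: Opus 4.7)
The plan is to reduce, étale-locally on $\cC$, to an explicit computation using the local presentations of Lemma \ref{lem:local-curves}.

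First, factor $p = q \circ r$, where $r: \cC \to C$ is the coarse moduli map and $q: C \to T$ is the structure morphism of the coarse space. Then $q: C \to T$ is a family of prestable curves over the Noetherian affine scheme $T$, and standard Grothendieck duality (see \cite[Tag~0E6R]{tag}) gives $q^!\OO_T \simeq \omega_{C/T}[1]$ with $\omega_{C/T}$ a line bundle on $C$. Thus $p^!\OO_T \simeq r^!(\omega_{C/T}[1])$, and since $\omega_{C/T}$ is dualizable, the isomorphism \eqref{eq:piso} of Lemma \ref{lem:f!-exists} gives $r^!(\omega_{C/T}[1]) \simeq \L r^*\omega_{C/T}[1] \otimes r^!\OO_C$. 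As $\L r^*\omega_{C/T}$ is a line bundle concentrated in degree zero, it suffices to prove that $r^!\OO_C$ is a line bundle on $\cC$.

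Being a line bundle can be checked étale-locally, so apply Lemma \ref{lem:local-curves} to obtain an étale cover by Cartesian squares
\[
\begin{tikzcd}
{[V/\mu_n]} \arrow[r, "f"] \arrow[d, "\tau"] & \cC \arrow[d, "r"]\\
U \arrow[r, "g"] & C
\end{tikzcd}
\]
with $f$ and $g$ étale. The morphism $r: \cC \to C$ is proper and concentrated, and by Lemma \ref{lem:pushing-ps-per} the functor $\R r_*$ preserves perfect complexes; hence Lemma \ref{lem:f!-basechange} applies and yields $\L f^* r^!\OO_C \simeq \tau^!\OO_U$. It therefore suffices to show that $\tau^!\OO_U$ is a line bundle on $[V/\mu_n]$.

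Finally, compute $\tau^!\OO_U$ using the canonical $\mu_n$-torsor $\sigma: V \to [V/\mu_n]$. The composite $\tau \circ \sigma: V = \Spec A \to \Spec R = U$ is a finite flat morphism (with $R = A^{\mu_n}$), and for any finite flat morphism one has $(\tau \circ \sigma)^!\OO_U \simeq \Shom_R(A, R)$ with its natural $A$-module structure. In each of the local models of Lemma \ref{lem:local-curves}, a direct check shows that $A$ is a Gorenstein $R$-algebra, so $\Shom_R(A, R)$ is a rank-one free $A$-module: the smooth case $A = R[x]/(x^n-t)$ is a principal hypersurface (hence a complete intersection), and in the nodal case $A = R[x,y]/(xy-t, x^n-u, y^n-v)$ (where $uv = t^n$ in $R$) one verifies Gorenstein-ness by exhibiting an explicit socle generator. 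Since $\sigma$ is a $\mu_n$-torsor with $\sigma^!\OO_{[V/\mu_n]} \simeq \OO_V$ (because the regular representation of $\mu_n$ is self-dual), descent gives that $\tau^!\OO_U$ is a line bundle on $[V/\mu_n]$. The main obstacle is the explicit Gorenstein verification in the nodal case together with the careful tracking of $\mu_n$-weights through the descent.
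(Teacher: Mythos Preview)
Your overall strategy matches the paper's: factor through the coarse moduli, reduce to showing $r^!\OO_C$ is a line bundle, and use the local charts of Lemma~\ref{lem:local-curves}. However, there is a genuine gap stemming from the fact that the coarse moduli map is \emph{not flat} at stacky nodes.

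Concretely, in the nodal model $A = R[x,y]/(xy-t,\,x^n-u,\,y^n-v)$ the relations $v x^i = t^i y^{\,n-i}$ and $u y^{\,n-i} = t^{\,n-i} x^i$ show that $A$ is not free (indeed not flat) over $R$; in the paper this is reflected in the infinite periodic free resolution used in Lemma~\ref{lem:compute-adjoints}. Two of your steps rely on flatness that is not there. First, your assertion that $\tau\circ\sigma: V\to U$ is finite \emph{flat}, and hence that $(\tau\sigma)^!\OO_U \simeq \Shom_R(A,R)$, is false in the nodal case: one must compute the derived $\Rhom_R(A,R)$, which genuinely requires an explicit resolution (this is exactly the ``more involved'' computation the paper carries out). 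The appeal to a Gorenstein/hypersurface argument works in the smooth case $A=R[x]/(x^n-t)$ but not here. Second, your invocation of Lemma~\ref{lem:f!-basechange} for the \'etale base change $U\to C$ requires $\R r_*$ to preserve perfect complexes, but Lemma~\ref{lem:pushing-ps-per} only gives preservation of pseudo-coherent objects for $r_*$ (perfect preservation is stated for $\R p_*$, not $\R r_*$), and in fact $r_*$ of a nontrivial character line bundle has infinite projective dimension over $R$. The paper instead cites \cite[Lem~0.1]{Nee17}, which applies because $U\to C$ is \'etale and $r_*$ preserves pseudo-coherents.

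In short, the architecture is right, but both the base-change step and the endgame computation need to be upgraded to handle the non-flatness of $r$ at stacky nodes: replace Lemma~\ref{lem:f!-basechange} by \cite[Lem~0.1]{Nee17}, and replace the finite-flat/Gorenstein shortcut by the explicit $\Rhom_R(A,R)$ calculation via the periodic resolution.
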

\begin{proof}
Let $q:C\rightarrow T$ be the coarse moduli space of $\cC$ and let $r: \cC \rightarrow C$ be the coarse moduli map. 
By \cite[Tag~0E6P,~0E6R]{tag} we know $q^!\OO_T$ is invertible and supported in degree -1. In particular it is dualizable, so we have\note{Because $Rp_* = Rq_*Rr_*$} 
	\[p^!\OO_T = r^!q^!\OO_T =  r^*q^!\OO_T \otimes r^!\OO_C\]
	where the second equality uses \cite[Thm~8.4]{FHM} and the fact that $q^!\OO_T$ is dualizable. Hence to prove the lemma it suffices to show that $r^!\OO_C$ is invertible and supported in degree 0. 
	
	Let $\bar c \to C$ be a geometric point. By Lemma \ref{lem:local-curves} we have a local description of $\cC \to C$ near $\bar c$ given by the diagram \eqref{eq:local_duality_new}. Note that a right adjoint to pushforward exists for every horizontal map in \eqref{eq:local_duality_new}.
	It follows from \cite[Lem~0.1]{Nee17} that the pullback of $r^!\OO_C$ to $[V/\mu_n]$ is equal to $\tau^!\OO_U$ (note that \cite[Lem~0.1]{Nee17} applies since $U \to C$ is \'etale and $r_*$ preserves pseudo-coherent objects by Lemma \ref{lem:pushing-ps-per}).
	Since $[V/\mu_n] \to \cC$ is flat, the complex $r^!\OO_C$ is represented by a quasi-coherent sheaf if and only if $\tau^!\OO_U$ is, and by \cite[Tag05B2]{tag} (applied on strictly simplicial \'etale sites as in Proposition \ref{prop:descent-olsson}) $r^!\OO_C$ is invertible if and only if $\tau^!\OO_U$ is.
	\note{here's the argument. Let $f: \cX \to \cY$ be a fpqc morphism of algebraic stacks, $E$ a complex on $\Dqc(\cY_{\liset})$ such that the pullback to $\cX$ is an invertible sheaf in degree 0. Since $f$ is flat, $f^*$ is exact, and the cohomology sheaves of $E$ must vanish in every degree except 0. So assume $E$ is a quasi-coherent sheaf. Now let $U \to \cX$ and $ V \to \cY$ be smooth covers by schemes fitting in a commuting (not necessarily fibered) square, with $U \to B \times_{\cY} \cX$ smooth. We know $(f^*E)|_{U_{\bullet,\et}^+}$ is invertible, so $f_\bullet^*(E_{V_{\bullet, \et}^+})$ is invertible. But $f_{\bullet}$ is still fpqc (at every level), so by 05B2 $E_{V_{\bullet, \et}^+}$ is invertible (at every level). So $E$ is inverible.}
	
To compute $\tau^!\OO_U$, set $\rho = \tau \circ \sigma$ and we observe that we have an equality
	\[
	\rho^!\OO_U = \sigma^*\tau^!\OO_U \otimes \sigma^!\OO_{[V/\mu_n]}
	\]
	so it suffices to show that $\rho^!\OO_U$ and $\sigma^!\OO_{[V/\mu_n]}$ are both invertible and supported in degree 0. 
	In Lemma \ref{lem:compute-adjoints} below, we prove the statement about $\rho^!\OO_U$, as well as the statement that $pr^!\OO_V$ is a line bundle in degree 0, where $pr: \mu_n \times V \to V$ is the projection. The statement about $pr^!\OO_V$ is equivalent to the statement about $\sigma^!\OO_{[V/\mu_n]}$ by an arument identical to the one used in the previous paragraph.

\end{proof}

\begin{lemma}\label{lem:compute-adjoints}
The complexes $\rho^!\OO_U$ and $pr^!\OO_V$ are represented by line bundles supported in degree 0.
\end{lemma}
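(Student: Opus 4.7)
The plan is to reduce each claim to a complete-intersection computation and then apply standard duality formulas. I would treat the two assertions separately, following the same template.

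For $pr: \mu_n \times V \to V$, the morphism is obtained by base change from $\mu_n = \Spec \ZZ[z]/(z^n-1) \to \Spec \ZZ$ along $V \to \Spec \ZZ$. Since $\mu_n$ is cut out of $\AA^1_\ZZ$ by the regular element $z^n-1$, the map $\mu_n \to \Spec \ZZ$ is a finite flat relative complete intersection of relative dimension $0$, with $!$-pullback of $\OO_\ZZ$ a line bundle in degree $0$ (explicitly, the Koszul resolution gives $\Ext^1_{\ZZ[z]}(\ZZ[z]/(z^n-1),\ZZ[z]) \simeq \ZZ[z]/(z^n-1)$ and higher Exts vanish). Since $\mu_n \to \Spec \ZZ$ is flat, the base-change square is tor-independent, so Lemma \ref{lem:f!-basechange} identifies $pr^!\OO_V$ with the pullback of this line bundle to $\mu_n \times V$.

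For $\rho: V \to U$ in case (1), the morphism factors as the codimension-one closed immersion $V \hookrightarrow \AA^1_U$ cut out by the regular element $x^n - t$, followed by the smooth projection $\AA^1_U \to U$ of relative dimension $1$. The $!$-pullbacks of $\OO$ along these maps are $N_{V/\AA^1_U}[-1]$ (a line bundle in degree $+1$) and $\Omega^1_{\AA^1_U/U}[1] \simeq \OO_{\AA^1_U}[1]$ (a line bundle in degree $-1$), so composing gives $\rho^!\OO_U$ as a line bundle in degree $0$.

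For $\rho$ in case (2), the key observation is that the proof of Lemma \ref{lem:local-curves} exhibits $\OO_{C,\bar c} \simeq \OO_{T, q(\bar c)}[u,v]/(uv - t^n)$ (after setting $u = x^n$, $v = y^n$), so we may take $R_0 = \OO_{T, q(\bar c)}$ and obtain a presentation $R \simeq R_0[u, v]/(uv - t^n)$---possibly after further \'etale localization, which is acceptable since being a line bundle is local on $V$. Substituting $u = x^n$ and $v = y^n$ into the defining relations of $A$ makes $x^n - u$ and $y^n - v$ redundant, leaving $A \simeq R_0[x, y]/(xy - t)$. Consequently both structure morphisms $p_V: V \to \Spec R_0$ and $p_U: U \to \Spec R_0$ are flat relative complete intersections of relative dimension $1$, so $p_V^!\OO_{R_0}$ and $p_U^!\OO_{R_0}$ are line bundles in degree $-1$.

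To conclude in case (2), apply \eqref{eq:piso} to the dualizable object $p_U^!\OO_{R_0}$ to obtain $\rho^!(p_U^!\OO_{R_0}) \simeq \rho^*(p_U^!\OO_{R_0}) \otimes \rho^!\OO_U$. Combining with the identity $\rho^!(p_U^!\OO_{R_0}) = (p_U \rho)^!\OO_{R_0} = p_V^!\OO_{R_0}$ and solving for $\rho^!\OO_U$ gives a tensor product of two degree-$(-1)$ line bundles, hence a line bundle in degree $0$. The main obstacle is justifying the factorization $R \simeq R_0[u, v]/(uv - t^n)$ in case (2); this combines the local structure from Lemma \ref{lem:local-curves} with the compatibility of $\rho^!$ under \'etale base change afforded by Lemma \ref{lem:f!-basechange}. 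Note that the argument works even though $R \to A$ is not flat (for instance at the node point in case (2)), because the derivation of the formula relies only on the composition of right adjoints and on \eqref{eq:piso}, not on any flatness of $\rho$.
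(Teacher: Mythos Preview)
Your argument has a genuine gap: you are implicitly using the ``upper-shriek'' of coherent duality theory, but in this paper $f^!$ is defined (Lemma~\ref{lem:f!-exists}) as the right adjoint to $\R f_*$ on $\Dqc$. For that functor, the formula $\pi^!\OO\simeq\omega_\pi[\dim]$ holds only when $\pi$ is \emph{proper}. For a non-proper smooth morphism such as $\pi:\AA^1_U\to U$, the right adjoint to $\R\pi_*$ sends $\OO_U$ to the (non-coherent) module $\Hom_R(R[x],R)$, not to $\Omega^1_{\AA^1_U/U}[1]$. Each of your three computations routes through such a non-proper smooth map (the projection $\AA^1_\ZZ\to\Spec\ZZ$ for $pr$, the projection $\AA^1_U\to U$ in case~(1), and the maps $p_U,p_V$ to $\Spec R_0$ in case~(2)), so none goes through as written. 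Concretely, the Ext you compute for $pr$ is $\Rhom_{\ZZ[z]}(\ZZ[z]/(z^n-1),\ZZ[z])$, which is $j^!\OO_{\AA^1_\ZZ}$ for the closed immersion $j:\mu_n\hookrightarrow\AA^1_\ZZ$, not the desired $\Rhom_\ZZ(\ZZ[z]/(z^n-1),\ZZ)$.

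The paper avoids this by using that $\rho$ and $pr$ are \emph{finite}, so finite duality gives $f^!\OO=\Rhom_A(B,A)$ for the underlying ring map $A\to B$. For $pr$ and for case~(1), $B$ is free over $A$ and this is an elementary check. In case~(2) the module $A$ is not projective over $R$, and the paper writes down an explicit (eventually periodic, hence infinite) free resolution of $A$ over $R$ and computes $\Rhom_R(A,R)$ from it. Your case~(2) route has a second gap as well: you need the presentation $R\simeq R_0[u,v]/(uv-t^n)$ \'etale-locally on $U$, which is not what Lemma~\ref{lem:local-curves} asserts (it only fixes the presentation of $A$ over $R$) and would need its own spreading-out argument. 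If you want to salvage the complete-intersection viewpoint, you would have to import the compactification-based pseudofunctor $(-)^!$ for separated finite-type morphisms of Noetherian schemes and check it agrees with the right adjoint on finite maps; that is correct but is considerably more machinery than the direct computation the paper gives.
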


\begin{proof}
We use the statement of finite duality in \cite[Tag~0AX2]{tag}\note{see also \cite[Thm~4.14(3)]{HR17}}, which we translate to a statement about rings using \cite[Tag~06Z0]{tag}. These results imply that for a morphism of affine schemes $\Spec(B) \rightarrow \Spec(A)$, the image of $\OO_{\Spec(A)}$ under the right adjoint to pushforward is induced by the complex of $B$-modules\note{some details: uncomment out below
}
\begin{equation}\label{eq:compute!}
\Rhom_A(B, A).
\end{equation}

For $pr$, the relevant ring map is the diagonal $A \rightarrow \prod_{g \in G} A$, and $B = \prod_{g \in G} A$ is a free $A$-module so \eqref{eq:compute!} is supported in degree 0. One checks that there is an isomorphism $B \rightarrow \Rhom_A(B, A)$ given by sending $1_B$ to the projection to the identity factor.\note{More detail?
The map $pr$ is induced by the diagonal ring map $A \rightarrow \prod_{g \in G} A$. The complex $pr^!\OO_V$ is represented by the complex $
	\Rhom_{A}(\prod_{g \in G}A, A)$ of $(\prod_{g \in G}A)$-modules. Since $\prod_{g \in G} A$ is a free $A$-module this complex is concentrated in degree 0. Let $e_1^\vee: \prod_{g \in G} A$ denote projection to the factor indexed by the identity $1 \in G$. Then $
	\Rhom_{A}(\prod_{g \in G}A, A)$ is generated as an $(\prod_{g \in G}A)$-module by $e_1$, hence the map
	\[
	\prod_{g \in G}A \rightarrow \Rhom_{A}(\prod_{g \in G}A, A)
	\]
	is an isomorphism, since it is an isomorphism of $A$-modules.}
	
For $\rho$, let $R = A^G$, the ring of $G$-invariants. Lemma \ref{lem:local-curves} lists two possibilities for $A$. In case (1) the computation of \eqref{eq:compute!} is similar to that for $pr$ since in this case, $A$ is a free $R$-module with basis $1, x, \ldots, x^{r-1}$ and $
	\Rhom_{R}(A, R)$ is generated as an $A$-module by projection to the $x^{r-1}$-factor. 
	
The computation in case (2) is more involved since we have to take a free resolution of $A$. One may use the resolution
\[
 \ldots \xrightarrow{d_3} R^{\oplus 2r-2} \xrightarrow{d_2} R^{\oplus 2r-2} \xrightarrow{d_1} R^{\oplus 2r-1} \xrightarrow{d_0} A \rightarrow 0
\]
with maps given as follows. If $\{f_i, g_i\}_{i=1}^{r-1}$ denotes a free basis for $R^{\oplus 2r-2}$ and $e$ is the additional basis element of $R^{\oplus 2r-1}$, then $d_i$ is defined by
{\footnotesize
\[\begin{array}{llllll}
d_0:&e \mapsto 1 & \quad d_i,\;i\;\text{odd}:& f_i \mapsto vf_i-t^ig_{r-i} & \quad d_i,\;i>0\;\text{even:}&f_i \mapsto uf_i+t^ig_{r-i}\\
&f_i \mapsto x^i&&g_{r-i} \mapsto ug_{r-i}-t^{r-i}f_i&&g_{r-i}\mapsto t^{r-i}f_i+vg_{r-i}\\
&g_i \mapsto y^i &&&&
\end{array}
\]	
}
For details, see \cite[25-28]{webb-thesis}.
\end{proof}

\subsection{Example of Situation \ref{situation2}}\label{sec:dualizing}
We realize Situation \ref{situation2} for families of twisted curves on algebraic stacks. We use the dualizing sheaf and trace map (as in Situation \ref{weaker-situation}) as a substitute for the full duality in Example \ref{ex:situation} because we are unable to show that the basechange morphism \eqref{eq:basechangeii} is an isomorphism in general.

\begin{proposition}\label{prop:duality}
For every family $\cC \rightarrow \cM$ of twisted curves on a locally Noetherian algebraic stack $\cM$, there is a pair $(\omegabul_{\cM}, \tr_{\cM})$ with $\omegabul_{\cM} = \omega_{\cM}[1]$ where $\omega_{\cM} \in \Qcoh(\cC_{\liset})$ is locally free and $\tr_{\cM}: \R\pi_*\omegabul_{\cM} \rightarrow \OO_{\cM}$,  such that the following hold:
\begin{enumerate}
\item The pair is functorial in the following sense. Given a fiber square
\begin{equation}\label{eq:duality1}
\begin{tikzcd}
\cC_{\cN} \arrow[r, "m'"] \arrow[d]& \cC_{\cM} \arrow[d] \\
\cN \arrow[r, "m"]& \cM
\end{tikzcd}
\end{equation}
there is a canonical isomorphism
\begin{equation}\label{eq:omega-iso}
{m'}^*\omegabul_\cM \xrightarrow{\sim} \omegabul_{\cN}\end{equation}
such that the following square commutes:
\begin{equation}\label{eq:tr-functoriality1}
\begin{tikzcd}
\L m^*\R p_*\omegabul_{\cM} \arrow[r, "\L m^*\tr_{\cM}"] \arrow[d, "\text{\eqref{eq:basechange}}", "\sim"']&\OO_{\cN} \\
\R p_*\L  {m'}^*\omegabul_{\cM} \arrow[r, "\text{\eqref{eq:omega-iso}}", "\sim"'] & \R p_*\omegabul_{\cN}\arrow[u, "\tr_{\cN}"]
\end{tikzcd}
\end{equation}
Moreover, if $n: \cK \rightarrow \cN$ is a morphism of algebraic stacks and $\cC_{\cK} = \cC_{\cN} \times_{\cN} \cK$ is the pullback and $n': \cC_{\cK} \rightarrow \cC_{\cN}$ the projection, then the isomorphism $(m' \circ n')^*\omegabul_{\cM} \rightarrow \omega_{\cN}$ is equal to the composition $n'^*m'^*\omegabul_{\cM} \rightarrow n'^*\omegabul_{\cN} \rightarrow \omegabul_{\cK}.$
\item If $\cM$ is a quasi-separated Noetherian algebraic space, then $\omegabul_{\cM}=p^!\OO_{\cM}$ and $\tr_{\cM}$ is the counit of the $(\R p_*, p^!)$ adjunction.\note{reason I did the more complicated descent, the one for hypercovers:  I actually only have base change when $\cM$ is a qcqs alg space, and for the fiber product of qcqs to be qcqs I need the \textit{base} to also be qcqs---i.e. I would need to assume $\cM$ is quasi-compact, see 075S. The way to get around this is to build a special simplicial cover, a \textit{hypercover}, where each level is a disjoint union of affine schemes. but now this is not the kind of cover that Martin proved descent for. so that's a super bummer.}
\end{enumerate}
\end{proposition}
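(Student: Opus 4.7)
The plan is to construct $(\omegabul_\cM, \tr_\cM)$ by descent from a smooth hypercover of $\cM$ whose levels are disjoint unions of quasi-separated Noetherian affine schemes, using Example \ref{ex:situation} to build the dualizing pair stratum by stratum and Lemma \ref{lem:f!-basechange} to match them across face maps. This mirrors the strategy of \cite[Tag~0E5W]{tag} for prestable curves, adapted to the twisted setting.

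First I would choose a smooth hypercover $U_\bullet \to \cM$ in which each $U_n$ is a disjoint union of quasi-separated Noetherian affine schemes. Because $\cM$ is locally Noetherian, one can start with a smooth cover by Noetherian affines and take a Čech-type nerve, refining each level so it remains a disjoint union of Noetherian affines. For each connected component $V \subset U_n$ the pullback family $p_V \colon \cC_V \to V$ is a family of twisted curves over a quasi-separated Noetherian affine scheme, so Example \ref{ex:situation} produces a line bundle $\omega_V$ on $\cC_V$ together with a trace $\tr_V \colon \R p_{V*}(\omega_V[1]) \to \OO_V$ realizing $\omega_V[1] = p_V^!\OO_V$ and identifying $\tr_V$ with the $(\R p_{V*}, p_V^!)$-counit.

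Next I would verify that these local data glue. The face maps of the hypercover are morphisms between disjoint unions of Noetherian affine schemes, which are qcqs algebraic spaces with affine (hence quasi-affine) diagonal, and $\R p_*$ sends perfect complexes to perfect complexes by Lemma \ref{lem:properties-of-p}(5). Thus Lemma \ref{lem:f!-basechange} applies along every face map and produces an isomorphism \eqref{eq:basechangeii} which, combined with the projection formula and base-change isomorphisms of Lemma \ref{lem:properties-of-p}(3)--(4), yields canonical isomorphisms $\alpha$ of the form required in Situation \ref{situation2}. The compatibility \eqref{eq:tr-functoriality} holds because each local trace is the counit of an adjunction and base change of that counit is the counit of the pulled-back adjunction (as in Lemma \ref{lem:bc-unit}), and the cocycle condition on compositions of face maps follows from the transitivity statement at the end of Lemma \ref{lem:technical}. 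This makes the collection $\{(\omega_V, \tr_V)\}$ into a descent datum in the lisse-\'etale topos of the hypercover, and the descent machinery developed in Appendix \ref{app:descent} then produces a global pair $(\omegabul_\cM, \tr_\cM)$ on $\cC \to \cM$.

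To prove functoriality in a morphism $m \colon \cN \to \cM$, I would pull back $U_\bullet \to \cM$ along $m$ (possibly refining to remain inside our class of hypercovers) and argue stratum-by-stratum: the base-change compatibility for the adjunction-like maps in Lemma \ref{lem:technical}, applied on each component, shows that the descended pair on $\cN$ computed from the pulled-back hypercover coincides with the pullback of the descended pair on $\cM$, giving \eqref{eq:omega-iso} and the commuting square \eqref{eq:tr-functoriality1}. The cocycle identity for composable pullbacks $\cK \to \cN \to \cM$ is again supplied by the final assertion of Lemma \ref{lem:technical}. Finally, when $\cM$ is itself a quasi-separated Noetherian algebraic space, the constant augmented simplicial object with every level equal to $\cM$ is an admissible hypercover, so the descent returns exactly the pair of Example \ref{ex:situation}, proving the last sentence of the proposition. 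The main obstacle is the descent step itself: one must set up descent in the lisse-\'etale topos for a hypercover whose levels are allowed to be infinite disjoint unions of Noetherian affines, and verify that the base-change isomorphisms from Lemma \ref{lem:f!-basechange} satisfy a strict enough cocycle condition to descend not just the sheaf $\omega$ but also the morphism $\tr$. This is precisely the content of Appendix \ref{app:descent}, after which the construction assembles routinely.
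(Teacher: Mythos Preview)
Your proposal is correct and follows essentially the same route as the paper: define the pair directly on quasi-separated Noetherian affine (or algebraic-space) bases via Example~\ref{ex:situation}, use Lemma~\ref{lem:f!-basechange} to get the base-change isomorphisms along face maps of a very smooth hypercover with levels disjoint unions of Noetherian affines, check the trace compatibility and cocycle, and descend via Appendix~\ref{app:descent}; functoriality is handled by choosing compatible hypercovers for $\cN\to\cM$.

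One small correction: the cocycle condition for the isomorphisms $\alpha$ (i.e., for \eqref{eq:basechangeii}) does not come from the final clause of Lemma~\ref{lem:technical}, which \emph{assumes} the cocycle for $\alpha$ and deduces it for $\fa$. The paper instead appeals to \cite[Prop~4.6.8]{lipman} for horizontal composition of the base-change maps \eqref{eq:basechangeii}, and to \cite[Rmk~4.4(d)]{lipman} for the commutativity of \eqref{eq:tr-functoriality1} in the algebraic-space case. Also, the paper does not invoke a constant hypercover to verify part~(2); rather it takes part~(2) as the \emph{definition} on quasi-separated Noetherian algebraic spaces and then checks that the descent construction for general stacks is consistent with this (which follows from the equivalence in Proposition~\ref{prop:descent}). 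Your constant-hypercover argument amounts to the same thing, but you should check that the descent equivalence really returns the input when the augmentation is an isomorphism. The gluing of $\tr_\cM$ also requires the Ext-vanishing argument of \cite[Tag~0DL9]{tag} (since $\R p_*\omegabul$ lives in degrees $[-1,0]$), which you allude to only implicitly.
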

 For a general base $\cM$ we do not know if our construction of $(\omegabul_\cM, \tr_{\cM})$ agrees with the right adjoint to pushforward.
 
 \begin{remark}
 To see that Proposition \ref{prop:duality} gives an example of Situation \ref{situation2} compatible with Example \ref{ex:situation}, we use the fact that $\L m^*: \Dqc(\cM_{\liset}) \to \Dqc(\cN_{\liset})$ has a right adjoint even when $m$ is not concentrated; see \cite[Sec~1.3]{HR17}.
 \end{remark}

\begin{proof}[Proof of Proposition \ref{prop:duality}]
The idea as follows. We will define the pair $(\omegabul_{\cM}, tr_{\cM})$ when $\cM$ is an algebraic space as required by part (2) of the proposition. When $\cM$ is an algebraic stack, we will take this as the smooth-local definition of $(\omegabul_{\cM}, tr_{\cM})$, and using the notion of a very smooth hypercover explained in Appendix \ref{app:descent} we will show that these local pairs ``glue'' to a global one with the correct properties.

We now proceed with the proof. When $\cM$ is a quasi-separated Noetherian algebraic space,  we define $\omegabul_{\cM}$ and $tr_{\cM}$ as required in part (2) of the proposition (see Example \ref{ex:situation}). When both $\cN$ and $\cM$ are both quasi-separated Noetherian algebraic spaces, we define \eqref{eq:omega-iso} to be the base change map \eqref{eq:basechangeii} (it is an isomorphism by Lemma \ref{lem:f!-basechange}). The commuting diagram \eqref{eq:tr-functoriality1} follows from the definition of \eqref{eq:basechangeii}, see \cite[Rmk~4.4(d)]{lipman}. The cocycle condition on \eqref{eq:omega-iso} is \cite[Prop~4.6.8]{lipman}.

Let $\cM$ be a locally Noetherian algebraic stack. In this paragraph we define $\omega_\cM$. Let $M_\bullet \rightarrow \cM$ be a very smooth hypercover (see Definition \ref{def:very-smooth-hypercover}) and let $\cC_{M,\bullet}$ be its pullback to $\cC_{\cM}$ (see Remark \ref{rem:hyper-pullback}). We have associated categories of quasi-coherent sheaves $\Qcoh(M_{\bullet, \liset})$ and $\Qcoh(\cC_{M, \bullet, \liset})$ as in Section \ref{sec:descent-topos}. By Remark \ref{rmk:hyper-affine} we may assume that each $M_i$ is a disjoint union of affine schemes (each Noetherian by \cite[Tag~06R6]{tag}). In particular, each $M_i$ is a disjoint union of qcqs Noetherian schemes. For each $n \in \ZZ_{\geq 0}$ we have families of twisted curves $\cC_{M, n} \rightarrow M_n$, and hence the system of locally free sheaves $\omega_{M_n}$ (defined by applying the construction in the previous paragraph to the Noetherian components of $M_n$) together with the isomorphisms \eqref{eq:omega-iso} defines an object $\omega_{M, \bullet}$ of $\Qcoh(\cC_{M, \bullet, \liset})$. By Proposition \ref{prop:descent} the sheaf $\omega_{M, \bullet}$ corresponds to a unique quasi-coherent sheaf $\omega_\cM$ in $\Qcoh(\cC_{\cM, \liset})$ whose restriction to $\cC_{M_i}$ is $\omega_{M_i}$. Let $\omegabul_{\cM} = \omega_{\cM}[1].$

In this paragraph we define $tr_{\cM}$. By Remark \ref{rmk:descent} the complex $\R p_*\omegabul_{\cM}$ is represented by the element of $\Dqc(M_{\bullet, \liset})$ whose $n^{th}$ component is $\R p_*\omegabul_{M_n}$ (see also \cite[Tag~0D9P]{tag}). We have trace maps $\tr_{M_n}:\R p_*\omegabul_{M_n} \to \OO_{M_n}$ for each $n$, and these are compatible with the transition maps of $M_\bullet$ by \eqref{eq:tr-functoriality1}. Now from Proposition \ref{prop:descent} combined with the argument in \cite[Tag~0DL9]{tag} we obtain $tr_{\cM}: \R p_*\omegabul_{\cM} \rightarrow \OO_{\cM}$ (the required Ext groups vanish since $\R p_*\omegabul_{\cM}$ is a complex in degrees [-1,0] by Lemma \ref{lem:properties-of-p}).\note{note that \cite[Tag~0D9S]{tag} and \cite[Tag~0D9T]{tag} apply to $X_{\bullet, \liset}$} 

Now we check that the pair $(\omega_{\cM}, tr_{\cM})$ has the properties required in part (1) of the proposition. Suppose we have a fiber square \eqref{eq:duality1} where $\cN$ and $\cM$ are algebraic stacks. Let $N_\bullet \to \cN$ and $M_\bullet \to \cM$ be very smooth hypercovers with $M_i$ and $N_i$ disjoint unions of affine schemes, with a morphism $N_\bullet \to M_\bullet$ commuting with the augmentations and $m:\cN \to \cM$ (see Remark \ref{rem:hyper-double-affine}). Let $\cC_{M,\bullet}$ and $\cC_{N,\bullet}$ be the pullbacks of $M_{\bullet}$ and $N_{\bullet}$ to $\cC_{\cM}$ and $\cC_{\cN}$, respectively.
For each $n \in \ZZ_{\geq 0}$, the twisted curve $\cC_{N, n} \rightarrow N_n$ is the pullback of $\cC_{M, n} \rightarrow M_n$, and we have isomorphisms $m'^*_n\omega_{M_n} \xrightarrow{\text{\eqref{eq:omega-iso}}}\omega_{N_n}$. 
Under the identifications $(a^*m'^*\omega_{\cM})|_{M_n} \simeq m'^*_n\omega_{M_n}$ of Remark \ref{rmk:descent}, these isomorphisms are compatible with the transition maps for the
sheaves $a^*{m'}^*\omega_{\cM}$ and $a^*\omega_{\cN}$ in $\Qcoh(\cC_{N, \bullet, \liset})$ because \eqref{eq:omega-iso} satisfies the cocycle condition. By descent we get an isomorphism $m'^*\omegabul_{\cM} \rightarrow \omegabul_{\cN}$. To check that this definition makes \eqref{eq:tr-functoriality1} commute, apply the equivalences $a^*$ and use Remark \ref{rmk:descent} to get a collection of commuting diagrams indexed by $n \in \ZZ_{\geq 0}$.

\end{proof}


\section{Obstruction theories via the Fundamental Theorem}\label{sec:main-proof}
\subsection{Some Picard categories}\label{sec:picard-stacks}
Let $\cS$ be a site. We recall the notion of Picard stacks from \cite[Sec~XVIII.1.4.5]{SGA4}, and observe that a Picard category is just a Picard stack on the punctual site (see also \cite[Def~XVIII.1.4.2]{SGA4}). If $f: \cP \rightarrow \cQ$ is a morphism of Picard stacks on $\cS$, we define the \textit{kernel} to be the fiber product $\cK = \bullet \times_{e, \cQ,f} \cP$ where $\bullet$ is the trivial Picard stack (a constant sheaf with all its fibers equal to a single point), and $e: \bullet \rightarrow \cQ$ is the identity. 


\begin{example}\label{ex:ch}
Let $\D(\cS)$ be the unbounded derived category of abelian sheaves on $\cS$. As in \cite[Sec~XVIII.1.4.11]{SGA4} we have a functor $ch$ from the subcategory $\D^{[-1,0]}(\cS)$ to the category of Picard stacks on $\cS$ (in the latter category, arrows are isomorphism classes of morphisms of stacks).
Suppose $A$ is a sheaf of rings on $\cS$ and $\D(A)$ is the unbounded derived category of sheaves of $A$-modules. For two complexes $F \in \D^{[-\infty, a]}(A)$ and $G\in \D^{[a-1, \infty]}(A)$,\note{confused here: do I want $G \in \D^{[a-1, \infty]}(A)$? because I might want to take an injective resolution, too...and i want the output of $\Rhom$ to be in degrees -1 and above.} we define
\begin{equation}\label{eq:def-ext}
\Estackbase{F}{G}{A} := ch(\tau_{\leq 0}\Ghom_A(F, G)) = ch(\tau_{\leq 0}\R \Gamma\Rhom_A(F, G)) 
\end{equation}
where $\Ghom_A$ is derived global hom for $\D(A)$ and we have omitted the pushforward from the derived category of $\Gamma(\sS, A)$-modules to the category of abelian groups. Observe that $ch$ is applied here over the site with one object and one morphism, so $\Estackbase{F}{G}{A}$ is actually a Picard category (and the prestack $pch(\tau_{\leq 0}\R \Gamma \Rhom_A(F, G)$ of \cite[Sec~XVIII.1.4.11]{SGA4} is actually a stack). \note{where did i want to use that $pch$ is a stack? I don't think that's quite true. But if I replace $F$ with flat and $G$ iwth injective, so $\Rhom$ is injective, applying $\R\Gamma$ leaves it injective. On the other hand, if the site is trivial, then what can stackification do?? isn't every prestack a stack in this case? SGA4 doesn't say you \textit{have} to have an injective object for $pch$ to be a stack, only that an injective object guarantees a stack. there could be other ways to get a stack i guess.} If the ring $A$ is clear we will omit it from the notation.\note{this is equivalent to the global fiber of what I had before. I mean that $ch( K)(\sS) = ch( \tau_{\leq 0}\R \Gamma K)$.
Proof: by 1.4.16 if $K^{-1}$ is injective then $pch$ is a stack. So replace $K^{-1}\to K^0$ with a complex $I^{-1}\to I^0 \xrightarrow{d} I^1 \to \ldots$ of injectives and then truncate to get a complex $I^{-1} \to \ker(d)$ that is qiso to
$K^{-1}\to K^0$ but its first term is injective. Now the objects of $ch(K)(U)$ are just $ker(d)(U)$ and arrows are $I^{-1}(U)$. On the other hand, to compute $\R\Gamma(K)$ apply $\Gamma$ to the
injective resolution $I^\bullet$. The truncation to two terms is $I^{-1}(U) \to \ker(d|_U))$. Now I guess $I^{-1}(U)$ is still injective, so $pch=ch$ for this complex, and in particular its objects are $\ker(d|_U)$. The point is that $\ker(d|_U) = \ker(d)(U)$ because global sections is left exact.}
\note{If you compute Rhom over $\ZZ$ instead of over $\OO$, you WILL get a different sheaf, and hence a different Picard stack, I think.} It follows from \cite[(XVIII.1.4.11.1)]{SGA4} that isomorphism classes of objects of $\Estackbase{F}{G}{A}$ are equal to $\Ext^0_A(F, G)$ and from \cite[(XVIII.1.4.11.2)]{SGA4} that automorphisms of the identity element are $\Ext^{-1}_A(F, G)$.
\end{example}

\begin{example}\label{ex:exal}
Let $\cX \rightarrow \cY$ be a representable morphism of algebraic stacks and let $I$ be a quasi-coherent sheaf on $\cX$. We recall from \cite[Sec~2.2, 2.12]{olsson-deformation} the Picard category $\Exal_{\cY}(\cX, I)$ on $\cX_{\et}$: objects are square-zero extensions $\cX \hookrightarrow \cX'$ of stacks over $\cY$, together with an isomorphism $I \rightarrow \ker(\OO_{\cX'} \rightarrow \OO_\cX)$ (see \cite[Sec~2.2]{olsson-deformation} for details, e.g. arrows). 

Now suppose we have the following commuting diagram of algebraic stacks where $q:\cX \hookrightarrow \cX'$ is a square-zero extension by a quasi-coherent sheaf $I$, the maps $f$ and $g$ are representable, and we have fixed 2-morphism $\gamma: r\circ f \rightarrow g \circ q$.
\begin{equation}\label{eq:defdiagram}
\begin{tikzcd}
\cX \arrow[r, "f"] \arrow[d, "q"'] & \cY \arrow[d, "r"] \\
\cX' \arrow[r, "g"] & \cZ
\end{tikzcd}
\end{equation}
The morphism $r$ induces a morphism $\uR: \Exal_{\cY}(\cX, I) \to \Exal_{\cZ}(\cX, I)$, and the perimeter of \eqref{eq:defdiagram} defines an element of $\Exal_\cZ(\cX, I)$ (i.e. a functor $\bullet \to \Exal_\cZ(\cX, I)$, where $\bullet$ is the groupoid with one object and one arrow).
We define the Picard category $\Def(f)$ to be the fiber product
\begin{equation}\label{eq:fiber1}
\begin{tikzcd}
\Def(f) \arrow[r] \arrow[d] & \Exal_{\cY}(\cX, I) \arrow[d, "\uR"] \\
\bullet \arrow[r] & \Exal_{\cZ}(\cX, I)
\end{tikzcd}
\end{equation}
where the bottom arrow $\bullet \to \Exal_{\cZ}(\cX, I)$ is the section induced by \eqref{eq:defdiagram}.
We use $\Defset(f)$ to denote the set of isomorphism classes of $\Def(f)$. Explicitly, objects of $\Def(f)$ are triples $(k, \epsilon, \delta)$ such that $k: \cX' \rightarrow \cY$ is a 1-morphism, and $\epsilon: f \rightarrow k \circ q$ and $\delta: r \circ k \rightarrow g$ are 2-morphisms satisfying $q^*(\delta) \circ r(\epsilon) = \gamma$. A morphism from $(k_1, \epsilon_1, \delta_1)$ to $(k_2, \epsilon_2, \delta_2)$ is a natural transformation $\tau: k_1 \rightarrow k_2$ such that $q^*(\tau)\circ \epsilon_1 = \epsilon_2$ and $\delta_1=\delta_2 \circ r(\tau)$ (for details see \cite[Lem~2.4.3]{webb-thesis}).
\end{example}

\begin{example}\label{ex:triv-diagram}
As an example of the diagram \eqref{eq:defdiagram}, let $\cX \xrightarrow{f} \cY \xrightarrow{r} \cZ$ be morphisms of algebraic stacks with $\cX$ an algebraic space, and let $I \in \Qcoh(\cX_{\et})$. Define $q: \cX \rightarrow \cX'$ to be the trivial extension by $I$, so we have $q': \cX' \rightarrow \cX$ such that $q' \circ q = 1_{\cX}$. Now $g := r \circ f \circ q'$ is representable, and $k = f \circ q'$ defines an element of $\Def(f)$.
\end{example}
\subsection{The Fundamental Theorem}
The fundamental property of the cotangent complex is that it provides a description of the Picard category in Example \ref{ex:exal} in terms of the construction in Example \ref{ex:ch}.

\begin{theorem}[{\cite{olsson-deformation}}]\label{thm:ft}Let $\cX \rightarrow \cY$ be a representable morphism of algebraic stacks. Then
there is an isomorphism of Picard categories:
\begin{equation}\label{eq:ft}
\Exal_{\cY}(\cX, I) \xrightarrow{\sim} \Estackbase{\LL_{\cX/\cY}}{I[1]}{\OO_{\cX}}
\end{equation}
\end{theorem}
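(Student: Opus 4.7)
The plan is to reduce to Illusie's classical Fundamental Theorem of the cotangent complex for ringed topoi, and then to descend from a smooth hypercover to $\cX$, following the same pattern used to prove Proposition \ref{prop:duality}.

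First I would construct the comparison functor from left to right. Given a square-zero extension $q: \cX \hookrightarrow \cX'$ over $\cY$ with ideal $I$, the transitivity triangle $q^*\LL_{\cX'/\cY} \to \LL_{\cX/\cY} \to \LL_{\cX/\cX'} \to q^*\LL_{\cX'/\cY}[1]$ combined with the canonical isomorphism $\LL_{\cX/\cX'} \simeq I[1]$ (which comes from $I^2 = 0$ and the vanishing of $\Omega^1_{\cX/\cX'}$) yields a morphism $\LL_{\cX/\cY} \to I[1]$ in $\D(\OO_\cX)$, i.e.\ an object of $\Estack{\LL_{\cX/\cY}}{I[1]}$. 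Morphisms of extensions produce homotopies between such morphisms, so the assignment promotes to a functor of Picard categories; compatibility with Baer sum (the Picard operation on the left) corresponds to addition in $\Ext$ (the Picard operation on the right), so the functor respects the Picard structures.

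Second, I would verify the functor is an equivalence locally. Pick a very smooth hypercover $V_\bullet \to \cY$ by disjoint unions of affine schemes (as in Definition \ref{def:very-smooth-hypercover}) and set $U_\bullet := V_\bullet \times_\cY \cX$; because $f$ is representable, $U_\bullet$ is a simplicial algebraic space. On each level $U_n \to V_n$ is a morphism of algebraic spaces (in fact of disjoint unions of affine schemes), for which the desired equivalence of Picard categories $\Exal_{V_n}(U_n, I|_{U_n}) \simeq \Estack{\LL_{U_n/V_n}}{I|_{U_n}[1]}$ is precisely Illusie's Fundamental Theorem (\cite[Chap~III]{illusie}). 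The functoriality of Illusie's construction in the simplicial direction makes these levelwise equivalences compatible with the face and degeneracy maps.

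Third, I would descend. Both sides of \eqref{eq:ft} define stacks of Picard categories on the lisse-\'etale site of $\cX$: the left side because square-zero extensions of $\OO_\cY$-algebras by quasi-coherent ideals satisfy \'etale descent, and the right side because derived $\Hom$ does. Since $\LL_{\cX/\cY}$ is by construction pulled back from the simplicial cotangent complex of $U_\bullet/V_\bullet$, the levelwise equivalences from Illusie's theorem glue to a global equivalence on $\cX$ by the descent machinery of Appendix \ref{app:descent}, reused exactly as in the proof of Proposition \ref{prop:duality}.

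The main obstacle is bookkeeping: one must check that the comparison functor is genuinely Picard-functorial (not merely bijective on $\pi_0 = \Ext^1$ and $\pi_1 = \Hom(\Omega,I)$) and that this functoriality is compatible with pullback along the hypercover. This is exactly what Olsson carries out in detail in \cite{olsson-deformation}, so in a formal write-up I would invoke his construction of the comparison functor and then invoke the hypercover-descent argument already developed for Proposition \ref{prop:duality} to globalize it.
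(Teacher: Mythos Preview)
Your outline diverges from the paper's approach in two meaningful ways, and one of them is a real gap.

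\textbf{Construction of the functor.} You build the comparison map via the transitivity triangle $q^*\LL_{\cX'/\cY}\to\LL_{\cX/\cY}\to\LL_{\cX/\cX'}$. The paper (following Olsson) instead defines \eqref{eq:ft} as an explicit composition $\gamma^{-1}\circ\beta\circ\alpha$: the map $\alpha$ identifies $\Exal_{\cY}(\cX,I)$ with $\Exal_{f^{-1}\OO_{Y^+_\bullet}}(\OO_{X^+_\bullet},\varpi^*I)$ on the strictly simplicial \'etale topos associated to the $0$-coskeleton of a smooth cover $Y\to\cY$; then $\beta$ is Illusie's isomorphism \eqref{eq:illusie} applied \emph{once} on that simplicial ringed topos (not levelwise); and $\gamma$ is the equivalence of $\Dqc$ categories from Proposition~\ref{prop:descent-olsson}. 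This matters downstream: the functoriality Lemmas~\ref{lem:ft-functoriality1} and~\ref{lem:ft-functoriality2}, which are the whole point of the appendix, are proved by chasing each of $\alpha,\beta,\gamma$ separately.

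\textbf{The descent step is the gap.} You propose to check the equivalence levelwise on $U_n\to V_n$ and then glue via ``the descent machinery of Appendix~\ref{app:descent}, reused exactly as in the proof of Proposition~\ref{prop:duality}.'' But Appendix~\ref{app:descent} proves descent for objects and morphisms in $\Dqc$, not descent for equivalences of Picard categories, and Proposition~\ref{prop:duality} glues a quasi-coherent sheaf and a single morphism of complexes --- a much weaker statement than gluing a Picard equivalence. Moreover, your levelwise Illusie gives $\Exal_{V_n}(U_n,I|_{U_n})\simeq\Estack{\LL_{U_n/V_n}}{I|_{U_n}[1]}$, and you still have to identify the system of $\LL_{U_n/V_n}$ with the restriction of $\LL_{\cX/\cY}$ and the system of Exal categories with the global one. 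The paper (via Olsson) sidesteps all of this: the map $\alpha$ is a \emph{direct} identification of the global $\Exal_{\cY}(\cX,I)$ with an Exal category on one simplicial ringed topos, so Illusie's theorem applies in one shot and no gluing of Picard data is needed. That identification (Olsson's Prop.~2.9 and Lem.~2.21) is the substantive input you are missing.
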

The definition of \eqref{eq:ft} is technical and we defer it to Section \ref{sec:defiso}. For us, the key property of \eqref{eq:ft} is that it is functorial under pullback and basechange as stated in the next two lemmas. 

\begin{lemma}\label{lem:ft-functoriality1}
Suppose we have maps $\cZ \xrightarrow{f} \cW \xrightarrow{g} \cY$ with $f$ and $g \circ f$ representable. Then given $I \in \Qcoh(\cZ_{\liset})$, there is a commuting diagram of Picard categories:
\begin{equation}\label{eq:wrap2}
\begin{tikzcd}
\Estack{\LL_{\cZ/\cW}}{ I[1]} \arrow[r, "A"] & \Estack{\LL_{\cZ/\cY}}{I[1]} \\
\Exal_{\cW}(\cZ, I) \arrow[r, "B"] \arrow[u, "{\text{\eqref{eq:ft}}}"]& \Exal_{\cY}(\cZ, I)\arrow[u, "{\text{\eqref{eq:ft}}}"']
\end{tikzcd}
\end{equation}
Here $A$ is induced by the canonical map $\LL_{\cZ/\cY} \rightarrow \LL_{\cZ/\cW}$ and $B$ is induced by composition with $g$.
\end{lemma}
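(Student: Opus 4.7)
The plan is to check commutativity of \eqref{eq:wrap2} by inspecting \eqref{eq:ft} on objects and morphisms, and then verifying compatibility with the Picard (Baer sum) structure. Granting the explicit description of \eqref{eq:ft} to be given in Section \ref{sec:defiso}, recall that a square-zero extension $\cZ \hookrightarrow \cZ'$ with ideal $I$ is sent to the morphism $\LL_{\cZ/\cW} \to I[1]$ obtained as the composition
\[
\LL_{\cZ/\cW} \longrightarrow \LL_{\cZ/\cZ'} \longrightarrow \tau_{\geq -1}\LL_{\cZ/\cZ'} \xleftarrow{\sim} I[1],
\]
where the first arrow comes from the transitivity triangle for $\cZ \to \cZ' \to \cW$, and the last uses the identifications $H^{-1}\LL_{\cZ/\cZ'} = I$ and $H^0\LL_{\cZ/\cZ'} = 0$ for a square-zero closed immersion.

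First I would handle objects. The clockwise composition in \eqref{eq:wrap2} regards the extension as one over $\cY$ and sends it to the analogous map $\LL_{\cZ/\cY} \to I[1]$ built from the transitivity triangle for $\cZ \to \cZ' \to \cY$. The counterclockwise composition precomposes the map $\LL_{\cZ/\cW} \to I[1]$ with the canonical arrow $\LL_{\cZ/\cY} \to \LL_{\cZ/\cW}$. So on objects, commutativity reduces to the equality of two natural maps $\LL_{\cZ/\cY} \to \LL_{\cZ/\cZ'}$: one directly from the transitivity triangle for $\cZ \to \cZ' \to \cY$, and one factoring through $\LL_{\cZ/\cW}$ via the triangle for $\cW \to \cY$. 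This is an instance of the standard compatibility of transitivity triangles for the chain $\cZ \to \cZ' \to \cW \to \cY$ and follows by an octahedral diagram chase once one fixes Olsson's simplicial model for $\LL$.

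Next I would address morphisms and the Picard structure. An arrow in $\Exal_{\cW}(\cZ, I)$ is an isomorphism of extensions compatible with the $\cW$-action and the identification of $I$; the map $B$ simply forgets the $\cW$-structure in favor of the induced $\cY$-structure, so it acts on automorphism groups in the obvious way. On the derived side, $A$ is precomposition with a fixed arrow and so commutes with the natural action of $\Ext^{-1}(\LL_{\cZ/?}, I)$ on the automorphisms of any object. Finally, compatibility with Baer sum follows because both $A$ and $B$ are defined via pullback and pushout constructions that preserve the Picard structure on the nose.

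The main obstacle is to make the object-level step manifestly natural. Olsson's \eqref{eq:ft} is constructed using smooth-local resolutions of $\cZ'$ over $\cW$ (resp.\ over $\cY$), so the comparison requires choosing one resolution that refines the other and verifying that the induced maps of cotangent complexes fit into the expected octahedral diagram. I would carry out this bookkeeping in the appendix, where the explicit construction of \eqref{eq:ft} is recalled in full detail.
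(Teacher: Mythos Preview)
Your proposal rests on a description of \eqref{eq:ft} that the paper does not give. You write ``recall that a square-zero extension $\cZ\hookrightarrow\cZ'$ is sent to the morphism $\LL_{\cZ/\cW}\to\LL_{\cZ/\cZ'}\to\tau_{\geq -1}\LL_{\cZ/\cZ'}\xleftarrow{\sim} I[1]$,'' but Section~\ref{sec:defiso} defines \eqref{eq:ft} as the composite
\[
\Exal_{\cY}(\cX,I)\xrightarrow{\alpha}\Exal_{f^{-1}\OO_{Y^+_\bullet}}(\OO_{X^+_\bullet},\varpi^*I)\xrightarrow{\beta}\Estack{\LL_{X^+_\bullet/Y^+_\bullet}}{\varpi^*I[1]}\xleftarrow{\gamma}\Estack{\LL_{\cX/\cY}}{I[1]},
\]
built from a choice of smooth cover and Illusie's isomorphism $\beta$ on the simplicial \'etale topos. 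Proving that this composite agrees, as a morphism of Picard categories, with your transitivity-triangle description is itself a nontrivial statement that you have not addressed; your octahedral argument only kicks in after that identification is made. In effect you have replaced the lemma by an equally hard (and unproven) comparison result.

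The paper's proof works directly with the $\alpha$--$\beta$--$\gamma$ definition. It chooses compatible smooth covers $Y\to\cY$ and $V\to\cW\times_\cY Y$, writes out a large diagram \eqref{eq:wrap3} whose perimeter is \eqref{eq:wrap2}, and checks each interior cell. The cells involving $\gamma$ commute by functoriality of \eqref{eq:sheafy-pullback}; the cell involving $\beta$ is an instance of the amalgamated functoriality diagram \eqref{eq:3-for-1}, which in turn is assembled from the three parts of Lemma~\ref{lem:core}; and the cell involving $\alpha$ is verified by an explicit computation with pushouts and pullbacks of algebra extensions, producing a concrete natural isomorphism between the two resulting objects of $\Exal_{\rho^{-1}q^{-1}\OO_{Y^+_\bullet}}(\rho^{-1}\OO_{Z^+_\bullet},\varpi'^*I)$. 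So rather than appealing to an intrinsic characterization of \eqref{eq:ft}, the paper tracks the simplicial construction through each of its layers.
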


Lemma \ref{lem:ft-functoriality1} is a special case of \cite[(2.33.3)]{olsson-deformation}, but that result is stated only for isomorphism classes of objects. We will prove Lemma \ref{lem:ft-functoriality1} in Appendix \ref{sec:ft-functoriality}.
For the second functoriality lemma, suppose we have a fiber square of algebraic stacks
\begin{equation}\label{eq:ft1}
\begin{tikzcd}[cramped]
\cZ \arrow[r, "p"] \arrow[d] & \cX \arrow[d]\\
\cW \arrow[r] & \cY
\end{tikzcd}
\end{equation}
where the map $\cW \rightarrow \cY$ is flat and $\cX \rightarrow \cY$ is representable. Then given a quasi-coherent sheaf $I \in \Qcoh(\cX_{\liset})$, there is a morphism of Picard categories
\begin{equation}\label{eq:ft2}
\Exal_\cY(\cX, I) \rightarrow \Exal_\cW(\cZ, p^*I)
\end{equation}
sending $\cX'\rightarrow \cY$ to the pullback $\cZ':= \cX' \times_\cY \cW \rightarrow \cW$ (observe that, since \eqref{eq:ft1} is fibered, we have an induced map $\cZ \hookrightarrow \cZ'$ with the desired kernel).
\note{I need to assume $a$ is flat because otherwise I would need to use derived pullback $La^*I$ in the definition of Exal, and I don't know how to interpret the right hand side when $I$ is a complex in negative degrees. For simplicity I'm just assuming the bottom map is flat.}

\begin{lemma}\label{lem:ft-functoriality2}
Given the fiber square \eqref{eq:ft1} and $I \in \Qcoh(\cX_{\liset})$, there is a commuting diagram of Picard categories:
\begin{equation}\label{eq:idunno}
\begin{tikzcd}
\Estack{\LL_{\cX/\cY}}{I[1]} \arrow[r, "C"] & \Estack{p^*\LL_{\cX/\cY}}{p^*I[1]} & \Estack{\LL_{\cZ/\cW}}{p^*I[1]} \arrow[l, "\sim", "D"'] \\
\Exal_{\cY}(\cX, I) \arrow[u, "\text{\eqref{eq:ft}}"] \arrow[rr, "E"] && \Exal_{\cW}(\cZ, p^*I)\arrow[u,"\text{\eqref{eq:ft}}"]
\end{tikzcd}
\end{equation}
Here $C$ is induced by \eqref{eq:sheafy-pullback} in the context of Example \ref{ex:change-topoi}\footnote{Example \ref{ex:change-topoi} differs from Example \ref{ex:general} because it uses general sheaves of $\OO$-modules and hence the $\Rhom$ functor instead of $\Rhom^{\qc}$.}, the arrow $D$ is induced by the canonical map of cotangent complexes (an isomorphism in this case), and $E$ is \eqref{eq:ft2}.
\end{lemma}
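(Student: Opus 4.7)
The plan is to unwind the explicit definition of \eqref{eq:ft} given in Section \ref{sec:defiso} and verify the commutativity of \eqref{eq:idunno} directly. Recall from the Illusie--Olsson construction that \eqref{eq:ft} sends a square-zero extension $q\colon\cX\hookrightarrow\cX'$ by $I$ to the morphism $\LL_{\cX/\cY}\to I[1]$ obtained by composing the connecting map $\LL_{\cX/\cY}\to\LL_{\cX/\cX'}$ of the transitivity triangle for $\cX\to\cX'\to\cY$ with the canonical identification $\LL_{\cX/\cX'}\simeq I[1]$ that is valid for square-zero extensions.

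First I would check that $E$ is well defined: since $\cW\to\cY$ is flat, pulling back the ideal sequence $0\to I\to\OO_{\cX'}\to\OO_\cX\to 0$ along $p$ remains exact, so $\cZ':=\cX'\times_\cY\cW$ is a square-zero extension of $\cZ$ by $p^*I$. Similarly, the arrow $D$ is an isomorphism because the canonical map $p^*\LL_{\cX/\cY}\to\LL_{\cZ/\cW}$ is an isomorphism when the horizontal arrow at the base of \eqref{eq:ft1} is flat; this is a standard compatibility of the cotangent complex with flat base change.

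The key step is to follow both paths around \eqref{eq:idunno} applied to an extension $\cX\hookrightarrow\cX'$. The top path produces the morphism $p^*\LL_{\cX/\cY}\to p^*I[1]$ obtained by applying $\L p^*$ to the connecting map $\LL_{\cX/\cY}\to I[1]$, which is precisely what the sheafy-pullback map $C$ computes in the setting of Example \ref{ex:change-topoi}. The bottom-then-right path yields the connecting map $\LL_{\cZ/\cW}\to p^*I[1]$ associated to the pulled-back extension $\cZ\hookrightarrow\cZ'$. These agree because the transitivity triangle for $\cZ\to\cZ'\to\cW$ is the $\L p^*$-pullback of the transitivity triangle for $\cX\to\cX'\to\cY$ (by flatness and the naturality of the cotangent complex), and the identification $\LL_{\cZ/\cZ'}\simeq p^*I[1]$ is the $\L p^*$-pullback of $\LL_{\cX/\cX'}\simeq I[1]$.

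The main obstacle will be the careful bookkeeping required to match the sheafy-pullback \eqref{eq:sheafy-pullback} with the ordinary $\L p^*$ on the $\Rhom$-complexes computing \eqref{eq:def-ext}, and to check commutativity at the level of morphisms (not merely isomorphism classes). Although the isomorphism-class version of this statement is essentially \cite[(2.33.3)]{olsson-deformation}, one must additionally verify compatibility on automorphisms of the identity, which encode the Picard-category structure on $\Ext^{-1}$. This reduces, as in the companion Lemma \ref{lem:ft-functoriality1}, to a formal diagram chase using naturality of the cotangent complex and of the adjunction unit, and is most naturally carried out in parallel with the proof of Lemma \ref{lem:ft-functoriality1} in the appendix.
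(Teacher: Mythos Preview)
Your description of \eqref{eq:ft} is not the one the paper uses. In Section~\ref{sec:defiso} the isomorphism \eqref{eq:ft} is defined as the composite
\[
\Exal_{\cY}(\cX, I)\xrightarrow{\alpha}\Exal_{f^{-1}\OO_{Y^+_\bullet}}(\OO_{X^+_\bullet},\varpi^*I)\xrightarrow{\beta}\Estack{\LL_{X^+_\bullet/Y^+_\bullet}}{\varpi^*I[1]}\xleftarrow{\gamma}\Estack{\LL_{\cX/\cY}}{I[1]},
\]
passing through strictly simplicial \'etale covers; it is \emph{not} defined via the connecting map of a transitivity triangle. The transitivity-triangle description you invoke may well compute the same element of $\Ext^1(\LL_{\cX/\cY},I)$, but that is a separate statement which you would first have to prove, and even then it only pins down \eqref{eq:ft} on isomorphism classes of objects. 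The paper's whole point (see Section~\ref{sec:discussion}) is that matching isomorphism classes is the easy part already in the literature; the content of Lemma~\ref{lem:ft-functoriality2} is the commutativity at the level of Picard \emph{categories}, i.e.\ on morphisms as well. Your last paragraph concedes exactly this gap and defers it to ``a formal diagram chase,'' which is not a proof.

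The paper's argument is structurally different: it expands \eqref{eq:idunno} into the large diagram \eqref{eq:mess1} by inserting the $\alpha,\beta,\gamma$ factorization on both vertical sides, and then checks each cell. The top cells are formal (functoriality of \eqref{eq:sheafy-pullback} in the adjoint pair, and the definition of the comparison map of cotangent complexes). The middle rectangle is precisely the amalgamated diagram \eqref{eq:3-for-1}, which packages the three functoriality statements for Illusie's map $\beta$ proved in Lemma~\ref{lem:core}; this is where the explicit unwinding of \eqref{eq:sheafy-pullback} and \eqref{eq:simplicial1} carried out in Lemmas~\ref{lem:last-one} and~\ref{lem:other-last-guy} enters. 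The bottom square, relating $\alpha$ to pullback of extensions, is checked by hand: one writes down the two resulting algebra extensions on the simplicial cover and exhibits a natural isomorphism between them. None of this is visible in your sketch, and the transitivity-triangle viewpoint does not obviously give access to the Picard-category-level morphisms that Lemma~\ref{lem:core} tracks.
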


We will prove Lemma \ref{lem:ft-functoriality2} in Appendix \ref{sec:ft-functoriality}.
We conclude this section with a corollary to Theorem \ref{thm:ft} that may be read as a relative version of the same theorem.

\begin{corollary}\label{cor:ft2}
Consider a diagram \eqref{eq:defdiagram} of algebraic stacks where $\cX \rightarrow \cX'$ is a square-zero extension with ideal sheaf $I$, and $f$ and $g$ are representable.
\begin{enumerate}
\item There is an obstruction $o(f) \in \Ext^1(\L f^*\LL_{\cY/\cZ}, I)$ whose vanishing is necessary and sufficient for the set $\Defset(f)$ to be nonempty.
\item If $o(f)=0$, then there is an isomorphism $\Def(f) \simeq \Estack{\L f^*\LL_{\cY/\cZ}}{I}$.
\end{enumerate}
\end{corollary}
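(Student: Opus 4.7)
The plan is to apply the Fundamental Theorem (Theorem \ref{thm:ft}) to translate the fiber product defining $\Def(f)$ into a homotopy fiber of a morphism of Picard categories of the form $\Estack{-}{I[1]}$, and then to analyze this fiber using the distinguished triangle of cotangent complexes for $\cX \xrightarrow{f} \cY \xrightarrow{r} \cZ$.

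Combining Theorem \ref{thm:ft} with Lemma \ref{lem:ft-functoriality1} (applied to $\cX \xrightarrow{f} \cY \xrightarrow{r} \cZ$), the map $\uR$ in the defining fiber product for $\Def(f)$ is identified with the map $A: \Estack{\LL_{\cX/\cY}}{I[1]} \to \Estack{\LL_{\cX/\cZ}}{I[1]}$ induced by the canonical morphism $\LL_{\cX/\cZ} \to \LL_{\cX/\cY}$. The outer square of \eqref{eq:defdiagram} determines an element of $\Exal_{\cZ}(\cX, I)$ corresponding under \eqref{eq:ft} to a morphism $\xi \colon \LL_{\cX/\cZ} \to I[1]$, and under these identifications $\Def(f)$ becomes the homotopy fiber of $A$ over $\xi$.

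Applying $\R\Hom(-, I[1])$ to the distinguished triangle $\L f^*\LL_{\cY/\cZ} \to \LL_{\cX/\cZ} \to \LL_{\cX/\cY} \xrightarrow{+1}$ and rotating produces a distinguished triangle with leftmost term $\R\Hom(\L f^*\LL_{\cY/\cZ}, I)$; after $\tau_{\leq 0} \circ ch$ this becomes a fiber sequence of Picard stacks
\[
\Estack{\L f^*\LL_{\cY/\cZ}}{I} \to \Estack{\LL_{\cX/\cY}}{I[1]} \xrightarrow{A} \Estack{\LL_{\cX/\cZ}}{I[1]},
\]
whose associated long exact sequence of homotopy groups contains
\[
\Ext^0(\L f^*\LL_{\cY/\cZ}, I) \to \Ext^1(\LL_{\cX/\cY}, I) \xrightarrow{\pi_0(A)} \Ext^1(\LL_{\cX/\cZ}, I) \xrightarrow{\partial} \Ext^1(\L f^*\LL_{\cY/\cZ}, I).
\]
Set $o(f) := \partial([\xi])$; by exactness $[\xi]$ lies in the image of $\pi_0(A)$ iff $o(f) = 0$, which proves part (1). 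When $o(f) = 0$, the homotopy fiber of $A$ over $\xi$ is a torsor over the kernel $\Estack{\L f^*\LL_{\cY/\cZ}}{I}$ (in the $2$-category of Picard stacks), and choosing any element of $\Def(f)$ as basepoint yields the desired equivalence of part (2).

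The main obstacle will be justifying that applying $\tau_{\leq 0} \circ ch$ to the relevant distinguished triangle actually produces a fiber sequence of Picard stacks whose induced long exact sequence in $\pi_0, \pi_1$ matches the Ext long exact sequence. This is standard from the Dold--Kan-type correspondence underlying \cite[Sec~XVIII.1.4]{SGA4}, but the bookkeeping---with truncations, shifts, and the verification that all complexes involved lie in the appropriate bounded range (which uses representability of $f$ and $g \circ q$ to place $\LL_{\cX/\cY}$ and $\LL_{\cX/\cZ}$ in nonpositive degrees)---is the only part of the argument not essentially immediate from the functoriality of \eqref{eq:ft} already established.
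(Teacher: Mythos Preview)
Your proposal is correct and follows essentially the same route as the paper: both use Lemma~\ref{lem:ft-functoriality1} to identify $\uR$ with the map induced by $\LL_{\cX/\cZ}\to\LL_{\cX/\cY}$, define $o(f)$ as the image of the class of the given extension under the connecting map from the distinguished triangle $\L f^*\LL_{\cY/\cZ}\to\LL_{\cX/\cZ}\to\LL_{\cX/\cY}$, and then identify the fiber with the kernel once a point exists. The paper handles your ``main obstacle'' by citing \cite[Lem~2.29]{olsson-deformation} for the kernel identification together with the explicit truncation computation $(\tau_{\leq -1}\mathrm{Cone}(\tau_{\leq 0}\beta))[-1]=\tau_{\leq 0}\mathrm{Cone}(\beta[-1])$, and isolates your torsor argument as Lemma~\ref{lem:pic-a-kernel}.
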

\begin{remark}
It follows from the Corollary that if $o(f)=0$, we get an isomorphism of groups between $\Ext^{-1}(\L f^*\LL_{\cY/\cZ})$ and the automorphism group of any element of $\Def(f)$. One can extract from the proof of the Corollary that $\Defset(f)$ is a torsor for $\Ext^0(\L f^*\LL_{\cY/\cZ}, I).$
\end{remark}
\begin{proof}

Applying Lemma \ref{lem:ft-functoriality1} to the maps $
\cX \rightarrow \cY \xrightarrow{r} \cZ$
we get a commuting diagram
\begin{equation}\label{eq:special2}
\begin{tikzcd}
\Estack{\LL_{\cX/\cY}}{I[1]} \arrow[r] \arrow[d, "\sim"] & \Estack{\LL_{\cX/\cZ}}{I[1]} \arrow[d, "\sim"]\\
\Exal_{\cY}(\cX, I) \arrow[r, "\uR"] &\Exal_{\cZ}(\cX, I)
\end{tikzcd}
\end{equation}
where $\uR$ is the same as the map $B$ in the Lemma.
When we restrict \eqref{eq:special2} to isomorphism classes of objects, we get the commuting square in the diagram below.
\begin{equation}\label{eq:defob}
\begin{tikzcd}
\Ext^1(\LL_{\cX/\cY},I) \arrow[r] \arrow[d, "\sim"] & \Ext^1(\LL_{\cX/\cZ},I) \arrow[d, "\sim", "\alpha"'] \arrow[r, "ob"] & \Ext^1(\L f^*\LL_{\cY/\cZ}, I)\\
\Exalset_{\cY}(\cX, I) \arrow[r, "R"]& \Exalset_{\cZ}(\cX, I)
\end{tikzcd}
\end{equation}
The top row of the diagram comes from applying $\Ext^1(-, I)$ to the distinguished triangle
\begin{equation}\label{eq:special0}
\L f^*\LL_{\cY/\cZ} \rightarrow \LL_{\cX/\cZ} \rightarrow \LL_{\cX/\cY}.
\end{equation}
The set $\Defset(f)$ is nonempty if and only if, in \eqref{eq:defob}, the fiber of $R$ over the element $[g]\in \Exalset_{\cZ}(\cX, I)$ defined by \eqref{eq:defdiagram} is nonempty. From the long exact sequence for $\Ext^i(-, I)$ applied to \eqref{eq:special0}, we see that this happens if and only if the image of $[g]$ in $\Ext^1(\L f^*\LL_{\cY/\cZ}, I)$ (under the maps given in \eqref{eq:defob}) is 0. We define
\begin{equation}\label{eq:special3}
o(f) = ob(\alpha^{-1}([g])).
\end{equation}

If $\Defset(f)$ is not empty, then by Lemma \ref{lem:pic-a-kernel} below $\Def(f)$ is isomorphic to the kernel of the morphism of Picard categories
\[
\uR: \Exal_{\cY}(\cX, I) \rightarrow \Exal_{\cZ}(\cX, I).
\]
It follows from \eqref{eq:special2} and \cite[Lem~2.29]{olsson-deformation} applied to the distinguished triangle
\[
\Ghom(\LL_{\cX/\cZ}, I[1]) \xrightarrow{\beta} \Ghom(\LL_{\cX/\cY}, I[1]) \rightarrow \Ghom(\L f^*\LL_{\cY/\cZ}, I[1]) \rightarrow
\]
induced from \eqref{eq:special0} that this kernel is canonically isomorphic to $ch((\tau_{\leq -1}Cone(\tau_{\leq 0}\beta))[-1])$, where $Cone$ denotes the mapping cone of a morphism. But we compute
\[(\tau_{\leq -1}Cone(\tau_{\leq 0}\beta)))[-1] = (\tau_{\leq -1}Cone(\beta))[-1]=\tau_{\leq 0}Cone(\beta[-1])\] 
so we get that this kernel is isomorphic to $\Estack{\L f^*\LL_{\cY/\cZ}}{I}$.
\end{proof}

\begin{lemma}\label{lem:pic-a-kernel}
Let $f: \cP \rightarrow \cQ$ be a morphism of Picard stacks on a stack $\cX$, and let $\cK$ denote the kernel. Let $q: \cX \rightarrow \cQ$ be a section and $\cF = \cP \times_{\cQ, q} \cX$ the fiber product. If the set of global objects of $\cF$ is not empty, then $\cF$ is non-canonically isomorphic to $\cK$.
\end{lemma}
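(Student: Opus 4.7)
The plan is to translate from the fiber $\cF$ to the kernel $\cK$ by ``subtracting off'' a fixed global object of $\cF$, using the Picard (addition and inverse) structure on $\cP$. Concretely, by assumption there exists a global object $P_0$ of $\cF$, that is, a section $P_0 \in \cP(\cX)$ together with an isomorphism $\iota: f(P_0) \xrightarrow{\sim} q$ in $\cQ(\cX)$. Using $P_0$ I would define a morphism of Picard stacks $\Phi: \cK \to \cF$ as follows. On sections over a map $U \to \cX$, an object of $\cK(U)$ consists of a pair $(K, \alpha)$ with $K \in \cP(U)$ and $\alpha: f(K) \xrightarrow{\sim} e|_U$; send it to the pair $(K + P_0|_U, \beta)$, where $\beta$ is the composition
\[
f(K + P_0|_U) \xrightarrow{\sim} f(K) + f(P_0|_U) \xrightarrow{\alpha + \iota|_U} e|_U + q|_U \xrightarrow{\sim} q|_U,
\]
using that $f$ is a morphism of Picard stacks. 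On morphisms, send $\phi: (K, \alpha) \to (K', \alpha')$ to $\phi + \mathrm{id}_{P_0|_U}$.

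Next I would construct an inverse $\Psi: \cF \to \cK$ by symmetrically adding $-P_0$, where $-P_0$ denotes the inverse of $P_0$ in the Picard stack $\cP$ together with the induced trivialization $f(-P_0) \simeq -f(P_0) \xrightarrow{-\iota} -q$. Explicitly, send $(F, \gamma: f(F) \xrightarrow{\sim} q|_U)$ to $(F + (-P_0|_U), \delta)$, where $\delta$ composes $f(F + (-P_0|_U)) \simeq f(F) + f(-P_0|_U) \xrightarrow{\gamma + (-\iota|_U)} q|_U + (-q|_U) \xrightarrow{\sim} e|_U$, and send a morphism $\psi$ to $\psi + \mathrm{id}_{-P_0|_U}$.

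Finally I would verify that $\Phi$ and $\Psi$ are mutually inverse by checking that, in each direction, the canonical $2$-isomorphisms $(X + P_0) + (-P_0) \simeq X$ coming from the Picard structure on $\cP$ are compatible with the trivializations over $\cQ$ coming from $P_0 + (-P_0) \simeq 0$ in $\cQ$. Since both $\Phi$ and $\Psi$ were defined using the addition map, which is part of the Picard structure, the resulting functors are morphisms of Picard stacks, giving the desired (non-canonical) isomorphism.

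The main obstacle is purely bookkeeping: one has to chase the associativity/commutativity/unit coherence data in the definition of a Picard stack to check that all the trivializations match up, but no essentially new ingredient beyond the Picard structure and the choice of $P_0$ is required. The dependence on the choice of $P_0$ is exactly why the resulting isomorphism is non-canonical.
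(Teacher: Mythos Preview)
Your approach is correct and is essentially the same as the paper's: both fix a global object of $\cF$ and use the Picard addition on $\cP$ to translate between $\cK$ and $\cF$, with the inverse given by adding $-P_0$. One small slip: $\cF$ is only a $\cK$-torsor, not itself a Picard stack, so you should not claim $\Phi$ and $\Psi$ are morphisms of Picard stacks---the lemma only asks for an isomorphism of stacks, which is what your construction gives.
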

\begin{proof}
A global object of $\cF$ defines a section $\sigma: \cX \rightarrow \cF$. One can check that the composition $\cK \times_{\cX} \cF \rightarrow \cP \times_{\cX} \cP \xrightarrow{\mu} \cP$, where $\mu$ is the group operation, factors through $\cF$. We obtain a morphism
\begin{equation}\label{eq:pica1}
\cK \xrightarrow{(1_{\cK}, \sigma)} \cK \times_{\cX} \cF \rightarrow \cF.
\end{equation}
On the other hand, we have the composition 
\begin{equation}\label{eq:pica2}
\cF \xrightarrow{(pr_1, -\sigma)} \cP \times_{\cX} \cP \xrightarrow{\mu} \cP \xrightarrow{f} \cQ
\end{equation}
where $pr_1: \cF \rightarrow \cP$ is the canonical morphism and $-\sigma$ is $\sigma$ followed by the inverse morphism. The composition \eqref{eq:pica2} factors through the identity $e: \cX \rightarrow \cQ$, so we get an induced map $\cF \rightarrow \cK$. One may check that this is inverse to \eqref{eq:pica1}.
\end{proof}
\subsection{Equivalent definitions of an obstruction theory}\label{sec:def-ot}
Let $\cY \rightarrow \cZ$ be a morphism of algebraic stacks. If $\phi: E \rightarrow F$ is a morphism in $\Dqc(\cY_{\liset})$, let $H^i(\phi): H^i(E) \rightarrow H^i(F)$ denote the induced morphism on cohomology sheaves. The following definition generalizes \cite[Def~4.4]{BF97}.

\begin{definition}
A morphism $\phi: E \rightarrow \LL_{\cY/\cZ}$ in $\Dqc(\cY_{\liset})$ is an \textit{obstruction theory} if $H^{-1}(\phi)$ is a surjection and $H^0(\phi)$, $H^1(\phi)$ are isomorphisms. 
\end{definition}

Given a morphism $\phi: E \rightarrow \LL_{\cY/\cZ}$ in $\Dqc(\cY_{\liset})$, for every diagram \eqref{eq:defdiagram} we have induced homomorphisms of groups (computed a priori in the lisse-\'etale topology)
\begin{equation}\label{eq:Phi}
\Phi_i: \Ext^i(\L f^*\LL_{\cY/\cZ}, I) \rightarrow \Ext^i(\L f^*E, I).\end{equation} 
We now present a well-known local criterion for a morphism $\phi$ to be an obstruction theory. Similar criteria have appeared in \cite[Thm~4.5]{BF97}, \cite[Cor~8.5]{AP19} and \cite[Thm~3.5]{poma}. However, we found the wording in these criteria to be vague in that they do not explicitly require compatibility between various morphisms. Since proving said compatibility is a major part of the paper (it comprises the functoriality computations in Appendix \ref{sec:ft-functoriality}), we give the precise statement of the local criterion and a fully detailed proof.
\begin{lemma}\label{lem:BF}
The following conditions are equivalent.
\begin{enumerate}
\item The morphism $\phi$ is an obstruction theory.
\item For every diagram \eqref{eq:defdiagram} with $\cX$ a scheme, the following hold:
\begin{enumerate}
\item the element $\Phi_1(o(f)) \in \Ext^1(\L f^*E, I)$ vanishes if and only if $\Def(f)$ is nonempty
\item if $\Phi_1(o(f))=0$ then $\Phi_0$ and $\Phi_{-1}$ are isomorphisms
\end{enumerate}
\item For every affine scheme $\cX$, and smooth map $\cX \rightarrow \cZ$, the following hold:
\begin{enumerate}
\item  for every ambient diagram \eqref{eq:defdiagram} using $\cX$, the element $\Phi_1(o(f)) \in \Ext^1(\L f^*E, I)$ vanishes if and only if $\Def(f)$ is nonempty
\item for every $I \in \Qcoh(\cX_{\liset})$, the maps $\Phi_0$ and $\Phi_{-1}$ are isomorphisms
\end{enumerate}
\end{enumerate}
\end{lemma}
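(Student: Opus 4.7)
The plan is to convert all three conditions into statements about the vanishing of $\Ext$-groups of the pullback cone $\L f^*C$, where $C := \mathrm{Cone}(\phi)$. Applying $\R\Hom(-, I)$ to the distinguished triangle $\L f^*E \to \L f^*\LL_{\cY/\cZ} \to \L f^*C$ yields a long exact sequence
\[
\cdots \to \Ext^i(\L f^*C, I) \to \Ext^i(\L f^*\LL_{\cY/\cZ}, I) \xrightarrow{\Phi_i} \Ext^i(\L f^*E, I) \to \Ext^{i+1}(\L f^*C, I) \to \cdots,
\]
so $\Phi_i$ is an isomorphism exactly when the two flanking $\Ext$-groups vanish, and injective exactly when the left-hand one does. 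Corollary \ref{cor:ft2} bridges these $\Ext$-theoretic statements with the deformation-theoretic data $o(f)$ and $\Def(f)$.

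For $(1) \Rightarrow (2)$: The obstruction-theory hypothesis forces $H^i(C) = 0$ for $i = -1, 0$ together with $H^1(\phi)$ an isomorphism. The hypercohomology spectral sequence $E_2^{p,q} = \Ext^p(H^{-q}(\L f^*C), I) \Rightarrow \Ext^{p+q}(\L f^*C, I)$, combined with the fact that $\L f^*$ preserves the vanishing of cohomology sheaves in the relevant nonpositive degrees, yields $\Ext^i(\L f^*C, I) = 0$ for $i \le 0$ and injectivity of $\Phi_1$. Corollary \ref{cor:ft2} then translates this into (2)(a) and (2)(b): $\Def(f)$ is nonempty iff $o(f) = 0$, which by injectivity is iff $\Phi_1(o(f)) = 0$; and when $o(f) = 0$ we get that $\Phi_0$ and $\Phi_{-1}$ are isomorphisms.

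The implication $(2) \Rightarrow (3)$ is a mere restriction to the narrower class of affine smooth-over-$\cZ$ schemes. I would note in passing that $(3)(a)$ becomes automatic in this setup: smoothness of $\cX \to \cZ$ with $\cX$ affine forces $\LL_{\cX/\cZ}$ to be a locally free sheaf in degree $0$, whence $\Ext^1(\LL_{\cX/\cZ}, I) = 0$, so every element of $\Exalset_\cZ(\cX, I)$ is trivial, $o(f) = 0$, and $\Def(f)$ always contains the split deformation coming from the trivial extension.

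The main work is $(3) \Rightarrow (1)$. Letting $I$ range over the injective quasi-coherent sheaves on $\cX$, the spectral sequence degenerates to $\Ext^n(\L f^*C, I) = \Hom(H^{-n}(\L f^*C), I)$, so $(3)(b)$ forces $H^n(\L f^*C) = 0$ for $n \in \{-1, 0, 1\}$. When $f$ is smooth, hence flat, $f^*$ is exact, giving $f^*H^n(C) = H^n(\L f^*C) = 0$. The principal obstacle is the covering argument: one needs to produce enough affine schemes $\cX$ with simultaneously smooth structure maps $\cX \to \cY$ and $\cX \to \cZ$ so that these cover $\cY$ in the smooth topology. This is arranged by starting with a smooth affine cover $U \to \cZ$, forming $\cY_U := \cY \times_\cZ U$ (smooth over $\cY$), and refining by a smooth affine cover of $\cY_U$, with care to keep the composite $\cX \to U \to \cZ$ smooth. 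Carrying out this two-fold smoothness descent is the most delicate part of the argument.
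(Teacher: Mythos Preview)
Your treatment of $(1) \Rightarrow (2)$ and $(2) \Rightarrow (3)$ matches the paper's argument, and your observation that (3a) is automatically satisfied when $\cX \to \cZ$ is smooth and $\cX$ affine is correct.

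The gap is in $(3) \Rightarrow (1)$. You assert that (3b) alone forces $H^n(\L f^*C)=0$ for $n\in\{-1,0,1\}$, but it does not. From $\Phi_0$ and $\Phi_{-1}$ being isomorphisms the long exact sequence yields only $\Ext^i(\L f^*C,I)=0$ for $i\in\{-1,0\}$ (granting that $E$, like $\LL_{\cY/\cZ}$, lives in degrees $\leq 1$). To obtain $\Ext^1(\L f^*C,I)=0$---equivalently $\Hom(H^{-1}(\L f^*C),I)=0$ for injective $I$---you need $\Phi_1$ to be injective, and (3b) says nothing about $\Phi_1$. The paper extracts injectivity of $\Phi_1$ from (3a) as follows: it chooses $f:\cX\to\cY$ \emph{smooth} (so that $\LL_{\cX/\cY}=\Omega^1_{\cX/\cY}[0]$ is locally free and hence $\Ext^2(\LL_{\cX/\cY},I)=0$), which makes the map $ob$ in \eqref{eq:defob} surjective; thus every class in $\Ext^1(\L f^*\LL_{\cY/\cZ},I)$ is $o(f)$ for some diagram, and (3a) then reads ``$\Phi_1(o(f))=0\Rightarrow o(f)=0$,'' i.e.\ $\Phi_1$ is injective. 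After this, either your degeneration trick with injective $I$ or the paper's inductive spectral-sequence argument finishes the job.

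Note that the paper's proof works with $f:\cX\to\cY$ smooth, not $\cX\to\cZ$ smooth as the statement of (3) literally reads. Under the $\cX\to\cY$ reading, (3a) is \emph{not} vacuous and does essential work, and your closing worry about arranging simultaneous smoothness over $\cY$ and $\cZ$ evaporates: any smooth affine cover of $\cY$ suffices. In fact your proposed two-step construction cannot succeed in general, since if $\cY\to\cZ$ is not smooth then no smooth cover $\cX\to\cY$ will have smooth composite $\cX\to\cZ$.
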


\begin{remark}
In Lemma \ref{lem:BF}, conditions (2) and (3) may be computed in $\cX_{\et}$---so in (3b), one checks every $I \in \Qcoh(\cX_{\et})$ (see e.g. \cite[Prop 9.2.16]{olsson-book}).
\end{remark}


\begin{proof}[Proof of Lemma \ref{lem:BF}]
The proof of this lemma seems to be well-known; many parts were explained to me by Bhargav Bhatt.
Let $C$ be the mapping cone of $\phi: E \rightarrow \LL_{\cY/\cZ}$. Then condition (1) is equivalent to
\[
 \text{(1')}\;\;\; H^i(C)=0 \;\text{for} \;i\geq -1.
\]

Assume (1'). Then $H^i(\L f^*C)$ also vanish for $i\geq -1$, so a spectral sequence \cite[Tag~07AA]{tag} for $\Ext^i(-, I)$ implies $\Ext^i(\L f^*C, I)=0$ for $i \leq 1$ and any $I$.\note{in general if $h^i(A^\bullet)=0$ for $i\geq a$ then $Ext^i(A^\bullet, F)=0$ for $i\leq -a$, as should be believable by thinking about degrees of homs} Now the long exact sequence of Ext groups arising from the distinguished triangle
\begin{equation}\label{eq:mydt}
\L f^*E \rightarrow \L f^*\LL_{\cY/\cZ} \rightarrow \L  f^*C \rightarrow
\end{equation}
implies that $\Phi_1$ is injective and $\Phi_0$ and $\Phi_{-1}$ are isomorphisms. Combined with Corollary \ref{cor:ft2}, this proves (2) (with $\cX$ an arbitrary scheme).
Now (2) implies (3) using Example \ref{ex:triv-diagram}.

Assume (3). Condition (1') may be checked smooth-locally on $\cY$, so let $f: \cX \rightarrow \cY$ be a smooth morphism from an affine scheme and let $I \in \Qcoh(\cX_{\liset})$ be arbitrary. We will show that if $i \geq -1$ then $\Ext^0(H^i( \L f^*C), I)=0$, which implies $f^* H^i(C) = H^i(\L f^*C) =0$ (the first equality is \cite[(1.9)]{HR17} and uses flatness of $f$). 

By assumption (3b), the morphisms  $\Phi_0$ and $\Phi_{-1}$ are isomorphisms. We show that $\Phi_1$ is injective. It follows from Corollary \ref{cor:ft2} and assumption (3a) that if $\Phi_1(o(f))=0$ then $o(f)=0$, so it suffices to show that every element of $\Ext^1(\L f^*\LL_{\cY/\cZ}, I)$ is equal to $o(f)$ for some diagram \eqref{eq:defdiagram}, or equivalently that the map $ob$ in \eqref{eq:special3} is surjective. This follows from the long exact sequence
\[
\rightarrow \Ext^1(\LL_{\cX/\cZ}, I) \xrightarrow{ob} \Ext^1(\L f^*\LL_{\cY/\cZ}, I) \rightarrow \Ext^2(\LL_{\cX/\cY}, I)\rightarrow
\]
 since $\LL_{\cX/\cY}=\Omega^1_{\cX/\cY}[0]$ is a locally free sheaf in degree 0.

Since $\Phi_1$ is injective and $\Phi_0$ and $\Phi_{-1}$ are isomorphisms, the long exact sequence of Ext groups for \eqref{eq:mydt} shows that $\Ext^i(\L f^*C, I)=0$ for every $i\leq 1$. By \cite[Tag~07AA]{tag} there is a spectral sequence whose second page is 
\[
\Ext^i(H^{-j}(\L f^*C), I) \implies \Ext^{i+j}(\L f^*(C), I).
\]
A priori we know $H^i(\L f^*C)=0$ for $i \geq 2$. By the above spectral sequence, the group $\Ext^0(H^1(\L f^*C), I)$ is equal to $\Ext^{-1}(\L f^*C, I)$ which vanishes for every $I$. This forces $H^1(\L f^*C)$ to vanish. Inductively applying the same argument to $\Ext^0(\L f^*C, I)$ and then $\Ext^1(\L f^*C, I)$ shows that $H^0(\L f^*C)$ and $H^{-1}(\L f^*C)$ vanish as well.

\end{proof}

\subsection{Moduli of sections}\label{sec:moduli-of-sections}

Consider a tower of algebraic stacks
\[
\cZ \rightarrow \cC \xrightarrow{p} \cM
\]
as in Section \ref{sec:intro}. There we defined the moduli of sections $\Sec_{\cM}({\cZ}/{\cC})$. By \cite[Thm~1.3]{HR19} and our assumption that $\cM$ is locally Noetherian, the stack $\Sec_{\cM}({\cZ}/{\cC})$ is also locally Noetherian.
The stack $\Sec_{\cM}({\cZ}/{\cC})$ has a universal curve $\cC_{\Sec_{\cM}(\cZ/\cC)}$ and a universal section $f_{\Sec_{\cM}(\cZ/\cC)}\in \Hom_{\cC}( \cC_{\Sec(\cZ)} ,\cZ)$ (we will omit the subscript on $f$ when possible).

Now suppose we have a tower of algebraic stacks
\[
\cZ \rightarrow \cW \rightarrow \cC \xrightarrow{p} \cM
\]
where $\cZ, \cC$, and $\cM$ are as before and $\cW \rightarrow \cM$ is locally finitely presented, quasi-separated, and has affine stabilizers. To simplify the notation, let $\SZ := \Sec_{\cM}({\cZ}/{\cC})$ and $\SW:=\Sec_{\cM}({\cW}/{\cC}) $. We have an induced map $\SZ \rightarrow \SW$, and over this map we have a canonical relative obstruction theory defined as follows. We have a morphism in $\Dqc(\cC_{\SZ})$ consisting of canonical morphisms of cotangent complexes:
\begin{equation}\label{eq:build1}
\L f^*\LL_{\cZ/\cW} \rightarrow \LL_{\cC_{\SZ}/\cC_{\SW}} \xleftarrow{\sim} p^*\LL_{\SZ/\SW}.
\end{equation}
Using the pair $(\omegabul_{\SZ}, tr_{\SZ})$ defined in Proposition \ref{prop:duality}, we may apply the adjunction-like morphism $a$ defined in Section \ref{sec:modification} to \eqref{eq:build1}, obtaining
\begin{equation}\label{eq:generalpot}
\phi_{\SZ/\SW} : \EE_{\SZ/\SW} \rightarrow \LL_{\SZ/\SW},  \quad \EE_{\SZ/\SW}:= \R p_*(\L f^*\LL_{\cZ/\cW} \otimes \omegabul_{\SZ}).
\end{equation}
For example, when $\cW=\cC$ we have $\SW = \cM$ and we obtain an obstruction theory on $\SZ$ relative to $\cM$. We refer the reader to \cite[Appendix~A]{CJW} for functoriality properties of $\Sec_{\cM}(\cZ/\cC)$ and the obstruction theories \eqref{eq:generalpot}.

The main theorem of this article is the following.
\begin{theorem}\label{thm:main}
The morphism \eqref{eq:generalpot} is an obstruction theory.
\end{theorem}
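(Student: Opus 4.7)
The plan is to verify condition (3) of Lemma \ref{lem:BF} by leveraging the universal property of $\Sec$ together with the adjunction-like map $a$ from Situation \ref{situation2}. First I would fix an affine scheme $\cX$ with a smooth morphism $\cX \to \SW$ and a diagram \eqref{eq:defdiagram} with $f\colon \cX \to \SZ$ and square-zero extension $q\colon \cX \hookrightarrow \cX'$ by $I \in \Qcoh(\cX_{\liset})$. By the very definition of $\Sec$, the morphism $f$ corresponds to a section $f^\sharp\colon \cC_\cX \to \cZ$ of $\cZ \to \cC$ over $\cC_\cX$, and similarly $g\colon \cX' \to \SW$ corresponds to $g^\sharp\colon \cC_{\cX'} \to \cW$. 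Since $p\colon \cC \to \cM$ is flat, the induced morphism $q^\sharp\colon \cC_\cX \hookrightarrow \cC_{\cX'}$ is a square-zero extension with ideal sheaf $p_\cX^*I$. This yields a tautological equivalence of Picard categories $\Def(f) \simeq \Def(f^\sharp)$.

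Next I would apply Corollary \ref{cor:ft2} on both sides, which produces obstruction classes $o(f) \in \Ext^1(\L f^* \LL_{\SZ/\SW}, I)$ and $o(f^\sharp) \in \Ext^1(\L (f^\sharp)^* \LL_{\cZ/\cW}, p_\cX^* I)$, together with isomorphisms
\[
\Def(f) \simeq \Estack{\L f^*\LL_{\SZ/\SW}}{I}, \qquad \Def(f^\sharp) \simeq \Estack{\L (f^\sharp)^*\LL_{\cZ/\cW}}{p_\cX^* I}
\]
whenever the respective obstruction vanishes. Invoking Lemmas \ref{lem:ft-functoriality1} and \ref{lem:ft-functoriality2} on the corresponding pullback/base change diagrams in $\Exal$, one checks that these identifications are compatible with $\Def(f) \simeq \Def(f^\sharp)$. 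In particular, on $\Ext^0$ and $\Ext^{-1}$ (once the obstruction vanishes) one obtains a canonical isomorphism
\[
\Ext^i(\L f^* \LL_{\SZ/\SW}, I) \xrightarrow{\sim} \Ext^i(\L (f^\sharp)^* \LL_{\cZ/\cW}, p_\cX^* I), \qquad i = 0, -1,
\]
and $o(f)$ matches $o(f^\sharp)$ under the induced map.

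The crucial step is then to show that these isomorphisms agree with $\Phi_i$ composed with the adjunction-like identification of Ext groups coming from the duality pair $(\omegabul_\cX, \tr_\cX)$. For this, I would take $\phi'\colon \L f_{\mathrm{univ}}^*\LL_{\cZ/\cW} \to p^*\LL_{\SZ/\SW}$ to be the canonical morphism \eqref{eq:build1} applied to the universal section on $\cC_{\SZ}$, so that by definition $\phi_{\SZ/\SW} = a(\phi')$. Proposition \ref{prop:duality}, combined with Lemma \ref{lem:properties-of-p}(4), places us in Situation \ref{situation2} with the squares $(\cC_\cX \to \cX, \cC_{\SZ} \to \SZ)$; moreover, since $\cX$ is an algebraic space, Example \ref{ex:situation} puts the left column into the stronger Situation \ref{situation}. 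Lemma \ref{lem:strange} (after taking derived global sections) then yields a commuting square
\[
\begin{tikzcd}
\Ext^i(\L f^*\EE_{\SZ/\SW}, I) \arrow[d, "\sim"'] & \Ext^i(\L f^*\LL_{\SZ/\SW}, I) \arrow[l, "\Phi_i"'] \arrow[d] \\
\Ext^i(\L (f^\sharp)^*\LL_{\cZ/\cW}, p_\cX^* I) & \Ext^i(p_\cX^*\L f^*\LL_{\SZ/\SW}, p_\cX^* I) \arrow[l]
\end{tikzcd}
\]
whose bottom row is induced by the cotangent morphism $\phi'$ after pullback via $m'$, and whose left vertical map is the adjunction-like isomorphism furnished by $a$ (note the top-left $\Ext$ agrees with $\Ext^i(\L f^*\EE_{\SZ/\SW}, I)$ by base change and by the isomorphism $m'^*\omegabul_{\SZ} \simeq \omegabul_\cX$ of Proposition \ref{prop:duality}(1)).

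Finally, I would identify the composition (right vertical) followed by (bottom horizontal) in this square with the isomorphism $\Ext^i(\L f^*\LL_{\SZ/\SW}, I) \simeq \Ext^i(\L(f^\sharp)^*\LL_{\cZ/\cW}, p_\cX^* I)$ derived in the second paragraph. This identification is the heart of the argument and requires Lemma \ref{lem:ft-functoriality2}: the cotangent morphism $\phi'$ realizes exactly the relationship between deformations of $f$ and deformations of $f^\sharp$ under the Fundamental Theorem. Once this compatibility is established, commutativity of the square combined with the isomorphism $\Def(f) \simeq \Def(f^\sharp)$ immediately gives that $\Phi_0, \Phi_{-1}$ are isomorphisms whenever $\Phi_1(o(f)) = 0$, and that $\Phi_1(o(f)) = 0$ precisely when $\Def(f)$ is nonempty. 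The main obstacle will be verifying the compatibility between the geometric identification $\Def(f) \simeq \Def(f^\sharp)$ (coming from the universal property of $\Sec$) and the algebraic identification produced by threading together Theorem \ref{thm:ft}, Lemma \ref{lem:ft-functoriality1}, Lemma \ref{lem:ft-functoriality2}, and Lemma \ref{lem:strange}; tracing through the explicit definitions of these isomorphisms on objects and arrows of the relevant Picard categories is what the appendices of the paper are largely devoted to.
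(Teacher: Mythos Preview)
Your proposal is correct and follows essentially the same route as the paper: verify condition (3) of Lemma \ref{lem:BF} by combining the tautological equivalence $\Def(f)\simeq\Def(f^\sharp)$ coming from the universal property of $\Sec$, the functoriality Lemmas \ref{lem:ft-functoriality1} and \ref{lem:ft-functoriality2} (assembled into a commuting cube of Picard categories), and Lemma \ref{lem:strange} to identify the resulting map with $\Phi_i$. The paper organizes this into three explicit Claims and is slightly more careful about one point you glossed over: for (3b) one must show $\Phi_0,\Phi_{-1}$ are isomorphisms for \emph{every} $I$, which the paper handles by taking the trivial extension (Example \ref{ex:triv-diagram}) so that the fibers in the $\Def$ diagram are actually kernels; also note that the smooth map in the setup should be $\cX\to\SZ$ (as in the paper's $m:T\to\SZ$), not $\cX\to\SW$, since smoothness of $f$ is used to ensure the cotangent complexes are pseudo-coherent and to replace $\Rhom^{\qc}$ by $\Rhom$.
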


\subsubsection{Proof of Theorem \ref{thm:main}}
We prove condition (3) of Lemma \ref{lem:BF}. To begin, fix a solid commuting diagram
\begin{equation}\label{eq:main1}
\begin{tikzcd}
T \arrow[r, "m"] \arrow[d] & \SZ\arrow[d]\\
T' \arrow[r] &\SW
\end{tikzcd}
\end{equation}
with $m$ a smooth morphism and $T \rightarrow T'$ a square-zero extension of affine schemes with ideal sheaf $I\in\Qcoh(T_{\liset})$. Let $\cC_T$ (resp. $\cC_{T'}$) denote the pullback of the universal curve to $T$ (resp. $T'$). We first observe that from the definition of the moduli stacks we have a commuting diagram of algebraic stacks
\begin{equation}\label{eq:ofspaces}
\begin{tikzcd}
&\cC_T \arrow[r, "f_T"] \arrow[dl, gray]& \cZ \arrow[r] & \cW \\
\cC_{T'}\arrow[urrr, gray] \arrow[d, equal]& \cC_T \arrow[u, equal] \arrow[d, "p"]\arrow[r, " m'"] & \cC_{\SZ} \arrow[d, "p"]\arrow[u, "f"'] \arrow[r] & \cC_{\SW} \arrow[u, "f_{\SW}"']\arrow[d, "p"]\\
\cC_{T'}  \arrow[d]&T \arrow[r, "m"] \arrow[dl, gray]& \SZ \arrow[r] & \SW\\
T' \arrow[urrr, gray]  &&&
\end{tikzcd}
\end{equation}

\begin{claim}[Step 1]\label{cl:ofexals} The diagram \eqref{eq:ofspaces} leads to a commuting diagram of Picard categories
\begin{equation}\label{eq:ofexals}
\begin{tikzcd}
\Def(f_T) \arrow[r] & \Exal_\cZ(\cC_T, p^*I) \arrow[r, "B"] & \Exal_\cW(\cC_T, p^*I) \\
\Def(m) \arrow[u, "\Psi"]\arrow[r] &\arrow[u, "B \circ E"] \Exal_{\SZ}(T, I) \arrow[r, "B"]& \Exal_{\SW}(T, I)\arrow[u, "B\circ E"]
\end{tikzcd}
\end{equation} 
where the arrows $B$ and $E$ are as in Lemmas \ref{lem:ft-functoriality1} and \ref{lem:ft-functoriality2}, the terms in the leftmost column are fibers of the top and bottom horizontal maps and $\Psi$ is an isomorphism.
\end{claim}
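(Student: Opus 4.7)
The plan has three steps: (A) verify that the right square of \eqref{eq:ofexals} commutes; (B) extract the left column as fibers of the horizontal arrows; and (C) prove that the induced morphism $\Psi$ is an equivalence by invoking the defining universal property of $\Sec_{\cM}(\cZ/\cC)$.

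For step (A), both vertical arrows are the composite $B \circ E$. Chasing an extension $T' \in \Exal_{\SZ}(T, I)$ around either route produces the same object of $\Exal_{\cW}(\cC_T, p^*I)$, namely $\cC_{T'}$ viewed over $\cW$ via the composite $\cC_{T'} \to \cC_{\SZ} \to \cZ \to \cW$. The only nontrivial compatibility is that applying the top $B$ (composition with $\cZ \to \cW$) commutes with the right $E$ (pullback along $\cC_T \to T$ through the fiber square involving $\cC_{\SW}$); this is immediate from the agreement of the two fiber squares in the middle row of \eqref{eq:ofspaces}, since the two descriptions $T' \times_{\SZ} \cC_{\SZ}$ and $T' \times_{\SW} \cC_{\SW}$ are canonically identified. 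For step (B), by Example \ref{ex:exal} the Picard category $\Def(m)$ is the fiber of the bottom horizontal $B$ over the section of $\Exal_{\SW}(T, I)$ determined by the boundary datum in \eqref{eq:main1}, and $\Def(f_T)$ is the fiber of the top horizontal $B$ over the corresponding section of $\Exal_{\cW}(\cC_T, p^*I)$. Step (A) guarantees that $B \circ E$ sends one distinguished section to the other, so the universal property of the fiber construction produces the map $\Psi$ filling in the left column.

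Step (C) is where the defining property of $\Sec$ enters: for every scheme $T'$ one has a natural equivalence of groupoids $\Hom(T', \SZ) \simeq \Hom_{\cC}(\cC_{T'}, \cZ)$. Unpacking the explicit description of $\Def(m)$ in Example \ref{ex:exal}, an object is a triple $(m', \epsilon, \delta)$ with $m' \colon T' \to \SZ$ a 1-morphism extending $m$ and $\epsilon$, $\delta$ compatibility 2-morphisms involving $m$ and the given composite $T' \to \SW$. The universal property of $\Sec$ converts $m'$ into a section $k \colon \cC_{T'} \to \cZ$ extending $f_T$, and converts $(\epsilon, \delta)$ into 2-morphisms realizing $k$ as an object of $\Def(f_T)$; the same analysis at the level of morphisms in the Picard categories gives full-faithfulness of $\Psi$, hence an equivalence. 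The main obstacle will be precisely this 2-categorical bookkeeping in step (C): one must track the particular 2-morphism data prescribed on the two sides and verify that the correspondence respects them. I anticipate no conceptual difficulty, only notational care; everything else reduces to functoriality properties of $B$, $E$, and the fiber-product construction.
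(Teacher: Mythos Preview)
Your proposal is correct and matches the paper's approach: steps (A) and (B) are exactly what the paper does, and for (C) the paper likewise invokes the defining universal property of $\Sec$, though it packages the argument by writing down the explicit inverse functor $(k,\epsilon,\delta)\mapsto (k_\delta,\epsilon,\mathrm{id})$ rather than arguing abstractly. One small wording issue: in your last sentence ``full-faithfulness, hence an equivalence'' is not literally valid without essential surjectivity, but since the universal property of $\Sec$ gives a bijection on objects as well, the intended argument goes through.
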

\begin{proof}
The right square in the diagram follows from the bottom two (fibered) squares of \eqref{eq:ofspaces} and the definitions of $B$ and $E$. Moreover, the element of $\Exal_{\SW}(T, I)$ defined by $m$ and its horizontal square in \eqref{eq:ofspaces} maps under $B\circ E$ to the element of $\Exal_{\cW}(\cC_T, p^*I)$ defined by $f_T$ and its horizontal square. By the definition \eqref{eq:fiber1} of $\Def$ we get the left square of \eqref{eq:ofexals}.

To prove that $\Psi$ is an isomorphism suffices to check \'etale-locally on $T$; i.e., it suffices to show that $\Psi$ induces an equivalence of categories $\Def(m) \rightarrow \Def(f_T)$. For this we construct an inverse functor. Let $(k, \epsilon, \delta)$ be an element of $\Def(f_T)$. We get an arrow $k_\delta: T' \rightarrow \SZ$ determined by $k$ and $\delta$, making the resulting triangle over $\SW$ strictly commutative. The 2-morphism $\epsilon$ determines a 2-morphism (also denoted $\epsilon$) from $m$ to the composition $T \rightarrow T' \xrightarrow{k_\delta} \SZ$. Hence our functor sends the object $(k, \epsilon, \delta)$ to the object $(k_\delta, \epsilon, id)$. We leave it to the reader to check that this is inverse to $\Psi$.
\end{proof}

\begin{claim}[Step 2]\label{cl:ofestacks} The diagram \eqref{eq:ofspaces} leads to a morphism of distinguished triangles
\begin{equation}\label{eq:ofestacks1}
\begin{tikzcd}[column sep=scriptsize]
\R p_*\Rhom({\L f_T^*\LL_{\cZ/\cW}},{p^*I}) \arrow[r] &[-7pt] \R p_*\Rhom({\LL_{\cC_T/\cZ}},{p^*I[1]}) \arrow[r] &[-5pt] \R p_*\Rhom({\LL_{\cC_T/\cW}},{p^*I[1])} \\
\Rhom({\L m^*\LL_{\SZ/\SW}},{I}) \arrow[u]\arrow[r] & \Rhom({\LL_{T/\SZ}},{I[1]}) \arrow[u]\arrow[r] & \Rhom({\LL_{T/\SW}},{I[1]})\arrow[u]
\end{tikzcd}
\end{equation}
where the leftmost vertical arrow has the property that there exists a composition
\begin{equation}\label{eq:ofestacks2}
\Rhom({\L m^*\LL_{\SZ/\SW}},{I}) \rightarrow \R p_*\Rhom({\L f_T^*\LL_{\cZ/\cW}},{p^*I}) \xrightarrow[\sim]{\exists} \Rhom(\L m^*(\R p_*\LL_{\cZ/\cW}\otimes \omegabul_{\SZ}), I)
\end{equation} equal to the map induced by $\phi_{\SZ/\SW}: \R p_*\LL_{\cZ/\cW}\otimes \omegabul_{\SZ}\rightarrow \LL_{\SZ/\SW}$. Applying the functor $ch \circ \tau_{\leq 0}\circ\R \Gamma$ to \eqref{eq:ofestacks1} yields a commuting diagram of Picard categories
\begin{equation}\label{eq:ofestacks}
\begin{tikzcd}[column sep=scriptsize]
\Estack{\L f_T^*\LL_{\cZ/\cW}}{p^*I} \arrow[r] &[-7pt] \Estack{\LL_{\cC_T/\cZ}}{p^*I[1]} \arrow[r, "A"] &[-5pt] \Estack{\LL_{\cC_T/\cW}}{p^*I[1]} \\
\Estack{\L m^*\LL_{\SZ/\SW}}{I} \arrow[u, "\Phi"]\arrow[r] & \Estack{\LL_{T/\SZ}}{I[1]} \arrow[u, "A \circ D^{-1} \circ C"]\arrow[r, "A"] & \Estack{\LL_{T/\SW}}{I[1]}\arrow[u, "A\circ D^{-1} \circ C"]
\end{tikzcd}
\end{equation}
where the arrows $A,$ $D$, and $C$ are defined as in Lemmas \ref{lem:ft-functoriality1} and \ref{lem:ft-functoriality2}, the terms in the leftmost column are the kernels of the top and bottom horizontal maps, and if $\Phi$ is an isomorphism then $\Phi_0$ and $\Phi_{-1}$ (defined in \eqref{eq:Phi}) are isomorphisms.
\end{claim}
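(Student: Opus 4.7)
The plan is to construct the morphism of distinguished triangles \eqref{eq:ofestacks1} and then apply $ch \circ \tau_{\leq 0}\circ \R\Gamma$ termwise. The two rows of \eqref{eq:ofestacks1} will be obtained by rotating the cotangent-complex distinguished triangles
$$\L f_T^*\LL_{\cZ/\cW} \to \LL_{\cC_T/\cW} \to \LL_{\cC_T/\cZ}, \qquad \L m^*\LL_{\SZ/\SW} \to \LL_{T/\SW} \to \LL_{T/\SZ},$$
and applying $\R p_*\Rhom(-, p^*I[1])$ and $\Rhom(-, I[1])$, respectively.

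For the middle and right vertical arrows, I will use that $\cC_T = T \times_{\SZ} \cC_{\SZ}$ is Cartesian and $p$ is flat (Lemma \ref{lem:properties-of-p}): flat base change yields $\LL_{\cC_T/\cC_{\SZ}} \simeq p^*\LL_{T/\SZ}$, and the cotangent triangle for $\cC_T \to \cC_{\SZ} \to \cZ$ produces a canonical map $\LL_{\cC_T/\cZ} \to p^*\LL_{T/\SZ}$. The middle vertical arrow will then be defined as the composition
$$\Rhom(\LL_{T/\SZ}, I[1]) \xrightarrow{\text{\eqref{eq:sheafy-pullback}}} \R p_*\Rhom(p^*\LL_{T/\SZ}, p^*I[1]) \to \R p_*\Rhom(\LL_{\cC_T/\cZ}, p^*I[1]),$$
with the right vertical arrow constructed analogously using $\cC_T \to \cC_{\SW} \to \cW$. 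Commutativity of the right square of \eqref{eq:ofestacks1} will follow from the naturality of the cotangent triangle and of \eqref{eq:sheafy-pullback}.

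The leftmost vertical arrow, the commutativity of the left square, and the factorization \eqref{eq:ofestacks2} will all come from a single application of Lemma \ref{lem:strange}. Setting $X = \L f^*\LL_{\cZ/\cW}$, $Y = \LL_{\SZ/\SW}$, $C = \omegabul_{\SZ}$, and letting $\phi' : X \to p^*Y$ be the morphism underlying \eqref{eq:build1}, we have $\phi_{\SZ/\SW} = a(\phi')$ by definition. The functoriality of $(\omegabul, \tr)$ under pullback (Proposition \ref{prop:duality}) together with the base-change and projection isomorphisms of Lemma \ref{lem:properties-of-p} places our data in Situation \ref{situation2} (cf.\ Lemma \ref{lem:technical}); the commuting rectangle \eqref{eq:strange} then both supplies the leftmost vertical arrow and yields commutativity of the left square, while its top-left equality is the isomorphism required in \eqref{eq:ofestacks2}.

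Finally, I will apply $ch \circ \tau_{\leq 0}\circ \R\Gamma$ termwise. By \cite[Sec~XVIII.1.4.11]{SGA4} (cf.\ \cite[Lem~2.29]{olsson-deformation}) this functor converts distinguished triangles (after truncation) to fiber sequences of Picard categories, so the first entry of each row becomes the kernel of the following horizontal map, producing \eqref{eq:ofestacks}. That the middle and right vertical arrows of \eqref{eq:ofestacks} match $A \circ D^{-1} \circ C$ is a matter of unwinding the definitions of $A$, $C$, $D$ from Lemmas \ref{lem:ft-functoriality1} and \ref{lem:ft-functoriality2} and comparing them with the constructions above. For the final assertion, recall from Example \ref{ex:ch} that $\pi_0$ and $\pi_{-1}$ of $\Estack{F}{G}$ compute $\Ext^0(F, G)$ and $\Ext^{-1}(F, G)$; if $\Phi$ is an isomorphism of Picard categories, the induced maps on these Ext groups are isomorphisms, and via \eqref{eq:ofestacks2} they coincide with $\Phi_0$ and $\Phi_{-1}$. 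The hard part will be the bookkeeping needed to apply Lemma \ref{lem:strange}, namely matching the various base-change, projection, and adjunction isomorphisms appearing in Situation \ref{situation2} against their concrete counterparts in \eqref{eq:ofestacks1}; no genuinely new input beyond what is already developed in Sections \ref{sec:formal} and \ref{sec:duality} is required.
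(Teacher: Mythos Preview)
Your proposal is correct and follows essentially the same route as the paper: build the morphism of cotangent-complex triangles, apply $\R p_*\Rhom(-,p^*I[1])$ together with \eqref{eq:sheafy-pullback}, invoke Lemma~\ref{lem:strange} (with the left column in Situation~\ref{situation} via Example~\ref{ex:situation}, since $T$ is a Noetherian affine scheme) for the leftmost column and the factorization \eqref{eq:ofestacks2}, then apply $ch\circ\tau_{\leq 0}\circ\R\Gamma$. One technical point you gloss over: Lemma~\ref{lem:strange} is stated in the closed monoidal category $\Dqc$, so it yields $\Rhom^{\qc}$ rather than $\Rhom$; the paper closes this gap by noting that all relevant cotangent complexes are pseudo-coherent (locally Noetherian stacks, locally finite-type maps, \cite[Tag~08PZ]{tag}) and then invoking \cite[Tag~0A6H]{tag} on the affine $T$---a small input not contained in Sections~\ref{sec:formal}--\ref{sec:duality}.
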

\begin{proof}
There is a morphism of distinguished triangles (see \cite[Lem~2.2.12]{webb-thesis})
\[
\begin{tikzcd}
\LL_{\cC_T/\cW} \arrow[r] \arrow[d] & \LL_{\cC_T/\cZ} \arrow[r] \arrow[d] & \L f_T^*\LL_{\cZ/\cW}[1] \arrow[r] \arrow[d] & {}\\
p^*\LL_{T/\SW} \arrow[r] & p^*\LL_{T/\SZ} \arrow[r] & p^*\L m^*\LL_{\SZ/\SW}[1] \arrow[r] & {}
\end{tikzcd}
\]
(note that the vertical arrows are only defined in the derived category). Applying\\
$\R p_*\Rhom^{\qc}(-, p^*I[1])$ to this diagram and composing with the morphism \eqref{eq:sheafy-pullback} yields \eqref{eq:ofestacks1}, but with $\Rhom^{\qc}$ in place of $\Rhom$. Now Lemma \ref{lem:strange} (applied in the context of Example \ref{ex:general}) produces the composition \eqref{eq:ofestacks2} that is isomorphic to the map induced by $\phi_{\SZ/\SW}$, but still with $\Rhom^{\qc}$ in place of $\Rhom$. To replace $\Rhom^{\qc}$ with $\Rhom$, we observe that
all stacks in \eqref{eq:ofspaces} are locally Noetherian and all morphisms are locally of finite type,\note{uses that $m$ is smooth, hence locally finitely presented.}\note{\url{http://indico.ictp.it/event/a0255/session/14/contribution/9/material/0/0.pdf } says that for $\LL_{X/Y}$ to be pseudo-coherent, you need $X \to Y$ to be locally of finite type nad $X, Y$ to be locally Noetherian schemes. My idea of a proof: reduce to the cotangent complex of a map $A \to B$ of Noetherian rings such that $B$ is a finite $A$-module. Now at least the differentials $\Omega_{B/A}$ are finitely generated, hence coherent since $B$ is Noetherian. } so by \cite[Tag~08PZ]{tag} all cotangent complexes are pseudo-coherent (in fact, in the derived category $\D^{-}_{Coh}$ of the appropriate topos),\note{to get from bounded above with coherent cohomology to pseudo-coherent, tag 08e8 requires noetherian hypotheses so i need to be careful. OK reduce to the base being an affine scheme and use 066E.} and we may make the replacement by \cite[Tag~0A6H]{tag} (recall that we are working on an affine scheme $T$). Now \eqref{eq:ofestacks} is produced by applying $ch\circ \tau_{\leq 0}\circ \R \Gamma$ and using \cite[Tag~08J6]{tag}, and arguing as at the end of the proof of Corollary \ref{cor:ft2}. The map $\Phi$ being an isomorphism implies $\Phi_0$ (resp. $\Phi_{-1}$) is an isomorphism by restricting $\Phi$ to isomorphism classes of objects (resp. automorphisms of the identity).
\end{proof}

\begin{claim}[Step 3]
Condition (3) in Lemma \ref{lem:BF} holds.
\end{claim}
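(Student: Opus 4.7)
The plan is to combine Claims 1 and 2 via the Fundamental Theorem (Theorem \ref{thm:ft}) together with its functoriality Lemmas \ref{lem:ft-functoriality1} and \ref{lem:ft-functoriality2}. Applying \eqref{eq:ft} vertex-by-vertex to \eqref{eq:ofexals} produces an equivalent diagram whose horizontal arrows match the $A$'s in \eqref{eq:ofestacks} by Lemma \ref{lem:ft-functoriality1}, and whose vertical arrows $B \circ E$ match $A \circ D^{-1} \circ C$ by combining Lemmas \ref{lem:ft-functoriality1} and \ref{lem:ft-functoriality2}. The computation at the end of the proof of Corollary \ref{cor:ft2} identifies the kernel of each horizontal arrow in \eqref{eq:ofestacks} with the corresponding Estack in the leftmost column. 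Hence whenever the fibers in the leftmost column of \eqref{eq:ofexals} are nonempty, Lemma \ref{lem:pic-a-kernel} identifies them with those kernels, and $\Psi$ is transported to $\Phi$.

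For condition (3b), I would fix an arbitrary $I \in \Qcoh(T_{\liset})$ and take the ambient diagram \eqref{eq:main1} to be the trivial square-zero extension of Example \ref{ex:triv-diagram}, with the obvious retractions providing the morphisms to $\SZ$ and $\SW$. The trivial section lies in $\Def(m)$, so the transport described above produces an isomorphism $\Phi$ of Picard categories, and the final assertion of Claim 2 implies that $\Phi_0$ and $\Phi_{-1}$ are isomorphisms.

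For condition (3a), one direction is immediate: if $\Def(m)$ is nonempty then $o(m)=0$ by Corollary \ref{cor:ft2}, so $\Phi_1(o(m))=0$. For the converse, the strategy is to chase obstruction classes through the morphism of long exact sequences obtained by applying $\R\Gamma$ and then taking cohomology of \eqref{eq:ofestacks1}. By the functoriality of \eqref{eq:ft} and the commutativity of \eqref{eq:ofexals}, the extension class $[T' \to \SW]$ maps to $[\cC_{T'} \to \cW]$ under $B\circ E$, so the corresponding classes in $\Ext^1(\LL_{T/\SW},I)$ and $\Ext^1(\LL_{\cC_T/\cW},p^*I)$ correspond under the vertical arrow. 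Applying the connecting map $ob$ of Corollary \ref{cor:ft2} along each row and invoking the naturality of connecting homomorphisms in a morphism of distinguished triangles, one deduces that $\Phi_1(o(m))$ coincides with $o(f_T)$ under the identification \eqref{eq:ofestacks2}. Therefore
\[
\Phi_1(o(m)) = 0 \iff o(f_T) = 0 \iff \Def(f_T) \ne \emptyset \iff \Def(m) \ne \emptyset,
\]
where the middle equivalence uses Corollary \ref{cor:ft2} and the last uses Claim 1.

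The main obstacle is the final identification $\Phi_1(o(m)) = o(f_T)$, which requires combining the functoriality proved in Appendix \ref{sec:ft-functoriality} with the compatibility between pullback and the adjunction-like morphism $a$ supplied by Lemma \ref{lem:strange}; this is precisely what links the appearance of $\L m^*\EE_{\SZ/\SW}$ on the Estack side of $\Phi$ with the $\R p_*\Rhom(\L f_T^*\LL_{\cZ/\cW}, p^*I)$ appearing in the top row of \eqref{eq:ofestacks1}.
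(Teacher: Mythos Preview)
Your proposal is correct and follows essentially the same route as the paper: build the commuting cube from \eqref{eq:ofexals} and \eqref{eq:ofestacks} via \eqref{eq:ft} and its functoriality, use the trivial extension of Example \ref{ex:triv-diagram} for (3b) so that the induced map of kernels identifies $\Psi$ with $\Phi$, and for (3a) chase the obstruction classes through the morphism of long exact sequences coming from \eqref{eq:ofestacks1} to see $\Phi_1(o(m))=o(f_T)$. One small point: your appeal to Lemma \ref{lem:pic-a-kernel} in the first paragraph is unnecessary and slightly misleading, since that lemma gives only a \emph{non-canonical} isomorphism and would not a priori transport $\Psi$ to $\Phi$; the reason the argument works is that in the trivial-extension case the section defining each fiber is the identity section, so the fibers $\Def(m)$ and $\Def(f_T)$ \emph{are} the kernels canonically---the paper makes exactly this observation.
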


\begin{proof}
We study the commuting cube formed by mapping the right square of \eqref{eq:ofestacks} (on the top floor) to the right square of \eqref{eq:ofexals} (on the ground) via \eqref{eq:ft} (vertical maps): 
\begin{equation}\label{eq:D}
  \begin{tikzcd}[row sep=tiny, column sep=tiny]
& \Estack{\LL_{\cC_T/\cZ}}{p^*I[1]}\arrow[rr] \arrow[dd] & [-5em]&  \Estack{\LL_{\cC_T/\cW}}{p^*I[1]}\arrow[dd]  \\
\Estack{\LL_{T/\SZ}}{I[1]} \arrow[rr, crossing over, ]\arrow[ur]  \arrow[dd]& & \Estack{\LL_{T/\SW}}{I[1]} \arrow[ur]\\
& \Exal_\cZ(\cC_T, p^*I)  \arrow[rr] & & \Exal_\cW(\cC_T, p^*I) \\
\Exal_{\SZ}(T, I) \arrow[rr]\arrow[ur] & & \Exal_{\SW}(T, I) \arrow[ur]\arrow[from=uu, crossing over]\\
\end{tikzcd}
\end{equation}
This cube commutes by Lemmas \ref{lem:ft-functoriality1} and \ref{lem:ft-functoriality2}. We note that Theorem \ref{thm:ft} applies because the maps $\cC_T \rightarrow \cZ$ and $\cC_T \rightarrow \cW$ are representable: for example, representability of $\cC_T \rightarrow \cZ$ follows from the fact that $m'$ is representable and \cite[Tag~04Y5]{tag}.

To prove (3a), restrict the diagram \eqref{eq:D} to isomorphism classes of objects. As in \eqref{eq:defob} we extend this diagram by the obstruction maps, obtaining a commutative diagram
\[
\begin{tikzcd}
\Exalset_{\cW}(\cC_T, p^*I) & \Ext^1(\LL_{\cC_T/\cW}, p^*I) \arrow[l, "\text{\eqref{eq:ft}}"', "\sim"] \arrow[r, "ob"] & \Ext^1(\L f_T^*\LL_{\cZ/\cW}, p^*I) \\
\Exalset_{\SW}(T, I) \arrow[u, "B\circ E"] & \Ext^1(\LL_{T/\SW}, I) \arrow[l, "\eqref{eq:ft}"', "\sim"] \arrow[u] \arrow[r, "ob"] & \Ext^1(\L m^*\LL_{\SZ/\SW}, I) \arrow[u, "\Phi_1'"]
\end{tikzcd}
\]
where the left square is a side of our cube and the right square is obtained by applying the derived global sections functor $\R \Gamma$ to \eqref{eq:ofestacks1} and then taking cohomology.\note{this uses $\R \Gamma \circ \R p_* = \R (\Gamma \circ p_*) = \R \Gamma$.} By defintion of $B\circ E$ and commutativity of the diagram, the map labeled $\Phi_1'$ sends $o(m)$ to $o(f_T)$. By \eqref{eq:ofestacks2} the map $\Phi_1'$ is quasi-isomorphic to $\Phi_1$, where $\Phi_1$ is defined as in Lemma \ref{lem:BF}. By Corollary \ref{cor:ft2}, the element $o(f_T)$ (resp $o(m)$) vanishes if and only if $\Defset(f_T)$ (resp $\Defset(m)$) is nonempty. Since the map $\Psi: \Defset(m)\rightarrow\Defset(f_T)$ from \eqref{eq:ofexals} is an isomorphism we see that (3a) holds.

To prove (3b), we may assume the diagram \eqref{eq:D} was formed from the trivial example of \eqref{eq:main1} (see Example \ref{ex:triv-diagram}). In this case the terms in the left column of \eqref{eq:ofexals} are kernels (not just fibers) of the horizontal maps, so \eqref{eq:D} induces the following commuting square of kernels:
\[
\begin{tikzcd}
\Estack{\L f_T^*\LL_{\cZ/\cW}}{p^*I} \arrow[r,"\sim"']  & \Def(f_T)\\
\Estack{\L m^*\LL_{\SZ/\SW}}{I} \arrow[u, "\Phi"] \arrow[r, "\sim"'] & \Def(m) \arrow[u, "\Psi"]
\end{tikzcd}
\]
The horizontal maps are induced by the instances of \eqref{eq:ft} in the diagram \eqref{eq:D} and they are isomorphisms because \eqref{eq:ft} is an isomorphism.
Since $\Psi$ is an isomorphism, $\Phi$ is an isomorphism as well.
\end{proof}

\appendix
\section{Descent theorems for lisse-\'etale sheaves on algebraic stacks}\label{app:descent}
In this section, we recall the unbounded cohomological descent theorem in \cite[Ex~2.2.5]{LO08} for quasi-coherent sheaves in the lisse-\'etale site of an algebraic stack (Proposition \ref{prop:descent-olsson}), and then we use it to prove a new descent theorem (Propositions \ref{prop:descent}) that is needed in this paper. In this section, if $\cX$ is an algebraic stack we use $\Le(\cX)$ to denote the lisse-\'etale site, and if $U$ is an algebraic space we use $\Et(U)$ to denote its small \'etale site (\cite[Tag~03ED]{tag}).

\subsection{Morphisms from \'etale to lisse-\'etale sites}\label{sec:cocontinuous-functor}
If $U$ is an algebraic space and $m: U \to \cX$ is a smooth morphism, there is an induced functor of sites $\Et(U) \to \Le(\cX)$ (also denoted $m$) that sends a scheme $V$ with an \'etale map $V \to U$ to the composition $V \to U \to \cX$. 

\begin{remark}\label{rmk:cocontinuous-functor}We make the following observations about the functor $m$:
\begin{enumerate}
\item \label{cocontinuous-functor1}The functor $m: \Et(U) \to \Le(\cX)$ is cocontinuous and hence induces a morphism of topoi $m: U_{\et} \to \cX_{\liset}$ by \cite[Tag~00XI]{tag}. The functor $m^{-1}: \cX_{\liset} \to U_{\et}$ is just restriction.
\item \label{cocontinuous-functor1.5} Since $m^{-1}$ is restriction, we have $m^{-1}\OO_{\cX} = \OO_U$ and $m^{-1} \cF = m^*\cF$ when $\cF$ is a sheaf of $\OO_\cX$-modules.
\item \label{cocontinuous-functor2}The functor $m: \Et(U) \to \Le(\cX)$ is also continuous, and hence $m^{-1}$ has a left adjoint by \cite[Tag~04BG]{tag}. Since $m$ commutes with fiber products and equalizers, the left adjoint is exact by \cite[Tag~04BH]{tag}. In particular $m^{-1}$ preserves injectives.
\end{enumerate}
Suppose we have the following commuting diagram of algebraic stacks where $V$ and $U$ are algebraic spaces and $m$ is smooth.
\[
\begin{tikzcd}
V \arrow[d, "m'"'] \arrow[r, "f'"] & U\arrow[d, "m"]\\
X \arrow[r, "f"] & \cX
\end{tikzcd}
\]
\begin{enumerate}[resume]
\item \label{cocontinuous-functor3}
If $f$ is representable and $V = U \times_{\cX} X$, then 
for $\cF \in X_{\liset}$ we have a canonical identification $f'_*m'^{-1}\cF = m^{-1}f_*\cF$, where $f'_*:V_{ \et} \to U_{\et}$ (resp. $f_*: X_{ \liset}\to \cX_{\liset}$) is the usual pushforward of \'etale (resp. lisse-\'etale) sheaves induced by a continuous functor of sites. (Note that $f_*$ may not have an exact left adjoint.\note{ and it may not preserve quasi-coherent sheaves}) Indeed, if $W$ is a scheme and $W \to U$ is \'etale, then we have
\[
(f'_*m'^{-1}\cF)(W) = \cF(W \times_U V) \quad \quad \quad \quad (m^{-1}f_*\cF)(W) = \cF(W \times_{\cX} X)
\]
but there is a natural identification of algebraic spaces $W \times_U V \simeq W \times_{\cX} X$.
\item\label{cocontinuous-functor4} If $f$ is smooth, then $f_*$ has an exact left adjoint and we let $f^*: \Mod{\OO_{\cX_{\liset}}} \to \Mod{\OO_{X_{\liset}}}$ be the induced pullback of $\OO$-modules. In this case, $f'^*m'^*\cF = m^*f^*\cF$ for $\cF \in \Qcoh(\cX_{\liset}).$ Indeed, by part \eqref{cocontinuous-functor1.5} above (since $f$ is representable) the functors $m^*$ and $m'^{*}f^*$ are just restriction, but $f'^*$ is the pullback functor from $\Qcoh(U_{\et})$ to $\Qcoh(U_{X,\et})$. Hence the desired equality holds by the Cartesian property of $\cF$. 
\end{enumerate}
\end{remark}

\subsection{The first descent theorem}\label{sec:descent-olsson} \label{sec:descent1}
In this section, we recall Lazslow-Olsson's theorem for unbounded cohomological descent for lisse-\'etale sheaves on an algebraic stack (Proposition \ref{prop:descent-olsson}).
To begin we recall the following general construction (which will be used multiple times in this appendix).
\begin{construction}\label{const:simplicial-topos}
Let $I$ be a category and let $\cC$ be a functor from $I$ to the 2-category of categories (see \cite[Tag~003N]{tag}); that is, for each $i \in I$ we have a category $\cC_i$, and for each morphism $\phi: i \to j$ in $I$ we have a functor $\phi^*_{\cC}: \cC_i \to \cC_j$ and these are compatible with compositions. We define a \emph{category of systems} $\cC_{total}$ whose objects are tuples $\cF := (\cF_i, \cF(\phi))$ with $\cF_i \in \cC_i$ and $\cF(\phi): \phi^*_{\cC} \cF_i \to \cF_j$, such that the following diagrams commute.\note{this is similar to [{cite SGA4 Vbis.1.2.1}]}
\[
\begin{tikzcd}
\phi^*_\cC \psi^*_{\cC} \cF_\ell \arrow[rr, "\cF(\psi \circ \phi)"] \arrow[dr, "\phi^*_{\cC} \cF(\psi)"'] && \cF_n\\
& \phi^*_{\cC} \cF_m \arrow[ur, "\cF(\phi)"']
\end{tikzcd}
\]
A morpism from $(\cF_i, \cF(\phi))$ to $(\cG_i, \cG(\phi))$ in $\cC_{total}$ is a collection of morphisms $\alpha_i: \cF_i \to \cG_i$ compatible with the $\cF(\phi)$ and $\cG(\phi)$. The \emph{category of Cartesian systems} $\cC^{cart}_{total}$ is the full subcategory of $\cC_{total}$ whose objects have the property that every $\cF(\phi)$ is an isomorphism.
\end{construction}
\begin{remark}\label{rmk:simplicial-topos-functor}
Suppose we are given two functors $\cC, \cD$ from $I$ to the 2-category of categories, and suppose we have functors $\Lambda_i: \cC_i \to \cD_i$ such that the squares
\[
\begin{tikzcd}
\cC_i \arrow[r, "\Lambda_i"] \arrow[d, "\phi^*_C"] & \cD_i \arrow[d, "\phi^*_D"] \\
\cC_j \arrow[r, "\Lambda_j"] & \cD_j
\end{tikzcd}
\]
 2-commute, and the 2-morphisms respect (vertical) compositions of squares. Then we have a functor $\Lambda: \cC_{total} \to \cD_{total}$ given by the rule $\Lambda(\cF_i, \cF(\phi)) = (\Lambda_i(\cF_i), \Lambda_j(\cF(\phi)))$.\note{could check this more careully. check}
\end{remark}

Let $\cX$ be an algebraic stack and let $U \to \cX$ be a smooth cover by an algebraic space. Let $U_\bullet$ be the simplicial algebraic space that is equal to the 0-coskeleton of $U \to \cX$. We apply Construction \ref{const:simplicial-topos} to the category $I := \Delta^+$, were $\Delta^+$ is the subcategory of the simplicial category $\Delta$ with the same objects but only the injective morphisms. For $i \in \Delta^+$ we set $\cC_i := U_{i, \et}$ and for $\phi: i \to j$ we let $\phi^*: U_{i, \et} \to U_{j, \et}$ be the usual inverse image functor for this morphism of topoi. The resulting category of systems is called the \textit{strictly simplicial topos} in \cite[Sec~2.1]{olsson-sheaves} and \cite[Ex~2.1.5]{LO08}, and we notate it $U^+_{\bullet, \et}$. The structure sheaves $\OO_{U_i}$ define a distinguished ring object $\OO_{U^+_{\bullet}}$ in $U^+_{\bullet, \et}$. A \emph{quasi-coherent sheaf} in $U^+_{\bullet, \et}$ is an $\OO_{U^+_{\bullet}}$-module $(\cF_i, \cF(\phi))$\note{you can define a ring object in any category, and a module in any category. unwinding the definitions, a system is an O-module if each $\cF_i$ is an O-module, and if the action maps $O_i \times \cF_i \to \cF_i$ fit into commuting squares with the transition maps $\phi^*\cF_j \to \cF_i$ (where $\phi^*$ is really $\phi^{-1}$)} such that each $\cF_i$ is in $\Qcoh(U_{i, \et})$ and the morphism $\phi^*\cF_i \otimes_{\phi^*\OO_{U_i}} \OO_{U_j} \to \cF_j$ induced by $\cF(\phi^*)$ is an isomorphism.
Observe that the category of quasi-coherent sheaves $\Qcoh(U_{\bullet, \et}^+)$ is equal to the category of Cartesian systems with $\cC_i = \Qcoh(U_{i, \et})$ and $\phi^*$ equal to the usual pullback of quasi-coherent sheaves. 

There is a functor $\varpi^*: \Mod{\OO_{\cX_{\liset}}} \to \Mod{\OO_{U^+_{\bullet, \et}}}$ given as follows: for $\cF \in \cX_{\liset}$ set $(\varpi^*\cF)_i = m_i^{-1} \cF \otimes_{m_i^{-1}\OO_{\cX}} \OO_{U_i}$ where $m_i^{-1}: \cX_{\liset} \to U_{i, \et}$ is defined using the projection $U_i \to \cX$ and Remark \ref{rmk:cocontinuous-functor}.\ref{cocontinuous-functor1}, and let $\cF(\phi)$ be the identity for each $\phi$. Note that $\varpi^*$ is exact and sends quasi-coherent sheaves to quasi-coherent sheaves.\note{this is maybe not completely obvious from what I've said so far?? but Martin proves it all.}
The following proposition is due to Laszo-Olsson.

\begin{proposition}[Laszlo-Olsson]\label{prop:descent-olsson}
The morphism $\varpi^*: \Qcoh(\cX_{\liset}) \to \Qcoh(U^+_{\bullet, \et})$ is an exact equivalence of categories. We use $\varpi_*$ to denote the quasi-inverse. Moreover, $\varpi^*: \Dqc(\cX_{\liset}) \to \Dqc(U^+_{\bullet, \et})$ is an equivalence, and we use $\R \varpi_*$ to denote the quasi-inverse.
\end{proposition}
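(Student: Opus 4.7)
The plan is to separate the statement into an abelian part and a derived part, reducing each to a descent result in the literature.

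For the equivalence $\varpi^*: \Qcoh(\cX_{\liset}) \to \Qcoh(U^+_{\bullet, \et})$, the first step is to observe that $\varpi^*$ is essentially restriction along the smooth cover. By Remark~\ref{rmk:cocontinuous-functor}(\ref{cocontinuous-functor1.5}) the functor $m_i^{-1}$ identifies $m_i^{-1}\OO_{\cX}$ with $\OO_{U_i}$, so the tensor product in the definition of $\varpi^*$ is trivial and $(\varpi^*\cF)_i = m_i^*\cF$; in particular $\varpi^*$ is exact. To construct the quasi-inverse, given a Cartesian system $(\cF_i, \cF(\phi))$ I would read off from $\cF_0$, together with the transition isomorphisms associated to the two face maps $[0] \rightrightarrows [1]$ and the cocycle condition on $[2]$, a descent datum for $\cF_0$ along the smooth cover $U \to \cX$. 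Faithfully flat descent for quasi-coherent sheaves on algebraic stacks (as in \cite[Prop~9.2.16]{olsson-book}) then yields a unique sheaf on $\cX_{\liset}$, and naturality gives a functor which one checks is quasi-inverse to $\varpi^*$; the unit and counit being isomorphisms amounts to the descent equivalence on cohomological degree zero.

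For the derived equivalence, the substance of the argument is \cite[Ex~2.2.5]{LO08}. The functor $\varpi^*$ is $t$-exact for the natural $t$-structures, and the abelian equivalence just proved shows it induces an equivalence on hearts. Over bounded-below complexes, full faithfulness and essential surjectivity propagate by induction on the range of nontrivial cohomology using the exact triangles $\tau_{\leq n-1}\cF \to \tau_{\leq n}\cF \to H^n(\cF)[-n]$ together with a convergent hypercohomology spectral sequence. Extending the equivalence to all of $\Dqc$ then requires that both derived categories be determined by their bounded-below truncations; this is the content of unbounded cohomological descent as developed in \cite[Sec~2]{LO08}, and I would cite it directly.

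The hard part is the passage from bounded to unbounded complexes. For bounded complexes everything reduces to smooth descent for $\Qcoh$ plus a convergent spectral sequence, but the unbounded case requires the Laszlo--Olsson machinery, which proceeds by verifying that derived pushforward from the strictly simplicial topos $U^+_{\bullet,\et}$ computes derived global sections correctly without any boundedness assumption. This is where the specific features of the lisse-\'etale site and the fact that $U_\bullet$ is $0$-coskeletal with each $U_i$ an algebraic space intervene, and it is for this reason that we simply attribute the derived half of the statement to \cite{LO08} rather than reproving it.
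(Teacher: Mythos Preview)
Your proposal is correct and follows the same line as the paper: both treat this proposition as an attribution to the literature rather than an original argument, deferring the $\Qcoh$ equivalence to Olsson's book and the $\Dqc$ equivalence to \cite[Ex~2.2.5]{LO08}. Your expanded sketch of the abelian case (reading off a descent datum from the degree $\leq 2$ part of the Cartesian system) is essentially what \cite[Prop~9.2.13]{olsson-book} does, and your outline for the derived case is a fair summary of the Laszlo--Olsson strategy.

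There is one point the paper addresses that you do not. Both \cite{olsson-sheaves} and \cite{LO08} carry a standing assumption that $\cX$ is quasi-separated, whereas the present paper works with arbitrary algebraic stacks (in the conventions of \cite[Tag~0260]{tag}). The paper therefore explains why the quasi-separated hypothesis can be dropped: the relevant abstract descent theorem \cite[Thm~2.2.3]{LO08} appears as \cite[Tag~0D7V]{tag} without that hypothesis, and the specific inputs from \cite{olsson-sheaves} (Proposition~4.4, Lemma~4.5, Lemma~4.8) do not use it either. Since you invoke \cite{LO08} directly, your argument as written only covers the quasi-separated case; to match the paper's generality you should add this observation.
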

\begin{remark}
The equivalence of categories of quasi-coherent sheaves is proved in \cite[Prop~9.2.13]{olsson-book}. The equivalence of unbounded derived categories is proved in \cite[Ex~2.2.5]{LO08} (using \cite[Thm~6.14]{olsson-sheaves}) under the assumption that $\cX$ is quasi-separated (a standing assumption for both \cite{olsson-sheaves} and \cite{LO08}). This assumption is not needed for Proposition \ref{prop:descent-olsson}. Indeed, \cite[Thm~2.2.3]{LO08} appears as \cite[Tag~0D7V]{tag} without the quasi-separated hypothesis, and one may check directly that the necessary portions of \cite{olsson-sheaves} (namely Proposition 4.4, Lemma 4.5, and Lemma 4.8) do not use this hypothesis.\note{Note HR17 doesn't assume quasi-separated. (Quasi-separated is used in the proof of Lemma 6.5(i) in olsson-sheaves, for example.)}
\end{remark}
\begin{remark}\label{rmk:functorial-varpi}

The equivalences $(\varpi^*, \varpi_*)$ are functorial as follows.
Let $X \to \cX$ be a smooth morphism of algebraic stacks (inducing a morphism of lisse-\'etale topoi), let $V \to X$ be a smooth surjective morphism from a scheme $V$, and let $V \to U$ be a morphism commuting with maps to $\cX$. 
It follows from Remark \ref{rmk:cocontinuous-functor}.\ref{cocontinuous-functor4} that there is an identification  $f^*_{\bullet,\et} \varpi^*\simeq \varpi^* f^*$ (where $f^*_{\bullet, \et}$ is given by $f^*_i$ at level $i$), and since $\varpi^*$ is an equivalence we also have $\varpi_* f^*_{\bullet,\et} \simeq f^* \varpi_*$. 

\end{remark}

\subsection{The second descent theorem: hypercovers} In this section we prove an unbounded cohomological descent theorem in the lisse-etale topology for very smooth hypercovers of algebraic stacks (Proposition \ref{prop:descent}).

\subsubsection{Very smooth hypercovers}
Recall that if $\cU \to \cX$ and $\cV \to \cX$ are representable morphisms of algebraic stacks, then the category $\Hom_{\cX}(\cU, \cV)$ is isomorphic to a set.

\begin{definition}
The \emph{enlarged smooth site} $\Es(\cX)$ of $\cX$ is the category with objects given by morphisms $f: \cU \to \cX$ where $\cU$ is an algebraic stack and $f$ is smooth and representable, and with arrows from $\cU \to \cX$ to $\cV \to \cX$ given by the set $\Hom_{\cX}(\cU, \cV)$. A covering is a set of smooth maps $\{\cU_i \to \cU\}_{i \in I}$ that are jointly surjective.
\end{definition}

\begin{remark}
The site $\Es(\cX)$ contains $id: \cX \to \cX$ as the final object.
\end{remark}

\begin{remark}\label{rmk:es-representable}
The morphisms in $\Es(\cX)$ are all representable.\note{because: representable iff faithful (04y5), and if $U \to \cX$ is injective on morphisms and it factors as $U \to V \to \cX$ then $U \to V$ is also injective on morphisms.}
\end{remark}

\begin{definition}
A \emph{smooth hypercover} of $\cX$ is a simplicial object $X_\bullet$ in $\Es(\cX)$ such that
\begin{enumerate}
\item $X_0 \to \cX$ is surjective (note that it will also be smooth)
\item $X_{n+1} \to (\mathrm{cosk}_n\mathrm{sk}_n X_\bullet)_{n+1}$ is smooth and surjective for $n\geq 0$.
\end{enumerate}
\end{definition}

\begin{remark}\label{rmk:agrees-with-fppf}
A smooth hypercover of $\cX$ is a hypercover of the final object in $\Es(\cX)$ in the sense of \cite[Tag~01G5]{tag}.\note{The stacks project tag defines a hypercover to be a simplicial object in the category of semi-representable objects. Martin says that semi-representable objects are a kind of hack to make sure that arbitrary disjoint unions are allowed in your category (your category here is the site). I don't need to worry about that because my site has arbitrary disjoint unions. At any rate, my statement here is true: a single object of the site is also a semi-representable object of the site. Also, if I use a stacks project tag that produces a semi-representable cover (like 0DAV), I can just take the union in my site and get an actual representble hypercover like I've defined.}
Moreover, if $\cX$ is an algebraic space, then a smooth hypercover of $\cX$ is also an fppf hypercover in the sense of \cite[Tag~0DH4]{tag}.\note{This tag has a separate requirement for (2). If it didn't, perhaps the meaning would be to require $U_1 \to U_0 \times_S U_0$ to be a covering, but that's not what you want: you want $U_1 \to U_0 \times_X U_0$. For larger $n$ this misunderstanding won't matter.}
\end{remark}

\begin{definition}\label{def:very-smooth-hypercover}
A \emph{very smooth hypercover} of $\cX$ is a smooth hypercover $X_\bullet$ such that every degeneracy and face map $X_i \to X_j$ is smooth.
\end{definition}


If $X_\bullet$ is a smooth hypercover of $\cX$ and $f: \cY \to \cX$ is a morphism of algebraic stacks, we can pullback $X_\bullet$ to a simplicial object $Y_\bullet$ in $\Es(\cY)$: define $Y_i = X_i \times_{\cX} \cY$.



\begin{remark}\label{rem:hyper-pullback}
If $\cY \to \cX$ is a morphism of algebraic stacks and $X_\bullet$ is a (very) smooth hypercover of $\cX$, then $Y_\bullet$ is a (very) smooth hypercover of $\cY$. This follows from \cite[Tag~0DAZ]{tag}.
\end{remark}

\begin{remark}\label{rmk:hyper-affine}
From \cite[Tag~0DEQ]{tag} and the proof of \cite[Tag~0DAV]{tag} it follows that if $\cX$ is an algebraic stack then a very smooth hypercover of $\cX$ exists. In fact, we may take $X_i$ to be a disjoint union of affine schemes.
\end{remark}

\begin{remark}\label{rem:hyper-double-affine}
Let $\cX \to \cY$ be a morphism of algebraic stacks. Then we can find very smooth hypercovers $X_\bullet \to \cX$ and $Y_\bullet \to \cY$ with $X_i$ and $Y_i$ disjoint unions of affine schemes, with a morphism $X_\bullet \to Y_\bullet$ commuting with the augmentations and the given morphism $\cX \to \cY$. This follows from analyzing the construction of $X_\bullet$ and $Y_\bullet$ in \cite[0DAV]{tag}, using the fact that the functors $\cosk_n$ are finite limits and hence commute with pullback (see the proof of \cite[Tag~0DAZ]{tag}).
\end{remark}

\subsubsection{The lisse-etale topos of a very smooth hypercover}\label{sec:descent-topos}

Recall that if $\cX \to \cY$ is a smooth morphism of algebraic stacks then there is a morphism of sites $\Le{\cX} \to \Le{\cY}$ (see e.g. \cite[Tag~00X1]{tag} and \cite[Sec~3.3]{olsson-sheaves}), and in fact a morphism of ringed topoi $(\cX_{\liset}, \OO_{\cX}) \to (\cY_{\liset}, \OO_{\cY})$.
We follow \cite[Tag~09WB]{tag} by defining the category of sites to be the category whose objects are sites and whose morphisms are morphsisms of sites. If $\cC_\bullet$ is a simplicial object in this category, then for each morphism $\varphi: [i] \to [j]$ of the simplicial category $\Delta$ we have a morphism of sites $f_\varphi: \cC_i \to \cC_j$.

\begin{definition}\label{def:le-site}
Let $X_\bullet$ be a very smooth hypercover of $\cX$. We construct an associated site $\Le(X_{\bullet})$ as follows: let $\cC_\bullet$ be the simplicial object in the category of sites with $\cC_i := \Le(X_{i})$ and $f_\varphi$ equal to the given morphism of sites (it is important that all the face and degeneracy maps are smooth). Define $\Le(X_{\bullet})$ to be the site $\cC_{total}$ in \cite[Tag~09WC]{tag}, and use $X_{\bullet, \liset}$ to denote the corresponding topos.
\end{definition}

\begin{remark}\label{rmk:systems}
By \cite[Tag~09WF]{tag}, a sheaf on $\Le(X_{\bullet})$ is given by a system $(\cF_i, \cF(\varphi))$ where $\cF_i$ is a sheaf on $\Le(X_{i})$ and $\cF(\varphi): f_{\varphi}^{-1}\cF_i \to \cF_j$ are compatible morphisms. 
\end{remark}

Using Remark \ref{rmk:systems}, define a sheaf $\OO_{X_{\bullet, \liset}}$ on $X_{\bullet, \liset}$ to be the sheaf equal to $\OO_{X_i}$ on $X_i$ with transition maps induced by the morphisms of ringed topoi already given. This makes $X_{\bullet, \liset}$ a ringed site. An $\OO_{X_{\bullet, \liset}}$-module $\cF$ on $X_{\bullet, \liset}$ is quasi-coherent if for each $i$ the sheaf $\cF_i$ is a quasicoherent $\OO_{X_i}$-module and if for each $\varphi: [i] \to [j]$ the induced maps
\[
f^{-1}_\varphi \cF_i \otimes_{f^{-1}\OO_{X_i}} \OO_{X_j} \to \cF_j
\]
are isomorphisms.

\subsubsection{The descent theorem}
For $X_\bullet$ a very smooth hypercover of $\cX$, let $a_i: X_i \to \cX$ denote the given (smooth) morphism of algebraic stacks. 

The morphism $X_0 \to \cX$ induces an augmentation of $\Le(X_{\bullet})$ towards $\Le(\cX)$ in the sense of \cite[Tag~0D6Z]{tag}. By \cite[Tag~0D70]{tag} we get a morphism of topoi 
\begin{equation}\label{eq:defa}
a: X_{\bullet, \liset} \to \cX_{\liset}\end{equation}
such that $a^{-1}\cF$ is given by the system with $(a^{-1}\cF)_i := a_i^{-1}\cF$ and the natural transition maps (they are all isomorphisms), and $a_*\cG$ 
is given by the equalizer of the two maps $a_{0*}\cG_0 \to a_{1*}\cG_1$. 

Using the maps $a_i^{-1}\OO_{\cX} \to \OO_{X_i}$, we get a morphism $a^{-1}\OO_{\cX} \to \OO_{X_{\bullet, \liset}}$ that makes $a$ a morphism of ringed topoi. 
Define $a^*: \Mod{\OO_{\cX}} \to \Mod{\OO_{X_{\bullet, \liset}}}$ by 
\[
a^*\cF := a^{-1} \cF \otimes_{a^{-1}\OO_{\cX}} \OO_{X_{\bullet, \liset}}
\]
It is clear that $a^*$ is exact and sends $\Qcoh(\cX_{\liset})$ to $\Qcoh(X_{\bullet, \liset})$.

\begin{proposition}\label{prop:descent}
Let $X_\bullet \to \cX$ be a very smooth hypercover. Then \begin{equation}\label{eq:descent1}a^*:\Qcoh(\cX_{\liset})\to \Qcoh(X_{\bullet, \liset})
\end{equation}
is an equivalence of categories with quasi-inverse $a_*$. Moreover, the functors $\R a_*$ and $a^*$ are inverse equivalences of $\Dqc(\cX_{\liset})$ and $\Dqc(X_{\bullet, \liset})$.
\end{proposition}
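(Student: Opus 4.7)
My plan is to bootstrap from Proposition \ref{prop:descent-olsson} by refining to a hypercover by algebraic spaces, then using the hypercover structure of $X_\bullet$ to promote the descent. The key observation is that an object of $\Qcoh(X_{\bullet, \liset})$, by the built-in Cartesian condition, is determined by its restriction to the strictly simplicial subcategory indexed by $\Delta^+$; in fact, it is determined by $\cF_0 \in \Qcoh(X_{0, \liset})$ together with a smooth descent datum encoded by $\cF_1$ and the Cartesian isomorphism. The cocycle condition for this descent datum follows from the level-two data $\cF_2$, making essential use of the smooth surjectivity of $X_2 \to (\cosk_1 \sk_1 X_\bullet)_2$ from the definition of a smooth hypercover.

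To make this precise, I would choose a smooth surjection $U \to X_0$ from a disjoint union of affine schemes using Remark \ref{rmk:hyper-affine}. Then $U \to \cX$ is smooth surjective and, by Proposition \ref{prop:descent-olsson}, $\Qcoh(\cX_{\liset}) \simeq \Qcoh(U^+_{\bullet, \et})$, where $U^+_{\bullet, \et}$ is the strictly simplicial \'etale topos of the $0$-coskeleton of $U/\cX$. The descent datum on $\cF_0$ pulls back to $U$ and its iterated self-products via Remark \ref{rmk:cocontinuous-functor}.\ref{cocontinuous-functor4}, using smooth surjectivity of the base change $X_1 \times_{X_0 \times_\cX X_0} (U \times_\cX U) \to U \times_\cX U$ to transfer the Cartesian isomorphism. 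Composing this with the equivalence of Proposition \ref{prop:descent-olsson} produces the inverse $a_*$ of $a^*$ at the quasi-coherent level; full faithfulness of $a^*$ is immediate from smooth descent along $X_0 \to \cX$.

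For the unbounded derived category statement, I would follow the template of \cite[Ex~2.2.5]{LO08}: the quasi-coherent equivalence, combined with exactness of $a^{-1}$ (and its analog at each level by Remark \ref{rmk:cocontinuous-functor}.\ref{cocontinuous-functor2}) and standard cohomological descent from the Stacks Project (e.g., along the lines of \cite[Tag~0D9P]{tag}), extends to an equivalence of $\Dqc$ categories; the key point is that $K$-injective or $K$-flat resolutions compatible with the simplicial structure exist, and the truncation/hypercohomology machinery compares the simplicial and original settings. The principal technical obstacle throughout will be the careful verification of the cocycle condition for the transferred descent datum; this is a diagram chase across the various sites involved, but the very-smoothness hypothesis together with the hypercover conditions ensure all intermediate morphisms in the chase are smooth surjective, which is precisely what makes them amenable to Proposition \ref{prop:descent-olsson}.
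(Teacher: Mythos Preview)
Your quasi-coherent argument has a real gap at the step you call ``transferring the Cartesian isomorphism.'' The isomorphism $d_0^*\cF_0\simeq d_1^*\cF_0$ lives on $X_1$, and to descend it along the smooth surjection $X_1\to X_0\times_\cX X_0$ you must check that its two pullbacks to $X_1\times_{X_0\times_\cX X_0}X_1$ agree. This ``bigon'' fiber product is not $X_2$, nor is it $(\cosk_1\sk_1 X_\bullet)_2$ (that is the space of triangles, a different limit), so the hypercover surjectivity you invoke does not directly apply. Extracting the needed equality from the simplicial compatibilities of a Cartesian system is possible, but it is the substance of the argument, not a routine diagram chase; it is essentially the content of \cite[Tag~0DHD]{tag} for hypercovers of algebraic spaces. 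The paper avoids this by a bisimplicial reduction: it forms $\Qcoh((X_\bullet\times_\cX U_\bullet^+)_{\et})$ indexed by $\Delta\times\Delta^+$, then reads it in one direction as levelwise Proposition~\ref{prop:descent-olsson} (giving $\Qcoh(X_{\bullet,\liset})$) and in the other as levelwise \cite[Tag~0DHD]{tag} applied to the pulled-back hypercover over each algebraic space $U_i$ (giving $\Qcoh(U^+_{\bullet,\et})$). The square closes against Proposition~\ref{prop:descent-olsson} for $\cX$, and no descent datum is ever manipulated by hand.

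Your derived-category paragraph is too vague to count as a proof. The paper does not imitate \cite[Ex~2.2.5]{LO08}; it applies \cite[Tag~0D7V]{tag}, and the one nontrivial hypothesis is that $\cF\to\R a_*a^*\cF$ is an isomorphism for quasi-coherent $\cF$, i.e.\ $\R^n a_*a^*\cF=0$ for $n>0$. This is where the work is: one identifies $\R^n a_*a^*\cF$ as the sheafification of $U\mapsto H^n(U_{\bullet,\liset},m_\bullet^*a^*\cF)$ for smooth $U\to\cX$, passes to \'etale cohomology of the simplicial space via Remark~\ref{rmk:cocontinuous-functor}, and reduces to the vanishing $\R^n a''_*a''^*(m^*\cF)=0$ for the pulled-back hypercover of the algebraic space $U$, which is \cite[Tag~0DHE]{tag}. ``$K$-injective resolutions compatible with the simplicial structure'' and Tag~0D9P play no role.
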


\begin{proof}
We first show that $a^*$ is an equivalence of categories of quasi-coherent sheaves with quasi-inverse $a_*$. Let $U \to \cX$ be a smooth map from an algebraic space $U$, and let $U^+_{\bullet, \et}$ be the strictly simplicial \'etale topos defined in Section \ref{sec:descent1}.
We apply Construction \ref{const:simplicial-topos} to the category $I=\Delta \times \Delta^+$. For $(i, j) \in \Delta \times \Delta^+$ we set $\cC_{i,j} = \Qcoh\big( (X_i \times_{\cX} U_j)_{\et}\big)$ (observe that the fiber product is an algebraic space) and we let $\phi^*: (X_i \times_{\cX} U_j)_{\et} \to (X_k \times_{\cX} U_\ell)_{\et}$ be the usual pullback of quasi-coherent sheaves. Let $\Qcoh((X_\bullet \times_\cX U_\bullet^+)_{\et} )$ denote the resulting category of  Cartesian systems.

Let $U_{X_i} = X_i \times_{\cX} U$. By viewing $\Qcoh(X_{\bullet, \liset})$ and $\Qcoh((X_\bullet \times_\cX U_\bullet^+)_{\et} )$ both as categories of systems with $I=\Delta$, we define functors 
\[
\begin{tikzcd}
\Qcoh(X_{\bullet, \liset}) \arrow[r, shift left=.2em, "\varpi_\bullet^*"] & \arrow[l, shift left=.2em, "\varpi_{\bullet,*}"] \Qcoh((X_\bullet \times_\cX U_\bullet^+)_{\et} )
\end{tikzcd}
\]
induced via Remark \ref{rmk:simplicial-topos-functor} by the inverse equivalences
\[
\begin{tikzcd}
\Qcoh(X_{i, \liset}) \arrow[r, shift left=.2em, "\varpi^*"] & \arrow[l, shift left=.2em, "\varpi_*"] \Qcoh(U^+_{X_i, \bullet, \et} )
\end{tikzcd}
\]
of Proposition \ref{prop:descent-olsson}. The rules $\varpi^*_\bullet$ and $\varpi_{*, \bullet}$ are indeed functors of categories of systems by Remark \ref{rmk:functorial-varpi}, and one checks that they are inverse equivalences.\note{we need to show that the square in this diagram
\[
\begin{tikzcd}[ampersand replacement=\&]
\phi^*\cF_i \arrow[r] \arrow[d] \& \phi^*\varpi_*\varpi^*\cF_i \arrow[r, equal] \arrow[d]\& \varpi_*\varpi^*\phi^*\cF_i \arrow[dl]\\
\cF_j \arrow[r] \& \varpi_*\varpi^*\cF_j
\end{tikzcd}
\]
commutes. the outer one does by naturality of $1 \to \varpi_*\varpi^*$ and the triangle does by definition of $\varpi_*\varpi^*\cF$ (how you apply a functor to the morphism-part of the $\cF$ data)}

Similarly, let $X_{U_i, \bullet}$ be the pullback of the hypercover $X_\bullet \to \cX$ to $U_i$ as in Remark \ref{rem:hyper-pullback}. By viewing $U^+_{\bullet, \et}$ and $\Qcoh((X_\bullet \times_\cX U_\bullet^+)_{\et} )$ as categories of systems with $I=\Delta^+$ we define functors \[
\begin{tikzcd}
\Qcoh(U^+_{\bullet, \et}) \arrow[r, shift left=.2em, "a_\bullet^*"] & \arrow[l, shift left=.2em, "a_{\bullet,*}"] \Qcoh((X_\bullet \times_\cX U_\bullet^+)_{\et} )
\end{tikzcd}
\]
induced via Remark \ref{rmk:simplicial-topos-functor} by the functors 
\begin{equation}\label{eq:descent100}
\begin{tikzcd}
\Qcoh(U_{i,\et}) \arrow[r, shift left=.2em, "a^*"] & \arrow[l, shift left=.2em, "a_*"] \Qcoh(X_{U_i, \bullet, \et})
\end{tikzcd}
\end{equation}
defined in analogy with \eqref{eq:defa} above. The functors $a_*$ and $a^*$ in \eqref{eq:descent100} are inverse equivalences by \cite[Tag~0DHD]{tag}.

We have constructed a diagram
\[
\begin{tikzcd}
\Qcoh(X_{\bullet, \liset}) \arrow[r, shift left = .2em, "\varpi^*_\bullet"] \arrow[d,  shift left = .2em,"a_*"] & \Qcoh((X_\bullet \times_\cX U_\bullet^+)_{\et} ) \arrow[l,  shift left = .2em,"\varpi_{\bullet, *}"] \arrow[d,  shift left = .2em,"a_{\bullet, *}"]\\
\Qcoh(\cX_{\liset}) \arrow[u,  shift left = .2em,"a^*"] \arrow[r, shift left = .2em, "\varpi^*"] & \Qcoh(U^+_{\bullet, \et}) \arrow[l, shift left = .2em, "\varpi_*"] \arrow[u, shift left = .2em, "a_{\bullet}^*"]
\end{tikzcd}
\]
where three of the four pairs of morphisms are known to be inverse equivalences. It follows from Remark \ref{rmk:cocontinuous-functor}.\ref{cocontinuous-functor4} that $\varpi^*_\bullet a^* = a_{\bullet}^* \varpi^*$,\note{since $\varpi^*$ is a kind of restriction, inverse image equals pullback. See the rmk:functorial-varpi.} so $a^*$ is an equivalence with inverse $\varpi_*a_{\bullet, *}\varpi^*_{\bullet}$. Using Remark \ref{rmk:cocontinuous-functor}.\ref{cocontinuous-functor4} and the fact that $\varpi^*$ is exact one can check that $a_{\bullet,*}\varpi^*_{\bullet} = \varpi^*a_*$, so $a_* = \varpi_*a_{\bullet, *}\varpi^*_{\bullet}$. This shows that $a^*$ and $a_*$ are inverse equivalences of quasi-coherent sheaves.

To finish the proof of the Proposition, we use \cite[Tag~0D7V]{tag}. To do so we must verify its five hypotheses. The category $\Qcoh({X_{\bullet, \liset}})$ is a weak Serre subcategory of $\Mod{\OO_{X_{\bullet, \liset}}}$ and conditions (1), (4), and (5) of \cite[Tag~0D7V]{tag} hold as in the proof of \cite[Tag~0DHF]{tag}. Condition (2) is the inverse equivalence of $a^*$ and $a_*$ that we just proved. The final condition, number (3), is the statement that
for $\cF \in \Qcoh(\cX_{\liset})$, the unit $\cF \to \R a_*a^* \cF$ is an isomorphism. Since we already know $\cF \to a_*a^*\cF$ is an isomorphism,\note{unit is an isomorphism iff left adjoint is fully faithful} it suffices to show $\R^n a_*a^*\cF=0$ for $n > 0$. 

For any smooth map $m: U \to \cX$ from a scheme $U$, let $U_\bullet \to U$ be the very smooth hypercover equal to the pullback of $X_\bullet$. We have a diagram of morphisms of topoi
\[
\begin{tikzcd}
U_{\bullet, \et} \arrow[d, "a''"] & U_{\bullet, \liset} \arrow[r, "m_{\bullet, \liset}"] \arrow[l] \arrow[d, "a'"] & X_{\bullet, \liset} \arrow[d, "a"]\\
U_{\et} & U_{\liset} \arrow[l]\arrow[r, "m_{\liset}"] & \cX_{\liset}
\end{tikzcd}
\]
where the site $U_{\bullet, \et}$ is constructed with \cite[Tag~09WC]{tag} and $a''$ and $a'$ are defined as in \eqref{eq:defa}. The top horizontal morphisms come from \cite[Tag~0DH0]{tag}.
It follows from \cite[V.5.1(1)]{SGA4}, \cite[Lem~3.5]{olsson-sheaves},
and Lemma \ref{lem:pullbacks} that $\R^n a_*a^*\cF$ is the sheafification of the presheaf that associates to an smooth map $m:U \to \cX$ from a scheme $U$ the group 
$H^n(U_{\bullet, \liset}, m^{-1}_{\bullet,\liset}a^*\cF)$. Since restriction to the \'etale site is exact and preserves injectives, this is equal to $H^i(U_{\bullet, \et}, m_{\bullet}^{*}a^*\cF)$ where $m_{\bullet}:U_{\bullet, \et} \to X_{\bullet, \liset}$ is defined as in Remark \ref{rmk:cocontinuous-functor}. Finally, by Remark \ref{rmk:cocontinuous-functor}.\ref{cocontinuous-functor4} (since 
$\cF$ is quasi-coherent)
this equals $H^n(U_{\bullet, \et}, a''^*m^*\cF).$
\note{
By \cite[V.5.1(1)]{SGA4}, $\R^na_*a^*\cF$ is the sheafification of the presheaf that associates to a smooth map $m:U \to \cX$ from a scheme $U$ the group $H^n(X_{\bullet, \liset}/a^{-1}\tilde U, a^*\cF)$ where $\tilde U$ is the sheaf represented by $U$ and $X_{\bullet, \liset}/a^{-1}\tilde U$ is the localized topos.
Define $U_i \:= U \times_{\cX} X$ and let $\Le\big(( X_i|_{U_i})_\bullet \big)$ be the site constructed as in \cite[Tag~09WC]{tag} with $\cC_i$ equal to the localized site $\Le(X_i)/U_i$. Unwinding the definitions and using \cite[Exercise~2.D]{olsson-book} and Lemma \ref{lem:pullbacks} below, we see that the localized topos $X_{\bullet, \liset}/a^{-1}\tilde U$ is the category of sheaves on the site $\Le\big(( X_i|_{U_i})_\bullet \big)$.
Now by \cite[Tag~0D7E]{tag}\todo{see research journal notes for 7/23/2021} there is a spectral sequence \todo{converging to} whose first page is
\begin{equation*}
E^{p,q}_1 = H^q(\Le(X_p)/U_p, a_p^*\cF) \quad \quad \quad d_1^{p,q}: E^{p,q}_1 \to E^{p+1,q}_1 
\end{equation*}
 By \cite[Lem~3.5]{olsson-sheaves} we have isomorphisms $H^q(\Le(X_p)/U_p, a_p^*\cF) \simeq H^p(\Le(U_p), m_{p, \liset}^{-1}a^*_p\cF)$ where ${m_{p,\liset}}: U_{p,\liset} \to X_{p,\liset}$ is the usual morphism of topoi (recall that $m$ is smooth). These isomorphisms commute with the differentials above \todo{martin's isomorphism boils down to identifying global sections of $\cF$ on $X|_U$ and of $m^{-1}\cF$ on $U$ as the set $\cF(V)$ (it is definitely important that $U \to X$ is smooth). I think the differential is also boils down to a map of global sections, the alternating sum in 0D9B (see also 0D9C).}, so we get an isomorphism of the limitting groups $H^n(\Le\big(( X_i|_{U_i})_\bullet \big), a^*\cF) \simeq H^n(\Le(U_{\bullet}), m_{\bullet, \liset}^{-1}a^*\cF)$. \todo{see also research journal notes for 7/23/2021}
}


 On the other hand, it follows from \cite[V.5.1(1)]{SGA4} and Lemma \ref{lem:pullbacks} that 
 $\R^na''_*a''^*(m^*\cF)$ is the sheafification of the presheaf that associates to an \'etale map $f:V \to U$ from a scheme $V$ the group $H^n(V_{\bullet, \et}, a''^*m^*\cF)$.\note{also uses that the restricted \'etale topos $U_{\bullet, \et}|_{V_\bullet}$ is equal to $V_{\bullet, \et}$}
 This group is equal to 
 $H^i(V_{\bullet, \et}, a''^*_V(m\circ f)^*\cF)$, where $a''_V: V_{\bullet, \et} \to V_{\et}$ is the usual morphism \eqref{eq:defa}.
 It follows that if $m: U \to \cX$ is a smooth cover by a scheme, the \'etale sheaves $m^*(\R^n a_*a^*\cF)$ and $\R^na''_*a''^*(m^*\cF)$ are the sheafification of the same presheaf,\note{the implicit fact here is that if $\cF_\bullet$ is a presheaf on a lisse-etale site, the etale sheaf on V of its sheafification is the sheafification of the etale presheaf on V. I think this is true since sheafification is a left adjoint, left adjoints preserve colimits, and restriction is a colimit } hence isomorphic. But it follows from \cite[Tag~0DHE]{tag} that $\R^na''_*a''^*(m^*\cF)=0$.
\end{proof}

\begin{lemma}\label{lem:pullbacks}
\label{lem:basechange} Consider a fiber square of algebraic stacks
\[
\begin{tikzcd}
\arrow[d]U_X\arrow[r] & \cX \arrow[d, "b"]\\
U \arrow[r, "m"] & \cY
\end{tikzcd}
\]
such that $U$ is an algebraic space.
\begin{enumerate}
\item If $m$ and $b$ are smooth and representable and $\tilde U$ is the sheaf represented by $U$ on $\cX_{\liset}$, then $b^{-1}\tilde U$ is represented by $U_X$.
\item If $\cY$ is representable, $m$ is \'etale, and $b$ is representable, and if $\tilde U$ is the sheaf represented by $U$ on $\cX_{\et}$, then the \'etale sheaf $b^{-1}\tilde U$ is represented by $U_X$.
\end{enumerate}
\end{lemma}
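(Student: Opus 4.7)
The plan is to prove both parts by a direct Yoneda argument, using the pullback--pushforward adjunction for the morphism of topoi induced by $b$ together with the concrete description of $b_*$ as restriction through a continuous functor of sites.

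For part (1), a smooth representable morphism $b:\cX\to\cY$ of algebraic stacks induces a morphism of lisse-\'etale topoi $b:\cX_{\liset}\to\cY_{\liset}$ arising from the continuous functor of sites $\Le(\cY)\to\Le(\cX)$, $(W\to\cY)\mapsto(W\times_\cY\cX\to\cX)$; this is well-defined because smoothness is stable under base change. The pushforward then satisfies $b_*\cG(W\to\cY)=\cG(W\times_\cY\cX\to\cX)$ for every smooth $W\to\cY$. For any sheaf $\cG$ on $\cX_{\liset}$, I would then chain together
\[
\Hom_{\cX_{\liset}}(b^{-1}\tilde U,\cG) = \Hom_{\cY_{\liset}}(\tilde U, b_*\cG) = (b_*\cG)(U\to\cY) = \cG(U_X\to\cX) = \Hom_{\cX_{\liset}}(\widetilde{U_X},\cG),
\]
using, in order, the $(b^{-1},b_*)$ adjunction, Yoneda on the $\cY$-side, the explicit formula for $b_*$, and Yoneda on the $\cX$-side. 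One more application of Yoneda produces the desired isomorphism $b^{-1}\tilde U\simeq\widetilde{U_X}$.

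For part (2), the assumption that $\cY$ is representable combined with representability of $b$ forces $\cX$ to be an algebraic space (apply representability to the identity $\cY\to\cY$), so the small \'etale site $\cX_{\et}$ is defined. Since \'etaleness is preserved by base change, $U_X\to\cX$ is \'etale and hence lies in $\cX_{\et}$. The argument of part (1) then carries over verbatim, with $\Le$ replaced by $\Et$ and with the continuous functor $\Et(\cY)\to\Et(\cX)$ acting by the same fiber-product rule.

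The one technical point I anticipate is that $U$ is an algebraic space rather than a scheme, so it may not literally be an object of the site in conventions that require representing objects to be schemes. This is routinely handled by choosing an \'etale scheme cover $U'\to U$: the sheaf $\tilde U$ is then the coequalizer of $\widetilde{U'\times_U U'}\rightrightarrows\widetilde{U'}$, the functor $b^{-1}$ preserves colimits, and the base-changed cover $U'_X\to U_X$ presents $\widetilde{U_X}$ as the analogous coequalizer; so the scheme case of the Yoneda argument above propagates formally to the algebraic-space case. I expect this to be bookkeeping rather than a genuine obstacle.
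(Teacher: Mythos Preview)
Your argument is correct and in fact cleaner than the paper's. The paper computes $b^{-1}\tilde U$ directly by writing out the colimit formula for the left adjoint: for $T\in\Le(\cX)$ one has $b^{-1}\tilde U(T)$ as the sheafification of $\colim_{(W,\,T\to W)}\Hom_\cY(W,U)$, where $W$ runs over smooth $\cY$-schemes factoring $T\to\cY$. In part~(1) this colimit collapses because $T\to\cY$ is itself smooth, so $W=T$ is terminal; in part~(2) the map $T\to\cY$ need not be \'etale, so the paper has to argue separately that the comparison map to $\Hom_\cY(T,U)$ is still a bijection. Your approach sidesteps this asymmetry entirely: by working with $b_*$ (whose formula $b_*\cG(W)=\cG(W\times_\cY\cX)$ is the \emph{easy} side of the adjunction) and applying Yoneda twice, both parts go through uniformly with no colimit to analyze.

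One small point to tighten: the continuous functor $u\colon\Le(\cY)\to\Le(\cX)$, $W\mapsto W\times_\cY\cX$, lands in algebraic spaces rather than schemes when $b$ is merely representable, so if $\Le(\cX)$ is taken to have only scheme objects you face the same bookkeeping you already flagged for $U$. This is harmless for the same reason (replace by an \'etale scheme cover, or invoke the equivalence of the scheme and algebraic-space versions of the lisse-\'etale topos), but it is worth noting that the issue arises on both sides of the adjunction, not just for $U$.
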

\begin{proof}
We first sketch the proof of (1). The sheaf $b^{-1}\tilde U$ is the sheafification of the presheaf that assigns to a scheme $T$ with a smooth map $g: T \to \cX$ the set $\colim \Hom_Y(W, U)$, where the colimit is taken over schemes $W$ fitting into diagrams
\begin{equation}\label{eq:pullbacks2}
\begin{tikzcd}
\arrow[d]T \arrow[r, "g"] & \cX \arrow[d, "b"]\\
W \arrow[r, "\text{smooth}"] & \cY
\end{tikzcd}
\end{equation}
Composition induces a map 
\begin{equation}
\label{eq:pullbacks1}
\colim \Hom_\cY(W, U) \to \Hom_\cY(T, U)
\end{equation}
which is an isomorphism since $T \to Y$ is smooth and hence defines the final object in the category over which we take the colimit. Finally we note that $\Hom_Y(T, U) = \Hom_Y(T, U_X)$. For (2), the map $T \to \cX$ is now \'etale and the colimit is over diagrams \eqref{eq:pullbacks2} with $W \to \cY$ \'etale, so $T \to \cY$ is not an object of the colimit category. However, the map \eqref{eq:pullbacks1} is still surjective. It is injective as well because an element of $\Hom_\cY(W, U)$ must be \'etale, so if we have elements of $\Hom_{\cY}(W_1, U)$ and $\Hom_{\cY}(W_2, U)$ that yield the same map $T \to U$, we may compare them via the \'etale $U$-scheme $W_1 \times_U W_2$.
\end{proof}

\begin{remark}\label{rmk:descent}
Let $X_\bullet \to \cX$ and $Y_\bullet \to \cY$ be very smooth hypercovers of algebraic stacks $\cX$ and $\cY$, and suppose we are given a morphism $f_\bullet : X_\bullet \to Y_\bullet$ of simplicial algebraic stacks and $f: \cX \to \cY$ such that these maps commute with the augmentations.  Then for $\cF \in \Dqc(\cY_{\liset})$ we have 
\[(a^*\L f^*\cF)|_{X_n} = \L f_n^*(a^*\cF)|_{X_n},\] 
and if $f$ is concentrated and $\cG \in \Dqc(\cX_{\liset})$ then 
\[(a^*\R f_*\cG)|_{Y_n} = \R f_{n*}(a^*\cG)|_{Y_n},\] where the functor $\L f_n^*$ (resp. $\R f_{n*}$) is the usual pullback functor (resp. direct image) between $\Dqc(Y_{n,\liset})$ and $\Dqc(X_{n,\liset})$. Indeed, the functor $(a^*-)|_{X_n}$ is just $a^*_n(-)$, so the desired equalities are equivalent to
\[
a_n^*\L f^*\cF = \L f_n^*a_n^*\cF \quad \quad \text{and} \quad \quad a_n^*\R f_*\cG = \R f_{n*}a_n^*\cG.
\]
These follow from naturality of derived pullback and \cite[Cor~4.13]{HR17}, respectivley.
\end{remark}

\section{Functoriality of the Fundamental Theorem}\label{sec:ft-functoriality}
In this section we prove Lemmas \ref{lem:ft-functoriality1} and \ref{lem:ft-functoriality2}. 
\subsection{Categories of algebra extensions}
In this section, $\cS$ is a site with $A \rightarrow B$ a morphism of sheaves of rings on $\cS$, and $I$ be a sheaf of $B$-modules.
\subsubsection{Categories}
 The Picard category $\Exal_A(B, I)$ was defined in \cite[Sec~III.1.1.2.3]{illusie}: an object is a surjective $A$-algebra map $E \rightarrow B$ whose kernel is (1) square-zero as an ideal of $E$, and (2) isomorphic to $I$ as a $B$-module.\note{see ega $0_{iv}$.18.2} We write these objects as short exact sequences of abelian sheaves
\begin{equation}\label{eq:start}
0 \rightarrow I \rightarrow E \rightarrow B \rightarrow 0.
\end{equation}
A morphism in $\Exal_A(B, I)$ is a commuting diagram
\[
\begin{tikzcd}
0 \arrow[r] & I \arrow[r] \arrow[d, equal]& E \arrow[r] \arrow[d, "g"]& B \arrow[r] \arrow[d, equal]& 0\\
0 \arrow[r] & I \arrow[r] & E' \arrow[r] & B \arrow[r] & 0
\end{tikzcd}
\]
where $f$ is a morphism of $B$-modules and $g, h$ are morphisms of $A$-algebras. 

\subsubsection{Functors}\label{sec:exal-functors}
If $I \rightarrow I'$ is a morphism of $B$-modules, 
$B'\rightarrow B$ is a morphism of $A$-algebras,
and $A' \rightarrow A$ is a morphism of rings, then we have natural functors
\begin{gather}
\label{eq:changeI}
\Exal_A(B, I) \rightarrow \Exal_A(B, I')\\
\label{eq:changeB}
\Exal_A(B, I) \rightarrow \Exal_A(B', I_{B'})\\
\label{eq:changeA}
\Exal_A(B, I)\rightarrow \Exal_{A'}(B, I)
\end{gather}
defined in \cite[Equ~III.1.1.5.2]{illusie}, \cite[Equ~III.1.1.5.3]{illusie}, and 
\cite[Equ~III.1.1.5.4]{illusie}, respectively. Here, $I_{B'}$ denotes the sheaf $I$ considered as a $B'$-module.
Let $\cS' \rightarrow \cS$ be a continuous morphism of sites inducing a morphism of topoi $(p^{-1}, p_*)$.\note{every morphism of topoi is induced by a continuous morphism of sites, \cite[Ex~2.N(iii)]{olsson-book}} Then we have an induced morphism
\begin{equation}\label{eq:changeC}
\Exal_A(B, I) \rightarrow \Exal_{p^{-1}A}(p^{-1}B, p^{-1}I)
\end{equation}
sending \eqref{eq:start} to its image under $p^{-1}$. We are using that $p^{-1}$ is an exact functor.

\begin{lemma}\label{lem:commutes-with-I}
The morphisms \eqref{eq:changeI}, \eqref{eq:changeB}, \eqref{eq:changeA}, and  \eqref{eq:changeC} commute pairwise.
\end{lemma}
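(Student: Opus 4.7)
The plan is to unwind the explicit descriptions of the four functors and verify commutativity for each of the six pairs; these verifications are essentially immediate once one recalls the definitions, so the goal of my exposition is to organize them neatly rather than to produce any surprise.

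First I would recall the concrete construction of each functor on an object $0 \to I \to E \to B \to 0$ of $\Exal_A(B, I)$. The morphism \eqref{eq:changeI} replaces $E$ by the pushout $E \sqcup_I I'$, i.e.\ it acts by pushout on the left of the extension. The morphism \eqref{eq:changeB} replaces $E$ by the fiber product $E \times_B B'$, i.e.\ it acts by pullback on the right. The morphism \eqref{eq:changeA} leaves $E$ unchanged as an abelian sheaf and merely restricts the algebra structure along $A' \to A$. Finally, \eqref{eq:changeC} applies the exact functor $p^{-1}$ termwise to the exact sequence and uses that $p^{-1}$ is strong symmetric monoidal (hence preserves algebra and module structures). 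With these descriptions in hand the six verifications break into three groups.

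The three pairs involving \eqref{eq:changeA} are essentially tautological: restriction of scalars does not touch the underlying abelian sheaf, so it visibly commutes with \eqref{eq:changeI} (which only modifies the abelian sheaf $E$ via pushout), with \eqref{eq:changeB} (which only modifies $E$ via pullback), and with \eqref{eq:changeC} (which is applied termwise and is compatible with the change-of-rings structure through $p^{-1}(A' \to A)$). The pair \eqref{eq:changeI}--\eqref{eq:changeB} follows from the fact that pushouts and pullbacks in the abelian category of sheaves of abelian groups commute with each other in the strong sense that $(E \sqcup_I I') \times_B B' \simeq (E \times_B B') \sqcup_{I_{B'}} I'_{B'}$, together with the observation that the $A$-algebra structures agree on both sides. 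The two remaining pairs \eqref{eq:changeI}--\eqref{eq:changeC} and \eqref{eq:changeB}--\eqref{eq:changeC} follow from exactness of $p^{-1}$: since $p^{-1}$ preserves finite limits and arbitrary colimits, it commutes with the pushout used to define \eqref{eq:changeI} and the pullback used to define \eqref{eq:changeB}, and the canonical comparison isomorphisms are clearly morphisms of algebra extensions.

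The main obstacle, if there is one, is purely bookkeeping: each commuting square should be stated as a natural isomorphism of functors (not merely a pointwise isomorphism of objects), so one must also check that the morphisms in each $\Exal$-category are sent to the same morphism under either composition. In every case this reduces to functoriality of pushout, pullback, restriction of scalars, or $p^{-1}$, so there is no genuine content beyond the object-level checks above. I would therefore present the proof as a short paragraph listing the six pairs and, for each, identifying the one formal property (commutativity of pullback and pushout, exactness of restriction of scalars, or exactness and monoidality of $p^{-1}$) that delivers the desired natural isomorphism.
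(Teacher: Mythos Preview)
Your proposal is correct and follows essentially the same strategy as the paper: both identify the pair \eqref{eq:changeI}--\eqref{eq:changeB} as the only one requiring real work (pushout--pullback compatibility, with the algebra structure checked), declare the three pairs involving \eqref{eq:changeA} to be strictly commutative, and reduce the two remaining pairs involving \eqref{eq:changeC} to exactness of $p^{-1}$. The only difference is presentational: the paper writes out the isomorphism $(E \times_B B') \sqcup_I I' \simeq (E \sqcup_I I') \times_B B'$ explicitly via a diagram of dashed arrows and verifies naturality by hand, whereas you invoke the general fact that pushouts and pullbacks commute in an abelian category.
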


\begin{proof}
The most involved pair to check is \eqref{eq:changeI} and \eqref{eq:changeB}. We work it out in detail and offer a few words about the remaining pairs at the end of the proof. 
When we say \eqref{eq:changeI} and \eqref{eq:changeB} commute, we mean that if $B' \rightarrow B$ is a morphism of rings and $I \rightarrow I'$ is a morphism of $B$-modules, then the diagram
\begin{equation}\label{eq:commutes-with-I1}
\begin{tikzcd}
\Exal_A(B, I) \arrow[r, "\text{\eqref{eq:changeB}}"] \arrow[d, "\text{\eqref{eq:changeI}}"] & \Exal_A(B', I_{B'}) \arrow[d, "\text{\eqref{eq:changeI}}"] \\
\Exal_A(B, I') \arrow[r, "\text{\eqref{eq:changeB}}"] & \Exal_A(B', I'_{B'})
\end{tikzcd}
\end{equation}
commutes up to a natural transformation.

Given an element \eqref{eq:start} of $\Exal_A(B, I)$, we have a diagram
\begin{equation}\label{eq:commutes-with-I3}
\begin{tikzcd}
& I' \arrow[r, "a'"] & P \arrow[dr, dashrightarrow, "{(0, b)}"]\\
0 \arrow[r] & I \arrow[u] \arrow[r, "a"] \arrow[dr, dashrightarrow, "{(a, 0)}"'] & E \arrow[u, "\iota_E"] \arrow[r, "b"] & B \arrow[r]  & 0\\
&&F \arrow[u, "p_E"'] \arrow[r, "b'"] & B' \arrow[u]
\end{tikzcd}
\end{equation}
where $P = I' \oplus_I E$ and $F = E \times_B B'$: as abelian groups, $P$ and $F$ are the colimit and limit of the usual diagrams, while the ring structures are described in \cite[$0_{\mathrm{IV}}$.18.2.8]{ega} and \cite[$0_{\mathrm{IV}}$.18.1.5]{ega}.\note{you have to sheafify to extend the definition of $I' \oplus_I E$ to sheaves because there is a quotient. note sheafification of a presheaf of rings is a sheaf of rings, 00YR.} Set $Q = I' \oplus_I F$ and $G = P \times_B B'$. Then 
\[
0 \rightarrow I' \rightarrow Q \rightarrow B' \rightarrow0
\]
is the image of \eqref{eq:start} under the composition ${}^\rightarrow \downarrow$ in \eqref{eq:commutes-with-I1}, and likewise $G$ defines the image under the composition $\downarrow{}_\rightarrow$. An arrow from $Q$ to $G$ in the groupoid $\Exal_A(B', I')$ is given by four dashed arrows so that this diagram commutes:
\begin{equation}\label{eq:commutes-with-I2}
\begin{tikzcd}
I \arrow[r] \arrow[dr] & I' \arrow[r, dashrightarrow] \arrow[dr, dashrightarrow] & P \arrow[dr]\\
&F \arrow[ur, dashrightarrow] \arrow[r, dashrightarrow] & B' \arrow[r] & B
\end{tikzcd}
\end{equation}
(To check commutativity, it suffices to check that the quadrilaterals $I'IFP$ and $I'IFB'$ and the perimeter commute.) The required collection of dotted arrows is given by $a': I' \rightarrow P$, $0: I' \rightarrow B'$, $\iota_E \circ p_E: F \rightarrow P$, and $b': F \rightarrow B'$. 

To show that the resulting arrows in $\Exal_A(B', I')$ define a natural transformation (in this groupoid), suppose we are given an arrow
\[
\begin{tikzcd}
0 \arrow[r] & I \arrow[r, "a_1"] \arrow[dr, "a_2"'] & E_1 \arrow[d, "f"]\arrow[r,"b_1"] &B\arrow[r] & 0\\
&&E_2 \arrow[ur, "b_2"']
\end{tikzcd}
\]
in $\Exal_A(B, I)$. Let $f_P: P_1 \rightarrow P_2$ and $f_F: F_1 \rightarrow F_2$ be the maps induced by $f$, where $P_i$ and $F_i$ are defined as in \eqref{eq:commutes-with-I3}. Likewise let $Q_i$ and $G_i$ be the images of $E_i$ in $\Exal_A(B', I')$ under the maps in \eqref{eq:commutes-with-I1}. We must compare two maps from $Q_1$ to $G_2$ in $\Exal_A(B', I')$. Such maps are given by diagrams of the form \eqref{eq:commutes-with-I2} with $F$ replaced by $F_1$ and $P$ replaced by $P_2$. In the situation at hand, one of the maps from $Q_1$ to $G_2$ is given by the diagram
\[
\begin{tikzcd}[column sep = 12 em]
 I' \arrow[r, dashrightarrow, "f_P\circ a_1'"] \arrow[dr, dashrightarrow, sloped, "0"', pos=0.15] & P_2 \\
F_1 \arrow[ur, dashrightarrow, sloped, "f_P\circ \iota_{E_1}\circ p_{E_1}", pos=.2] \arrow[r, dashrightarrow, "b_1'"'] & B' 
\end{tikzcd}
\]
and the other is given by the diagram
\[
\begin{tikzcd}[column sep=12em]
I' \arrow[r, dashrightarrow, "a_2'"] \arrow[dr, dashrightarrow, sloped, "0"', pos=.15] & P_2\\
F_1 \arrow[ur, dashrightarrow, sloped, "\iota_{E_2}\circ p_{E_2} \circ f_F", pos=.19] \arrow[r, dashrightarrow, "b_2'\circ f_F"'] & B'
\end{tikzcd}
\]
These are easily seen to consist of the same morphisms.

This completes the proof that \eqref{eq:changeB} and \eqref{eq:changeI} commute. Of the remaining pairs, most of the checks are trivial (in particular, the analog of \eqref{eq:commutes-with-I1} is strictly commutative). Only the pairs (\eqref{eq:changeI}, \eqref{eq:changeC}) and (\eqref{eq:changeB}, \eqref{eq:changeC}) are nontrivial. For these, one uses that $p^{-1}$ is exact and hence preserves finite limits and colimits.
\end{proof}

\subsection{Illusie's theorem}\label{sec:core}

\subsubsection{Statement}
In this section, $\cS$ is a site with $A \rightarrow B$ a morphism of sheaves of rings on $\cS$, and $I$ be a sheaf of $B$-modules.
\begin{theorem}[{\cite[Thm~A.7]{olsson-deformation}, \cite[Sec~III.1.2.2]{illusie}}] There is a canonical isomorphism
\begin{equation}\label{eq:illusie}
\beta: \Exal_{A}(B, I) \rightarrow \Estackbase{\LL_{B/A}}{I[1]}{B},
\end{equation}
where the right hand side was defined in \eqref{eq:def-ext}.
\end{theorem}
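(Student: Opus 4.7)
The plan is to follow the classical approach of Illusie \cite[Sec~III.1.2]{illusie}. First I would fix a simplicial resolution $P_\bullet \to B$ of $B$ by free simplicial $A$-algebras, obtained by iterating the free $A$-algebra monad on $\cS$. Then $\LL_{B/A}$ is represented, via the Dold-Kan correspondence, by the chain complex of $B$-modules associated to the simplicial $B$-module $\Omega^1_{P_\bullet/A}\otimes_{P_\bullet} B$, which sits in non-positive degrees.

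To define $\beta$ on objects, given an extension $0 \to I \to E \to B \to 0$ in $\Exal_A(B,I)$, I would use the freeness of each $P_n$ together with the surjectivity of $E \to B$ to construct a simplicial lift $\widetilde{\varphi}\colon P_\bullet \to E$ of the augmentation $P_\bullet \to B$ over $A$. The discrepancy between $\widetilde{\varphi}$ and the augmentation, being valued in $I$ and compatible with the face and degeneracy maps, encodes a morphism $\Omega^1_{P_\bullet/A}\otimes_{P_\bullet} B \to I$ of simplicial $B$-modules, equivalently a morphism $\LL_{B/A} \to I[1]$ in $\D(B)$. Any two such lifts differ by a simplicial $A$-derivation $P_\bullet \to I$, hence give cohomologous cocycles and thus the same class in $\D(B)$, so $\beta$ is well-defined on objects. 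A morphism of extensions is translated into a simplicial homotopy between associated lifts, and hence into a morphism in $\Estackbase{\LL_{B/A}}{I[1]}{B}$, giving a functor $\beta$ of Picard categories.

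For the quasi-inverse, I would take a morphism $\varphi\colon \LL_{B/A} \to I[1]$, represent it by a simplicial $A$-derivation $d\colon P_\bullet \to I$, and build the extension $E$ as a quotient of $P_0 \oplus I$ by the relation determined by $d$ and the simplicial structure of $P_\bullet$. On isomorphism classes, $\beta$ recovers the classical bijection $\Exalset_A(B,I) \simeq \Ext^1_B(\LL_{B/A}, I)$, and on automorphisms of a fixed extension $\beta$ identifies the $A$-derivations $B \to I$ with $\Hom_B(\Omega^1_{B/A}, I) = \Ext^0_B(\LL_{B/A}, I)$, which is the automorphism group of the identity in the target Picard category. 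To promote these bijections into an equivalence of Picard categories, I would verify that $\beta$ intertwines the Baer sum of extensions with addition of morphisms in $\D(B)$.

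The hard part will not be the existence of such an equivalence, which is classical, but rather its canonicity: for use in Lemmas~\ref{lem:ft-functoriality1} and~\ref{lem:ft-functoriality2} one needs $\beta$ to be natural in all three inputs $A$, $B$, $I$, in the senses encoded by the functors \eqref{eq:changeI}--\eqref{eq:changeC} of Section~\ref{sec:exal-functors}. This requires careful bookkeeping of signs across the Dold-Kan correspondence, together with a verification that $\beta$ is independent of the chosen free resolution $P_\bullet$, by comparing any two such resolutions via a simplicial map lifting the identity on $B$ (which exists by freeness).
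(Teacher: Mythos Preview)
Your overall strategy---use the standard free simplicial resolution $P_\bullet \to B$ and pass through $\Omega^1_{P_\bullet/A}\otimes_{P_\bullet} B$ via Dold--Kan---is exactly Illusie's, and is also what the paper invokes. But your description of the key step is not quite right. A simplicial lift $\widetilde\varphi\colon P_\bullet \to E$ of the augmentation, if it really is a morphism of simplicial $A$-algebras, would split the pulled-back extension and produce the zero class; and a morphism of simplicial $B$-modules $\Omega^1_{P_\bullet/A}\otimes_{P_\bullet} B \to I$ (with $I$ constant) corresponds under Dold--Kan to a map $\LL_{B/A}\to I$, not to $I[1]$. What actually carries the information is the pulled-back \emph{extension} $0\to I\to E\times_B P_\bullet \to P_\bullet\to 0$: each level splits because $P_n$ is free, but the splittings are not simplicially compatible, and it is this incompatibility (equivalently, the associated extension of $\Omega_{P_\bullet/A}\otimes_{P_\bullet} B$ by $I$) that yields a class in $\Ext^1$, i.e.\ a map $\LL_{B/A}\to I[1]$.

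The paper (following Olsson's reformulation of Illusie) organizes this as a composite of equivalences of Picard categories
\[
\Exal_A(B,I)\xrightarrow{\beta_1}\Exal_A(P_\bullet,I)\xrightarrow{\beta_2}\Extcat(\Omega_{P_\bullet/A},I)\xrightarrow{\beta_3}\Extcat(\Omega_{P_\bullet/A}\otimes B,I)\xrightarrow{\beta_4}\Estackbase{\LL_{B/A}}{I[1]}{B},
\]
where $\beta_1$ is pullback along $P_\bullet\to B$, $\beta_2$ takes K\"ahler differentials, $\beta_3$ is $-\otimes_{P_\bullet} B$, and $\beta_4$ is the Dold--Kan comparison of \cite[Prop~A.3]{olsson-deformation}. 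The payoff of this factored form is precisely the functoriality you flag at the end: each $\beta_i$ can be shown separately to commute with change of $I$, change of $(A,B)$, and change of topos, which is how Lemma~\ref{lem:core} is proved. Your cocycle description, once corrected, is equivalent, but to run the functoriality arguments you would in any case be led to rediscover this four-step decomposition.
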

\begin{proof}
 Since the isomophism in \cite[Thm~A.7]{olsson-deformation} is defined on groupoid fibers, we may use the same definition for our morphism \eqref{eq:illusie} (written out in the proof of Lemma \ref{lem:core}), and the argument in \cite[Thm~A.7]{olsson-deformation} shows that it is an isomorphism. Note that when $\sS$ has a final object $S$, the map \eqref{eq:illusie} is the value on $S$ of the isomorphism in \cite[Thm~A.7]{olsson-deformation}.
\end{proof}

\subsubsection{Functoriality}
We will show that \eqref{eq:illusie} is compatible with the functors defined in Section \ref{sec:exal-functors}. We will use the following instances of Situation \ref{basic-situation}.
\begin{example}\label{ex:change-rings} The following is an example of Situation \ref{basic-situation}. Let $\cS$ be a site and $B$ a sheaf of rings on $\cS$. Then $\D(B)$ is a closed symmetric monoidal category with product $\lotimes_B$ and internal hom $\Rhom_B$. If $B' \rightarrow B$ is a flat morphism of sheaves of rings, then extension of scalars $- \otimes_{B'} B: B'\mathrm{-mod} \rightarrow B\mathrm{-mod}$ is strong monoidal and exact with an exact right adjoint $(-)_{B'}$ given by restriction of scalars.

Let $\sC = \D(B')$ and let $\sD = \D(B)$. By \cite[Tag~0DVC]{tag} the functors $- \otimes_{B'}$ and $(-)_{B'}$ extend to an adjoint pair for $\D(B')$ and $\D(B)$, and by \cite[Tags~07A4, 08I6]{tag} the functor $- \otimes_{B'} B: \D(B') \rightarrow \D(B)$ is still strong monoidal.\note{I was confused about 07a4, the ``constituents'': how do you get a map TO the infinite product? the answer is that the total complex is defined in 012Z uses direct sum, not product. (people also sometimes use the product total complex; they are good for different things.) you can always take the direct sum of some maps.}\note{if $M^\bullet, N^\bullet$ are complexes of $B'$-modules such that one of them is $K$-flat, the desired isomorphism of complexes $(M\otimes_{B'} B) \lotimes_B (N \otimes_{B'} B) \rightarrow (M\lotimes_{B'}N)\otimes_{B'} B$ is given in degree $n$ by the sum of sheaf maps
\[
\bigoplus_{p+q=n} \left( (M^p\otimes_{B'} B) \otimes_B (N^q \otimes_{B'} B) \rightarrow (M^p\otimes_{B'}N^q)\otimes_{B'} B\right).
\]}

In addition, it follows from \cite[Tags~08J9, 0A90, 0A5Y]{tag} that if $M^\bullet$ is $K$-flat and $N^\bullet$ is injective, the counit $\Rhom_{B'}(M, N)\lotimes_{B'} M \rightarrow N$ is given in degree $n$ by a product over $p+q+r=n$ of the sheaf maps
\[
\Shom_{B'}(M^{-p}, N^q)\otimes_{B'} M^r \rightarrow N^n,
\]
where this map is equal to the usual evaluation map if $q=n$ and it is zero otherwise. We will give an explicit description of \eqref{eq:sheafy-push} in the proof of Lemma \ref{lem:last-one}.
\end{example}

\begin{example}\label{ex:change-topoi}
If $p: (\sC, \OO_{\sC}) \rightarrow (\sD, \OO_{\sD})$ is a flat morphism of ringed topoi given by an adjoint pair $(p^{-1}, p_*)$,\note{I need to work with RINGED topoi so that I have derived categories)} then $p^*$ is exact and hence defines a strong monoidal functor $\D(\OO_{\sD}) \rightarrow \D(\OO_{\sC})$ with a right adjoint $\R p_*$.
We will give an explicit description of \eqref{eq:sheafy-pullback} in the proof of Lemma \ref{lem:other-last-guy}.
\end{example}

\begin{lemma}\label{lem:core}
The isomorphism \eqref{eq:illusie} is functorial as follows.
\begin{enumerate}
\item Let $A \rightarrow B$ be a map of sheaves of rings on $\cS$. If $I \rightarrow J$ is a morphism of $B$-modules, there is a commuting diagram
\[
\begin{tikzcd}
\Exal_A(B, I) \arrow[r, "\beta"] \arrow[d, "\text{\eqref{eq:changeI}}"] & \Estackbase{\LL_{B/A}}{I[1]}{B} \arrow[d] \\
\Exal_A(B, J) \arrow[r, "\beta"] &\Estackbase{\LL_{B/A}}{J[1]}{B} 
\end{tikzcd}
\]
\item If there is a commuting square of rings with $B' \rightarrow B$ flat\note{I do not know how to make \eqref{eq:simplicial1} explicit if $B' \to B$ is not flat, and for this square to commute I need everything to be explicit.}
\[
\begin{tikzcd}[cramped]
A' \arrow[r] \arrow[d] & A \arrow[d]\\
B' \arrow[r] & B
\end{tikzcd}
\]
then the canonical map $\LL_{B'/A'}\lotimes_{B'} B \rightarrow \LL_{B/A}$ induces a commuting square
\begin{equation}\label{eq:core2}
\begin{tikzcd}
\Exal_A(B, I) \arrow[d, "\text{\eqref{eq:changeB}}\circ\text{\eqref{eq:changeA}}"] \arrow[rr, "\beta"] &&\Estackbase{\LL_{B/A}}{I[1]}{B}\arrow[d]\\
\Exal_{A'}(B', I_{B'})  \arrow[r, "\beta"] &\Estackbase{\LL_{B'/A'}}{I_{B'}[1]}{B'}\arrow[r,  "\text{\eqref{eq:simplicial1}}"]&\Estackbase{\LL_{B'/A'}\otimes_{B'} B}{I[1]}{B}
\end{tikzcd}
\end{equation}
where \eqref{eq:simplicial1} is defined in the context of Example \ref{ex:change-rings}.
\item Let $(\cS, \OO_{\cS}) \rightarrow (\cS', \OO_{\cS'})$ be a continuous morphism of ringed sites inducing a flat morphism of topoi $(p^{-1}, p_*)$. Let $A$ and $B=\OO_{\cS}$ be sheaves of rings on $\cS$. Then if $I$ is a sheaf of $B$-modules, there is a commuting diagram
{\small
\begin{equation}\label{eq:core3}
\begin{tikzcd}[column sep=small]
\Exal_A(B, I) \arrow[rr, "\beta"] \arrow[d, "\text{\eqref{eq:changeC}}"]&& \Estackbase{\LL_{B/A}}{I[1]}{B} \arrow[d, "\text{\eqref{eq:sheafy-pullback}}"] \\
\Exal_{p^{-1}A}(p^{-1}B, p^{-1}I) \arrow[r, "\beta"] & \Estackbase{p^{-1}\LL_{B/A}}{p^{-1}I[1]}{p^{-1}B} \arrow[r, "\text{\eqref{eq:sheafy-pullback}}"] & \Estackbase{p^*\LL_{B/A}}{p^*I[1]}{p^* B}
\end{tikzcd}
\end{equation}
}
where the horizontal instance of \eqref{eq:sheafy-pullback} is defined in the context of Example \ref{ex:change-rings} and the vertical instance of \eqref{eq:sheafy-pullback} is defined in the context of Example \ref{ex:change-topoi}, and we have suppressed an isomorphism induced by $\LL_{p^{-1}B/p^{-1}A} \simeq p^{-1}\LL_{B/A}$.\note{need ringed topoi in order to work with derived categories (necessary since we have cotangent complex); then we need flatness so that $p^*I$ is a sheaf, not a complex.}
\end{enumerate}
\end{lemma}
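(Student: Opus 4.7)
The plan is to use the explicit construction of $\beta$ from the proof of Theorem~A.7 in \cite{olsson-deformation}: given a simplicial free resolution $P_\bullet \to B$ of $B$ as an $A$-algebra, an extension $E \in \Exal_A(B, I)$ lifts canonically (at level $0$) to a map $P_0 \to E$, and the induced map from the ideal of $P_0 \to B$ into $I$ produces, after truncation and the standard $\tau_{\leq 0}\R\Gamma$/$ch$ machinery, a representative for $\beta(E)$ in $\Estackbase{\LL_{B/A}}{I[1]}{B}$. Each of the three functorialities will be verified by choosing compatible resolutions and comparing the corresponding representatives; the resulting diagrams of Picard categories will commute strictly (not just up to 2-isomorphism), so we get commutativity at the level of both isomorphism classes (recovering \cite[Sec~III.1.2]{illusie}) and automorphism groups.

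For part (1), a morphism $I \to J$ of $B$-modules affects neither $P_\bullet$ nor the lift $P_0 \to E$; pushing out $E$ along $I \to J$ on the Exal side corresponds to post-composing the classifying map $\LL_{B/A} \to I[1]$ with $I[1] \to J[1]$, yielding an immediate commuting square. For part (2), I would choose compatible resolutions $P'_\bullet \to B'$ over $A'$ and $P_\bullet \to B$ over $A$ together with a map $P'_\bullet \otimes_{A'} A \to P_\bullet$; the canonical map $\LL_{B'/A'} \lotimes_{B'} B \to \LL_{B/A}$ is then realized at the level of chosen resolutions, and the Exal-side composition $\eqref{eq:changeB}\circ\eqref{eq:changeA}$ sends the extension $E$ to the pullback sequence $0 \to I_{B'} \to E\times_B B' \to B' \to 0$, whose classifying map is visibly the composition of the original classifying map with the adjunction isomorphism \eqref{eq:simplicial1} in Example~\ref{ex:change-rings}. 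Flatness of $B' \to B$ makes the adjunction isomorphism explicit at the underived level and lets us identify $\Rhom_{B'}(\LL_{B'/A'}, I_{B'}[1])$ with $\Rhom_B(\LL_{B'/A'}\otimes_{B'}B, I[1])$ on actual cochain complexes.

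For part (3), the observation is that $p^{-1}$, being exact and preserving finite colimits and free algebra functors, sends any simplicial free $A$-algebra resolution $P_\bullet \to B$ to a simplicial free $p^{-1}A$-algebra resolution $p^{-1}P_\bullet \to p^{-1}B$; hence the classifying map of $p^{-1}E$ is canonically $p^{-1}$ applied to the classifying map of $E$, compatibly with the isomorphism $\LL_{p^{-1}B/p^{-1}A} \simeq p^{-1}\LL_{B/A}$. The passage from $p^{-1}$ to $p^*$, i.e.\ the inclusion of the horizontal \eqref{eq:sheafy-pullback} arrow in \eqref{eq:core3}, reflects the tensor with $\OO_{\cS'}$ appearing in the definition of $p^*$ of $\OO$-modules; the explicit description of \eqref{eq:sheafy-pullback} for sheaves of modules (cf.\ Examples~\ref{ex:change-rings} and \ref{ex:change-topoi}) makes this insertion transparent.

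The principal technical obstacle in all three parts is bookkeeping at the level of the Picard-category enhancement rather than merely on isomorphism classes: one must work with honest cochain representatives of $\tau_{\leq 0}\R\Gamma\Rhom$ so that the auxiliary data (the sheafy hom complex, the choice of representing cochains for each morphism of complexes, and the $ch$ construction via Dold--Kan) all fit together coherently. Managing this is what prevents the proof from being a one-line remark; the strategy is to make a single global choice of functorial simplicial resolution and carry out each of the three diagram chases uniformly, drawing on the explicit formulas for \eqref{eq:sheafy-push} and \eqref{eq:sheafy-pullback} already set up in Examples~\ref{ex:change-rings} and~\ref{ex:change-topoi}.
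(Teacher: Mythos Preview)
Your proposal is correct and follows essentially the same strategy as the paper. The paper makes the decomposition of $\beta$ more explicit, writing it as a four-step composite
\[
\Exal_A(B,I)\xrightarrow{\beta_1}\Exal_A(P_\bullet,I)\xrightarrow{\beta_2}\Extcat(\Omega_{P_\bullet/A},I)\xrightarrow{\beta_3}\Extcat(\Omega_{P_\bullet/A}\otimes B,I)\xrightarrow{\beta_4}\Estack{\LL_{B/A}}{I[1]}
\]
and then verifies each of the three functorialities one $\beta_i$ at a time; the delicate step in parts (2) and (3) is the compatibility of $\beta_4$ with \eqref{eq:sheafy-push} and \eqref{eq:sheafy-pullback}, which the paper isolates into two separate technical lemmas requiring the same explicit cochain-level descriptions you anticipate. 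One small caveat: the intermediate squares do not literally commute strictly but only up to explicit natural isomorphisms (constructed, for instance, from universal maps out of pushouts and into pullbacks, as in Lemma~\ref{lem:commutes-with-I}); this is exactly the ``bookkeeping at the level of the Picard-category enhancement'' you flag, so your plan already accounts for it.
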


\begin{proof}
We summarize the definition of $\beta$; see \cite[Thm~A.7]{olsson-deformation} for more details. Let $P_\bullet$ be the simplicial $A$-algebra given by the standard free resolution of the $A$-algebra $B$ \cite[Tag~08SR]{tag}. The morphism $\beta$ is defined to be the composition
{\footnotesize
\[
\Exal_A(B, I) \xrightarrow{\beta_1} \Exal_A(P_\bullet, I) \xrightarrow{\beta_2} \Extcat(\Omega_{P_\bullet/A}, I) \xrightarrow{\beta_3} \Extcat(\Omega_{P_\bullet/A}\otimes B, I) \xrightarrow{\beta_4} \Estack{\LL_{B/A}}{I[1]}
\]
}
Here, $\Extcat(\Omega_\bullet, I)$ denotes the Picard category of simplicial $\OO_{\cC}$-module extensions of $\Omega_\bullet$ by $I$ (viewed as a simplicial module); see \cite[Sec~A.1]{olsson-deformation}.
The map $\beta_1$ is given by the map \eqref{eq:changeB} applied to the augmentation $P_\bullet \rightarrow B$, the morphism $\beta_2$ is given by taking differentials, $\beta_3$ is given by tensoring with $B$, and $\beta_4$ is the functorial isomorphism in \cite[Prop~A.3]{olsson-deformation}.
\note{You should go read Olsson A.7 to understand these maps better. For $\beta_1$, you should vie the operation \eqref{eq:changeB} as happening on the simplicial topos where $I$, $B', B$, and $P_{\bullet}$ are all usual $A$-modules (where $A$ is the constant simplicial object, but it is just an ordinary ring object in the simplicial topos). The map $\beta_2$ is basically tag 00RU: a surjective map o algebras makes an exact sequence of modules of differentials. Now you apply $-\otimes_{P_\bullet} B$ to the WHOLE SEQUENCE of differentials. There are two things to check. First, the sequence remain exact because $\Omega^1_{P_\bullet/A}$ is free, hence flat, and so you can apply something like 004L. Second, the counit $I_{P_\bullet}\otimes_{P_\bullet} B \rightarrow I$ is an isomorphism because restriction of scalars for $P_\bullet \rightarrow B$ is fully faithful, because $P_\bullet \rightarrow B$ is an epimorphism (at least it is an epimorphism at every level, so I hope this means it is epi). See MSE ``does ring epimorphism induce reversed . . . ''}\\

\noindent
\textit{{Proof of (1).}}
The desired functoriality follows from a commuting diagram
{\footnotesize
\[
\begin{tikzcd}[column sep=small]
\Exal_A(B, I) \arrow[r, "{\beta_1}"]\arrow[d, "\text{\eqref{eq:changeI}}"]& \Exal_A(P_\bullet, I) \arrow[r, "{\beta_2}"]\arrow[d, "\text{\eqref{eq:changeI}}"]& \Extcat(\Omega_{P_\bullet/A}, I) \arrow[r, "{\beta_3}"] \arrow[d]& \Extcat(\Omega_{P_\bullet/A}\otimes B, I) \arrow[d] \arrow[r, "\beta_4"] & \Estack{\LL_{B/A}}{I[1]}\arrow[d]\\
\Exal_{A}(B, J) \arrow[r, "{\beta_1}"]& \Exal_{A}(P_\bullet, J) \arrow[r, "{\beta_2}"]& \Extcat(\Omega_{P_\bullet/A}, J) \arrow[r, "{\beta_3}"] & \Extcat(\Omega_{P_\bullet/A}\otimes B, J)\arrow[r, "\beta_4"] & \Estack{\LL_{B/A}}{J[1]} 
\end{tikzcd}
\]
}
The square with $\beta_1$ commutes by Lemma \ref{lem:commutes-with-I}. The square with $\beta_2$ commutes because differentials commute with colimits \cite[Tag~031G]{tag}. The square with $\beta_3$ commutes because tensor product is a left adjoint and so commutes with colimits, and the square with $\beta_4$ commutes by the naturality in \cite[Prop~A.3]{olsson-deformation}.
\\

\noindent
\textit{{Proof of (2).} }
The desired functoriality follows from two commuting diagrams. First we have
{\footnotesize
\begin{equation}\label{eq:core4}
\begin{tikzcd}
\Exal_A(B, I) \arrow[r, "{\beta_1}"]\arrow[d]& \Exal_A(P_\bullet, I) \arrow[r, "{\beta_2}"]\arrow[d]& \Extcat(\Omega_{P_\bullet/A}, I) \arrow[r, "{\beta_3}"] \arrow[d]& \Extcat(\Omega_{P_\bullet/A}\otimes B, I) \arrow[d]\\
\Exal_{A'}(B', I_{B'}) \arrow[r, "{\beta_1}"]& \Exal_{A'}(P_\bullet', I_{B'}) \arrow[r, "{\beta_2}"]& \Extcat(\Omega_{P_\bullet'/A'}, I_{P'_\bullet}) \arrow[r, "{\beta_3}"] & \Extcat(\Omega_{P_\bullet'/A'}\otimes B', I_{B'}) 
\end{tikzcd}
\end{equation}
}
which we claim commutes. Here $P'_\bullet$ is the simplicial $A'$-algebra that is the standard resolution of $B'$. The two left vertical arrows are given by \eqref{eq:changeA} and \eqref{eq:changeB}; the next two vertical arrows are given by the analog of \eqref{eq:changeB} for the $\Extcat$ categories. The first square commutes by Lemma \ref{lem:commutes-with-I}. 
The commutativity of the squares with $\beta_2$ and $\beta_3$ may be checked with the same type of computation used in Lemma \ref{lem:commutes-with-I} and we will be brief here. 

For the square with $\beta_2$, if $0 \rightarrow I \rightarrow E_\bullet \rightarrow P_\bullet \rightarrow 0$ is an object of $\Exal_A(P_\bullet, I)$, then the natural transformation is given on this object by the (iso)morphism\note{It is an isomorphism because it is a morphism in a groupoid. To show that it is a morphism in the groupoid you must check that it commutes with maps to $\Omega_{P'_\bullet/A'}$ (easy) and from $I_{P'_\bullet}$ (a little harder). check that this morphism commutes wtih maps of $E$ and that it is compatible with restriction (this last uses that $a^{-1}\Omega_{A/B} = \Omega_{a^{-1}A/a^{-1}B}$.}
\[
\Omega_{E_\bullet \times_{P_\bullet} P'_\bullet/A'} \rightarrow \Omega_{E_\bullet/A} \times_{\Omega_{P_\bullet/A}} \Omega_{P'_\bullet/A'}
\]
induced by the commuting cube
\[
  \begin{tikzcd}[row sep=tiny, column sep=tiny]
& E_\bullet  \arrow[rr] \arrow[from=dd] & & P_\bullet   \\
E_\bullet \times_{P_\bullet} P'_\bullet \arrow[rr, crossing over, ]\arrow[ur]  & & P'_\bullet \arrow[ur]\\
& A  \arrow[rr, equal] & & A  \arrow[uu]\\
A' \arrow[rr, equal]\arrow[ur]\arrow[uu] & & A' \arrow[ur]\arrow[uu, crossing over]\\
\end{tikzcd}
\]

For the square with $\beta_3$, if $0 \rightarrow I \rightarrow E_\bullet \rightarrow \Omega_{P_\bullet/A} \rightarrow 0$ is an object of $\Extcat_{P_\bullet}(\Omega_{P_\bullet/A}, I)$, then the natural transformation is given on this object by the (iso)morphism of $B'$-modules
\[
(E_\bullet\times_{\Omega_{P_\bullet/A}} \Omega_{P'_\bullet/A'}) \otimes_{P'_\bullet} B' \rightarrow (X\otimes_P B) \times_{\Omega_{P_\bullet/A}\otimes_{P_\bullet} B}(\Omega_{P'_\bullet/A'} \otimes_{P'_\bullet}B')
\]
induced by the natural map of $P'_\bullet$-modules
\[
E_\bullet\times_{\Omega_{P_\bullet/A}} \Omega_{P'_\bullet/A'} \rightarrow (E_\bullet\otimes_{P_\bullet} B) \times_{\Omega_{P_\bullet/A}\otimes_{P_\bullet} B}(\Omega_{P'_\bullet/A'} \otimes_{P'_\bullet}B').
\]

The second diagram comprising \eqref{eq:core2} is as follows.
{\footnotesize
\[
\begin{tikzcd}
\Extcat_B(\Omega_{P_\bullet/A}\otimes B, I) \arrow[r, "{\beta_4}"]\arrow[d, "\rho"]&[-10pt] \Estackbase{\LL_{B/A}}{I[1]}{B} \arrow[d, "\text{\eqref{eq:sheafy-push}}"]\arrow[dr, "\epsilon_{\LL_{B/A}}"]\\
\Extcat_{B'}((\Omega_{P_\bullet/A}\otimes B)_{B'}, I_{B'}) \arrow[r, "{\beta_4}"]\arrow[d, "\text{\eqref{eq:changeB}}"]& \Estackbase{(\LL_{B/A})_{B'}}{I_{B'}[1]}{B'} \arrow[d]\arrow[r, equal, "\text{\eqref{eq:simplicial1}}"]& \Estackbase{(\LL_{B/A})_{B'}\otimes_{B'}B}{I[1]}{B} \arrow[d]\\
\Extcat_{B'}(\Omega_{(P_\bullet)'/A'}\otimes B', I_{B'}) \arrow[r, "{\beta_4}"]& \Estackbase{\LL_{B'/A'}}{I_{B'}[1]}{B'} \arrow[r, equal, "\text{\eqref{eq:simplicial1}}"] & \Estackbase{\LL_{B'/A'}\otimes_{B'}B}{I[1]}{B}
\end{tikzcd}
\]
}
The arrow labeled $\rho$ sends an extension of $B$-modules to the extension of $B'$-modules obtained by restriction of scalars (an exact functor). One may check directly that the composition of the left vertical arrows is equal to the right vertical arrow in \eqref{eq:core4}. 
The map labeled \eqref{eq:sheafy-push} is in the context of Example \ref{ex:change-rings}\note{$Estack$ includes a map that forgets the B (or $B'$)-module structure and only remembers the complex of abelian groups)} and the triangle commutes by definition, while the top left square commutes by Lemma \ref{lem:last-one} below.
The unlabeled vertical maps are induced by the canonical map $(\LL_{B/A})_{B'} \rightarrow \LL_{B'/A'}$, so the bottom squares commute by functoriality of $\beta_4$ and \eqref{eq:simplicial1}. \\

\noindent
\textit{{Proof of (3).}}
Let $A' = p^{-1}A$, let $B' = p^{-1}B$, and let $P'_\bullet$ denote the standard resolution of $B'$ as an $A'$-algebra. The desired commuting square comes from two commuting diagrams. First, we have
{\footnotesize
\begin{equation}\label{eq:part3}
\begin{tikzcd}[column sep=small]
\Exal_A(B, I) \arrow[r, "\beta_1"] \arrow[d, "p^{-1}"] & \Exal_A(P_\bullet, I) \arrow[r, "\beta_2"] \arrow[d, "p^{-1}"] & \Extcat(\Omega_{P_\bullet/A}, I) \arrow[r, "\beta_3"]\arrow[d] & \Extcat(\Omega_{P_\bullet/A}\otimes B, I)\arrow[d]\\
\Exal_{A'}(B', a^{-1}I) \arrow[r, "\beta_1"] & \Exal_{A'}(P'_\bullet, p^{-1}I) \arrow[r, "\beta_2"] & \Extcat(\Omega_{P'_\bullet/A'}, p^{-1}I) \arrow[r, "\beta_3"]  & \Extcat(\Omega_{P'_\bullet/A'}\otimes B', p^{-1}I)
\end{tikzcd}
\end{equation}
}
The first square commutes by 
Lemma \ref{lem:commutes-with-I}.
The third vertical map is induced by $p^{-1}$ and the isomorphism $p^{-1}\Omega_{P_\bullet/A} \simeq \Omega_{p^{-1}P_\bullet/p^{-1}A}$ (\cite[Tag~08TQ]{tag}), and the fourth is induced by $p^{-1}$ and the isomorphism $p^{-1}(\Omega_{P_\bullet/A} \otimes B)\simeq \Omega_{p^{-1}P_\bullet/p^{-1}A}\otimes p^{-1}B$ (\cite[Tag~03EL]{tag}), and the squares commute by functoriality of the same isomorphisms.\note{the tag 03EL is for $f^*$, not $f^{-1}$, but $f^{-1}$ is just the special case when the domain is ringed by the inverse image of the target ring.}
Second, we have\note{the middle line really maps to $p_*\Estackbase{N(p^{-1}(\Omega_{P/A} \otimes B)}{p^{-1}I}{p^{-1}B}$ (the bottom right corner also maps to this one) and then to $p_*\Estackbase{p^{-1}\LL_{B/A}}{p^{-1}I[1]}{p^{-1}B}$}
{\small
\[
\begin{tikzcd}[column sep=small]
\Extcat(\Omega_{P_\bullet/A}\otimes B, I) \arrow[r, "\beta_4"] \arrow[d, "p^{-1}"] & \Estackbase{\LL_{B/A}}{I[1]}{B} \arrow[d, "\text{\eqref{eq:sheafy-pullback}}"] \arrow[dr, "\text{\eqref{eq:sheafy-pullback}}"]\\
\Extcat(p^{-1}(\Omega_{P_\bullet/A} \otimes B), p^{-1}I) \arrow[d, equal] \arrow[r] &  \arrow[d, equal] \Estackbase{p^{-1}\LL_{B/A}}{p^{-1}I[1]}{p^{-1}B} \arrow[r, "\text{\eqref{eq:sheafy-pullback}}"] &\Estackbase{p^*\LL_{B/A}}{p^*I[1]}{p^*B}\\
\Extcat(\Omega_{P'_\bullet/A'}\otimes B', p^{-1}I)  \arrow[r, "\beta_4"]& p_*\Estackbase{\LL_{B'/A'}}{p^{-1}I[1]}{p^{-1}B} & 
\end{tikzcd}
\]
}
The composition of the left vertical arrows is equal to the right vertical arrow in \eqref{eq:part3}. The middle horizontal arrow is comprised of $\beta_4$ and an isomorphism (see Lemma \ref{lem:other-last-guy}), and the top left square commutes by Lemma \ref{lem:other-last-guy}. The bottom square commutes by functoriality of $\beta_4$, and the triangle commutes by functoriality of  \eqref{eq:sheafy-pullback}
in the functors.
\end{proof}

\begin{lemma}\label{lem:last-one}
Let $(\sS, \OO_{\sS})$ be a ringed site, let $\Omega_\bullet$ be a simplicial $\Omega_\sS$-module, and let $I$ be an $\Omega_{\sS}$-module. Let $\OO_{\sS}' \rightarrow\OO_{\sS}$ be a flat morphism of rings. There is a commuting diagram
\begin{equation}\label{eq:last-one2}
\begin{tikzcd}
\Extcat_{\OO_\sS}(\Omega_\bullet, I) \arrow[d, "\rho"] \arrow[rr, "\beta_4"] &[-15pt]&[-15pt] \Estackbase{N(\Omega_\bullet)}{I[1]}{\OO_{\sS}} \arrow[d, "\text{\eqref{eq:sheafy-push}}"]\\
\Extcat_{\OO_{\sS}'}((\Omega_\bullet)_{\OO_{\sS}'}, I_{\OO_{\sS}'}) \arrow[r, "\beta_4"] & \Estackbase{N((\Omega_\bullet)_{\OO_{\sS}'})}{(I)_{\OO_{\sS}'}[1]}{\OO'_{\sS}} \arrow[r, equal] & \Estackbase{(N(\Omega_\bullet))_{\OO_{\sS}'}}{(I)_{\OO_{\sS}'}[1]}{\OO'_{\sS}} 
\end{tikzcd}
\end{equation}
where $N(\Omega_\bullet)$ is the normalization of the Moore complex associated to $\Omega_\bullet$ (see \cite[Tag~0194]{tag}), $\beta_4$ is the isomorphism of \cite[Prop~A.3]{olsson-sheaves}, $\rho$ applies restriction of scalars to an exact sequence, and \eqref{eq:sheafy-push} is in the context of Example \ref{ex:change-rings}\note{to understand the equality, read the definition of $N$ in 0194 and 017V. it is induced by the commutativity of $p^{-1}$ with kernels.}.
\end{lemma}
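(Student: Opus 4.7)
The plan is to unpack the definition of $\beta_4$ and the definition of \eqref{eq:sheafy-push} in the context of Example~\ref{ex:change-rings}, and then to observe that both vertical arrows of \eqref{eq:last-one2} are, at the end of the day, implemented by the forgetful functor along $\OO'_\sS\to\OO_\sS$, applied either to short exact sequences of simplicial modules or to the complex $\Rhom$-representative of the resulting Ext class.

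First I would recall from \cite[Prop~A.3]{olsson-sheaves} that $\beta_4$ sends an extension $0\to I\to E_\bullet\to\Omega_\bullet\to 0$ of simplicial $\OO_\sS$-modules to the morphism $N(\Omega_\bullet)\to I[1]$ in $\D(\OO_\sS)$ obtained from the distinguished triangle $N(I)\to N(E_\bullet)\to N(\Omega_\bullet)\to N(I)[1]$ associated to the normalized Moore complexes, together with the quasi-isomorphism $N(I)\simeq I$ (the sheaf $I$ viewed as constant simplicial object). The key point, which I would record first as a preliminary step, is that normalization of the Moore complex commutes with the restriction of scalars $\rho$, because $\rho$ is just the identity on underlying abelian sheaves and $N$ is defined by kernels in the category of abelian sheaves (see \cite[Tag~0194]{tag}). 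In particular the distinguished triangle obtained from the restricted extension $\rho(E_\bullet)$ is the image under restriction of scalars of the original triangle.

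Next I would make \eqref{eq:sheafy-push} explicit in the context of Example~\ref{ex:change-rings}. Given a morphism $u\colon N(\Omega_\bullet)\to I[1]$ in $\D(\OO_\sS)$, represented after choosing a $K$-flat resolution $P^\bullet\to N(\Omega_\bullet)$ and an injective resolution $I[1]\to J^\bullet$ by an honest map of complexes of $\OO_\sS$-modules, the map \eqref{eq:sheafy-push} followed by the identification \eqref{eq:simplicial1} applied to $u$ is represented by the same map of complexes viewed in $\D(\OO'_\sS)$ via the counit $f^*f_*P^\bullet\to P^\bullet$; the discussion of the counit in Example~\ref{ex:change-rings} shows that this amounts to precomposition with the identity on the underlying abelian sheaves, which is exactly $\rho$ at the level of complexes of $\OO_\sS$-modules reinterpreted over $\OO'_\sS$. (Flatness of $\OO'_\sS\to\OO_\sS$ is used here to guarantee that $P^\bullet$ remains $K$-flat after restriction and that $J^\bullet$ remains a suitable target, so the description of \eqref{eq:sheafy-push} in degree $0$ really is literal restriction of scalars.)

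Combining these two steps, the class $\text{\eqref{eq:sheafy-push}}\circ\beta_4([E_\bullet])$ is represented by the triangle $N(\rho(I))\to N(\rho(E_\bullet))\to N(\rho(\Omega_\bullet))$, which is also the triangle that $\beta_4$ attaches to the extension $\rho(E_\bullet)$ of $\OO'_\sS$-modules. Hence the diagram \eqref{eq:last-one2} commutes. The one step that requires some care is the explicit identification of \eqref{eq:sheafy-push} used above, so I would expect the main bookkeeping obstacle to be pinning down that \eqref{eq:sheafy-push} (composed with the adjunction isomorphism \eqref{eq:simplicial1}) really does agree, on morphism classes, with restriction of scalars of complexes; this is a purely formal check along the lines of Example~\ref{ex:modules}, done at the derived level using the explicit formulae for the counit recorded in Example~\ref{ex:change-rings}.
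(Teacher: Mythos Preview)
Your approach is correct and essentially the same as the paper's: both proofs reduce to showing that \eqref{eq:sheafy-push} in the setting of Example~\ref{ex:change-rings} is literally restriction of scalars on morphisms of complexes, and then observe that $\beta_4$ (built from the normalized Moore complex, which is defined purely on underlying abelian sheaves) commutes with restriction of scalars. The paper carries out the explicit computation of \eqref{eq:sheafy-push} at the level of the full $\Rhom$ complex, first unwinding \eqref{eq:simplicial1} and then \eqref{eq:sheafy-push} degree by degree, whereas you phrase it in terms of a single derived morphism; the paper's formulation has the minor advantage of simultaneously handling degrees $0$ and $-1$, hence both objects and arrows of the Picard categories, which your sketch treats only at the level of objects $[E_\bullet]$ but would extend identically.
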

\begin{proof}
By \cite[Tag~05NI, 05T7]{tag} there is a quasi-isomorphism $N \rightarrow N(\Omega_\bullet)$ from a complex $N\in D^{[-\infty, 0]}(\OO_{\sS})$ of flat $\OO_{\sS}$-modules. We enlarge \eqref{eq:last-one2} on its right side by composing with the square induced by $N \rightarrow N(\Omega_\bullet)$ and show that the perimeter of the new diagram commutes.\note{this is ok because it differs from the original diagram by an isomorphism, and because the new square is commutative using the abstract defintion of \eqref{eq:sheafy-push}.} From the definition of $\beta_4$ we may assume $I$ is injective. To simplify notation let $B = \OO_{\sS}$ and $B' = \OO_{\sS}'$.

Most of the work is to describe \eqref{eq:sheafy-push} explicitly. To this end we first recall the definition of \eqref{eq:simplicial1}: in the notation of Section \ref{sec:csmcats}, it is the image of the composition
\begin{equation}\label{eq:last-guy1}
f^*\Hom(X, f_*Y)\otimes f^*X \xrightarrow[\sim]{\text{\eqref{eq:strong-monoidal}}} f^*(\Hom(X, f_*Y)\otimes X) \xrightarrow{f^*(\epsilon^{\otimes}_{f_*Y})} f^*f_*Y \xrightarrow{\epsilon^{f^*}_Y} Y
\end{equation}
under the $(\otimes, \Hom)$ adjunction and then the $(f^*, f_*)$ adjunction. One sees using the description of $\epsilon^\otimes$ in Example \ref{ex:change-rings} that with $M \in \D^{[-\infty, 0]}(B')$ a complex of flat $B'$-modules and $I$ as in the previous paragraph, the morphism $(\Rhom_{B'}(M, (I)_{B'}[1])\otimes_{B'}B)\lotimes_B (M\otimes_{B'} B) \rightarrow I[1]$ of \eqref{eq:last-guy1} is given by a product over $p+r=0$ of the canonical sheaf maps 
\[
(\Shom_{B'}(M^{-p}, (I)_{B'})\otimes_{B'}B) \otimes_B(M^r\otimes_{B'} B) \rightarrow I.
\]
 We are using the fact that $(-)_{B'}$ preserves injectives (since it has an exact left adjoint) and hence $(I)_{B'}$ is injective. We see that \eqref{eq:simplicial1}: $\Rhom_{B'}(M, (I)_{B'}[1]) \to (\Rhom_B(M\otimes_{B'}B, I[1])_{B'})$ is given in degree $n$ by the canonical sheaf map 
\[
\Shom_{B'}(M^{-n-1}, (I)_{B'}) \to (\Shom_B(M^{-n-1}\otimes_{B'}B, I)_{B'}).
\]
To compute \eqref{eq:sheafy-push}, given $N$ as at the beginning of this proof, we note that $(N)_{B'} \in \D^{[-\infty, 0]}$ is a complex of flat $B'$-modules by \cite[Tag~00HC]{tag}, so our previous description of \eqref{eq:simplicial1} applies with $M = (N)_{B'}$. From the definition of 
\begin{equation}\label{eq:whatamess}
\text{\eqref{eq:sheafy-push}}: (\Rhom_B(N, I[1]))_{B'} \rightarrow \Rhom_{B'}((N)_{B'}, (I)_{B'}[1])
\end{equation}
we see that it is given in degree $n$ by the usual sheaf map 
\[
\Shom_B(N^{-n-1}, I) \to \Shom_{B'}((N^{n-1})_{B'}, (I)_{B'}).
\]
We are interested in the cases $n=0$ and $n=-1$. Given $U \in \sS$ and a section $f:N^{n-1}|_U \rightarrow I|_U$ of the left hand side, i.e. a morphism of $B$-modules, this map sends $f$ to the corresponding morphism of $B'$-modules. (One may verify this claim by unwinding the definitions and ultimately appealing to Example \ref{ex:modules}.)
Now applying $\R \Gamma$ to \eqref{eq:whatamess} is straightforward since both complexes are complexes of injectives by \cite[Tag~0A96]{tag}.\note{immediately this gives an explicit description of the right vertical arrow in \eqref{eq:last-one2} at the level of Picard prestacks on a site with one object and one morphism. But a stack on this site is the same as a prestack on this site: it is just a category. So i have completely described the right vertical arrow.}

This gives a completely explicit description of the morphism labeled \eqref{eq:sheafy-push} in \eqref{eq:last-one2}. With this in hand it is easy to check that \eqref{eq:last-one2} commutes.\note{essentially this is because the functor $\beta_4$ only cares about MAPS of modules, which don't really see the module structure. For example, the kernel of a map of $B$ modules is the same as the kernel of the map of $B'$-modules. I guess it is important that restriction of scalars is exact.}

\end{proof}

\begin{lemma}\label{lem:other-last-guy}
Let $(\sS, \OO_{\sS})$ be a ringed site, let $\Omega_\bullet$ be a simplicial $\Omega_\sS$-module, and let $I$ be an $\Omega_{\sS}$-module. Let $(\sS, \OO_{\sS}) \rightarrow (\sS', \OO_{\sS'})$ be a continuous morphism of ringed sites inducing a flat morphism of topoi $(p^{-1}, p_*)$\note{flatness is used to apply tag 0730 below} such that $p^{-1}\OO_{\sS} = \OO_{\sS'}$. Then there is a commuting diagram
{\small
\begin{equation}\label{eq:other-last-one2}
\begin{tikzcd}
\Extcat_{\OO_{\sS}}(\Omega_\bullet, I)\arrow[d] \arrow[rr, "\beta_4"] &[-13pt]&[-13pt] \Estackbase{N(\Omega_\bullet)}{I[1]}{\OO_{\sS}} \arrow[d, "\text{\eqref{eq:sheafy-pullback}}"] \\
\Extcat_{\OO_{\sS'}}(p^{-1}(\Omega_\bullet), p^{-1}(I)) \arrow[r, "\beta_4"] & \Estackbase{N(p^{-1}\Omega_\bullet)}{p^{-1}I[1]}{\OO_{\sS'}} \arrow[r, equal] &\Estackbase{p^{-1}N(\Omega_\bullet)}{p^{-1}I[1]}{\OO_{\sS'}}
\end{tikzcd}
\end{equation}
}
where $N(\Omega_\bullet)$ is the normalization of the simplical module, $\beta_4$ is the isomorphism of \cite[Prop~A.3]{olsson-sheaves}, \eqref{eq:sheafy-pullback} is in the context of Example \ref{ex:change-topoi}, and the left vertical arrow applies $p^{-1}$ to an exact sequence.\note{to understand the equality, read the definition of $N$ in 0194 and 017V. it is induced by the commutativity of kernels and restriction of scalars (restriction of scalars is exact so it commutes with kernels).}
\end{lemma}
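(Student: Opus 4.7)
The plan is to follow the strategy of the just-proved Lemma~\ref{lem:last-one} closely, replacing the role of \eqref{eq:sheafy-push} with \eqref{eq:sheafy-pullback}. First, I would enlarge \eqref{eq:other-last-one2} on the right by the square induced by the quasi-isomorphism $N \to N(\Omega_\bullet)$ provided by \cite[Tag~05NI,~05T7]{tag}, where $N \in D^{[-\infty,0]}(\OO_{\sS})$ is a complex of flat $\OO_{\sS}$-modules; this reduces the problem to checking the perimeter commutes. Using the construction of $\beta_4$ as in \cite[Prop~A.3]{olsson-sheaves}, I would further reduce to the case that $I$ is injective. Since $p^{-1}\OO_{\sS} = \OO_{\sS'}$ we have $p^* = p^{-1}$ on $\OO$-modules, and since $p$ is flat, $p^{-1}$ is exact and sends flat modules to flat modules; in particular $p^{-1}N$ is a bounded-above complex of flat $\OO_{\sS'}$-modules, hence K-flat.

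The main technical task is to make the right vertical arrow \eqref{eq:sheafy-pullback} explicit at the cochain level, analogous to what was done for \eqref{eq:sheafy-push} in the proof of Lemma~\ref{lem:last-one}. Recall that \eqref{eq:sheafy-pullback} factors as the unit map $\Rhom_{\OO_{\sS}}(N, I[1]) \to \Rhom_{\OO_{\sS}}(N, \R p_* p^{-1}I[1])$ followed by the instance of \eqref{eq:simplicial1} from Example~\ref{ex:change-topoi}. To compute $\R p_* p^{-1}I[1]$, I would pick a K-injective resolution $p^{-1}I \to J^\bullet$; then $p_*J^\bullet$ is K-injective in $\Mod{\OO_{\sS}}$ because $p^{-1}$ is exact, so $p_*$ preserves K-injective objects. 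Unwinding the definition of \eqref{eq:simplicial1} as in \eqref{eq:last-guy1} (where now $f^* = p^{-1}$ is the strong monoidal functor and $p^{-1}N$ is K-flat), and using that the $(\otimes, \Hom)$-counit is the usual evaluation pairing, I expect to find that in each degree $n$ the map \eqref{eq:sheafy-pullback} is represented by the canonical sheaf map
\[
\Shom_{\OO_{\sS}}(N^{-n-1}, I) \to p_*\Shom_{\OO_{\sS'}}(p^{-1}N^{-n-1}, p^{-1}I)
\]
sending a local section $f:N^{-n-1}|_U \to I|_U$ to $p^{-1}f:p^{-1}N^{-n-1}|_{p^{-1}U} \to p^{-1}I|_{p^{-1}U}$.

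With this explicit description in hand, the commutativity of \eqref{eq:other-last-one2} becomes a straightforward verification: $\beta_4$ extracts from an extension of simplicial modules $0\to I \to E_\bullet \to N(\Omega_\bullet) \to 0$ a class in the appropriate $\Estack$ by a construction involving normalization and the Dold–Kan correspondence, all of which commute with the exact functor $p^{-1}$. Thus tracing a class around either path of \eqref{eq:other-last-one2} amounts to comparing $\beta_4$ of the extension $0 \to I \to E_\bullet \to \Omega_\bullet \to 0$ followed by $f \mapsto p^{-1}f$ (which is the explicit form of \eqref{eq:sheafy-pullback} in each degree) with $\beta_4$ of the extension $0 \to p^{-1}I \to p^{-1}E_\bullet \to p^{-1}\Omega_\bullet \to 0$, and these agree by the naturality of $\beta_4$ and the definition of the left vertical arrow.

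The principal obstacle will be the explicit identification of \eqref{eq:sheafy-pullback} as the termwise inverse image map; the bookkeeping around the K-injective resolution $p^{-1}I \to J^\bullet$ (needed because $p^{-1}I$ is not itself injective in general) is the only nontrivial point, and it relies on the fact that the unit $I[1] \to \R p_* p^{-1} I[1]$ can be computed as the composition $I[1] \to p_*p^{-1}I[1] \to p_* J^\bullet[1]$ and that $p_* J^\bullet$ is K-injective, so the further derived-Hom computations reduce to underived ones.
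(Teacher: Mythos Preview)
Your proposal is correct and follows essentially the same approach as the paper: both reduce via a flat replacement $N \to N(\Omega_\bullet)$ and injective $I$, then make \eqref{eq:sheafy-pullback} explicit at the cochain level (using a K-injective resolution $p^{-1}I \to J$) to see it sends a local section $f$ to $p^{-1}f$, after which the commutativity check is immediate. The only cosmetic difference is that the paper rewrites \eqref{eq:sheafy-pullback} via the triangle identity as the $(\otimes,\Hom)$- and $(f^*,f_*)$-adjoint of $f^*\Hom(X,Y)\otimes f^*X \to f^*Y$ before computing, and its explicit degree-$n$ formula records the composition all the way into $J^{-1}$ rather than stopping at $p^{-1}I$; your final paragraph already anticipates this bookkeeping.
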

\begin{proof}
By \cite[Tag~05NI, 05T7]{tag} there is a quasi-isomorphism $N \rightarrow N(\Omega_\bullet)$ from a complex $N\in D^{[-\infty, 0]}(\OO_{\sS})$ of flat $\OO_{\sS}$-modules. We enlarge \eqref{eq:other-last-one2} on its right side by composing with the square induced by $N \rightarrow N(\Omega_\bullet)$ and show that the perimeter of the new diagram commutes. From the definition of $\beta_4$ we may assume $I$ is injective. To simplify notation let $B = \OO_{\sS}$ and $B' = \OO_{\sS}'$.

Most of the work is to describe \eqref{eq:sheafy-pullback} explicitly. To this end, we first note that (in the notation of Section \ref{sec:csmcats}) \eqref{eq:sheafy-pullback} is equal to the image of the composition
\begin{equation}\label{eq:other-last-guy1}
f^*\Hom(X, Y) \otimes f^*X \xrightarrow[\sim]{\text{\eqref{eq:strong-monoidal}}} f^*(\Hom(X, Y)\otimes X) \xrightarrow{f^*(\epsilon^{\otimes}_{Y})} f^*Y
\end{equation}
under the $(\otimes, \Hom)$ adjunction and the $(f^*, f_*)$ adjunction. (To see this, use \eqref{eq:last-guy1} and the triangle identity $\epsilon^{f^*}_{f^*Y}\circ f^*\eta^{f^*}_{Y}=1_{f^*Y}$.)
One sees using the description of $\epsilon^{\otimes}$ in Example \ref{ex:change-rings} that with $N$ and $I$ as in the previous paragraph, the morphism \eqref{eq:other-last-guy1}: $p^{-1}\Rhom_B(N, I[1]) \lotimes_{B'} p^{-1}N \rightarrow p^{-1}I[1]$ is given by the product over $r \in \ZZ$ of the usual sheaf maps
\[
p^{-1}\Shom_B(N^r, I)\otimes_{B'} p^{-1}N^r \rightarrow p^{-1}I.
\]
To compute the $(\otimes, \Hom)$ adjunction we must take an injective resolution $p^{-1}I[1] \rightarrow J$ of $p^{-1}I[1]$. Given this, one checks that 
\begin{equation}\label{eq:phoebe}
\text{\eqref{eq:sheafy-pullback}}: \Rhom_B(N, I[1]) \rightarrow \R p_*\Rhom_{B'}(p^{-1}N, J)
\end{equation}
is given in degree $n$ by the composition of the usual sheaf maps\note{a priori the right hand side should be a product over $\Shom(p^{-1}N^{-p}, J^q)$ with $p+q=n$, but the maps all come from adjoints of the previous line (composed with $p^{-1}I[1] \to J$), and those maps are zero in all degrees except -1. So the map is zero unless $q=-1$ and $p=-n-1$.}
\[
\Shom_B(N^{-n-1}, I) \rightarrow p_*\Shom_B'(p^{-1}N^{-n-1}, p^{-1}I) \rightarrow p_*\Shom_{B'}(p^{-1}N^{-n-1}, J^{-1})
\]
We have used \cite[Tag~0A96]{tag} to conclude that $\Rhom_{B'}(p^{-1}N, J)$ is a complex of injectives so its pushforward can be computed termwise. We are interested in the cases $n=0, -1$. Given $U \in \sS$ and a section $f: N^{-n-1}|_U \rightarrow I|_U$ of the left hand side, this map sends $f$ to $p^{-1}f: p^{-1}N^{-n-1}|_U \rightarrow p^{-1}I|_U \to J^{-1}|_U$ (it is an exercise to check that the ``usual sheaf map'' does this). Now applying $\R \Gamma$ to \eqref{eq:phoebe} is straightforward since both complexes are complexes of injectives by \cite[Tag~0730, 0A96]{tag}.\note{for passing from $pch\tau_{\leq 0}$ to $ch$, see note in the previous lemma.}

This gives a completely explicit description of \eqref{eq:sheafy-pullback} in \eqref{eq:other-last-one2}. With this in hand one may check directly that \eqref{eq:other-last-one2} commutes.

\end{proof}

\subsection{Description of \eqref{eq:ft}}\label{sec:defiso}

To define \eqref{eq:ft}, we use another example of Situation \ref{basic-situation}. 

\begin{example}\label{ex:change-simplicial}
Let $\cX$ be an algebraic stack and let $X \rightarrow \cX$ be a smooth cover by an algebraic space $X$. By Proposition \ref{prop:descent-olsson}, the morphism $\varpi^*: \Dqc(\cX_{\liset}) \to \Dqc(U^+_{\bullet, \et})$ is an equivalence of categories. In fact, it follows from the construction of $\varpi^*$ that it is a strong monoidal equivalence of symmetric monoidal categories. A standard argument shows that the inverse equivalence $\R \varpi_{ *}$ is also strong monoidal.\note{there is an earlier note about this with the phrase "helpful point"}
\end{example}

Let $f:\cX \rightarrow \cY$ be a representable morphism of algebraic stacks. Let $Y \rightarrow \cY$ be a smooth cover by a scheme with $ Y^+_\bullet \rightarrow \cY$ the associated strictly simplicial algebraic space and $\varpi: X^+_\bullet \rightarrow \cX$ its pullback to $\cX$. We will use $\varpi^*$ and $\R \varpi_*$ to denote the functors in Example \ref{ex:change-simplicial}.\note{ I think $X^+_{\bullet, \et} = \cX_{\et}|_{X^+, \bullet}$---NO this is false!!! so i need to use the first one. There are NO lisse-etale sites in this section. The reason we need to use simplicial sites is to compute the cotangent complex: since the base is an algebraic stack that's the only way.} 
We recall that the cotangent complex $\LL_{\cX/\cY}$ is defined to be the object in $\Dqc(\cX_{\liset})$ corresponding, under the equivalence $\R \varpi_*$ of Example \ref{ex:change-simplicial}, to the cotangent complex of the morphism of topoi $X^+_{ \bullet, \et} \rightarrow Y^+_{\bullet, \et}$.

\begin{definition}[{\cite{olsson-deformation}}]Let $I\in \Qcoh(\cX_{\liset})$. The isomorphism \eqref{eq:ft} is defined to be the following composition of morphisms of Picard categories on $\cX_{\liset}$:
{\footnotesize
\[
\Exal_{\cY}(\cX, I) \xrightarrow{\alpha} \Exal_{f^{-1}\OO_{Y^+_\bullet}}(\OO_{X^+_\bullet}, \varpi^*I) \xrightarrow{\varpi_*\beta} \Estack{\LL_{X^+_\bullet/Y^+_\bullet}}{ \varpi^*I[1]}\xleftarrow{\gamma} \Estack{\LL_{\cX/\cY}}{I[1]}.
\]}
The objects and maps in this composition are defined as follows.
\begin{itemize}
\item The Picard category $\Exal_{f^{-1}\OO_{Y^+_\bullet}}(\OO_{X^+_\bullet}, \varpi^*I)$ is defined as in Section \ref{sec:core} on the site $X^+_{\bullet, \et}$ 
\item 
The map $\alpha$ is the composition of \cite[(2.8.1),(2.20.1)]{olsson-deformation}: it sends an extension $\cX \rightarrow \cX'$ by $I$ to the exact sequence of $f^{-1}\OO_{Y^+_\bullet}$-modules
\[
0 \rightarrow \varpi^*I \rightarrow \OO_{\cX'^+_\bullet} \rightarrow \OO_{\cX^+_\bullet} \rightarrow 0.
\]
It is an isomorphism by \cite[Prop~2.9, Lem~2.21]{olsson-deformation}.

\item The map $\beta$
was defined in \eqref{eq:illusie}.

\item The arrow $\gamma$ is induced by applying $ch \circ \tau_{\leq 0} \circ \R \Gamma$ to \eqref{eq:sheafy-pullback} in the context of Example \ref{ex:change-simplicial}. It is an isomorphism since $\varpi^*$ is fully faithful (in fact, an equivalence of categories).
\end{itemize}
\end{definition}


\begin{remark} The proof of Lemma \ref{lem:ft-functoriality1} shows that \eqref{eq:ft} is independent of the choice of cover $Y \rightarrow \cY$.
\end{remark}

\subsection{Proofs of Lemmas \ref{lem:ft-functoriality1} and \ref{lem:ft-functoriality2}}\label{sec:lastproofs}
We describe an amalgamation \eqref{eq:3-for-1} of the three diagrams in Lemma \ref{lem:core} that will be used to prove both functoriality lemmas. Let $(\cS, \OO_{\cS}) \rightarrow (\cS', \OO_{\cS'})$ be a continuous morphism of ringed sites inducing a flat morphism of topoi $(p^{-1}, p_*)$. Let $A$ and $B:=\OO_{\cS}$ be sheaves of rings on $\cS$, let $I$ be a sheaf of $B$-modules, and let $A'$ be a sheaf of rings on $\cS'$ such that there is a commuting diagram as follows (note that $p^*B = \OO_{\cS'}$):
\[
\begin{tikzcd}
p^{-1}A \arrow[r] \arrow[d] & A' \arrow[d] \\
p^{-1}B \arrow[r] & p^*B
\end{tikzcd}
\]
Then we obtain the following commuting diagram.
{\footnotesize
\begin{equation}\label{eq:3-for-1}
	\begin{tikzcd}[column sep=tiny]
	\Estackbase{\LL_{B/A}}{I[1]}{B} \arrow[rr]&[-20pt] &[-50pt] \Estackbase{p^*\LL_{B/A}}{p^*I[1]}{p^*B} &[-50pt]&[-20pt] \Estackbase{\LL_{p^*B/A'}}{p^*I[1]}{p^*B} \arrow[ll] \\
	& \Estackbase{p^{-1}\LL_{B/A}}{p^{-1}I[1]}{p^{-1}B} \arrow[ur, "\text{\eqref{eq:sheafy-pullback}}"] \arrow[rr] && \Estackbase{p^{-1}\LL_{B/A}}{(p^*I[1])_{p^{-1}B}}{p^{-1}B}\arrow[ul, "\text{\eqref{eq:simplicial1}}"', "\sim"] & \\
	\Exal_A(B, I) \arrow[uu, "\beta"] \arrow[r] & \Exal_{p^{-1}A}(p^{-1}B, p^{-1}I) \arrow[rr]\arrow[u, "\beta"]&&\Exal_{p^{-1}A}(p^{-1}B, (p^*I)_{p^{-1}B}) \arrow[u, "\beta"]& \arrow[l] \Exal_{A'}(p^*B, p^*I) \arrow[uu, "\beta"]
	\end{tikzcd}
	\end{equation}
	}
	Here, the left square is Lemma \ref{lem:core} (3) and the right square is Lemma \ref{lem:core} (2). The middle square is Lemma \ref{lem:core} (1), using the unit $p^{-1}I \rightarrow (p^*I)_{p^{-1}B}$ of the $(\otimes, \Hom)$ adjunction. The triangle commutes by definition of the maps involved.

\begin{proof}[Proof of Lemma \ref{lem:ft-functoriality1}]
Construct a diagram
\[
\begin{tikzcd}
U\arrow[d, "\rho"] \arrow[r, "r"] &V \arrow[d]\\
Z \arrow[r] \arrow[d, "\varpi"] & \cW \times_{\cY} Y \arrow[r] \arrow[d]&Y\arrow[d]\\
\cZ \arrow[r] & \cW \arrow[r] & \cY
\end{tikzcd}
\]
where $U, V, Z$, and $Y$ are algebraic spaces with $Y \rightarrow \cY$ and $V \rightarrow \cW \times_{\cY} Y$ smooth and surjective and all squares are fibered. Let $q$ denote the map $Z \rightarrow Y$ and let $\varpi' = \varpi\circ \rho$, and use the same letters to denote induced morphisms of (simplicial) topoi. Then commutativity of \eqref{eq:wrap2} is equivalent to commutativity of the following diagram.
{\footnotesize
\begin{equation}\label{eq:wrap3}
\begin{tikzcd}[column sep=small]
\Estack{\LL_{\cZ/\cW}}{I[1]} \arrow[d, "\gamma", "\sim"']\arrow[rr, "A"] && \Estack{\LL_{\cZ/\cY}}{I[1]}\arrow[d, "\gamma", "\sim"'] \arrow[dl]\\
\Estack{\LL_{U^+_\bullet/{V^+_\bullet}}}{\varpi'^*I[1]} \arrow[r] & \Estack{\rho^{*}\LL_{Z^+_\bullet/{Y^+_\bullet}}}{\varpi'^*I[1]}&\arrow[l, "\sim"'] \Estack{\LL_{Z^+_\bullet/{Y^+_\bullet}}}{\varpi^*I[1]} \\
\Exal_{r^{-1}\OO_{V^+_\bullet}}(\OO_{U^+_\bullet}, \varpi'^*I) \arrow[r] \arrow[u, "\beta"]& \Exal_{\rho^{-1}q^{-1}\OO_{Y^+_\bullet}}(\rho^{-1}\OO_{Z^+_\bullet}, \varpi'^*I)  \arrow[u]& \Exal_{q^{-1}\OO_{Y^+_\bullet}}(\OO_{Z^+_\bullet}, \varpi^*I) \arrow[l]\arrow[u, "\beta"] \\
\Exal_{\cW}(\cZ, I) \arrow[rr, "B"]\arrow[u, "\alpha"] && \Exal_{\cY}(\cZ, I)\arrow[u, "\alpha"]
\end{tikzcd}
\end{equation}
}
In the triangle, all of the maps are equal to \eqref{eq:sheafy-pullback}, and the triangle commutes by the functoriality of \eqref{eq:sheafy-pullback} in the adjoint pair. The arrow
\[
\Estack{\rho^{*}\LL_{Z^\bullet/{Y^\bullet}}}{\varpi'^*I[1]} \leftarrow \Estack{\LL_{Z^\bullet/{Y^\bullet}}}{\varpi^*I[1]} 
\]
is an equivalence (as claimed in the diagram) because $\rho^*: \Dqc(Z^+_{\bullet, \et}) \rightarrow \Dqc(U^+_{\bullet,\et})$
is fully faithful\note{since $\varpi_U^*$ and $\varpi_V^*$ are)}.
 The trapezoid commutes by definition of the canonical map $\LL_{\cZ/\cY} \rightarrow \LL_{\cZ/\cW}$ (one can produce an explicit description for $\gamma$ by the same argument as was used in the proof of Lemma \ref{lem:other-last-guy}).\note{to say that the interior commuting implies the perimeter is commuting, you need to use the existence of a secret upward arrow between the middle guys (OK now I added it so it is not secret); you can see it exists in \eqref{eq:3-for-1}.} The commutativity of the middle square is \eqref{eq:3-for-1}, reflected left-to-right, with $p=\rho$, $A = q^{-1}\OO_{Y^+_\bullet}$, $A' = r^{-1}\OO_{V^+_\bullet}$, and $B=\OO_{Z^+_\bullet}$. 

It remains to check that the bottom square commutes. We do this by direct computation. Let $i: \cZ \hookrightarrow \cZ'$ be an element of $\Exal_{\cW}(\cZ, I)$. We have the following commuting diagram, where all squares are fibered:
\begin{equation}\label{eq:mess2}
\begin{tikzcd}
U \arrow[r, "i"] \arrow[d, "\rho"] & U' \arrow[r] \arrow[d, "\rho'"] & V\arrow[d]\\
Z \arrow[r, "i"] \arrow[d, "\varpi"] & Z' \arrow[r] \arrow[d] & \cW \times_\cY Y \arrow[r] \arrow[d] & Y\arrow[d]\\
\cZ \arrow[r, "i"] & \cZ' \arrow[r] & \cW \arrow[r] & \cY
\end{tikzcd}
\end{equation}
The map $\alpha$ sends $i: \cZ \hookrightarrow \cZ'$ to the extension
\[
0 \rightarrow \varpi'{}^*I \xrightarrow{m} i^{-1}\OO_{U'{}^+_\bullet} \rightarrow \OO_{U^+_\bullet} \rightarrow 0
\]
of $r^{-1}\OO_{V^+_\bullet}$-modules, and the maps \eqref{eq:changeA} and \eqref{eq:changeB} send this to the extension
\begin{equation}\label{eq:mess3}
0 \rightarrow \varpi'{}^*I \xrightarrow{(m, 0)} i^{-1}\OO_{U'{}^+_\bullet} \times_{\OO_{U^+_\bullet}}\rho^{-1}\OO_{Z^+_\bullet} \rightarrow \rho^{-1} \OO_{Z^+_\bullet} \rightarrow 0
\end{equation}
of $\rho^{-1}q^{-1}\OO_{Y^+_\bullet}$-modules.

On the other hand, the map $B$ sends $i:\cZ \hookrightarrow \cZ'$ to the same extension, now as an element of $\Exal_{\cY}(\cZ, I)$. The image of this under $\rho^{-1}\circ \alpha$ is
\[
0 \rightarrow \rho^{-1}\varpi^*I \rightarrow\rho^{-1}i^{-1}\OO_{Z'{}^+_\bullet} \xrightarrow{n} \rho^{-1}\OO_{Z^+_\bullet} \rightarrow 0
\]
an extension of $\rho^{-1}q^{-1}\OO_{Y^\bullet}$-modules. Here $n$ is part of the data of the morphism of ringed topoi associated to $i: Z \rightarrow Z'$.
Finally the map \eqref{eq:changeI} sends this extension to 
\begin{equation}\label{eq:mess4}
0 \rightarrow \varpi'{}^*I \rightarrow \varpi'{}^*I \oplus_{\rho^{-1}\varpi^*I}\rho^{-1}i^{-1}\OO_{Z'{}^+_\bullet} \xrightarrow{(0, n)} \rho^{-1}\OO_{Z^+_\bullet} \rightarrow 0
\end{equation}
also an extension of $\rho^{-1}q^{-1}\OO_{Y^+_\bullet}$-modules.

A morphism from \eqref{eq:mess4} to \eqref{eq:mess3} in the groupoid $\Exal_{\rho^{-1}q^{-1}\OO_{Y^+_\bullet}}(\rho^{-1}\OO_{Z^+_\bullet}, \varpi'{}^*I)$ is given by a collection of dotted arrows making the following diagram commute.
\begin{equation}
\begin{tikzcd}
\rho^{-1}\varpi^*I \arrow[r] \arrow[dr] & \varpi'{}^*I \arrow[r, dashrightarrow] \arrow[dr, dashrightarrow] & i^{-1}\OO_{U'{}^+_\bullet} \arrow[dr]\\
&\rho^{-1}i^{-1}\OO_{Z'{}^+_\bullet} \arrow[ur, dashrightarrow] \arrow[r, dashrightarrow] & \rho^{-1}\OO_{Z^+_\bullet} \arrow[r] & \OO_{U^+_\bullet}
\end{tikzcd}
\end{equation}
We choose arrows as follows (note they are compatible with restriction).
\begin{align*}
&m:\varpi'{}^*I \rightarrow i^{-1}\OO_{U'^+_{\bullet}} &n: \rho^{-1}i^{-1}\OO_{Z'{}^+_\bullet} \rightarrow \rho^{-1}\OO_{Z^+_\bullet}\\
&0: \varpi'{}^*I \rightarrow \rho^{-1}\OO_{Z^+_\bullet} &k:\rho^{-1}i^{-1}\OO_{Z'{}^+_\bullet}\rightarrow i^{-1}\OO_{U'^+_{\bullet}}.
\end{align*}
where $k$ is equal to $i^{-1}$ applied to the canonical morphism $\rho^{-1}\OO_{Z'^+_\bullet} \rightarrow \OO_{U'^+_\bullet}$.
Commutativity of the resulting diagram follows from commutativity of \eqref{eq:mess2}.\note{well . . . there are two exact sequences. Commutativity of the two parallelograms just uses that fact. Commutativity of the righthand triangle uses commutativity of the top left square of \eqref{eq:mess2}. Commutativity of the lefthand triangle needs you to say why the unit $\rho^{-1}\varpi^*I \rightarrow \varpi'^*I$ is the same as the induces map of kernels for the two exact sequences. This is because the other two maps connecting the exact triangles are also the same kind of unit: $i^{-1}(\rho^{-1}\OO_{Z'^+_\bullet} \rightarrow \OO_{U'^+_\bullet})$ and $\rho^{-1}\OO_{Z^+_\bullet} \rightarrow \OO_{U^+_\bullet}$}

We claim that this morphism is natural for arrows coming from $\Exal_{\cW}(\cZ, I)$. If we are given an arrow $f$ from $i_1: \cZ \rightarrow \cZ_1$ to $i_2:\cZ \rightarrow \cZ_2$ inducing maps $f_U:i_1^{-1}\OO_{U^+_{1,\bullet}} \rightarrow i_2^{-1}\OO_{U^+_{2,\bullet}}$ and $f_Z:\rho^{-1}i_1^{-1}\OO_{Z^+_{1,\bullet}} \rightarrow\rho^{-1}i_1^{-1}\OO_{Z^+_{2,\bullet}}$, then this naturality is equivalent to the fact that the maps in the following two criss-cross diagrams coincide.
\[
\begin{tikzcd}[column sep=10em]
 \varpi'{}^*I \arrow[r, dashrightarrow, "m_2"] \arrow[dr, dashrightarrow, "0" description, pos=.2] & i^{-1}_2\OO_{U{}^+_{2,\bullet}}\\
\rho^{-1}i^{-1}_1\OO_{Z{}^+_{1,\bullet}} \arrow[ur, dashrightarrow, "k_2 \circ f_Z" description, pos=.2] \arrow[r, dashrightarrow, "n_2 \circ f_Z"'] & \rho^{-1}\OO_{Z^+_\bullet}
\end{tikzcd}
\begin{tikzcd}[column sep=10em]
 \varpi'{}^*I \arrow[r, dashrightarrow, "f_U\circ m_1"] \arrow[dr, dashrightarrow, "0" description, pos=.2] & i^{-1}_2\OO_{U{}^+_{2,\bullet}}\\
\rho^{-1}i^{-1}_1\OO_{Z{}^+_{1,\bullet}} \arrow[ur, dashrightarrow, "f_U\circ k_1" description, pos=.2] \arrow[r, dashrightarrow, "n_1"'] & \rho^{-1}\OO_{Z^+_\bullet}
\end{tikzcd}
\]
\end{proof}

\begin{proof}[Proof of Lemma \ref{lem:ft-functoriality2}]
Construct a fiber diagram
\[
\begin{tikzcd}
Z \arrow[r] \arrow[d, "r"] & X\times_{\cX} \cZ \arrow[r] \arrow[d] & X \arrow[d, "q"]\\
W \arrow[r] & Y\times_{\cY}\cW \arrow[r] & Y
\end{tikzcd}
\]
where  $Y \rightarrow \cY$ is a smooth cover by a scheme, $X = Y \times_{\cY} \cX$, and $W \rightarrow Y\times_{\cY}\cW$ is a smooth cover by a scheme. Use $p$ to denote the map $Z \rightarrow X$. Then commutativity of \eqref{eq:idunno} is equivalent to commutativity of the diagram below.
{\footnotesize
 \begin{equation}\label{eq:mess1}
 \begin{tikzcd}
	\Estack{\LL_{\cZ/\cW}}{I[1]} \arrow[r, "\text{\eqref{eq:sheafy-pullback}}"', "C"]\arrow[d, "\gamma"', "\sim"]&[-12pt] \Estack{p^*\LL_{\cX/\cY}}{p^*I[1]} \arrow[d, "\text{\eqref{eq:sheafy-pullback}}"', "\sim"] &[-12pt] \arrow[l, "\sim", "D"'] \Estack{\LL_{\cZ/\cW}}{p^*I[1]} \arrow[d, "\gamma"', "\sim"] \\
	\Estackbase{\LL_{X^+_\bullet/Y^+_\bullet}}{\varpi^*I[1]}{\OO_{X^+_\bullet}}\arrow[r, "\text{\eqref{eq:sheafy-pullback}}"]& \Estackbase{p^*\LL_{X^+_\bullet/Y^+_\bullet}}{p^*\varpi^*I[1]}{p^*\OO_{X^+_\bullet}} & \arrow[l, "\sim"'] \Estackbase{\LL_{Z^+_\bullet/W^+_\bullet}}{\varpi^*p^*I[1]}{\OO_{Z^+_\bullet}}\\
\Exal_{q^{-1}\OO_{Y^+_\bullet}}(\OO_{X^+_\bullet}, \varpi^*I) \arrow[u, "\beta"] \arrow[rr] & & \Exal_{r^{-1}\OO_{W^+_\bullet}}(\OO_{Z^+_\bullet}, \varpi^*p^*I) \arrow[u, "\beta"]\\
	\Exal_\cY(\cX, I) \arrow[rr, "E"] \arrow[u, "\alpha"]&& \Exal_\cW(\cZ, p^*I)\arrow[u, "\alpha"]
	\end{tikzcd}\end{equation}
}
The vertical instance of \eqref{eq:sheafy-pullback} is an isomorphism since $\varpi^*$ is fully faithful. This implies that the unnamed arrow in the top right square of \eqref{eq:mess1} is an isomorphism (it is already labeled as such), since the other three maps in the square are.
The commutativity of the top left square uses functoriality of \eqref{eq:sheafy-pullback} in the adjoint functors. The top right square commutes by definition of the canonical map of cotangent complexes. The middle rectangle is \eqref{eq:3-for-1} with $p$ the map $Z\rightarrow X$, $A = q^{-1}\OO_{Y^+_\bullet}$, $B = \OO_{X^+_\bullet}$, and $A' = r^{-1}\OO_{W^+_\bullet}$. We have suppressed various squares commuting the maps $p$ and $\varpi$. 

It remains to check that the bottom square of \eqref{eq:mess1} commutes. This we do by direct computation, using \eqref{eq:3-for-1} to factor the map
\[
\Exal_{q^{-1}\OO_{Y^+_\bullet}}(\OO_{X^+_\bullet}, \varpi^*I)  \rightarrow \Exal_{r^{-1}\OO_{W^+_\bullet}}(\OO_{Z^+_\bullet}, \varpi^*p^*I).
\]
Let $i:\cX \hookrightarrow \cX'$ be an element of $\Exal_{\cY}(\cX, I)$. Then we have a commuting diagram
\[
\begin{tikzcd}[column sep=tiny, row sep=tiny]
Z \arrow[rr] \arrow[dr] \arrow[dd] && Z' \arrow[rr] \arrow[dr] \arrow[dd] && W \arrow[dd] \arrow[dr] \\
&X \arrow[rr, crossing over]  && X' \arrow[rr, crossing over]  && Y' \arrow[dd]\\
\cZ \arrow[rr] \arrow[dr]&& \cZ' \arrow[rr] \arrow[dr] && \cW \arrow[dr]\\
& \cX \arrow[from=uu, crossing over]\arrow[rr] && \cX'\arrow[rr]\arrow[from=uu, crossing over] && \cY
\end{tikzcd}
\]
where the front, bottom, and back squares are fibered (six squares in all). The map $p^{-1}\circ\alpha$ sends $i:\cX \hookrightarrow \cX'$ to the extension
\[
0 \rightarrow p^{-1}\varpi^*I \rightarrow p^{-1}i^{-1}\OO_{X'{}^+_\bullet} \rightarrow p^{-1}\OO_{X^+_\bullet} \rightarrow 0
\]
of $p^{-1}q^{-1}\OO_{Y^+_\bullet}$-algebras, and the map \eqref{eq:changeI} sends this extension to the extension
\begin{equation}\label{eq:mess5}
0 \rightarrow p^*\varpi^*I \rightarrow p^*\varpi^*I\oplus_{z^{-1}\varpi^*I}p^{-1}i^{-1}\OO_{X^+_\bullet} \rightarrow p^{-1}\OO_{X^+_\bullet} \rightarrow 0.
\end{equation}
On the other hand, the map $E$ sends $i:\cX \hookrightarrow \cX'$ to $\cZ \hookrightarrow \cZ'$, which under $\alpha$ corresponds to the extension
\[
0 \rightarrow \varpi^*p^*I \rightarrow i^{-1}\OO_{Z'{}^+_\bullet} \rightarrow \OO_{Z^+_\bullet} \rightarrow 0
\]
of $r^{-1}\OO_{W^+_\bullet}$-algebras. After applying $\varpi^*p^*=p^*\varpi^*$ and the morphisms \eqref{eq:changeA} and \eqref{eq:changeB}, this becomes the extension
\begin{equation}\label{eq:mess6}
0 \rightarrow p^*\varpi^*\rightarrow i^{-1}\OO_{Z'{}^+_\bullet} \times_{\OO_{Z^+_\bullet}}p^{-1}\OO_{X^+_\bullet} \rightarrow p^{-1}\OO_{X^+_\bullet} \rightarrow 0
\end{equation}
of $p^{-1}q^{-1}\OO_{Y^+_\bullet}$-algebras. As in the proof of Lemma \ref{lem:ft-functoriality1}, one can write down a functorial (necessarily iso)morphism between \eqref{eq:mess5} and \eqref{eq:mess6}, and check that it is compatible with restrictions.

\end{proof}

\printbibliography

\end{document}